\title[Dense and comeager conjugacy classes]
  {Dense and comeager conjugacy classes in zero-dimensional dynamics}
\author{Michal Doucha}
\address{
  Institute of Mathematics\\
  Czech Academy of Sciences\\
  \v Zitn\'a 25\\
  115 67 Praha 1\\
  Czechia}
\email{doucha@math.cas.cz}
\author{Julien Melleray}
\address{
  Universit\'e Claude Bernard Lyon 1 \\
  Institut Camille Jordan \\
  Lyon \\
  France}
\email{melleray@math.univ-lyon1.fr}
\author{Todor Tsankov}
\address{
  Universit\'e Claude Bernard Lyon 1 \\
  Institut Camille Jordan \\
  Lyon \\
  France}
\email{tsankov@math.univ-lyon1.fr}
\thanks{M.~Doucha was supported by the GA\v{C}R project 25-15366S and by the Czech Academy of Sciences (RVO 67985840).
T.~Tsankov was partially supported by Institut Universitaire de France.}
\subjclass[2020]{Primary 37B05, 37B10; Secondary 03C25}
\keywords{Cantor dynamics, generic actions, symbolic dynamics, sofic subshifts, free groups, hyperbolic groups, non-finitely generated groups}
\setlist[enumerate,1]{label=(\roman*), font=\normalfont}
\numberwithin{equation}{section}
\newcommand{\Tr}{\textup{tr}}
\newcommand{\Min}{\textup{min}}
\newcommand{\Part}{\mathrm{part}}
\newcommand{\Mpmp}{\textup{min-pmp}}
\newcommand{\Mp}{\textup{pmp}}
\newcommand{\Per}{\textup{per}}
\newcommand{\Pertr}{\textup{per-tr}}
\newcommand{\ec}{\textup{ec}}
\newcommand{\edge}{\xrightarrow}
\DeclareMathOperator{\Cay}{Cay}
\DeclareMathOperator{\Ret}{Ret}
\begin{document}

\begin{abstract}
Given a countable group $G$, we initiate a systematic study of the Polish spaces of all minimal and topologically transitive actions of $G$ on the Cantor space by homeomorphisms, with a focus on the existence of comeager conjugacy classes in these spaces. We develop a general model-theoretic framework to study this and related questions, recovering on the way many existing results from the literature.

A substantial part of the paper is devoted to actions of free groups.
We show that in that case, there is a comeager conjugacy class in the space of minimal actions, as well as in the space of minimal, probability measure-preserving actions. The first one is the Fraïssé limit of all sofic minimal subshifts and the second, the universal profinite action.
The case of the integers was already treated by Hochman and there the two actions coincide with the universal odometer.
In the non-abelian case, they are substantially different and new techniques are required.

In the opposite direction, if $G$ is an amenable group which is not finitely generated, we show that there is no comeager conjugacy class in the space of all actions, and if $G$ is locally finite, also in the space of minimal actions.

Finally, we study the question of existence of a dense conjugacy class in the space of topologically transitive actions. We show that if $G$ is hyperbolic or virtually polycyclic, then such a dense conjugacy class exists iff $G$ is virtually cyclic, suggesting that the case of the integers may be exceptional.
\end{abstract}

\maketitle

\setcounter{tocdepth}{1}
\tableofcontents


\section{Introduction}
\label{sec:introduction}

Understanding generic behavior of dynamical systems is a problem with long history that, in ergodic theory, goes back at least to Oxtoby and Ulam~\cite{Oxtoby1941}, and was later extensively studied by Halmos.  One way of formalizing the question is to consider a Polish space of dynamical systems of interest and ask about the properties of the ``typical system'' in the sense of Baire category. Often spaces of this type satisfy a topological zero--one law: for every isomorphism-invariant, sufficiently definable property, either it or its negation is generic.

In this paper, we are interested in actions of countable groups on the Cantor space, which we denote by $\Omega$. For a fixed group $G$, it is natural to parametrize all such actions by homomorphisms from $G$ to the Polish group $\Homeo(\Omega)$; these form a Polish space that we denote by $\Xi(G)$. The group $\Homeo(\Omega)$ acts on $\Xi(G)$ by conjugation and we call its orbits \df{conjugacy classes}. Two systems are in the same conjugacy class iff they are isomorphic in the usual dynamical sense.
The space $\Xi(G)$ always has a dense conjugacy class (see \autoref{p:dense-conj-class-Xi}) and thus the topological zero--one law applies. In particular, every conjugacy class is meager or comeager and there are two possibilities for the global structure of $\Xi(G)$, depending on the group $G$: either there is a comeager conjugacy class or all classes are meager. The existence of a comeager conjugacy class is a very strong property which implies that the generic dynamical behavior for the group $G$ is determined by a single system. This is quite unusual in other dynamical contexts such as ergodic theory.

The first rather striking result of this type in topological dynamics was proved by Kechris and Rosendal in \cite{Kechris2007a}. They established the existence of a comeager conjugacy class in $\Xi(\Z)$  (see also \cite{Akin2008} for a more concrete construction). This was generalized to the finitely generated free groups $\bF_d$ by Kwiatkowska~\cite{Kwiatkowska2012}. Doucha~\cite{Doucha2022p} further extended Kwiatkowska's result to free products of finite and cyclic groups. In the opposite direction, Hochman~\cite{Hochman2012} showed that there is no comeager conjugacy class in $\Xi(\Z^d)$ for $d \geq 2$ and this result was extended in \cite{Doucha2022p} to finitely generated nilpotent groups which are not virtually cyclic.

From a dynamical perspective, the systems in the comeager conjugacy classes in the cases discussed above are rather degenerate: for example, they are never topologically transitive. In order to find more interesting generic behavior, it is therefore natural to look for generic conjugacy classes in the spaces $\Xi_\Min(G)$ and $\Xi_\Tr(G)$ of minimal and topologically transitive actions, respectively. Here, the only results in the literature, due to Hochman~\cite{Hochman2008}, are for $G = \Z$: there is a generic conjugacy class in $\Xi_\Min(\Z)$, which is isomorphic to the universal odometer and $\Xi_\Min(\Z)$ is dense in $\Xi_\Tr(\Z)$. The tools used to prove these results are specific to $\Z$ and do not generalize to other groups.

It turns out that for studying the existence of a comeager conjugacy class in this and related situations, the most general and flexible tools come from logic. Fraïssé theory is a branch of model theory that studies generic structures and variations of these techniques have already been used by Kechris and Rosendal~\cite{Kechris2007a}. In its most basic form, Fraïssé's theorem and its extension proved by Ivanov~\cite{Ivanov1999}, gives a necessary and sufficient condition for the existence of a comeager isomorphism class, based on the family of allowed finitely generated substructures, when this family is countable: it has to be hereditary and satisfy the joint embedding and weak amalgamation properties. Group actions on the Cantor space fall naturally in the Fraïssé framework if one uses Stone duality to represent them as actions on a Boolean algebra (which is a countable structure). Then finitely generated substructures correspond to finitely generated Boolean $G$-algebras (or, dually, \df{subshifts}), which are the main objects of study of symbolic dynamics. So, for example, if one wants to study the space $\Xi(G)$, one considers the collection of all subshifts, for $\Xi_\Min(G)$, the collection of minimal subshifts, etc. The correspondence between the topology on the space of subshifts and the one on $\Xi(G)$ was already observed by Hochman in \cite{Hochman2008}.

The classes of all subshifts and minimal subshifts are hereditary and satisfy joint embedding and (full) amalgamation but, crucially, fail to be countable, which prevents the application of classical Fraïssé theory.
They do, however, come with a Polish topology, which can be viewed either as the Stone topology on the space of \df{quantifier-free types} or, more traditionally, as the Vietoris topology on the space of subshifts.
This is reminiscent of the situation in model theory, where there is also a notion of genericity coming from the omitting types theorem: a countable model is generic iff it is \df{atomic}, i.e., it realizes only isolated types, and such a model exists iff the isolated types are dense.
However, this theory is based on types with quantifiers and a different topology.
The main novelty of our abstract approach is to combine the two and develop a theory for \df{topological Fraïssé classes} of quantifier-free types.
Isolated types are replaced by the \df{projectively isolated} ones (see \autoref{df:proj-isolated}), a notion which generalizes the projectively isolated subshifts from \cite{Doucha2022p}.
With this in hand, we prove a general model-theoretic criterion: a topological Fraïssé class of structures admits a comeager isomorphism class iff projectively isolated types are dense (see \autoref{th:proj-atomic}). Moreover, we show that when a comeager conjugacy class exists, the corresponding structure can be represented as the usual Fraïssé limit of the projectively isolated quantifier-free types (note that these always form a countable class).
It is also worth noting that we do not assume compactness (because for our main application, the space $\Xi_\Min(G)$ is not compact), and this leads to a more delicate proof and makes apparent a surprising topological condition that is not present in previous work (condition \autoref{i:tF:cond-pi} in \autoref{df:top-Fraisse}).

The choice to put these results in an appendix does not reflect their relative importance but rather the fact that they require more model-theoretic background.
Our goal is to make the paper more accessible to readers interested in the dynamical content and willing to take some model theory on faith.
While it would be possible to translate the proofs into a more familiar dynamical language, they cannot be substantially simplified, and we believe that the benefit of the general applicability of the criterion outweighs the cost of abstraction. Another advantage of the abstract approach is that it is not sensitive to the parametrization space and can be adapted for example to the space of ``generalized subshifts'' used by Hochman in \cite{Hochman2008}.

The model-theoretic results apply in particular to the space $\Xi_\Min(G)$, but also to $\Xi(G)$ (recovering the result of \cite{Doucha2022p}), to the space of marked groups (recovering a result of \cite{Goldbring2023}; see \autoref{sec:appendix-groups}), and to the space of $G$-sets (recovering a result of \cite{Glasner2016}; see \autoref{sec:appendix-g-sets}).

Applying the criterion to $\Xi_\Min(G)$, we obtain the following (see \autoref{c:min-actions-comeager-ex}).
\begin{theorem}
  \label{intro:th:min-actions-comeager-ex}
  Let $G$ be a countable, infinite group. Then the following are equivalent:
  \begin{enumerate}
  \item There is a comeager conjugacy class in $\Xi_\Min(G)$.
  \item Minimal subshifts which are projectively isolated in $\cS_\Min(A^G)$ are dense in $\cS_\Min(A^G)$ for every $A$.
  \end{enumerate}
\end{theorem}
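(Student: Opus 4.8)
The plan is to derive the statement from the general criterion for the existence of a comeager isomorphism class developed in \autoref{sec:append-g_delta-isom}, applied to the class of minimal subshifts over finite alphabets. The first step is to make the passage between the dynamical and the model-theoretic pictures precise. By Stone duality, an action of $G$ on $\Omega$ by homeomorphisms is the same datum as a countable atomless Boolean $G$-algebra, whose finitely generated Boolean $G$-subalgebras correspond dually to the factors of the system onto subshifts over finite alphabets; such a system is minimal exactly when all of these factors are minimal subshifts. Moreover, the space of quantifier-free $n$-types realized in minimal systems is naturally homeomorphic to the Polish space $\cS_\Min(A^G)$ of minimal subshifts of $A^G$ with the Vietoris topology (as $A$ ranges over finite alphabets), and under this identification the conjugation action of $\Homeo(\Omega)$ on $\Xi_\Min(G)$ matches the isomorphism action on the model-theoretic side. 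Since $G$ is infinite, $\Xi_\Min(G)$ is a nonempty Polish space; it carries a dense conjugacy class (which follows from the joint embedding property established below), so the topological zero--one law is in force and ``comeager conjugacy class in $\Xi_\Min(G)$'' is synonymous with ``comeager isomorphism class in the class of minimal subshifts''.

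The second step is to check that the class $\mathcal{K}$ of minimal subshifts, with morphisms the embeddings of the associated Boolean $G$-algebras (equivalently, the factor maps read in the reverse direction), is a topological Fraïssé class in the sense of \autoref{df:top-Fraisse}. Hereditarity is immediate, since a factor of a minimal system is minimal. Joint embedding and (strong) amalgamation also hold: given minimal subshifts with a common factor, one forms the fibered product of the underlying subshifts and passes to a minimal subsystem, which still maps onto each of the given subshifts because any subsystem surjecting onto a minimal system is everything. What remains are the topological conditions on the spaces of quantifier-free types imposed by \autoref{df:top-Fraisse}, and this is where I expect the main obstacle: unlike the space $\cS(A^G)$ of \emph{all} subshifts, which is compact, the space $\cS_\Min(A^G)$ is only Polish. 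This lack of compactness is exactly what makes condition \autoref{i:tF:cond-pi} of \autoref{df:top-Fraisse} nonvacuous, and verifying it carries the genuine dynamical content of the argument: one must control how a minimal subshift, its factors, and the operations relating them behave in the Vietoris topology, even though minimal subshifts can escape to infinity inside the ambient compact space of all subshifts. The other hypotheses are routine once the dictionary of the first step is in place.

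Finally, with $\mathcal{K}$ established as a topological Fraïssé class, the criterion of \autoref{sec:append-g_delta-isom} applies directly: it gives that a comeager isomorphism class in $\mathcal{K}$ exists if and only if the projectively isolated quantifier-free types (in the sense of \autoref{df:proj-isolated}) are dense in every type space, i.e. if and only if the projectively isolated minimal subshifts are dense in $\cS_\Min(A^G)$ for every finite $A$. Transporting this equivalence back through the identification of the first step yields precisely the statement, and, as a by-product, identifies the comeager conjugacy class --- when it exists --- as the Fraïssé limit of the projectively isolated minimal subshifts.
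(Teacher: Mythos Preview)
Your plan is correct and follows the paper's approach exactly: translate via Stone duality to the class $\cF_\Min(G)$ of minimal subshifts, verify this is a topological Fra\"iss\'e class in the sense of \autoref{df:top-Fraisse}, check that $\Xi_\Min(G)$ is an admissible coding, and apply \autoref{c:proj-isolated-equivalence}.

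Two points where your outline stops short of the paper's argument. First, you correctly flag condition~\autoref{i:tF:cond-pi} of \autoref{df:top-Fraisse} as the real content but do not indicate how to verify it; the paper's argument (in \autoref{p:minimal-subshift-Fraisse}) is clean: using the basis of sets $\cS_\Min(Z)$ with $Z$ an SFT (\autoref{c:SFT-basis-minimal}), one shows directly that for the alphabet projection $\pi$ one has $\pi[\cS_\Min(Z)] = \cS_\Min(\pi(Z))$, which is \emph{closed}, so images of basic open sets are closed and hence images of open sets are $F_\sigma$. Second, you do not mention the verification that $\Xi_\Min(G)$ is an \emph{admissible coding} in the sense of \autoref{df:admissible-coding}; this is a separate hypothesis needed to invoke \autoref{c:proj-isolated-equivalence}, and its proof (again in \autoref{p:minimal-subshift-Fraisse}) uses the infiniteness of $G$ to realize finite minimal subshifts as factors of minimal Cantor actions. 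Your remark that amalgamation works ``because any subsystem surjecting onto a minimal system is everything'' is garbled---what you need is that the projection of a minimal subsystem of $Y_1 \times_X Y_2$ to each $Y_i$ is a nonempty closed invariant subset of the minimal $Y_i$, hence all of $Y_i$.
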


\subsection*{Minimal actions of the free group}
\label{sec:intro-new-results-free}

A sufficient condition for a minimal subshift to be projectively isolated in $\cS_\Min(A^G)$ is for it to be \df{sofic}, i.e., a factor of a subshift of finite type (or SFT, for short); see \autoref{sec:preliminaries} for the precise definitions. We do not know whether this condition is also necessary (see \autoref{q:proj-isolated-sofic}). The first of our main results for free groups is the following (see \autoref{th:minimal-sofic-dense} and \autoref{c:min-Z}; the case $G = \Z$ is  due to \cite{Bezuglyi2006} and \cite{Hochman2008}).

\begin{theorem}
  \label{intro:th:minimal-sofic-dense}
  Let $G$ be a finitely generated free group and let $A$ be a finite alphabet. Then the set of sofic minimal subshifts is dense in $\cS_\Min(A^G)$.
\end{theorem}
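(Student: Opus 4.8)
The plan is to reduce the statement to a finite‑combinatorial core and then construct the approximant as a factor of a self‑simulating subshift of finite type. For the reduction, recall that the sets $U_R(X)=\{Y\in\cS_\Min(A^G):\cL_{B_R}(Y)=\cL_{B_R}(X)\}$, where $B_R$ ranges over the balls of the standard Cayley graph of $G$ and $\cL_{B_R}$ denotes the set of $B_R$-patterns, form a neighbourhood basis of $X$ in $\cS_\Min(A^G)$. So, given a minimal $X$ and $R$, I must produce a sofic minimal $Y$ with $\cL_{B_R}(Y)=\cL_{B_R}(X)$. If $X$ is finite it is already an SFT, so assume $X$ infinite and use minimality to fix $N\geq R$ such that every $B_N$-pattern of $X$ contains a copy of every $B_R$-pattern of $X$. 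Writing $\mathcal{P}$ for the finite set of $B_N$-patterns of $X$, it then suffices to build a sofic minimal $Y$ with $\cL_{B_N}(Y)\neq\emptyset$ and $\cL_{B_N}(Y)\subseteq\mathcal{P}$: restricting to $B_R$ gives $\cL_{B_R}(Y)\subseteq\cL_{B_R}(X)$, while any element of $\cL_{B_N}(Y)$ lies in $\mathcal{P}$ and hence witnesses every $B_R$-pattern of $X$, so equality holds.

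\emph{Why the one‑dimensional argument does not transfer.} For $G=\Z$ one takes $Y$ to be a periodic, finite‑orbit subshift obtained by ``closing up'' a long enough word of $X$. This has no direct analogue over the non‑amenable group $\bF_d$: there exist minimal subshifts of $\bF_d$ with no invariant probability measure, and such subshifts are not Vietoris limits of subshifts that do carry one (weak$^*$ limits of invariant measures along a convergent sequence of subshifts are invariant measures of the limit), in particular not of finite‑orbit subshifts. Thus the approximants must be genuinely infinite, and soficity — being a \emph{factor} of an SFT, rather than an SFT — is used in an essential way.

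\emph{Construction.} I would produce $Y=\pi(W)$ for an SFT $W$ over $G$ and a sliding‑block code $\pi$, built as follows. A vertex of the $2d$‑regular tree carries, besides a combinatorial layer, a symbol of $A$ and a $B_N$-pattern of $X$, subject to the finite‑type constraints that the pattern attached to a vertex displays, at its centre, the $A$-symbol of that vertex, and that the patterns attached to adjacent vertices be \emph{compatible} — i.e.\ occur at the corresponding adjacent positions in some point of $X$ (a relation satisfied, for example, by the $B_N$-patterns read along a fixed $x_0\in X$). Then compatibility propagates along paths, so the $A$-symbols seen on any $B_N$-ball of a point of $W$ reconstruct the $B_N$-pattern attached to its centre; letting $\pi$ read off the $A$-symbol, this forces $\cL_{B_N}(\pi(W))\subseteq\mathcal{P}$, and $\cL_{B_N}(\pi(W))\neq\emptyset$ as soon as $W\neq\emptyset$. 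The combinatorial layer is a hierarchical, self‑simulating structure on the tree (of the kind used to build aperiodic SFTs), whose purpose is to force every configuration of $W$ to project under $\pi$ to a uniformly recurrent point with one and the same finite language $\mathcal{R}$. Then for each $w\in W$ the orbit closure $\overline{G\pi(w)}$ is the unique minimal subshift with language $\mathcal{R}$, so $Y=\pi(W)$ equals that subshift; in particular $Y$ is minimal and sofic. Note that $W$ itself need not be minimal — only its factor $Y$ is — which is exactly what makes the construction possible.

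\emph{Main obstacle.} The crux is the construction of $W$, and above all its non‑emptiness: one must make the hierarchical self‑simulation coexist with a globally consistent $B_N$-labelling. Concretely, the ``legal transition graph'' of $X$ — vertices the $B_N$-patterns of $X$, edges the compatible pairs — is strongly connected with bounded diameter \emph{precisely because $X$ is minimal}, and one has to convert this bounded‑recurrence input into labellings of the hierarchical blocks compatible with the substitution rule defining the self‑simulation; this step replaces the pigeonhole / closing‑up argument available for $\Z$, and verifying that it can always be carried out (equivalently, that the resulting SFT is non‑empty) is where the real work lies. Once a single consistent seed is produced at the bottom scale, the self‑simulating structure propagates it to all scales, yielding a non‑empty $W$ and hence a sofic minimal $Y\in U_R(X)$; as $R$ was arbitrary, sofic minimal subshifts are dense in $\cS_\Min(A^G)$.
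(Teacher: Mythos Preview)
Your reduction to the core combinatorial problem is sound and matches the paper's in spirit: the paper also reduces to ``produce a sofic minimal subshift inside an arbitrary SFT containing a minimal subshift'' (via the basis $\{\cS_\Min(Z):Z\text{ SFT}\}$ rather than your $U_R(X)$, but these are equivalent), and your use of a large $N$ so that a single $B_N$-pattern already forces all $B_R$-patterns is a nice shortcut.

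The gap is exactly where you locate it yourself: you do not construct the self-simulating SFT $W$, and you acknowledge that non-emptiness and compatibility of the hierarchical layer with the pattern labelling is ``where the real work lies.'' As written this is a research plan, not a proof. Hierarchical self-simulating constructions on trees exist, but coupling one to an arbitrary minimal pattern graph so that the factor is minimal with a prescribed language is not routine and would itself be a theorem. One small caution even at the level of the sketch: the transition graph on $B_N$-patterns is indeed connected, but the invariant the paper actually uses (Definition~4.4 and Example~4.7) is connectivity by \emph{reduced} paths between \emph{edges}, which is strictly stronger than vertex-level strong connectivity.

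The paper sidesteps all of this with a much lighter device: an \emph{edge selector}, a deterministic rule $T$ on the Rauzy graph $\cG$ of $X$ at scale $N$ that specifies, for each incoming edge and each outgoing label, which edge to take next. This determines a single point $x_T\in X(\cG)$, and the key observation (Proposition~5.2) is that $x_T$ is the isolated transitive point of an explicit SFT over the alphabet $E(\cG)\cup\{*\}$ (one simply records the last edge traversed), so $X(T)=\overline{G\cdot x_T}$ is projectively isolated, hence sofic. Minimality of $X(T)$ is obtained by choosing $T$ to be \emph{recurrent for a simple reduced cycle} $C$ in $\cG$: $T$ follows $C$ and $\bar C$, leaves the cycle symmetrically, and from every edge eventually returns to $C$ (Definition~5.3). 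The existence of such a cycle and such a selector uses only minimality of the Rauzy graph (Lemmas~5.6 and~5.7), and minimality of $X(T)$ is then a direct syndeticity calculation (Proposition~5.4). No hierarchy, no self-simulation --- just a finite deterministic walk on a finite graph.
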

While the existence of a comeager conjugacy class in $\Xi_\Min(\Z)$ is established more easily than that in $\Xi(\Z)$, for non-abelian free groups, the situation is reversed.
For $\Z$, the sofic minimal subshifts are the finite subshifts, and it is relatively straightforward to show that they are dense in $\cS_\Min(A^\Z)$.
This fails dramatically for non-abelian free groups.
For example, the boundary action $\bF_d \actson \partial \bF_d$ can be realized as an SFT, which must be isolated because it is minimal.
We construct somewhat similar SFTs that factor inside any open subset of $\cS_\Min(A^{\bF_d})$.

Combining \autoref{intro:th:minimal-sofic-dense} with \autoref{intro:th:min-actions-comeager-ex}, we obtain the following.
\begin{cor}
  \label{intro:c:minimal-free-comeager}
  The space $\Xi_\Min(\bF_d)$, for $d\geq 1$, has a comeager conjugacy class, elements of which are isomorphic to the Fraïssé limit of the sofic minimal subshifts.
\end{cor}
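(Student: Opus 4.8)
The plan is to combine the two free-group results of the excerpt with the general Fraïssé machinery from the appendix, splitting the statement into an existence part and an identification part. For existence, I would first invoke the stated sufficient condition that every sofic minimal subshift is projectively isolated in $\cS_\Min(A^G)$. By \autoref{intro:th:minimal-sofic-dense}, the sofic minimal subshifts are dense in $\cS_\Min(A^G)$ for $G = \bF_d$ and every finite alphabet $A$; since each of them is projectively isolated, the (possibly larger) family of projectively isolated minimal subshifts is also dense. This is exactly condition (2) of \autoref{intro:th:min-actions-comeager-ex}, so the equivalence stated there yields condition (1): the space $\Xi_\Min(\bF_d)$ has a comeager conjugacy class.

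The remaining and more delicate task is to identify this class. Here I would appeal to the appendix, which shows that whenever a comeager conjugacy class exists it can be represented as the usual Fraïssé limit of the projectively isolated quantifier-free types. Thus the comeager class of $\Xi_\Min(\bF_d)$ is the Fraïssé limit of the projectively isolated minimal subshifts, and it remains to show that this limit coincides with the Fraïssé limit of the \emph{sofic} minimal subshifts. The feature to exploit is that the sofic minimal subshifts form a countable subfamily (each is specified by finite combinatorial data, being a factor of an SFT) which, by \autoref{intro:th:minimal-sofic-dense}, is already dense in $\cS_\Min(A^G)$.

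To carry out the identification I would construct a generic sequence for the comeager class consisting entirely of sofic minimal subshifts. Genericity is a back-and-forth/extension condition phrased against the projectively isolated types; whenever the construction calls for an extension or amalgam realizing such a type, the density of the sofic subshifts lets me replace the required target by a sofic one that is arbitrarily close in the type topology, so that the relevant extension condition is still satisfied in the limit. Using the amalgamation available for minimal subshifts together with this approximation step at each stage, I obtain a chain of sofic minimal subshifts whose limit is generic and therefore lies in the comeager conjugacy class. By uniqueness of the comeager class and of the Fraïssé limit, this limit is precisely the Fraïssé limit of the sofic minimal subshifts, which gives the identification.

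The main obstacle is this identification rather than the existence. Because it is open whether every projectively isolated minimal subshift is sofic (\autoref{q:proj-isolated-sofic}), one cannot simply equate the two generating families; instead one must use \autoref{intro:th:minimal-sofic-dense} to argue that the a priori smaller sofic class already suffices to generate the generic structure. The care required is to check that the amalgamation and extension steps of the Fraïssé construction can be carried out, or approximated, within the sofic class without leaving it, so that the resulting object is simultaneously built from sofic pieces and generic. This is exactly the place where the density theorem does the essential work, bridging the gap between "sofic" and "projectively isolated."
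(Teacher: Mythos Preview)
Your existence argument is correct and matches the paper. For the identification, however, you miss a much simpler observation that the paper exploits (\autoref{rem:sofic-eq-proj-isolated-frgr}): once sofic minimal subshifts are dense, \emph{every} minimal subshift projectively isolated in $\cS_\Min(A^{\bF_d})$ is automatically sofic. Indeed, if $X$ is projectively isolated there exist a factor map $\Phi\colon B^G\to A^G$ and a non-empty open $\cU\subseteq\cS_\Min(B^G)$ with $\Phi[Y]=X$ for all $Y\in\cU$; by \autoref{intro:th:minimal-sofic-dense} the set $\cU$ contains a sofic subshift, and a factor of a sofic subshift is sofic. Thus the two generating families literally coincide, and the Fra\"iss\'e limit of the projectively isolated types is the Fra\"iss\'e limit of the sofic minimal subshifts with no further work. \autoref{q:proj-isolated-sofic} remains open only for groups where such density is unavailable.

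Your proposed approximation route, by contrast, has a genuine gap. At each stage you hold a sofic type $p\in\tS_x(\cF_\Min)$ and must produce a sofic $q\in\tS_{xy}(\cF_\Min)$ with $\pi_x(q)=p$ realizing a prescribed open condition. Density of sofic types in $\tS_{xy}(\cF_\Min)$ does not yield this, because the constraint $\pi_x(q)=p$ is closed, not open: a sofic $q'$ close to a valid $q$ need not project exactly to $p$, so you are no longer building a chain. You would also need amalgamation \emph{within} the sofic class even to speak of its Fra\"iss\'e limit, and you have not established this independently; the paper obtains it only after identifying the sofic and projectively isolated classes (\autoref{c:sofic-minimal-amalgamation} via \autoref{c:proj-isolated-Fraisse}).
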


Our main tool for constructing sofic minimal subshifts is a certain family of graphs, inspired by the Rauzy graphs used for studying SFTs for $\Z$, where the edges are labeled by the generators of the free group, and which we use for describing open neighborhoods in the space of subshifts. We call these graphs again \df{Rauzy graphs}. We start by characterizing the Rauzy graphs whose associated neighborhood contains a minimal subshift (see \autoref{th:minimal-Rauzy}). This characterization is not obvious and several natural attempts fail (see \autoref{ex:examples-Rauzy-graphs}).
Then for each of these graphs, we construct a sofic minimal subshift in the corresponding neighborhood. On the way, we also recover Kwiatkowska's result on the existence of a generic class in $\Xi(\bF_d)$ (see \autoref{c:comeager-conj-class-free-group}).

We remark that these techniques cannot work for $\Z^d$ when $d\geq 2$: it follows from the work of Hochman~\cite{Hochman2012} that sofic minimal subshifts are not dense in this case and the following problem remains open.
\begin{question}
  \label{intro:q:intro:min-Zd}
  Does there exist a comeager conjugacy class in $\Xi_\Min(\Z^d)$ for $d \geq 2$?
\end{question}

Next we turn to finding the correct generalization of Hochman's result that the generic minimal $\Z$-action is isomorphic to the universal odometer. We recall that the \df{universal odometer} (or the \df{universal profinite action} of $\Z$) is the action of $\Z$ on its profinite completion by translation (the \df{profinite completion} of a group is the compact group obtained by taking the inverse limit of all of its finite quotients).
There is a natural generalization of this action to the free group (namely, the profinite completion of $\bF_d$) but there is an obvious obstruction to its being generic: profinite actions are \df{pmp} (i.e., preserve a Borel probability measure), and the subset of minimal pmp actions is closed in $\Xi_\Min(G)$ and it is proper if $G$ is not amenable. It turns out that this is the only obstruction (see \autoref{th:min-pmp-free-group}).

\begin{theorem}
  \label{intro:th:min-pmp-free-group}
  The following hold:
  \begin{itemize}
  \item Finite $\bF_d$-subshifts are dense in the space of all minimal, pmp $\bF_d$-subshifts.

  \item The space of minimal, pmp actions of $\bF_d$ has a comeager conjugacy class whose elements are isomorphic to the universal profinite action of $\bF_d$.
  \end{itemize}
\end{theorem}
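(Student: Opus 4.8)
The plan is to derive the second assertion from the first by the general machinery, so that all the content is concentrated in the first. For the deduction: a periodic $\bF_d$-subshift is finite, hence is a subshift of finite type and is uniquely ergodic, and any minimal subshift contained in a finite one must equal it; consequently the pair consisting of a periodic minimal subshift and its (unique) invariant measure is an \emph{isolated} point of the space of minimal pmp subshifts of $A^{\bF_d}$, in particular it is projectively isolated. Granting the first bullet (periodic subshifts are dense there for every finite alphabet $A$), it follows that the projectively isolated minimal pmp subshifts are dense, whence, by the pmp version of the criterion of \autoref{intro:th:min-actions-comeager-ex} (an instance of the general criterion of \autoref{sec:append-g_delta-isom}), the space of minimal pmp actions of $\bF_d$ has a comeager conjugacy class, equal to the Fra\"iss\'e limit of the projectively isolated minimal pmp subshifts. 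Since the periodic ones are dense in the latter, this limit is the Fra\"iss\'e limit of the finite $\bF_d$-sets with their uniform measures, i.e. the inverse limit $\varprojlim_H \bF_d/H$ over the finite-index subgroups, which is exactly the action of $\bF_d$ on its profinite completion $\widehat{\bF_d}$ with normalized Haar measure: the universal profinite action. (This identification parallels the one for $G$-sets in \autoref{sec:appendix-g-sets}: the finitely generated sub-$\bF_d$-algebras-with-measure of the clopen algebra of $\widehat{\bF_d}$ are precisely the finite ones, and $\widehat{\bF_d}$ is ultrahomogeneous for them.)

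For the first bullet, fix a minimal pmp subshift $(X,\mu)$, a radius $n$, and $\epsilon>0$. A basic neighborhood prescribes the set $L_{B_n}(X)$ of patterns occurring on the ball $B_n=B_n(e)$ together with the $\mu$-measures of the corresponding cylinders up to $\epsilon$, so I must produce a finite-index subgroup $H\le\bF_d$ and a coloring $c\colon\bF_d/H\to A$ whose periodic subshift $Z$ has $L_{B_n}(Z)=L_{B_n}(X)$ and uniform measure $\epsilon$-close to $\mu$ on $B_n$-cylinders. First reduction: by minimality of $X$ there is $R$ such that every pattern of $L_{B_n}(X)$ occurs within distance $R$ of every vertex of every point of $X$, so taking $m:=R+n+1$, \emph{every} pattern of $L_{B_m}(X)$ already contains every pattern of $L_{B_n}(X)$; hence it suffices to arrange that $Z$ is nonempty with $L_{B_m}(Z)\subseteq L_{B_m}(X)$ (which forces $L_{B_n}(Z)=L_{B_n}(X)$ automatically) together with the measure estimate. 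Second reduction: pass to the $B_m$-block presentation — take as alphabet $B:=L_{B_m}(X)$ and as constraints the relations $R_i\subseteq B\times B$ of ``$B_m$-patterns $u,u'$ overlap consistently across the $i$-th generator''; then, with $\lambda$ the pushforward of $\mu$, it suffices to find a finite quotient $Q$ of $\bF_d$ carrying a $B$-coloring that obeys the $R_i$ and whose empirical color-distribution is close to $\lambda$. Because $\bF_d$ is free, $\Cay(\bF_d/H)$ is locally a $2d$-regular tree once $H$ is deep, so such a colored quotient unfolds to a periodic subshift $Z\subseteq A^{\bF_d}$ with the desired properties (when the choices are made generically, $Z$ is moreover minimal).

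The core step is a transportation argument, and it is exactly here that freeness is decisive: a finite quotient of $\bF_d$ is simply a finite set $Q$ with an arbitrary transitive $d$-tuple of permutations and \emph{no further relation imposed} — the commutation that $\Z^d$ would require is what obstructs the analogue there. Write $\lambda(u)$ for the mass of $u\in B$ under $\lambda$ and $\lambda_i(u,u')$ for the probability that a vertex is colored $u$ and its $i$-successor is colored $u'$; invariance of $\lambda$ yields $\sum_{u'}\lambda_i(u,u')=\lambda(u)$ and $\sum_{u}\lambda_i(u,u')=\lambda(u')$, and $\lambda_i$ is supported on $R_i$. Fix a large $N$ and nonnegative integers $k_u\approx N\lambda(u)$, $k^i_{u,u'}\approx N\lambda_i(u,u')$ with exact marginals $\sum_{u'}k^i_{u,u'}=k_u$ and $\sum_{u}k^i_{u,u'}=k_{u'}$ (possible since the transportation polytope with these marginals has integral vertices), partition a set $Q$ of size $\sum_u k_u$ into color classes $Q_u$ of size $k_u$, and for each generator $i$ choose a permutation $\sigma_i$ of $Q$ mapping exactly $k^i_{u,u'}$ elements of $Q_u$ into $Q_{u'}$ — an elementary choice of disjoint bijections, the marginal conditions ensuring that the classes partition $Q$ both as sources and as targets. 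Because $k^i_{u,u'}=0$ unless $(u,u')\in R_i$, the coloring $c(q):=u$ for $q\in Q_u$ obeys the constraints; over tree-like balls it glues to an honest $A$-coloring whose $B_m$-pattern at every point is the value $c(q)\in L_{B_m}(X)$, and the frequency of each $B_m$-pattern $u$ is $k_u/|Q|\approx\lambda(u)$; restricting to $B_n$ gives $\nu_Z\approx_\epsilon\mu$ on $B_n$-cylinders.

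The main obstacle is exactly the clause ``over tree-like balls'': one must take $\Cay(\bF_d/H)$ with girth $>2m$ (so the $A$-coloring is well defined and monodromy around short loops cannot create an illegal $B_m$-pattern) while still realizing the prescribed class sizes $k_u$ and transition counts $k^i_{u,u'}$. A fixed large-girth quotient need not admit any valid coloring at all, so the two requirements cannot be separated, and the $\sigma_i$ have to be chosen large-girth-and-correct-type at once. I expect to handle this by a randomized construction: a first-moment estimate bounds, independently of $N$, the expected number of vertices lying on loops of length $\le 2m$ in a random $\sigma_i$ of the prescribed type, so for $N$ large one excises an asymptotically negligible set of vertices around all short loops and reconnects the loose ends by a color-respecting matching (which exists because $\lambda_i$, being an invariant coupling, has ``recurrent'' support on $R_i$), changing the statistics by $o(1)$; a further randomization yields transitivity of $\langle\sigma_1,\dots,\sigma_d\rangle$ and triviality of the symmetry of the unfolded coloring, so that $Z$ is a genuine minimal periodic subshift with the required statistics. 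Carrying out this surgery cleanly is the delicate part, and I expect the most transparent route is to run the whole construction through the ``Rauzy graph'' formalism already set up for the non-pmp density theorem \autoref{intro:th:minimal-sofic-dense}, with $\mu$ recorded as an additional weighting of the Rauzy graph. Finally, I emphasize that the reliance on freeness is genuine: by \cite{Hochman2012} sofic — hence a fortiori periodic — minimal subshifts are not dense for $\Z^d$ with $d\ge2$, so this whole strategy cannot extend there.
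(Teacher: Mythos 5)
There are two genuine gaps. The first is in the core combinatorial step, which you yourself flag as unresolved: your block-coding reduction forces you to produce a finite quotient whose Schreier graph is locally tree-like (girth $>2m$), since only then does single-edge consistency of the $B_m$-block labels unfold to an honest $A$-coloring; the randomized ``excise short loops and rewire'' surgery, the transitivity of $\langle\sigma_1,\dots,\sigma_d\rangle$, and the minimality of the resulting $Z$ are all left as expectations rather than proofs. The paper's proof of \autoref{th:min-pmp-free-group} avoids this problem entirely by never asking the finite model to reproduce $m$-ball patterns: by \autoref{p:nbhd-Rauzy-Xi} and \autoref{th:minimal-Rauzy}, a basic neighborhood of a minimal action is the set $\cN(\cG)$ of actions whose Rauzy graph with respect to a fixed clopen partition is \emph{exactly} a given minimal graph $\cG$ --- a condition only on single-generator adjacencies of abstract partition atoms, with no consistency requirement around cycles. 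One then only needs a finite transitive $G$-set together with a surjective graph morphism onto the measured Rauzy graph realizing prescribed integer weights; this is your transportation argument (\autoref{l:integer-measured-Rauzy}, \autoref{l:finite-actions-dense}) plus an explicit orbit-merging swap for transitivity (\autoref{l:finite-tr-actions-dense}), and no girth condition or probabilistic surgery is needed. Note also that the paper deduces the subshift-density bullet from density of $\Xi_\Pertr(\bF_d)$ in $\Xi_\Mpmp(\bF_d)$, the opposite direction from yours; your stronger ``measure-close'' formulation is fine (cf. \autoref{rem:mpmp-convergence-fixed-measure}) but is not what the Vietoris topology on $\cS_\Mpmp(A^G)$ requires.

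The second gap is the deduction of the comeager class from the first bullet via a ``pmp version'' of \autoref{c:min-actions-comeager-ex}. That criterion is not free of charge: to invoke \autoref{c:proj-isolated-equivalence} for the class of minimal pmp subshifts you would have to verify that it is a topological Fraïssé class with an admissible coding, and in particular amalgamation: given minimal pmp $Y_1,Y_2$ over a common factor $X$, one needs a minimal subflow of $Y_1\times_X Y_2$ that still carries an invariant measure, which for a non-amenable group is not clear (invariant measures on $Y_1$ and $Y_2$ need not have a common pushforward to $X$, and an invariant measure on the fiber product need not exist). The paper sidesteps this by applying the machinery only to periodic (transitive) subshifts, where amalgamation is trivial (\autoref{p:periodic-subshifts}), obtaining via \autoref{c:profinite-actions} that the profinite action's class is comeager in $\Xi_\Pertr(\bF_d)$, and then upgrading to $\Xi_\Mpmp(\bF_d)$ because $\Xi_\Pertr(\bF_d)$ is a dense $G_\delta$ there. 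If you want to keep your architecture, you should either prove the Fraïssé-class axioms for the minimal pmp class or reroute the second bullet through density of profinite actions as the paper does.
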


\subsection*{Non-finitely generated groups}

Our next results concern actions of groups that are not finitely generated, which exhibit a number of interesting dynamical properties. For example, the generic behavior of their dynamical systems is markedly different from the finitely generated case. A simple instance of this is that a generic element of $\Xi(G)$ is topologically transitive iff $G$ is not finitely generated (see \autoref{p:non-fg-gen-n-transitive}).
It is thus natural to ask whether the generic action in $\Xi(G)$ is minimal.
We have the following characterization for amenable groups (see \autoref{c:character-amen-non-fg}).

\begin{theorem}
  \label{intro:th:character-amen-non-fg}
  Let $G$ be a countably infinite amenable group. Then the following are equivalent:
  \begin{itemize}
  \item $G$ is locally finite;
  \item $\Xi_\Min(G)$ is comeager in $\Xi(G)$.
  \end{itemize}
\end{theorem}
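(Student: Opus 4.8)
The plan is to prove the equivalence by establishing two implications. The easier direction is that $G$ locally finite implies $\Xi_\Min(G)$ is comeager. Here I would argue that for a locally finite group, a generic action is actually \emph{profinite}: enumerate $G$ as an increasing union $\bigcup_n G_n$ of finite subgroups, and observe that the set of actions for which the orbit of every point is finite and, moreover, coincides with a $G_n$-orbit for the appropriate $n$ is comeager. More precisely, for each finite $G_n$ and each basic clopen partition, the set of actions in which the $G_n$-translates of the partition refine to the $G$-orbit structure is open dense (one can always perturb a homeomorphic action on a finite Boolean algebra to make a finitely generated subalgebra $G$-invariant, since $G$ acts through a finite quotient on it); intersecting over $n$ and over a countable basis of partitions gives a comeager set of profinite, hence minimal, actions. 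This uses the local finiteness essentially: every finitely generated subalgebra is moved by only finitely many group elements in an essential way.

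For the converse, I would prove the contrapositive: if $G$ is amenable, countably infinite, and \emph{not} locally finite, then $\Xi_\Min(G)$ is not comeager, i.e.\ $\Xi(G)\setminus\Xi_\Min(G)$ is non-meager, and since by the zero--one law (applicable because $\Xi(G)$ has a dense conjugacy class by \autoref{p:dense-conj-class-Xi}) this set is then comeager, it suffices to exhibit a single non-minimal action in the closure of every conjugacy class, or rather to show the non-minimal actions are dense. Since $G$ is not locally finite, it contains a finitely generated infinite subgroup $H$. The idea is that an infinite finitely generated amenable group admits, in the closure of any conjugacy class of $G$-actions, an action that is not minimal — for instance, one with a fixed point or more generally with a proper closed invariant set. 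Concretely, given any action $\alpha$ and any finite clopen partition, I would build a nearby action by taking a suitable "blow-up" or disjoint-sum-type construction that introduces a proper subsystem: because $H$ is infinite and finitely generated, the Boolean $G$-algebra generated by the partition can be extended $G$-equivariantly to one on which some point has an infinite $H$-orbit whose closure is a proper invariant subset, while still agreeing with $\alpha$ on the original partition. Amenability enters to guarantee that such extensions exist within the relevant space (e.g.\ via Følner-set arguments or via the existence of invariant measures on subshifts), and to rule out the minimality one can use that an infinite finitely generated group has subshifts — indeed periodic or profinite pieces of $H$ glued in — that are not minimal as $G$-subshifts.

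The main obstacle I anticipate is the converse direction, specifically producing non-minimal $G$-subshifts densely in $\cS(A^G)$ when $G$ is not locally finite: one must leverage an infinite finitely generated subgroup $H\le G$ together with amenability to embed a non-minimal pattern into an arbitrary open neighborhood in the space of subshifts. The delicate point is controlling the $G$-action (not just the $H$-action) on the induced structure so that the resulting $G$-subshift genuinely fails minimality — a naive induction from $H$ to $G$ may destroy the intended invariant subset. I expect the cleanest route is to use that $H$ being infinite and amenable has a minimal subshift that is \emph{not} the whole space together with a point not in it (e.g.\ via an almost periodic but non-periodic point, or simply two disjoint minimal $H$-subshifts sitting inside a larger $H$-subshift), co-induce this to $G$, and check that density in $\cS(A^G)$ is preserved; the amenability hypothesis is what makes the relevant subshifts of $H$ abundant enough (in the non-amenable case, paradoxicality would obstruct this, consistent with the theorem's scope). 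Finally I would invoke \autoref{intro:th:min-actions-comeager-ex}-style reasoning, or rather the direct translation between density of subshifts with a given property and comeagerness in $\Xi(G)$, to conclude.
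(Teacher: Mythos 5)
Both directions of your plan have genuine gaps.

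For the direction ``locally finite $\Rightarrow$ comeager'', the key claim that a generic action of a locally finite group is \emph{profinite} is false in general, and your justification (``one can always \dots make a finitely generated subalgebra $G$-invariant, since $G$ acts through a finite quotient on it'') presupposes that $G$ has nontrivial finite quotients. An infinite, simple, locally finite group (e.g.\ the finitary alternating group) has no proper finite-index subgroups, so its only profinite actions on $\Omega$ are trivial; yet the theorem asserts (and the paper proves) that its generic action is minimal. Also, ``profinite, hence minimal'' is not an implication: the trivial action is profinite and non-minimal, so even where profinite actions abound you would additionally need genericity of transitivity. The paper avoids all this by proving generic only a \emph{local} condition: for every partition $A$ and finite $F\sub G$ there is a finite subgroup $H\supseteq F$ and an $H$-invariant refinement $B$ of $A$ on which $H$ acts freely and transitively (\autoref{th:generic_behavior_locally_finite}); this refinement is not $G$-invariant and the generic action is not claimed to be equicontinuous, but the condition already forces minimality (and unique ergodicity), and its density is obtained by co-induction from finite subgroups together with \autoref{l:ext-finite-groups}.

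For the converse, your reduction is logically insufficient: since $\Xi_\Min(G)$ is $G_\delta$ and conjugation-invariant, ``not comeager'' is equivalent (via the zero--one law) to ``not dense'', so you must produce a non-empty \emph{open} set of $\Xi(G)$ containing no minimal action. Showing that non-minimal actions are dense, or that every conjugacy class has a non-minimal action in its closure, proves nothing: non-minimal actions are dense in $\Xi(G)$ for \emph{every} countably infinite group (take the product of a given action with the trivial action on $\Omega$ and apply \autoref{p:factor-closure-conj-class}), including locally finite groups, for which $\Xi_\Min(G)$ is comeager. Likewise, co-inducing a non-minimal $H$-subshift into a prescribed neighborhood can only yield density of non-minimality. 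The missing idea is an open obstruction to minimality that is non-empty exactly when $G$ is not locally finite: the paper uses the \emph{shrinking} condition --- the existence of a clopen set equidecomposable (by finitely many group translations of a clopen partition of it) with a proper subset of itself. This is an open condition on $\xi$; it is incompatible with minimality when $G$ is amenable, because a minimal amenable system carries an invariant measure giving every non-empty open set positive mass (\autoref{l:not-min-shrinking-amenable}); and it is realized by some action whenever $G$ contains an infinite finitely generated subgroup, via an infinite ray in its Cayley graph inside $\beta G$ (\autoref{p:generic_behavior_amenable_non_lf}). Your instinct that amenability enters through invariant measures is right, but without an open certificate of non-minimality of this kind the contrapositive cannot be completed.
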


For locally finite groups, we also obtain more detailed information about the generic properties of the action and its invariant measures (see \autoref{th:generic_behavior_locally_finite}).

The next theorem is the basis of our results concerning the existence of a generic conjugacy class in $\Xi(G)$ for non-finitely generated groups.
\begin{theorem}
  \label{intro:th:projectivly_isolated_iff_minimal_sofic}
Let $G$ be a countable group which is not finitely generated. Then a subshift is projectively isolated in the space of all subshifts iff it is minimal and sofic.
\end{theorem}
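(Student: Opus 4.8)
The plan is to prove the two implications separately. The backward direction is essentially contained in the earlier discussion: a sofic minimal subshift is projectively isolated in the space of \emph{all} subshifts of $A^G$ by the sufficient condition quoted before \autoref{intro:th:minimal-sofic-dense} (soficity implies projective isolation in $\cS_\Min$, and since minimality is itself a ``projective'' condition one upgrades this to projective isolation in $\cS(A^G)$). So the content is the forward direction: if $X$ is projectively isolated among all subshifts, then $X$ must be minimal and sofic. Here is where non-finite-generation of $G$ is used crucially.

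For \emph{minimality}, I would argue by contraposition. Suppose $X \subseteq A^G$ is not minimal, so it has a proper nonempty subsystem $Y \subsetneq X$. Because $G$ is not finitely generated, write $G = \bigcup_n G_n$ as an increasing union of proper subgroups (equivalently, finitely generated subgroups). The idea is that in any basic neighborhood of $X$ in $\cS(A^G)$ — which is determined by looking at the ``$W$-patterns'' appearing in $X$ for some finite window $W \subseteq G$, hence really only sees the action of a finitely generated subgroup $\langle W \rangle \leq G_n$ for $n$ large — one can modify $X$ outside $\langle W\rangle$ to produce subshifts that are ``strictly smaller'' in the specialization order, witnessing that $X$ is not even topologically isolated, let alone projectively isolated. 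Concretely: from the subsystem $Y$ and an element $x \in X \setminus Y$, build a new subshift $X'$ agreeing with $X$ on the chosen window but for which the ``type'' of $X$ is not realized; iterating along the $G_n$ produces a sequence converging to $X$ with no $X_i$ mapping back onto $X$, contradicting projective isolation. The non-finitely-generated hypothesis is exactly what gives the room to perform infinitely many such independent modifications.

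For \emph{soficity}, once we know $X$ is minimal, I would use \autoref{intro:th:min-actions-comeager-ex}-style reasoning together with the hypothesis. A projectively isolated point of $\cS(A^G)$ is in particular projectively isolated within the (closed, since $X$ turns out minimal) subspace of minimal subshifts, and the cleanest route is to show directly that projective isolation in $\cS(A^G)$ forces $X$ to be a factor of an SFT. The key point is that projective isolation means: every neighborhood of $X$ contains a neighborhood $V$ such that every subshift in $V$ factors onto $X$. Taking $V$ to be cut out by finitely many forbidden patterns over a window $W \subseteq G_n$, the SFT $Z$ defined by those same forbidden patterns (over $A^G$) lies in $V$, hence factors onto $X$; but one must ensure $Z$ can be taken so that this factor map is genuine, which again uses that the constraint involves only the finitely generated subgroup $\langle W\rangle$ and that outside it there is complete freedom. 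Assembling this gives a factor map from an SFT onto $X$, i.e., $X$ is sofic.

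The main obstacle I expect is the forward direction of \emph{soficity}: projective isolation is an abstract topological statement and extracting from it an \emph{explicit} SFT cover requires carefully understanding the basic open sets in $\cS(A^G)$ and checking that the ``obvious'' SFT candidate really does factor onto $X$ rather than merely lying in the neighborhood. In the finitely generated setting this can fail (hence \autoref{q:proj-isolated-sofic} is open), so the argument must genuinely exploit the decomposition $G = \bigcup_n G_n$ and the freedom to extend patterns off $\langle W \rangle$; pinning down that freedom precisely is the crux.
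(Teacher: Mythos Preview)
Your proposal misidentifies both where the difficulty lies and what projective isolation means, leading to genuine gaps.

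First, soficity is \emph{not} the hard part; it is a one-line consequence of the definition and holds for any group. Recall that $X \in \cS(A^G)$ is projectively isolated if there exist a finite alphabet $B$, a factor map $\Phi \colon B^G \to A^G$, and a non-empty open $\cU \subseteq \cS(B^G)$ such that $\Phi[Z] = X$ for every $Z \in \cU$. Since SFTs are dense in $\cS(B^G)$, pick an SFT $Y \in \cU$; then $\Phi[Y] = X$ and $X$ is sofic. Your worry about \autoref{q:proj-isolated-sofic} is misplaced: that question concerns projective isolation in $\cS_\Min$, a genuinely different notion, whereas here we work in the full space $\cS(A^G)$. Relatedly, your paraphrase ``every neighborhood of $X$ contains a neighborhood $V$ such that every subshift in $V$ factors onto $X$'' is not the definition; the witnessing neighborhood lives in $\cS(B^G)$, not in $\cS(A^G)$, and there is a fixed factor map $\Phi$.

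Second, and more seriously, your argument for minimality attacks the wrong target. You propose to produce subshifts $X' \in \cS(A^G)$ near $X$ with $X' \neq X$; at best this shows $X$ is not \emph{isolated}, which does not preclude projective isolation (a sofic minimal subshift is projectively isolated but typically not isolated). To contradict projective isolation you must work in the witnessing space: given $B$, $\Phi$, and an SFT $Y \subseteq B^G$ with defining window $F$ such that $\Phi[Z] = X$ whenever $Z_F = Y_F$, you must build $Z \subseteq Y$ with $Z_F = Y_F$ but $\Phi[Z] \subsetneq X$. The paper does exactly this: let $H = \langle F \rangle$, note $Y$ is co-induced from $Y|_H$, pick a pattern $p_0$ occurring in $X$ and a point $x_0 \in X$ missing $p_0$ (using non-minimality), lift $x_0$ to $y_0 \in Y$, and let $Z$ consist of those $y \in Y$ in which $\Phi$-preimages of $p_0$ occur in at most one left $H$-coset. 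One checks $Z_F = Y_F$ by gluing a pattern-realizing point on the $H$-coset of $1_G$ with $y_0$ on all other cosets, while $\Phi[Z]$ omits the point obtained by copying a $p_0$-realizing $x_1 \in X|_H$ on every coset. The non-finite-generation of $G$ enters precisely to ensure $H \lneq G$, so there are at least two cosets to play with. Your sketch gestures at coset-by-coset freedom but never engages with the witnessing data $(B, \Phi, Y)$, which is where the argument has to live.
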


From this and \autoref{intro:th:min-actions-comeager-ex}, we obtain the following.
\begin{cor}
  \label{intro:c:a_generic_must_be_minimal}
Let $G$ be a countable group which is not finitely generated. If there exists a comeager conjugacy class in $\Xi(G)$, then the generic element of $\Xi(G)$ is minimal.
\end{cor}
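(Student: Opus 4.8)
The plan is to deduce the statement from \autoref{intro:th:projectivly_isolated_iff_minimal_sofic} together with the structural description of a comeager conjugacy class as the Fraïssé limit of the projectively isolated quantifier-free types, plus an elementary reduction of minimality of an action to minimality of the subshifts obtained by coding a single clopen set.

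First I would suppose that $\Xi(G)$ has a comeager conjugacy class $C$, and fix $\alpha \in C$. By the criterion of \cite{Doucha2022p} (re-proved, in a more general form, in \autoref{sec:append-g_delta-isom}), when a comeager conjugacy class exists it consists of atomic models: every quantifier-free type realized in the Boolean $G$-algebra $\mathrm{CO}(\Omega)$ of clopen subsets of $\Omega$, equipped with the $\alpha$-action, is projectively isolated. Now fix a clopen set $U \subseteq \Omega$ and consider the continuous $G$-map $\pi_U \colon \Omega \to \{0,1\}^G$ given by $\pi_U(x)(g) = 1 \iff \alpha(g^{-1})x \in U$, with image the subshift $X_{\alpha, U} := \pi_U(\Omega)$. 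Under the usual dictionary between subshifts and quantifier-free types (see \autoref{sec:preliminaries}), the Boolean $G$-subalgebra of $\mathrm{CO}(\Omega)$ generated by $U$ is canonically isomorphic to the clopen algebra of $X_{\alpha,U}$, so that the quantifier-free type of the element $U$ being projectively isolated is precisely the statement that $X_{\alpha, U}$ is a projectively isolated subshift of $\{0,1\}^G$. Hence $X_{\alpha, U}$ is projectively isolated, and since $G$ is not finitely generated, \autoref{intro:th:projectivly_isolated_iff_minimal_sofic} yields that $X_{\alpha, U}$ is minimal.

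Next I would prove that an action $\alpha$ for which $X_{\alpha, U}$ is minimal for every clopen $U \subseteq \Omega$ is itself minimal. Suppose not; then there is a point $x \in \Omega$ whose $\alpha$-orbit is not dense, so we may pick a nonempty clopen set $U \subseteq \Omega$ disjoint from the closure of the orbit of $x$, and set $V := \Omega \setminus U$. Since every point $\alpha(g^{-1})x$ lies in that orbit closure, it lies in $V$, so $\pi_V(x)$ is the constant sequence $\mathbf 1 \in \{0,1\}^G$, which is a fixed point of the shift. On the other hand, any $x' \in U$ satisfies $\pi_V(x')(e) = 0$, so $\pi_V(x') \neq \mathbf 1$; thus $X_{\alpha, V}$ properly contains the nonempty closed shift-invariant set $\{\mathbf 1\}$, contradicting its minimality. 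This proves the claim, so every element of $C$ is minimal, that is, $C \subseteq \Xi_\Min(G)$; as $C$ is comeager, so is $\Xi_\Min(G)$, which is exactly the assertion that the generic element of $\Xi(G)$ is minimal.

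The main obstacle is the first part: it relies on the identification of the comeager conjugacy class with the atomic model --- the substantial input, coming from the Fraïssé-theoretic analysis of \autoref{sec:append-g_delta-isom} --- and on the dictionary translating the quantifier-free type of a single clopen set into a subshift over the two-letter alphabet, after which \autoref{intro:th:projectivly_isolated_iff_minimal_sofic} does the real work. The remaining reduction from minimality of $\alpha$ to minimality of all the $X_{\alpha,U}$ is elementary. (Alternatively, one could avoid the atomic-model language by checking directly that, for each clopen $U$, the set $\{\alpha \in \Xi(G) : X_{\alpha, U} \text{ is minimal}\}$ is a dense $G_\delta$ --- density being a consequence of the density of minimal sofic subshifts in $\cS(\{0,1\}^G)$ furnished by \autoref{intro:th:projectivly_isolated_iff_minimal_sofic} and the criterion --- and then intersecting over all clopen $U$; but this essentially re-does part of the proof of the criterion and seems less efficient.)
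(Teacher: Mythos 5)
Your proof is correct and follows essentially the same route as the paper's: the element of the comeager class is projectively atomic, so the subshifts it codes are projectively isolated, hence minimal by the theorem characterizing projectively isolated subshifts for non-finitely generated groups. The only (harmless) difference is the final step, where the paper invokes that an inverse limit of minimal flows is minimal, while you argue directly that minimality of all the two-letter codings $X_{\alpha,U}$ forces minimality of $\alpha$.
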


Combining the above results and the fact that locally finite groups admit no sofic, minimal subshifts (see \autoref{p:no_minimal_sofic_locally_finite_group}), we get the following partial answer to a question from \cite{Doucha2022p}.
\begin{cor}
  \label{c:intro:comeager-not-fg}
Let $G$ be a countable amenable group which is not finitely generated. Then there is no comeager conjugacy class in $\Xi(G)$. If $G$ is locally finite, there is also no comeager conjugacy class in $\Xi_\Min(G)$.
\end{cor}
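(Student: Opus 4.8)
The plan is to prove both statements by contradiction, using the criterion of \cite{Doucha2022p} for the existence of a comeager conjugacy class in $\Xi(G)$ --- that one exists iff the projectively isolated subshifts are dense in $\cS(A^G)$ for every finite alphabet $A$ --- together with \autoref{intro:th:projectivly_isolated_iff_minimal_sofic}, \autoref{intro:c:a_generic_must_be_minimal}, \autoref{intro:th:character-amen-non-fg} and the obstruction \autoref{p:no_minimal_sofic_locally_finite_group}. (The criterion \autoref{intro:th:min-actions-comeager-ex} could also be invoked for the statement about $\Xi_\Min(G)$, but the argument below reduces it to the statement about $\Xi(G)$.)

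For the first assertion, suppose $G$ is amenable, not finitely generated --- hence countably infinite --- and that $\Xi(G)$ has a comeager conjugacy class. I would draw two consequences from this. First, \autoref{intro:c:a_generic_must_be_minimal} gives that the generic element of $\Xi(G)$ is minimal, i.e.\ $\Xi_\Min(G)$ is comeager in $\Xi(G)$; as $G$ is amenable, \autoref{intro:th:character-amen-non-fg} then forces $G$ to be locally finite. Second, by the criterion of \cite{Doucha2022p} the projectively isolated subshifts are dense in $\cS(A^G)$ for every finite $A$, and since $G$ is not finitely generated, \autoref{intro:th:projectivly_isolated_iff_minimal_sofic} identifies them with the minimal sofic subshifts; hence the minimal sofic subshifts are dense in $\cS(A^G)$ for every finite $A$. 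Taking $|A| \geq 2$, the two conclusions are incompatible: since $G$ is locally finite, by \autoref{p:no_minimal_sofic_locally_finite_group} there are no (non-trivial) minimal sofic subshifts over $G$, so the minimal sofic subshifts in $\cS(A^G)$ reduce to at most the one-point subshifts, which are not dense. This contradiction proves the first assertion.

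For the second assertion, let $G$ be locally finite; then $G$ is countably infinite, amenable and not finitely generated, so by the first part $\Xi(G)$ has no comeager conjugacy class. On the other hand, \autoref{intro:th:character-amen-non-fg} gives that $\Xi_\Min(G)$ is comeager in $\Xi(G)$. Since a subset of a comeager subspace which is comeager there is comeager in the ambient space, any conjugacy class comeager in $\Xi_\Min(G)$ would be comeager in $\Xi(G)$; as there is no such class, $\Xi_\Min(G)$ has no comeager conjugacy class.

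The one genuinely dynamical ingredient, and the place where I expect the real work, is \autoref{p:no_minimal_sofic_locally_finite_group}: over a non-finitely-generated group every SFT is cut out by a finite window generating only a proper subgroup, so it cannot ``coordinate'' its behaviour across the infinitely many cosets of that subgroup, and one must turn this into the assertion that, when $G$ is locally finite, no sofic factor of such an SFT is a non-trivial minimal system. The rest is formal manipulation of the Baire-category criteria; in particular, routing the $\Xi_\Min(G)$ statement through \autoref{intro:th:character-amen-non-fg} sidesteps the subtler question of which minimal subshifts are projectively isolated in the non-compact space $\cS_\Min(A^G)$.
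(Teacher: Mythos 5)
Your proof is correct and uses essentially the same ingredients as the paper: \autoref{c:a_generic_must_be_minimal} together with the characterization of amenable non-locally-finite groups, \autoref{th:projectivly_isolated_iff_minimal_sofic}, \autoref{p:no_minimal_sofic_locally_finite_group} and \autoref{c:all-actions-comeager-ex} for $\Xi(G)$, and the comeagerness of $\Xi_\Min(G)$ in $\Xi(G)$ for locally finite $G$ to transfer the conclusion to $\Xi_\Min(G)$. The only difference is organizational (you run a single contradiction that first deduces local finiteness, whereas the paper splits into the locally finite and non-locally finite cases up front), which does not change the argument.
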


\subsection*{The space $\Xi_\Tr(G)$ of topologically transitive actions}
\label{sec:space-xi_trg-trans}

For $\Xi_\Tr(G)$, even the more basic question of the existence of a dense conjugacy class is quite delicate and interesting. First, for $G = \Z$, it follows from \cite{Hochman2008} that $\Xi_\Min(\Z)$ is dense in $\Xi_\Tr(\Z)$, so $\Xi_\Tr(\Z)$ admits even a comeager conjugacy class. It is easy to generalize this to virtually cyclic groups (see \autoref{p:dense-conj-class-finite-index}) but these are all the finitely generated examples we know.

\begin{question}
  \label{intro:q:transitive-virtually-cyclic}
  Let $G$ be a finitely generated group which is not virtually cyclic.
  \begin{enumerate}
  \item Is it possible that $\Xi_\Min(G)$ is dense is $\Xi_\Tr(G)$?
  \item Can $\Xi_\Tr(G)$ have a dense conjugacy class?
  \end{enumerate}
\end{question}

In \cite{Hochman2012}, Hochman claims that there is a dense conjugacy class in $\Xi_\Tr(G)$ for $G = \Z^d$ and provides a general argument that does not depend on the structure of the group. However, this argument is incorrect and we have the following theorem. See \autoref{sec:transitive-actions} for more details.

\begin{theorem}
  \label{th:intro:dense-transitive}
  Let $G$ be a group which is hyperbolic or virtually polycyclic. Then $\Xi_\Tr(G)$ has a dense conjugacy class iff $G$ is virtually cyclic.
\end{theorem}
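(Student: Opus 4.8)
The plan is as follows. The ``if'' direction is immediate: if $G$ is finite then $\Xi_\Tr(G)=\emptyset$, and if $G$ is virtually cyclic and infinite it has a finite-index copy of $\Z$, so a dense conjugacy class in $\Xi_\Tr(G)$ is obtained from the density of $\Xi_\Min(\Z)$ in $\Xi_\Tr(\Z)$ (\cite{Hochman2008}), via \autoref{p:dense-conj-class-finite-index}. The substance is the converse: assuming $G$ is free or virtually polycyclic but \emph{not} virtually cyclic, one must show that no transitive action has a dense conjugacy class. I would do this by exhibiting two disjoint, non-empty open subsets of the relevant space whose union is conjugacy-invariant, so that no conjugacy class --- in particular no dense one --- can meet both.

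The separating invariant is topological entropy. Passing to the space $\cS_\Tr(A^G)$ of transitive subshifts over a finite alphabet $A$ (via the correspondence of \cite{Hochman2008}), entropy $X\mapsto h(X)$ is a conjugacy invariant, and it is upper semicontinuous: for a finite $F\subseteq G$ and $p\in A^F$, the condition ``$p$ occurs in $X$'' is clopen in the Vietoris topology, so $|L_F(X)|$ is locally constant in $X$, and $h(X)=\inf_F|F|^{-1}\log|L_F(X)|$ is an infimum of locally constant functions. Moreover the orbit closure of the indicator of the identity is an infinite transitive subshift of entropy $0$, so $\{X:h(X)<\varepsilon\}$ is a non-empty open set for every $\varepsilon>0$. (For $G$ free one uses an entropy-type conjugacy invariant adapted to free groups and checks the analogous semicontinuity; the remainder of the argument is formally the same, and if need be one can argue more directly that neighbourhoods of the subshift $Y$ below contain no conjugate of a fixed low-complexity transitive subshift.)

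The crux is then:
\textbf{Key step.} When $G$ is free or virtually polycyclic but not virtually cyclic, there are a finite alphabet $A$, a constant $\varepsilon_0>0$, and a $G$-subshift of finite type $Y\subseteq A^G$ that admits a transitive subsystem but all of whose non-empty subsystems have entropy at least $\varepsilon_0$.
Granting this, pick a window $F$ on which $Y$ is defined and set $\mathcal V=\{X\in\cS_\Tr(A^G):L_F(X)=L_F(Y)\}$: this is open and non-empty (it contains the orbit closure of any point of $Y$), and every $X\in\mathcal V$ satisfies $X\subseteq Y$, whence $h(X)\ge\varepsilon_0$. Then $\{X:h(X)<\varepsilon_0\}$ and $\mathcal V$ are the desired pair, since a dense conjugacy class, meeting the first, would consist of subshifts of entropy $<\varepsilon_0$, yet it would also meet $\mathcal V$. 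The dichotomy transfers back to $\Xi_\Tr(G)$ inside the framework already set up in the paper.

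The main obstacle is the Key step --- and it is precisely there that failure of virtual cyclicity enters. It is \emph{false} over $\Z$, and hence, after restricting to a finite-index copy of $\Z$, over every virtually cyclic group: a non-empty $\Z$-subshift of finite type contains a periodic point, whose $G$-orbit closure is a finite, hence entropy-zero, subsystem. For $G$ not virtually cyclic one must use the extra room in $G$ to build $Y$ whose local rules not only preclude periodicity but force a positive rate of branching on every configuration, ruling out any subsystem of subexponential complexity. For $G$ virtually polycyclic of Hirsch length at least $2$, I would pass to a quotient or finite-index subgroup exhibiting $\Z^2$-like behaviour and invoke the rigidity of two-dimensional subshifts --- the very phenomenon that invalidates Hochman's group-independent argument from \cite{Hochman2012}. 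For $G$ a non-abelian free group I would instead build $Y$ from the tree structure of $G$, by graph-theoretic methods in the spirit of the Rauzy graphs of this paper.
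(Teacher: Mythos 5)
Your ``if'' direction coincides with the paper's (Hochman's result for $\Z$ plus \autoref{p:dense-conj-class-finite-index}), but the converse has two genuine gaps, and the first is structural. A dense conjugacy class in $\Xi_\Tr(G)$ is a single Cantor action $\xi_0$ such that the itinerary subshifts $\pi_A(f\xi_0f^{-1})$, as $f$ and the clopen partition $A$ vary, are dense in the realizable part of the subshift space; these are \emph{factors} of one fixed system, not isomorphic copies of one subshift, so they need not share entropy. Your sentence ``a dense conjugacy class, meeting the first, would consist of subshifts of entropy $<\varepsilon_0$'' conflates the class of actions with a class of subshifts: a conjugate of $\xi_0$ landing in $\{X : h(X)<\varepsilon_0\}$ only says that $\xi_0$ has \emph{one} low-complexity itinerary factor, while a conjugate landing in $\mathcal{V}$ says it has \emph{another} factor contained in $Y$; both happen simultaneously for, say, a suitable product system, so there is no contradiction. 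The underlying obstruction is that entropy can only decrease under factors: an open condition on a finite clopen partition can force the entropy of the action from below, but never from above, so upper semicontinuity of $h$ on $\cS_\Tr(A^G)$ does not lift to any open subset of $\Xi_\Tr(G)$ forcing small entropy. This is exactly why the paper's criterion (\autoref{th:dense-conj-transitive}) is phrased as a simultaneous condition on \emph{pairs} of SFTs about a single recurrent pair point, and why the obstruction actually used (return sets trapped in two subgroups with finite intersections, via the special symbol property) is one that no single transitive system can satisfy for both open sets at once. You would also need the realizability analysis of \autoref{p:criterion_factor_transitive_Cantor} to see that the pullback of $\mathcal{V}$ to $\Xi_\Tr(G)$ is non-empty; you do not address this, though it is the minor issue.

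The second gap is that the Key Step is false in the very case you lean on. By Desai's theorem on subsystem entropies of $\Z^d$ shifts of finite type ($d\geq 2$), every SFT $Y$ with $h(Y)>0$ contains SFT subsystems whose entropies are dense in $[0,h(Y)]$, in particular non-empty subsystems of entropy below any prescribed $\varepsilon_0$; and if $h(Y)<\varepsilon_0$ then $Y$ itself violates your requirement. So no $\Z^d$-SFT has all of its non-empty subsystems of entropy uniformly bounded away from $0$, and the proposed reduction of the virtually polycyclic case to ``$\Z^2$-like rigidity'' cannot produce the object you need. For the free group the hedge about ``an entropy-type conjugacy invariant adapted to free groups'' is not substantiated: naive entropy is degenerate for non-amenable groups, and sofic entropy is neither upper semicontinuous in the required sense nor monotone under factors (it can increase), which is precisely what your separation argument would require. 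In short, both the separation mechanism and the key combinatorial construction fail, whereas the paper's route through \autoref{th:dense-conj-transitive}, \autoref{p:no-dense-class-2-subgroups}, and the relative special symbol property avoids entropy altogether.
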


The proof of this theorem proceeds through a general criterion, again based on subshifts, for the (non-)existence of a dense conjugacy class in $\Xi_\Tr(G)$ that we are able to verify in the above cases using a relativized version of the \df{special symbol property} introduced by Dahmani and Yaman in \cite{Dahmani2008}.
The proof for hyperbolic groups is based on finite automata.

\subsection*{Acknowledgments}
\label{sec:acknowledgments}

We are grateful to Alekos Kechris for asking questions which prompted some of this work, to Yves Cornulier for some advice in group theory, and to Andrew Marks, Denis Osin, and Ville Salo for useful discussions.


\section{Preliminaries}
\label{sec:preliminaries}

\subsection{Minimal and transitive actions}
\label{sec:minim-trans-acti}

In this section, we fix some terminology and notation.

Let $G$ be a countable group and let $\Omega$ denote the Cantor space. We denote by $\Xi(G)$ the space of all actions $G \actson \Omega$ by homeomorphisms, that is,
\begin{equation}
  \label{eq:defn-Xi}
  \Xi(G) = \Hom \big(G, \Homeo(\Omega) \big).
\end{equation}
We equip $\Homeo(\Omega)$ with the uniform convergence topology and note that $\Xi \sub \Homeo(\Omega)^G$ is a closed subset, so a Polish space.

If $Z$ is a compact, zero-dimensional space, we denote by $\cB(Z)$ the Boolean algebra of clopen subsets of $Z$ and if $\cB$ is a Boolean algebra, we denote by $\tS(\cB)$ the space of ultrafilters of $\cB$. By Stone duality, an element $f \in \Homeo(\Omega)$ can be viewed as an automorphism of $\cB(\Omega)$, and the topology on $\Homeo(\Omega)$ is given by pointwise convergence on $\cB(\Omega)$ (seen as a discrete space). This gives a convenient way to view the topology on $\Xi(G)$: a subbasis consisting of clopen subsets is given by the collection of sets of the form
\begin{equation}
  \label{eq:subbasis-Xi-definition-topo-Homeo}
  \set{\xi \in \Xi(G) : a = \xi(g) \cdot b }, \quad \text{ for } g \in G, a, b \in \cB(\Omega).
\end{equation}
Another useful subbasis is given by the clopen subsets
\begin{equation}
  \label{eq:subbasis-Xi}
  \set{\xi \in \Xi(G) : a \cap \xi(g) \cdot b = \emptyset}, \quad \text{ for } g \in G, a, b \in \cB(\Omega).
\end{equation}
To see this, note that $a = \xi(g) \cdot b$ iff $a \cap (\xi(g) \cdot \neg b) = \emptyset$ and $\neg a \cap \xi(g) \cdot b = \emptyset$. If $G$ is generated by a subset $S$, then in \autoref{eq:subbasis-Xi-definition-topo-Homeo} and \autoref{eq:subbasis-Xi}, it is enough to consider elements $g \in S$.
\begin{defn}
  \label{df:Gflow}
  Let $G$ be a group. A \df{$G$-flow} is an action $G \actson Z$ on a non-empty, compact, Hausdorff space $Z$ by homeomorphisms. A \df{subflow} of $Z$ is a subset which is a $G$-flow, i.e., a subset which is non-empty, closed, and $G$-invariant.
\end{defn}

If $G \actson Z_1$ and $G \actson Z_2$ are $G$-flows, a \df{factor map} $\pi \colon Z_1 \to Z_2$ is a $G$-equivariant, continuous, surjective map. In this situation, we will say that $Z_2$ is a \df{factor} of $Z_1$ or that $Z_1$ is an \df{extension} of $Z_2$.

The space $\Xi(G)$ is equipped with a continuous action of $\Homeo(\Omega)$ by conjugation:
\begin{equation*}
  (f \cdot \xi)(g) = f \xi(g) f^{-1}, \quad \text{ for } \xi \in \Xi(G), f \in \Homeo(\Omega), g \in G.
\end{equation*}
An orbit of this action is called a \df{conjugacy class} and we say that two actions are \df{conjugate} if they are in the same class. The following two facts are well-known.

\begin{prop}
  \label{p:factor-closure-conj-class}
Let $G$ be a countable group, and $\xi_1$, $\xi_2 \in \Xi(G)$ be such that $\xi_2$ is a factor of $\xi_1$. Then $\xi_2$ belongs to the closure of the conjugacy class of $\xi_1$.
\end{prop}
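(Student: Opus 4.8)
The plan is to check directly that the conjugacy class of $\xi_1$ meets every basic open neighborhood of $\xi_2$. By the subbasis \eqref{eq:subbasis-Xi}, it is enough to treat neighborhoods of the form
\[
  W = \set{\xi \in \Xi(G) : a_i \cap \xi(g_i) \cdot b_i = \emptyset \text{ for } i = 1, \dots, n},
\]
where $g_i \in G$, $a_i, b_i \in \cB(\Omega)$, and $\xi_2 \in W$, i.e.\ $a_i \cap \xi_2(g_i) \cdot b_i = \emptyset$ for every $i$. Fix a factor map $\pi \colon \Omega \to \Omega$ realizing $(\Omega, \xi_2)$ as a factor of $(\Omega, \xi_1)$. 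The first observation is that $\pi^{-1}$ sends clopen sets to clopen sets and defines an \emph{injective} homomorphism $\cB(\Omega) \to \cB(\Omega)$ of Boolean algebras (injectivity because $\pi$ is onto), which moreover intertwines the two actions: $G$-equivariance of $\pi$ gives $\pi^{-1}(\xi_2(g) \cdot b) = \xi_1(g) \cdot \pi^{-1}(b)$ for all $g \in G$ and $b \in \cB(\Omega)$.

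Next I would fix a finite subalgebra $\mathcal{A}$ of $\cB(\Omega)$ containing all of the clopen sets $a_i$, $b_i$, and $\xi_2(g_i) \cdot b_i$, and let $P_1, \dots, P_k$ be its atoms, so $\set{P_1, \dots, P_k}$ is a finite clopen partition of $\Omega$ into non-empty pieces. Put $Q_l \defeq \pi^{-1}(P_l)$; since $\pi^{-1}$ is a Boolean embedding and $\pi$ is onto, $\set{Q_1, \dots, Q_k}$ is again a finite partition of $\Omega$ into non-empty clopen sets. As every non-empty clopen subset of the Cantor space is homeomorphic to the whole space, there is $f \in \Homeo(\Omega)$ with $f(Q_l) = P_l$ for each $l$. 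In particular, $f^{-1}$ and $\pi^{-1}$ agree on $\mathcal{A}$ (both send $\bigcup_{l \in L} P_l$ to $\bigcup_{l \in L} Q_l$), and $f(\pi^{-1}(c)) = c$ for every $c \in \mathcal{A}$.

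It then remains to verify that $f \cdot \xi_1 \in W$, which is a short computation: for each $i$, using successively that $b_i \in \mathcal{A}$, the intertwining identity, and $\xi_2(g_i) \cdot b_i \in \mathcal{A}$,
\begin{align*}
  a_i \cap (f \cdot \xi_1)(g_i) \cdot b_i
  &= a_i \cap f\bigl(\xi_1(g_i) \cdot f^{-1}(b_i)\bigr)
   = a_i \cap f\bigl(\xi_1(g_i) \cdot \pi^{-1}(b_i)\bigr) \\
  &= a_i \cap f\bigl(\pi^{-1}(\xi_2(g_i) \cdot b_i)\bigr)
   = a_i \cap \xi_2(g_i) \cdot b_i = \emptyset.
\end{align*}
Hence $f \cdot \xi_1 \in W$, and since $W$ was an arbitrary basic neighborhood of $\xi_2$, the action $\xi_2$ lies in the closure of the conjugacy class of $\xi_1$. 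I do not expect a serious obstacle here: the only point requiring a little care is choosing $\mathcal{A}$ large enough — it must contain the sets $\xi_2(g_i) \cdot b_i$ as well — so that the homeomorphism $f$ built from a partition into its atoms actually transports the relevant clopen sets as needed; everything else reduces to elementary facts about Stone duality and the Cantor space.
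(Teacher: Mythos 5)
Your proof is correct and follows essentially the same route as the paper: pull back a sufficiently large finite clopen algebra via $\pi^{-1}$, choose $f \in \Homeo(\Omega)$ matching the pulled-back atoms with the original ones (so $f(\pi^{-1}(c)) = c$ on that algebra), and check that $f \cdot \xi_1$ lies in the given neighborhood of $\xi_2$. The only difference is cosmetic — you work with the disjointness subbasis \eqref{eq:subbasis-Xi} while the paper uses neighborhoods given by equality on a partition — and you spell out the construction of $f$ that the paper leaves implicit.
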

\begin{proof}
Choose a neighborhood of $\xi_2$, which we may assume to be of the form
\[ O=\set{ \xi \in \Xi(G) : \forall g \in K \ \forall a \in A \ \xi(g)\cdot a=\xi_2(g)\cdot a}, \]
where $K$ is a finite subset of $G$ and $A$ is a clopen partition of $\Omega$. Let $\pi \colon \Omega \to \Omega$ witness that $\xi_2$ is a factor of $\xi_1$. Then $\pi^{-1}$ induces an isomorphism from the (finite) Boolean subalgebra of $\cB(\Omega)$ generated by the sets $\xi_2(g) \cdot a$ $(a \in A)$ onto the subalgebra generated by the sets $\xi_1(g) \cdot a$ ($a \in A$). By homogeneity of $\cB(\Omega)$ and Stone duality, there exists $f \in \Homeo(\Omega)$ such that $f  \big(\pi^{-1}(\xi_2(g)\cdot a) \big)=\xi_1(g)\cdot a$ for all $a \in A$ and all $g \in K$. Then $f \cdot \xi_1 \in O$.
\end{proof}

Recall that a subset of a topological space is \df{Baire measurable} if it differs from an open set by a meager set. In particular, all Borel sets are Baire measurable. We refer the reader to \cite{Kechris1995} for basic facts about Baire category (note however that in that book, the terminology \emph{sets with the Baire property} is used to denote what we call here Baire measurable sets).
\begin{prop}
  \label{p:dense-conj-class-Xi}
  Let $G$ be a countable group. Then $\Xi(G)$ admits a dense conjugacy class. In particular, every conjugacy invariant, Baire measurable subset of $\Xi(G)$ is meager or comeager.
\end{prop}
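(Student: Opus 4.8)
The plan is to produce a single action $\xi^* \in \Xi(G)$ whose conjugacy class is dense, by taking an infinite product of a dense sequence of actions and then applying \autoref{p:factor-closure-conj-class}. Since $\Xi(G)$ is Polish, hence separable, I first fix a sequence $(\eta_n)_{n \geq 0}$ that is dense in $\Xi(G)$. Viewing each $\eta_n$ as an action of $G$ on $\Omega$, I form the coordinatewise product action of $G$ on $\prod_{n \geq 0} \Omega$, namely $g \cdot (x_n)_n = (\eta_n(g) \cdot x_n)_n$. A countable product of copies of the Cantor space is again homeomorphic to the Cantor space, so after fixing such a homeomorphism this product action becomes a bona fide element $\xi^* \in \Xi(G)$.

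The key observation is that every $\eta_n$ is a \emph{factor} of $\xi^*$: the coordinate projection $\pi_n \colon \prod_{m \geq 0} \Omega \to \Omega$ is continuous and surjective, and it is $G$-equivariant from $\xi^*$ to $\eta_n$ precisely because $\xi^*$ acts coordinatewise (so $\pi_n \circ \xi^*(g) = \eta_n(g) \circ \pi_n$). By \autoref{p:factor-closure-conj-class}, each $\eta_n$ therefore lies in the closure of the conjugacy class of $\xi^*$. That closure is a closed subset of $\Xi(G)$ containing the dense sequence $(\eta_n)_{n \geq 0}$, hence it is all of $\Xi(G)$; in other words, the conjugacy class of $\xi^*$ is dense.

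For the final assertion I would invoke the standard topological zero--one law for continuous actions of separable groups with a dense orbit. Concretely: if $B \subseteq \Xi(G)$ is conjugacy-invariant, Baire measurable and non-meager, then $B$ is comeager in some nonempty open set, hence comeager in the union $V$ of all open sets in which it is comeager; this $V$ is open, nonempty and $\Homeo(\Omega)$-invariant, so it must contain the dense conjugacy class of $\xi^*$ and therefore be dense, which forces $B$ to be comeager in all of $\Xi(G)$.

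I do not anticipate a genuine obstacle here: all the conceptual content is carried by \autoref{p:factor-closure-conj-class}, which reduces the problem to the trivial task of building an action of which every action is a factor. The only points requiring a little care are verifying the $G$-equivariance of the coordinate projections for the coordinatewise action (immediate) and recalling that $\prod_{n \geq 0} \Omega$ is again a Cantor space (Brouwer's characterization). One could instead bypass \autoref{p:factor-closure-conj-class} and check directly that a suitable homeomorphic conjugate of $\xi^*$ meets any prescribed basic open set of $\Xi(G)$, but this is longer and essentially reproves the proposition in this special case.
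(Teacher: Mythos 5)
Your proof is correct and follows essentially the same route as the paper: take the product of a countable dense family of actions, observe via \autoref{p:factor-closure-conj-class} that each member of the family lies in the closure of the product action's conjugacy class, and invoke the topological zero--one law (the paper cites \cite{Kechris1995}*{8.46}) for the second assertion. The only difference is that you spell out the details (the product is again a Cantor space, the coordinate projections are equivariant factor maps) that the paper leaves implicit.
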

\begin{proof}
  If $\set{\xi_i : i \in \N}$ is a countable dense subset of $\Xi(G)$, then the conjugacy class of the action $\prod_i \xi_i$ is dense by~\autoref{p:factor-closure-conj-class}. The second claim follows from \cite{Kechris1995}*{8.46}.
\end{proof}

\begin{defn}
  \label{df:syndetic}
  Let $G \actson Z$ be a $G$-set. A subset $L \sub Z$ is called \df{syndetic} if finitely many $G$-translates of $L$ cover $Z$. It is called \df{thick} if it intersects every syndetic set, equivalently, if every finite set of translates of $L$ has non-empty intersection. A subset $L \sub G$ is called \df{left syndetic} if it is syndetic for the action $G \actson G$ by left translation and it is called \df{left thick} if it is thick for this action. Similarly, \df{right syndetic} and \df{right thick} for the action by right translation.
\end{defn}

\begin{defn}
  \label{df:min-trans}
  A $G$-flow $G \actson Z$ is called \df{topologically transitive} (or simply \df{transitive}), if for all non-empty, open $U, V \sub Z$, there is $g \in G$ such that $g \cdot U \cap V \neq \emptyset$. It is called \df{minimal} if every non-empty, open $U \sub Z$ is syndetic. Equivalently, $G \actson Z$ is minimal if it does not have any proper subflows.

  For a $G$-flow $G \actson Z$ and $H \leq G$, we say that a point $z \in Z$ is \df{$H$-fixed} if $H \cdot z = z$. We say that $z$ is \df{periodic} if it is $H$-fixed for some $H \leq G$ of finite index. $Z$ is \df{periodic} if it is finite.
\end{defn}

We note that the two properties of being transitive or being minimal are preserved under factors.
 It is clear that every minimal action is transitive. An action is minimal iff every orbit is dense in $Z$ and if $Z$ is metrizable, an action is transitive iff it admits a dense orbit. If $G \actson Z$ is minimal, then $Z$ is either finite or perfect, but this is not true for transitive actions: it is possible to have an infinite dense orbit consisting of isolated points.

If $Z = \Omega$, in \autoref{df:min-trans}, we can take $U$ and $V$ to be elements of $\cB(\Omega)$. We will denote by $\Xi_\Tr(G)$ and $\Xi_\Min(G)$ the set of $\xi \in \Xi(G)$, which are transitive and minimal, respectively.

\begin{prop}
  \label{p:trans-min-Gdelta}
 Let $G$ be a countable group. The sets $\Xi_\Tr(G)$ and $\Xi_\Min(G)$ are $G_\delta$ subsets of $\Xi(G)$ and, therefore, Polish spaces. Moreover, they are invariant under the conjugation action of $\Homeo(\Omega)$.
\end{prop}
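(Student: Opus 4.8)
The plan is to show that each of the two sets is a countable intersection of open sets, using the subbases described in \autoref{eq:subbasis-Xi-definition-topo-Homeo} and \autoref{eq:subbasis-Xi}, and that conjugation-invariance is immediate because transitivity and minimality are isomorphism invariants (indeed, they are preserved under factors). Since $\Xi(G)$ is Polish and a $G_\delta$ subset of a Polish space is Polish, this gives the desired conclusion.

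\emph{Transitivity.} First I would fix a countable base $\set{U_n : n \in \N}$ for $\Omega$ consisting of nonempty clopen sets. By the remark after \autoref{df:min-trans}, an action $\xi$ is transitive iff for every pair $(m,n)$ there is $g \in G$ with $\xi(g) \cdot U_m \cap U_n \neq \emptyset$. For fixed $m, n$ and fixed $g$, the set $\set{\xi : \xi(g) \cdot U_m \cap U_n \neq \emptyset}$ is open, being the complement of a basic clopen set of the form \autoref{eq:subbasis-Xi}. Taking the union over $g \in G$ (a countable union) keeps it open, and then intersecting over all pairs $(m,n)$ gives
\[
  \Xi_\Tr(G) = \bigcap_{m,n} \bigcup_{g \in G} \set{\xi \in \Xi(G) : \xi(g) \cdot U_m \cap U_n \neq \emptyset},
\]
which is $G_\delta$.

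\emph{Minimality.} This requires slightly more care because the definition quantifies over covers by translates. The key reformulation is: $\xi$ is minimal iff for every nonempty clopen $U \sub \Omega$, finitely many $\xi$-translates of $U$ cover $\Omega$; equivalently, for every nonempty clopen $U$ and every point (or, by compactness, for a suitable finite clopen partition) the $G$-orbit of $U$ covers. Concretely, I would use: $\xi \in \Xi_\Min(G)$ iff for every $n$ there is a finite set $F \sub G$ with $\bigcup_{g \in F} \xi(g) \cdot U_n = \Omega$. For fixed $n$ and fixed finite $F$, the set $\set{\xi : \bigcup_{g \in F} \xi(g) \cdot U_n = \Omega}$ is clopen: writing $\Omega = \bigcup_{g \in F} \xi(g)\cdot U_n$ as a finite Boolean condition expressible via the subbasis \autoref{eq:subbasis-Xi-definition-topo-Homeo}, it depends only on finitely many coordinates. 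Hence
\[
  \Xi_\Min(G) = \bigcap_{n \in \N} \bigcup_{\substack{F \sub G \\ F \text{ finite}}} \set{\xi \in \Xi(G) : \textstyle\bigcup_{g \in F} \xi(g) \cdot U_n = \Omega},
\]
a countable union (there are countably many finite subsets of $G$) inside a countable intersection, so $G_\delta$. Finally, conjugation-invariance of both sets follows since if $\xi$ is transitive (resp. minimal) and $f \in \Homeo(\Omega)$, then $f \cdot \xi$ is topologically isomorphic to $\xi$ via $f$, and these properties transfer along isomorphisms.

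The main point to get right is the reformulation of minimality as a countable condition: one must justify that covering by \emph{arbitrary} syndetic translate-families can be replaced by covering the fixed clopen sets $U_n$ by \emph{finitely many} translates, and that each such finite covering condition is clopen rather than merely open or Borel. Compactness of $\Omega$ handles the first issue (a cover by translates of $U_n$ has a finite subcover), and the fact that the $U_n$ form a base together with the observation that minimality of $\xi$ is equivalent to every basic clopen set having syndetic $\xi$-orbit handles the reduction to the countable family; the clopenness of $\set{\xi : \bigcup_{g\in F}\xi(g)\cdot U_n = \Omega}$ is then a routine check using \autoref{eq:subbasis-Xi-definition-topo-Homeo}. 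Everything else is bookkeeping.
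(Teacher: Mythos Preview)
Your proof is correct and follows essentially the same approach as the paper's: express transitivity and minimality as countable conjunctions of open conditions using the countability of $\cB(\Omega)$ (or a basis thereof) and the subbases \eqref{eq:subbasis-Xi-definition-topo-Homeo}, \eqref{eq:subbasis-Xi}. The paper is terser and only asserts that the covering condition $\bigcup_{g\in F}\xi(g)\cdot a=\Omega$ is \emph{open}, which is all that is needed; your stronger claim that it is clopen is true but your justification (``depends only on finitely many coordinates'') is slightly loose, since those coordinates range over the infinite discrete set $\cB(\Omega)$---a cleaner way to see clopenness is to argue openness of both the set and its complement directly from \eqref{eq:subbasis-Xi}.
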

\begin{proof}
  For $\Xi_\Tr$, this follows from the countability of $\cB(\Omega)$ and the fact that for fixed $a, b \in \cB(\Omega)$ and $g \in G$, the condition $\xi(g) \cdot a \cap b \neq \emptyset$ defines an open subset of $\Xi(G)$ (see \autoref{eq:subbasis-Xi}). Similarly, for $\Xi_\Min$, use the fact that for a fixed $a \in \cB(\Omega)$, the existence of some finite $F \sub G$ such that $\bigcup_{g \in F} \xi(g)a = \Omega$ is an open condition.
\end{proof}

Let $\mu$ be a Borel probability measure (or just a measure, for short) on $\Omega$. We note that, by regularity, $\mu$ is determined by its values on $\cB(\Omega)$ and it follows from Carathéodory's theorem that any finitely additive measure on $\cB(\Omega)$ extends to a Borel measure on $\Omega$. The measure $\mu$ is called \df{invariant} under an action $G \actson \Omega$ if $\mu(g \cdot a) = \mu(a)$ for all $g \in G$ and $a \in \cB(\Omega)$ (and thus, for all Borel sets). We will denote by $\Xi_\Mp(G)$ the elements of $\Xi(G)$ which admit an invariant  measure, and by $\Xi_{\Mpmp}(G)$ the intersection $\Xi_\Min(G) \cap \Xi_\Mp(G)$. Note that the property of having an invariant measure is preserved under factors.

\begin{prop}
  \label{p:pmp-closed}
 Let $G$ be a countable group. The set $\Xi_{\Mp}(G)$ is closed in $\Xi(G)$. Therefore $\Xi_{\Mpmp}(G)$ is a $G_\delta$ subset of $\Xi(G)$.
\end{prop}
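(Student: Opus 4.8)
The plan is to leverage the compactness of the space $P(\Omega)$ of Borel probability measures on $\Omega$, equipped with the weak$^*$ topology. Recall that since $\Omega$ is compact and metrizable, so is $P(\Omega)$, and that for every clopen $a \in \cB(\Omega)$ the evaluation map $\nu \mapsto \nu(a) = \int \mathbf 1_a \, d\nu$ is continuous on $P(\Omega)$ (as $\mathbf 1_a \in C(\Omega)$). First I would record the following continuity fact: for fixed $g \in G$ and $a \in \cB(\Omega)$, the map $\Xi(G) \times P(\Omega) \to [0,1]$, $(\xi, \nu) \mapsto \nu(\xi(g) \cdot a)$, is continuous. Indeed, as $b$ ranges over $\cB(\Omega)$ the sets $\set{\xi : \xi(g) \cdot a = b}$ form a partition of $\Xi(G)$ into clopen pieces (by the description of the topology on $\Xi(G)$ recalled in \autoref{eq:subbasis-Xi-definition-topo-Homeo}), and on each such piece the map above agrees with the continuous map $(\xi, \nu) \mapsto \nu(b)$.

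Consequently, for each $g \in G$ and $a \in \cB(\Omega)$ the set $\set{(\xi, \nu) : \nu(\xi(g) \cdot a) = \nu(a)}$ is closed in $\Xi(G) \times P(\Omega)$, and intersecting these countably many closed conditions over all $g \in G$ and $a \in \cB(\Omega)$ yields that
\[ R = \set{(\xi, \nu) \in \Xi(G) \times P(\Omega) : \nu \text{ is $\xi$-invariant}} \]
is closed in $\Xi(G) \times P(\Omega)$; here I use, as noted before the statement, that a measure is invariant iff it is invariant on clopen sets. Now, since $P(\Omega)$ is compact, the projection $\Xi(G) \times P(\Omega) \to \Xi(G)$ is a closed map, so $\Xi_\Mp(G)$, being the image of $R$ under this projection, is closed in $\Xi(G)$. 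For the second assertion, $\Xi_\Min(G)$ is $G_\delta$ in $\Xi(G)$ by \autoref{p:trans-min-Gdelta} and $\Xi_\Mp(G)$ is closed, hence $G_\delta$; therefore $\Xi_\Mpmp(G) = \Xi_\Min(G) \cap \Xi_\Mp(G)$ is $G_\delta$.

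I do not expect any real obstacle: the only points needing a little care are the continuity of $(\xi, \nu) \mapsto \nu(\xi(g) \cdot a)$ (which rests on $\Homeo(\Omega)$ carrying the topology of pointwise convergence on the discrete set $\cB(\Omega)$) and the reduction of invariance to clopen sets. If one prefers to avoid the closed-projection lemma, the same conclusion follows by a sequential argument, using that $\Xi(G)$ is Polish: given $\xi_n \to \xi$ with $\mu_n$ a $\xi_n$-invariant measure, pass to a weak$^*$-convergent subsequence $\mu_n \to \mu$ in the compact space $P(\Omega)$; for fixed $g$ and $a$ one has $\xi_n(g) \cdot a = \xi(g) \cdot a$ for all large $n$, whence $\mu_n(\xi(g) \cdot a) = \mu_n(\xi_n(g) \cdot a) = \mu_n(a)$, and letting $n \to \infty$ gives $\mu(\xi(g) \cdot a) = \mu(a)$, so $\mu$ is $\xi$-invariant.
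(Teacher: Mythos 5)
Your proposal is correct and rests on the same idea as the paper's proof: compactness of the space of probability measures on $\Omega$ together with checking invariance only on clopen sets; indeed, the sequential argument you sketch at the end is precisely the paper's (very short) proof, and your closed-relation/closed-projection formulation is just a repackaging of it. No gaps.
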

\begin{proof}
  Let $(\xi_n)_{n \in \N}$ be a sequence of elements of $\Xi(G)$ which converges to some $\xi \in \Xi(G)$; let also $\mu_n$ be an invariant measure for $\xi_n$ for every $n$. Then any limit point $\mu$ of the $\mu_n$ in the compact space of probability measures on $\Omega$ is $\xi$-invariant.
\end{proof}

\subsection{Subshifts}
\label{sec:subshifts}

Let $G \actson Z$ be a $G$-flow. We will denote by $\cS(Z)$ the set of all subflows of $Z$. It is a closed subspace of the space of all closed subsets of $Z$, equipped with the Vietoris topology, and thus a compact space (metrizable if $Z$ is metrizable). We recall that the Vietoris topology is generated by sets of the form $\set{K \in \cS(Z) \colon K \subseteq U}$ and $\set{K \in \cS(Z) \colon K \cap U \ne \emptyset}$ with $U$ an open subset of $Z$.

We will denote by $\cS_\Tr(Z)$, $\cS_\Min(Z)$, and $\cS_\Per(Z)$ the collections of transitive, minimal, and periodic subflows, respectively. We also let $\cS_\Pertr(Z) = \cS_\Per(Z) \cap \cS_\Tr(Z)$.

\begin{prop}
  \label{p:trmin-subflows-Gdelta}
  Let $G \actson Z$ be a $G$-flow with $G$ countable and $Z$ metrizable. Then $\cS_\Tr(Z)$ and $\cS_\Min(Z)$ are $G_\delta$ subsets of $\cS(Z)$.
\end{prop}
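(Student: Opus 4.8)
The plan is to write each of the two properties — transitivity and minimality of a closed invariant subset $K \subseteq Z$ — as a countable intersection of conditions, each of which defines an open subset of $\cS(Z)$ in the Vietoris topology. Since $Z$ is metrizable and compact, I fix a countable basis $\mathcal{U} = \{U_n : n \in \N\}$ of non-empty open sets for $Z$, which I may assume is closed under finite intersections, and I enumerate $G$; these countable data are what make the $G_\delta$ computation possible.

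For transitivity, recall that $G \actson K$ is transitive iff for every pair of non-empty relatively open subsets of $K$ there is a group element moving one to meet the other; using the basis, this is equivalent to: for all $m, n$, if $K \cap U_m \ne \emptyset$ and $K \cap U_n \ne \emptyset$, then there is $g \in G$ with $g \cdot (K \cap U_m) \cap U_n \ne \emptyset$, i.e. $K \cap U_m \cap g^{-1} U_n \ne \emptyset$. So $\cS_\Tr(Z) = \bigcap_{m,n} C_{m,n}$, where $C_{m,n}$ is the set of $K \in \cS(Z)$ such that either $K \cap U_m = \emptyset$, or $K \cap U_n = \emptyset$, or $\exists g\ K \cap U_m \cap g^{-1}U_n \ne \emptyset$. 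The first alternative, $\{K : K \cap U_m = \emptyset\} = \{K : K \subseteq Z \setminus U_m\}$, is closed (hence Borel but more to the point its complement is one of the Vietoris subbasic open sets), the third alternative $\bigcup_{g}\{K : K \cap (U_m \cap g^{-1}U_n) \ne \emptyset\}$ is open; so $C_{m,n}$ is the union of two closed sets and an open set. That is not obviously $G_\delta$ as written, so instead I rewrite: on the clopen-in-$\cS(Z)$ piece where $K \cap U_m \ne \emptyset$ and $K \cap U_n \ne \emptyset$ one needs the open condition $\exists g$; more cleanly, $\cS_\Tr(Z) = \bigcap_{m,n}\big( \{K : K \cap U_m = \emptyset\} \cup \{K : K \cap U_n = \emptyset\} \cup \bigcup_g \{K : K \cap U_m \cap g^{-1}U_n \ne \emptyset\}\big)$, and since $\{K : K \cap U \ne \emptyset\}$ is open and $\{K : K \cap U = \emptyset\}$ is closed, each bracketed set is a union of an open set with two closed sets; but a finite union of closed sets with one open set is the complement of a set of the form (closed)$\cap$(open)$\cap$(open), which is $F_\sigma$... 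I will instead argue directly that each $C_{m,n}$ is open: if $K \in C_{m,n}$ via the third clause with witness $g$, then a Vietoris neighborhood of $K$ forcing intersection with $U_m \cap g^{-1}U_n$ keeps us in $C_{m,n}$; if $K \in C_{m,n}$ because $K \cap U_m = \emptyset$, I use that $K \mapsto K \cap U_m = \emptyset$ is \emph{not} open, so this case needs care — the honest fix is to note $C_{m,n}$ as a subset of $\cS(Z)$ is open because its complement $\{K : K\cap U_m \ne \emptyset,\ K \cap U_n \ne \emptyset,\ \forall g\ K \cap U_m \cap g^{-1}U_n = \emptyset\}$ is closed: $\{K : K \cap U_m \ne \emptyset\}$ and $\{K : K \cap U_n \ne \emptyset\}$ — hmm, these are open not closed.

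Let me restate the clean approach, which is what I would actually carry out: a closed invariant $K$ \emph{fails} transitivity iff there exist $m, n$ with $K \subseteq Z \setminus \overline{U_m}$ false, etc. — this bookkeeping is fiddly, so the right move is to use the characterization via a dense orbit. Since $Z$ is metrizable, $G \actson K$ is transitive iff some orbit is dense in $K$, iff $\{x \in K : \overline{Gx} = K\}$ is a dense $G_\delta$ in $K$, iff for every $n$ with $K \cap U_n \ne \emptyset$, the set $\{x \in K : Gx \cap U_n \ne \emptyset\} = K \cap \bigcup_g g^{-1}U_n$ is dense in $K$. Dualizing through the Baire category theorem on $K$: transitivity of $K$ is equivalent to the statement that for all $m, n$, if $K \cap U_m \ne \emptyset$ and $K \cap U_n \ne \emptyset$ then $\exists g\ K \cap U_m \cap g^{-1}U_n \ne \emptyset$ — and now the point I will make carefully is that the contrapositive set $\{K : K \cap U_m \ne \emptyset \wedge K\cap U_n \ne \emptyset \wedge \forall g\, K \cap U_m \cap g^{-1}U_n = \emptyset\}$ — its membership depends on $K$ hitting two open sets (open conditions) and avoiding countably many open sets (a $G_\delta$, not closed), so this is the subtlety and the \textbf{main obstacle}: one must find a formulation where the negation of transitivity is $F_\sigma$. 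The standard trick is to replace "$K \cap U_m \ne \emptyset$" by "$K \cap \overline{U'_m} \ne \emptyset$ for a basic open $U'_m$ with $\overline{U'_m}\subseteq U_m$" so that hitting the \emph{closure} is a closed condition on $K$ (as $\{K : K \cap F \ne \emptyset\}$ is closed for $F$ closed), while still ranging over enough sets to detect non-transitivity; with the basis refined this way, $C_{m,n}$ becomes $\{K : K \cap \overline{U'_m} = \emptyset\} \cup \{K : K \cap \overline{U'_n} = \emptyset\} \cup \bigcup_g\{K : K \cap (U_m \cap g^{-1}U_n) \ne \emptyset\}$ whose complement is (closed $\cap$ closed $\cap$ closed) $=$ closed, so $C_{m,n}$ is open and $\cS_\Tr(Z) = \bigcap_{m,n}C_{m,n}$ is $G_\delta$.

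For minimality I do the analogous thing but it is easier because minimality of $K$ says exactly: for every $m$ with $K \cap U_m \ne \emptyset$ there is a finite $F \subseteq G$ with $K \subseteq \bigcup_{g \in F} g^{-1}U_m$. Refining the basis as above so that "$K$ meets $\overline{U'_m}$" is closed, the set $M_m := \{K : K \cap \overline{U'_m} = \emptyset\} \cup \bigcup_{F \subseteq G \text{ finite}} \{K : K \subseteq \bigcup_{g\in F} g^{-1}U_m\}$ is a union of a closed set with countably many open sets (each $\{K : K \subseteq V\}$ is Vietoris-open), so its complement $\{K : K \cap \overline{U'_m} \ne \emptyset,\ \forall F\ K \not\subseteq \bigcup_{g\in F}g^{-1}U_m\}$ is (closed) $\cap$ (countable intersection of closed sets) $=$ closed; hence $M_m$ is open and $\cS_\Min(Z) = \bigcap_m M_m$ is $G_\delta$. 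I would present minimality first (it is the cleaner computation) and then transitivity, and I would spell out once and for all the basis-refinement lemma — that a compact metrizable $Z$ has a countable basis $\{U_n\}$ together with a countable family $\{U'_n\}$ with $\overline{U'_n} \subseteq U_n$ and such that the $U'_n$ still separate points from closed sets — since both parts rely on it. I expect the transitivity half to remain the only genuinely delicate point, precisely because "having a dense orbit" is naturally a $G_\delta$-on-$K$ statement that must be transferred to a $G_\delta$-on-$\cS(Z)$ statement without an extra layer of quantifiers blowing up the complexity.
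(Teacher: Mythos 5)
Your final argument is correct in outline, but it takes a detour around an obstacle that is not actually there, and the paper's proof is precisely the direct argument you abandoned. You decompose transitivity over basic pairs into conditions of the form (closed) $\cup$ (closed) $\cup$ (open) and then worry that this ``is not obviously $G_\delta$ as written''; but since $Z$ is compact metrizable, $\cS(Z)$ is metrizable, so every closed set in it is $G_\delta$, and a finite union of $G_\delta$ sets is again $G_\delta$ (distribute the intersection over the union). Hence each set $\set{K : K \cap U_m = \emptyset} \cup \set{K : K \cap U_n = \emptyset} \cup \bigcup_g \set{K : K \cap U_m \cap g^{-1}U_n \neq \emptyset}$ is already $G_\delta$, and likewise for minimality the condition ``$K \cap U_i = \emptyset$ or $K \subseteq G \cdot U_i$'' is (closed) $\cup$ (open), hence $G_\delta$; intersecting over the countable basis finishes the proof. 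This is exactly what the paper does, and it makes your entire closure/refined-basis machinery unnecessary. Your variant, which upgrades each condition from $G_\delta$ to open by testing the hypothesis on $\cl{U'_m}$ with $\cl{U'_m} \subseteq U_m$, does work, but buys nothing here, and the refinement lemma you defer is stated imprecisely: what the equivalence with transitivity/minimality actually requires is the \emph{paired} property that for every $x$ and every open $\tilde U \ni x$ there is an index $m$ with $x \in U'_m$ and $U_m \subseteq \tilde U$ (obtained, e.g., by enumerating all pairs of basic sets $V_k, V_l$ with $\cl{V_k} \subseteq V_l$); merely asking that the $U'_n$ separate points from closed sets does not control the companion set $U_m$, which is the set appearing in the conclusion of your conditions. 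If you keep your route, state and prove that pairing lemma and delete the false starts (the dense-orbit/Baire digression plays no role); otherwise, the two-line observation about closed sets being $G_\delta$ in metrizable spaces is the cleaner proof.
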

\begin{proof}
  Let $\set{U_n : n \in \N}$ be a basis for $Z$.
  For $X \in \cS(Z)$, we have that $X$ is transitive iff for all $i, j$, $X \cap U_i = \emptyset$ or $X \cap U_j = \emptyset$ or there is $g \in G$ such that $X \cap U_i \cap g \cdot U_j \neq \emptyset$. For fixed $i,j$ this is a $G_\delta$ condition since in a metrizable space closed subsets are $G_\delta$, and a finite union of $G_\delta$ subsets is a $G_\delta$ subset.
   Similarly, $X$ is minimal iff for every $i$, $U_i \cap X = \emptyset$ or $X \sub G \cdot U_i$. Both conditions are $G_\delta$ (one is closed and the other is open).
\end{proof}

\begin{prop}
  \label{p:basis-negative}
  Let $G \actson Z$ be a $G$-flow.
  The collection
  \begin{equation*}
    \set[\big]{\set{X \in \cS_\Min(Z) : X \sub U} : U \sub Z \text{ open}}
  \end{equation*}
  forms a basis of open sets of $\cS_\Min(Z)$. If $Z$ is zero-dimensional, one can take the sets $U$ to be clopen.
\end{prop}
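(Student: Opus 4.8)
The plan is to show that the sets of the form $\set{X \in \cS_\Min(Z) : X \sub U}$, which are visibly open in $\cS_\Min(Z)$ (being the traces of subbasic Vietoris-open sets), cover every Vietoris-open subset from inside. So fix $X_0 \in \cS_\Min(Z)$ and a basic Vietoris neighborhood $V$ of $X_0$; after collapsing the several ``$K \sub \cdot$'' conditions into one, we may write
\[
V = \set{K \in \cS(Z) : K \sub U_0} \cap \bigcap_{i=1}^n \set{K \in \cS(Z) : K \cap U_i \ne \emptyset}
\]
for open sets $U_0, \dots, U_n \sub Z$. Since $X_0 \in V$, we have $X_0 \sub U_0$ and $X_0 \cap U_i \ne \emptyset$ for each $i$. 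The goal is to produce an open (resp. clopen) $U$ with $X_0 \sub U$ and $\set{X \in \cS_\Min(Z) : X \sub U} \sub V$.

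The one nontrivial step is to use minimality of $X_0$: for each $i \in \set{1,\dots,n}$, the set $U_i \cap X_0$ is a non-empty relatively open subset of the minimal flow $G \actson X_0$, hence syndetic in $X_0$, so there is a finite $F_i \sub G$ with $X_0 \sub \bigcup_{g \in F_i} g \cdot U_i$. Now put
\[
U \coloneqq U_0 \cap \bigcap_{i=1}^n \bigcup_{g \in F_i} g \cdot U_i .
\]
As $G$ acts by homeomorphisms, each $g \cdot U_i$ is open, so $U$ is a finite Boolean combination of open sets, hence open, and $X_0 \sub U$ by construction. If moreover $Z$ is zero-dimensional (and compact Hausdorff), then clopen sets form a basis of $Z$, so $V$ may be taken of the above form with all $U_i$ clopen; then each $g\cdot U_i$ is clopen and $U$ is clopen.

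Finally one checks $\set{X \in \cS_\Min(Z) : X \sub U} \sub V$. Let $X \in \cS_\Min(Z)$ with $X \sub U$. Then $X \sub U_0$; and for each $i$, $X \sub \bigcup_{g \in F_i} g \cdot U_i$ forces $X \cap g \cdot U_i \ne \emptyset$ for some $g \in F_i$, so $g^{-1} \cdot X \cap U_i \ne \emptyset$, and since $X$ is $G$-invariant this gives $X \cap U_i \ne \emptyset$. Hence $X \in V$, completing the argument. I do not expect a real obstacle: the only point requiring care is invoking minimality of $X_0$ at exactly the right spot to extract the finite covers $F_i$, after which everything reduces to the definition of the Vietoris topology and $G$-invariance of subflows.
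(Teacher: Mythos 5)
Your proof is correct and follows essentially the same route as the paper's: replace the ``meets $U_i$'' conditions by containment in translates of $U_i$, using minimality of $X_0$ in one direction and $G$-invariance of subflows in the other. The only (harmless) difference is that you invoke syndeticity to take finitely many translates, so your $U$ is automatically clopen once the $U_i$ are, whereas the paper uses the full orbit $G \cdot U_i$ and only at the end shrinks to a clopen set via compactness and zero-dimensionality.
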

\begin{proof}
  Let $X_0 \in \cS_\Min(Z)$. A basic open neighborhood of $X_0$ in the Vietoris topology is given by the collection of all $X \in \cS_\Min(Z)$ such that $X \sub U$ and $X \cap V_i \neq \emptyset$, $i = 1, \ldots, n$ for some non-empty, open $U, V_1, \ldots, V_n \sub Z$. We claim that the set
  \begin{equation}
    \label{eq:defn-basis-X0}
    \set{X \in \cS_\Min(Z) : X \sub U \cap G \cdot V_1 \cap \cdots \cap G \cdot V_n}
  \end{equation}
  contains $X_0$ and is contained in this open neighborhood. First, by minimality of $X_0$, $X_0 \sub G \cdot V_i$ for every $i$. Second, if $X \in \cS_\Min(Z)$ and $X \sub G \cdot V_i$, it is clear that $X \cap V_i \neq \emptyset$.

  For the last statement, note that by compactness and $0$-dimensionality, if $X$ is closed, $U$ is open and $X \subseteq U$ then there exists a clopen subset $V$ such that $X \subseteq V \subseteq U$. So we can replace $U \cap G \cdot V_1 \cap \cdots \cap G \cdot V_n$ in \autoref{eq:defn-basis-X0} by a clopen subset.
\end{proof}

Let $A$ be a finite set. The group $G$ acts on the compact space $A^G$ by left shift:
\begin{equation*}
  (g \cdot x)(h) = x(g^{-1}h).
\end{equation*}
A \df{subshift} is an element of $\cS(A^G)$. We note that for any $x \in A^G$, the subshift $\overline{G\cdot x}$ is minimal iff for any finite $F \subseteq G$ the set $\{g : (g \cdot x)|_{F}=x|_{F}\}$ is left syndetic.

Let $F \sub G$ be finite. An \df{$F$-pattern} is a function $p \colon F \to A$ and $F$ is called the \df{support} of $p$. Each $F$-pattern $p$ defines a clopen cylinder
\begin{equation}
  \label{eq:def-cyl-pattern}
  C_p \coloneqq \set{x \in A^G : x|_F = p}
\end{equation}
and two clopen subsets of $\cS(G)$
\begin{align*}
  \cU_p^+ &\coloneqq \set{X \in \cS(A^G) : X \cap C_p \neq \emptyset} \text{ and }\\
  \cU_p^- &\coloneqq \cS(A^G) \sminus \cU_p^+ = \set{X \in \cS(A^G) : X \cap C_p = \emptyset}.
\end{align*}
We say that the pattern $p$ \df{occurs} in a subshift $X$ if $X \in \cU_p^+$. The pattern $p$ \df{occurs} in a point $x \in A^G$ if there is $g \in G$ such that $g \cdot x \in C_p$.
The collection
\begin{equation*}
  \set{\cU_p^+, \cU_p^- : p \text{ is a pattern}}
\end{equation*}
forms a subbasis for the topology of $\cS(A^G)$. If $P$ is a finite set of patterns, we denote $\cU^+_P = \bigcap_{p \in P} \cU^+_p$ and $\cU^-_P = \bigcap_{p \in P} \cU^-_p$.
A subshift $X \in \cS(A^G)$ is of \df{finite type} (or \df{SFT}, for short) if it is of the form
\begin{equation}
  \label{eq:SFT}
  X = A^G \sminus \bigcup_{p \in P} \bigcup_{g \in G} g \cdot C_p,
\end{equation}
where $P$ is a finite collection of \df{forbidden patterns}. In other words, $X$ is the largest subshift in which none of the forbidden patterns occur. It follows that if $X$ is the SFT given by \autoref{eq:SFT}, then the set
\begin{equation*}
  \cS(X) = \cU^-_P
\end{equation*}
is open in $\cS(A^G)$. It is also clear that SFTs are dense in $\cS(A^G)$. If $X$ is defined as in \autoref{eq:SFT} and $F$ is finite and contains $\dom(p)$ for every $p \in P$, then we say that $F$ is a \emph{defining window} for $X$.

We record the following corollary of \autoref{p:basis-negative}.
\begin{cor}
  \label{c:SFT-basis-minimal}
  The collection
  \begin{equation*}
    \set{\cS(X) \cap \cS_\Min(A^G) : X \text{ is an SFT}}
  \end{equation*}
  forms an open basis for the topology of $\cS_\Min(A^G)$.
\end{cor}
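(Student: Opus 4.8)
The plan is to reduce everything to \autoref{p:basis-negative}, which (since $A^G$ is compact and zero-dimensional) says that the sets $\set{X \in \cS_\Min(A^G) : X \sub U}$, with $U \sub A^G$ clopen, form a basis for $\cS_\Min(A^G)$. First I note that for an SFT $X$ given by \autoref{eq:SFT} with forbidden patterns $P$, the set $\cS(X) = \cU^-_P$ is open in $\cS(A^G)$, so $\cS(X) \cap \cS_\Min(A^G)$ is open in $\cS_\Min(A^G)$; moreover, since $X$ is the largest subshift omitting $P$ and subshifts are shift-invariant, $\cS(X) = \set{Y \in \cS(A^G) : Y \sub X}$, hence $\cS(X) \cap \cS_\Min(A^G) = \set{Y \in \cS_\Min(A^G) : Y \sub X}$. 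It therefore remains to verify the refinement property: given $X_0 \in \cS_\Min(A^G)$ and a clopen $U$ with $X_0 \sub U$, I must produce an SFT $X$ with $X_0 \sub X \sub U$; then $X_0 \in \cS(X) \cap \cS_\Min(A^G) \sub \set{Y \in \cS_\Min(A^G) : Y \sub U}$, which is exactly what is needed.

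To build such an $X$: since $A^G \sminus U$ is clopen, it is a finite union of cylinders, and, enlarging their supports to a single finite window $F \sub G$, I may write $A^G \sminus U = \bigcup_{p \in Q} C_p$ for a finite set $Q$ of $F$-patterns. I then let $P$ be the set of \emph{all} $F$-patterns $p$ that do not occur in $X_0$ (that is, $X_0 \cap C_p = \emptyset$), and let $X$ be the SFT with forbidden patterns $P$, as in \autoref{eq:SFT}. Note $Q \sub P$ because $X_0 \sub U$ forces $X_0 \cap C_p = \emptyset$ for every $p \in Q$.

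Finally I would check the two inclusions. For $X_0 \sub X$: if $x \in X_0$ and some translate $g \cdot x$ lies in $C_p$ for an $F$-pattern $p$, then $g \cdot x \in X_0$ by shift-invariance, so $X_0 \cap C_p \neq \emptyset$ and $p \notin P$; hence $x$ omits every forbidden pattern, i.e. $x \in X$. For $X \sub U$: since $Q \sub P$, the set $X = A^G \sminus \bigcup_{p \in P} \bigcup_{g \in G} g \cdot C_p$ is contained in $A^G \sminus \bigcup_{p \in Q} C_p = U$. This completes the argument. There is no serious obstacle; the only step requiring a little care is normalizing the cylinders defining $A^G \sminus U$ to one common window $F$, so that "$p$ does not occur in $X_0$" corresponds precisely to the SFT forbidding condition and both inclusions go through cleanly.
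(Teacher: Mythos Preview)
Your proof is correct and follows the intended route via \autoref{p:basis-negative}. A small simplification: rather than taking $P$ to be all $F$-patterns missing from $X_0$, you can simply forbid $Q$ itself, since for any subshift $Y$ one has $Y \sub U$ iff $Y \cap C_p = \emptyset$ for every $p \in Q$ iff (by shift-invariance of $Y$) $Y \sub X_Q$; hence $\set{Y \in \cS_\Min(A^G) : Y \sub U} = \cS(X_Q) \cap \cS_\Min(A^G)$ already.
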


A subshift $X \in \cS(A^G)$ is \df{sofic} if there exists a finite alphabet $B$, an SFT $Y \in \cS(B^G)$ and a factor map $Y \to X$. It is well known that, by enlarging the alphabet $B$, one can always assume that this factor map is induced by a surjection from $B$ to $A$ (hence extends to a factor map $B^G \to A^G$).

A subshift $X \in \cS(A^G)$ is \df{isolated} in $\cS(A^G)$ if it is an isolated point in the topology of $\cS(A^G)$.
It is \df{projectively isolated} in $\cS(A^G)$ (see \cite{Doucha2022p}) if there exists a finite alphabet $B$, a factor map $\Phi \colon B^G \to A^G$, and a non-empty open subset $\cU \sub \cS(B^G)$ such that $\Phi[Y] = X$ for every $Y \in U$. (Here and in what follows, if $f \colon Z \to W$ is a function and $Y \sub Z$, we denote by $f[Y]$ the image of $Y$ by $f$, that is, the set $\set{f(y) : y \in Y}$.)
 Without loss of generality, we may assume that $\Phi$ comes from a map $\phi \colon B \to A$. As SFTs are dense, it follows that every isolated subshift is an SFT and every projectively isolated subshift is sofic.

We have the following easy sufficient condition for a subshift to be isolated in $\cS(A^G)$.
\begin{lemma}
  \label{l:isolated-point-SFT}
  Let $X \in \cS(A^G)$ be an SFT that has an isolated point with a dense orbit. Then $X$ is isolated in $\cS(A^G)$.
\end{lemma}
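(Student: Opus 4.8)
The plan is to produce an explicit basic open neighborhood of $X$ in $\cS(A^G)$ that contains no other subshift, by combining the two hypotheses: the SFT property will force every nearby subshift to be \emph{contained} in $X$, and the existence of an isolated point with dense orbit will force it to \emph{contain} $X$.

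Concretely, write $X = A^G \sminus \bigcup_{p \in P}\bigcup_{g \in G} g \cdot C_p$ for a finite set of forbidden patterns $P$, so that $\cS(X) = \cU^-_P$ is open in $\cS(A^G)$ (as noted just before \autoref{c:SFT-basis-minimal}). Let $x \in X$ be an isolated point of $X$ with $\overline{G \cdot x} = X$. Since $x$ is isolated in $X$ and $X$ is compact zero-dimensional, there is a finite $F \sub G$ such that, setting $q \coloneqq x|_F$, we have $C_q \cap X = \{x\}$. Now consider
\[
  \cU \coloneqq \cU^-_P \cap \cU^+_q = \set{Y \in \cS(A^G) : Y \sub X \text{ and } Y \cap C_q \neq \emptyset}.
\]
This is an open neighborhood of $X$: indeed $X \sub X$ gives $X \in \cU^-_P$, and $x \in X \cap C_q$ gives $X \in \cU^+_q$.

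It remains to check $\cU = \{X\}$. Take $Y \in \cU$. From $Y \in \cU^-_P$ we get $Y \sub X$. From $Y \in \cU^+_q$, pick $y \in Y \cap C_q \sub X \cap C_q = \{x\}$, so $x = y \in Y$; since $Y$ is closed and $G$-invariant, $\overline{G \cdot x} \sub Y$, and as $\overline{G \cdot x} = X$ this yields $X \sub Y$. Hence $Y = X$, so $X$ is isolated in $\cS(A^G)$.

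I do not expect a genuine obstacle here: the argument is short and the only point requiring care is to package both hypotheses into the single neighborhood $\cU$ — the SFT property is exactly what makes $\cS(X)$ (equivalently $\cU^-_P$) open, while the isolated-point-with-dense-orbit hypothesis is what collapses the remaining freedom once one knows $Y \sub X$ and $Y$ meets $C_q$.
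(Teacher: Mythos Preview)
Your proof is correct and follows essentially the same approach as the paper: the paper's proof is the one-line observation that $\{X\} = \set{Y \in \cS(X) : Y \cap U \neq \emptyset}$ for $U$ open with $U \cap X = \{x_0\}$. You have simply made explicit that $\cS(X) = \cU^-_P$ and chosen $U$ to be a cylinder $C_q$, which is fine since cylinders form a basis.
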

\begin{proof}
  Let $x_0 \in X$ be isolated and let $U \sub A^G$ be open with $U \cap X = \set{x_0}$. Then
  \begin{equation*}
    \set{X} = \set{Y \in \cS(X) : Y \cap U \neq \emptyset}. \qedhere
  \end{equation*}
\end{proof}
Similarly, if $X$ is sofic and has an isolated point with a dense orbit, then $X$ is projectively isolated in $\cS(A^G)$.

Another operation on subshifts that we will need is the \df{disjoint union}: if $X \in \cS(A^G)$ and $Y \in \cS(B^G)$, then $X \sqcup Y$ is a naturally a subshift of $(A \sqcup B)^G$. Here $A \sqcup B$ means that if $A$ and $B$ are not disjoint, we take the union of two disjoint copies.
\begin{lemma}
  \label{l:disjoint-union}
  Suppose that $G$ is finitely generated and $X \in \cS(A^G)$ and $Y \in \cS(B^G)$ are subshifts. If $X$ and $Y$ are SFT/sofic/isolated/projectively isolated, then so is $X \sqcup Y$.

If $X$, $Y$ are sofic/projectively isolated in $\cS(A^G)$ then so is $X \cup Y$.
\end{lemma}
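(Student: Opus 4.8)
The plan is to isolate one structural fact about the disjoint‑union subshift and then read off all four ``quality'' properties from it, treating the second statement as a formal consequence of the first. Throughout I fix a finite, symmetric generating set $S$ of $G$ with $e \notin S$, and I call a pattern on $A \sqcup B$ with support $\set{e, s}$ (for $s \in S$) \emph{mixed} if it takes one value in $A$ and the other in $B$. The fact is: if $Z \in \cS((A \sqcup B)^G)$ realizes no translate of a mixed pattern, then---because the Cayley graph of $(G, S)$ is connected---every point of $Z$ takes all its values in $A$ or all its values in $B$, so $Z = Z_A \sqcup Z_B$ with $Z_A \coloneqq Z \cap A^G$ and $Z_B \coloneqq Z \cap B^G$ (possibly empty) subshifts of $A^G$, resp.\ $B^G$; moreover a pattern valued in $A$ can be realized in $Z$ only inside $Z_A$. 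This is the one place where finite generation of $G$ enters. Granting it, the SFT case is immediate: if $X$, $Y$ are defined by finite sets $\mathcal{F}_X$, $\mathcal{F}_Y$ of forbidden patterns, the SFT over $A \sqcup B$ forbidding $\mathcal{F}_X \cup \mathcal{F}_Y$ together with every mixed pattern is exactly $X \sqcup Y$. The sofic case follows by applying this to alphabet‑map‑induced SFT extensions $X' \to X$, $Y' \to Y$ and composing with the alphabet map $C \sqcup D \to A \sqcup B$.

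For the isolated and projectively isolated cases I would use that such subshifts are in particular SFT, resp.\ sofic, so $X \sqcup Y$ is at least an SFT and $\cS(X \sqcup Y) = \cU^-_R$ is clopen. In the isolated case, pick basic clopen isolating neighborhoods $\cU^+_{P_1} \cap \cU^-_{P_2} = \set{X}$ and $\cU^+_{Q_1} \cap \cU^-_{Q_2} = \set{Y}$; after enlarging $P_1$, $Q_1$ by a single‑site pattern occurring in $X$, resp.\ $Y$, I may assume $P_1$, $Q_1$ carry a non‑trivial pattern. Then $\cU^-_R \cap \cU^+_{P_1 \cup Q_1} \cap \cU^-_{P_2 \cup Q_2}$ is a clopen neighborhood of $X \sqcup Y$, and for $Z$ in it the structural fact gives $Z = Z_A \sqcup Z_B$: realizing $P_1$ forces $Z_A \neq \emptyset$ and $Z_A \in \cU^+_{P_1}$, while $Z_A \sub X$ gives $Z_A \in \cU^-_{P_2}$, so $Z_A = X$, and symmetrically $Z_B = Y$; hence $Z = X \sqcup Y$. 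For projective isolation I run the same computation one level up: write the witnesses as $(B_1, \Phi_1, O_1)$, $(B_2, \Phi_2, O_2)$ with $\Phi_i$ induced by $\phi_i \colon B_i \to A$ (resp.\ $B$), shrink $O_1 = \cU^+_{P_1} \cap \cU^-_{P_2}$ and $O_2 = \cU^+_{Q_1} \cap \cU^-_{Q_2}$ to basic open sets with $P_1$, $Q_1$ non‑trivial, let $\Phi$ be induced by $\phi_1 \sqcup \phi_2$, and let $\cU^-_R$ cut out the non‑mixed configurations over $B_1 \sqcup B_2$. Then $\cU \coloneqq \cU^-_R \cap \cU^+_{P_1 \cup Q_1} \cap \cU^-_{P_2 \cup Q_2}$ is non‑empty, and every $W \in \cU$ decomposes as $W_1 \sqcup W_2$ with $W_1 \in O_1$, $W_2 \in O_2$, so $\Phi[W] = \Phi_1[W_1] \sqcup \Phi_2[W_2] = X \sqcup Y$; thus $X \sqcup Y$ is projectively isolated.

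For the second statement I would observe that $X \cup Y = \Psi[X \sqcup Y]$, where $X \sqcup Y \in \cS((A \sqcup A)^G)$ and $\Psi$ is the map induced by the fold $A \sqcup A \to A$. If $X$, $Y$ are sofic, then $X \sqcup Y$ is sofic by the first part, hence a factor of an SFT, and composing that factor map with $\Psi$ exhibits $X \cup Y$ as a factor of an SFT, i.e.\ sofic. If $X$, $Y$ are projectively isolated in $\cS(A^G)$, then $X \sqcup Y$ is projectively isolated by the first part, witnessed by some $(B, \Phi, \cU)$ with $\Phi$ induced by an alphabet map; then $(B, \Psi \circ \Phi, \cU)$, with $\Psi \circ \Phi$ induced by the composite alphabet map, witnesses that $X \cup Y$ is projectively isolated in $\cS(A^G)$.

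I do not expect a deep obstacle here: everything turns on the elementary no‑mixed‑edges reduction of the first paragraph, which is exactly where the hypothesis that $G$ is finitely generated is used (connectedness of the Cayley graph). The only point requiring genuine care is the emptiness bookkeeping in the (projectively) isolated cases: one must ensure that $P_1$ and $Q_1$ each contain a non‑trivial pattern, so that realizing them in $Z$ (resp.\ $W$) forces the two halves to be non‑empty and hence to lie in the correct witnessing neighborhoods.
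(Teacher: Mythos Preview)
Your proposal is correct and follows essentially the same route as the paper: the key structural fact (forbidding the finitely many mixed $\{e,s\}$-patterns forces every configuration to live entirely in $A^G$ or entirely in $B^G$) is exactly what the paper uses, and your treatment of the second statement via the fold $A \sqcup A \to A$ matches the paper's factor map $X \sqcup Y \to X \cup Y$.

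The only difference is one of compression. The paper disposes of the isolated and projectively isolated cases in a single sentence, observing that $\cS(A^G \sqcup B^G)$ is an open subset of $\cS((A\sqcup B)^G)$ and leaving the rest implicit; you instead write out the isolating neighborhood $\cU^-_R \cap \cU^+_{P_1\cup Q_1}\cap \cU^-_{P_2\cup Q_2}$ explicitly and track the emptiness bookkeeping. One small wording slip: in the isolated case you write ``$Z_A \subseteq X$ gives $Z_A \in \cU^-_{P_2}$,'' which reverses cause and effect---the correct flow is $Z \in \cU^-_{P_2} \Rightarrow Z_A \in \cU^-_{P_2}$ (since $Z_A \subseteq Z$), and then $Z_A \in \cU^+_{P_1}\cap\cU^-_{P_2}=\{X\}$. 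This is cosmetic and does not affect the argument.
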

\begin{proof}
  First we note that $A^G \sqcup B^G$ is an SFT (as a subshift of $(A \sqcup B)^G$). Indeed, if $S$ is a finite, symmetric generating set of $G$ not containing $1_G$, it is enough to forbid the patterns $p \colon S \cup \set{1_G} \to A \sqcup B$ with $p(1_G) \in A$ and $p(s) \in B$ for $s \in S$. Now the statements for disjoint unions of SFT and sofic subshifts are immediate, and the other two follow after observing that $\cS(A^G \sqcup B^G)$ is an open subset of $\cS((A \sqcup B)^G)$.

  The second statement follows from the first and the fact that there is a natural factor map $X \sqcup Y \to X \cup Y$.
\end{proof}

Now let $A$ be a \df{(clopen) partition} of $\Omega$, that is, a finite set of elements of $\cB(\Omega)$ which are pairwise disjoint and whose union is $\Omega$. If all of the elements of $A$ are non-empty, we will say that the partition is \df{non-degenerate}. If $A$ is a partition of $\Omega$ and $\omega \in \Omega$, we will denote by $A(\omega)$ the element $a \in A$ such that $\omega \in a$.
For a fixed $\xi \in \Xi$, we can define a map $\Pi^A_\xi \colon \Omega \to A^G$ by
\begin{equation}
  \label{eq:defin-PiAxi}
  \Pi^A_\xi(\omega)(g) = A(\xi(g)^{-1} \cdot \omega) \quad \text{ for } \omega \in \Omega, g \in G
\end{equation}
and we note that $\Pi^A_\xi$ is continuous and $G$-equivariant.
Using this, we define the map $\pi_A \colon \Xi(G) \to \cS(A^G)$ by:
\begin{equation}
  \label{eq:defn-piA}
  \pi_A(\xi) = \Pi^A_\xi[\Omega]
\end{equation}
and note that it is continuous. Moreover, the topology on $\Xi(G)$ is induced by the maps $\pi_A$ as $A$ varies over non-degenerate partitions of $\Omega$.

Every $X \in \cS(A^G)$ admits a canonical generating partition $\hat{A}(X)$ given by
\begin{equation*}
  \hat{A}(X) \coloneqq \set[\big]{\set{x \in X : x(1_G) = a} : a \in A},
\end{equation*}
which is in a bijection with a subset of $A$.


\section{Fraïssé theory for Cantor actions}
\label{sec:Fraisse-cantor-actions}

As was first observed by Ivanov \cite{Ivanov1999} and more systematically exploited by Kechris and Rosendal in \cite{Kechris2007a}, Fraïssé theory provides a convenient framework for studying the existence of dense and comeager conjugacy classes. In \autoref{sec:append-g_delta-isom}, we prove a general criterion for the existence of a comeager conjugacy class and here we provide the necessary translations to apply this result in the context of Cantor dynamics.

First, we use Stone duality to convert dynamical systems into algebraic objects.
\begin{defn}
  \label{df:Boolean-algebra}
  A \df{Boolean $G$-algebra} is a Boolean algebra $\cA$ equipped with an action of $G$ by automorphisms. A Boolean $G$-algebra $\cA$ is \df{finitely generated} if there is a finite subset $A \sub \cA$ that generates $\cA$ as a Boolean $G$-algebra.
\end{defn}

It is easy to see that if a Boolean $G$-algebra is finitely generated, then the generating set $A$ can be taken to be a partition of $\bOne$. In that case, every element of $\cA$ can be written as a finite union of elements of the form
  \begin{equation*}
    \bigcap_{g \in F} g \cdot p(g),
  \end{equation*}
  where $p \colon F \to A$ is a pattern.

  Using the Stone functors $Z \mapsto \cB(Z)$, $\cA \mapsto \tS(\cA)$, we see that zero-dimensional $G$-flows correspond to Boolean $G$-algebras and subshifts of $A^G$ correspond to Boolean $G$-algebras generated by the finite partition $\hat A$. More precisely, $\cB(A^G)$ is the free Boolean $G$-algebra generated by the partition $\hat A$ and subshifts of $A^G$ correspond to its quotients. A subshift is of finite type if the corresponding ideal in $\cB(A^G)$ is finitely generated (over $G$). Factor maps of $G$-flows correspond to embeddings of Boolean $G$-algebras.

  Next we explain how to see the space of subshifts as a type space in the sense of \autoref{sec:append-g_delta-isom} to which we refer for the relevant definitions. Let $\cL$ be the language of Boolean algebras $\set{\bZero, \bOne, \cap, \cup, \neg}$ and let $\cL_G$ be $\cL$ augmented with unary function symbols for all elements of $G$. Let $\cF$ be the hereditary $\cL$-class of Boolean algebras and let $\cF(G)$ be the $\cL_G$-class of all $G$-Boolean algebras. For a finite set of variables $x$, it is easy to see that $\tS_x(\cF)$ is finite and $\tS_x(\cF(G))$ is compact (the theory of $G$-Boolean algebras is first-order, universal). We let $A = 2^x$ and
  \begin{equation*}
    \tS_A^\Part(\cF(G)) = \set{p \in \tS_A(\cF(G)) : p \models \bigsqcup_{\alpha \in A} \alpha = \bOne},
  \end{equation*}
  that is, the set of types where the variables form a partition of $\bOne$. For $\alpha \in A$, we let $t_\alpha(x)$ be the $\cL$-term $\bigcap_{v \in x} v^{\alpha(v)}$, where $v^1 = v$ and $v^0 = \neg v$.
  There is a homeomorphism $\Phi \colon \tS_x(\cF(G)) \to \tS_A^\Part(\cF(G))$ given by
  \begin{equation*}
    \Phi(q) \models \phi(A) \iff q \models \phi \big((t_\alpha(x) : \alpha \in A)\big) \quad \text{ for } q \in \tS_x(\cF(G)),
  \end{equation*}
  and another homeomorphism $\Psi \colon \cS(A^G) \to \tS_A^\Part(\cF(G))$ given by
  \begin{equation*}
    \Psi(X) \models \bigcap_{g \in F} g \cdot p(g) \neq \bZero \iff X \cap \cU_p \neq \emptyset
  \end{equation*}
  for $X \in \cS(A^G)$ and for every pattern $p \colon F \to A$ with $F \sub G$ finite. Moreover, using these identifications, if $y$ is a finite set of variables disjoint from $x$ and $B = 2^{x \cup y}$, then the projection map $\pi_x \colon \tS_{xy}(\cF(G)) \to \tS_x(\cF(G))$ can be represented as the projection $\cS(B^G) \to \cS(A^G)$ given by the map $B \to A$, $\alpha \mapsto \alpha|_x$. This shows that for all intents and purposes, we may identify the type space $\tS_x(\cF(G))$ and the space of subshifts $\cS(A^G)$. In particular, it makes sense to say that a certain class of subshifts is a topological Fraïssé class in the sense of \autoref{df:top-Fraisse}.

Next we consider coding models of $\cF(G)$, i.e., Boolean $G$-algebras. We use a coding by expansions. Let $u$ be a countable set of variables and let $c \in \cB(\Omega)^u$ be a tuple that bijectively enumerates $\cB(\Omega)$. Following \autoref{ex:expansions}, we let
\begin{equation*}
  \Xi(G) = \set{\xi \in \Xi_0(\cF(G)) : \xi|_\cL = \tp c}.
\end{equation*}
There is an obvious identification between the space defined above and the one defined by \autoref{eq:defn-Xi}.

\begin{prop}
  \label{p:all-subshift-Fraisse}
  Let $G$ be a countable group. Then the class $\cF(G)$ of $G$-subshifts is a topological Fraïssé class and $\Xi(G)$ is an admissible coding for models of $\cF(G)$ in the sense of \autoref{df:admissible-coding}.
\end{prop}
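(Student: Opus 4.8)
The plan is to unwind the two definitions being invoked — the notion of topological Fraïssé class from \autoref{df:top-Fraisse} and that of admissible coding from \autoref{df:admissible-coding} — and check their clauses one at a time, throughout using the identification of $\tS_A(\cF(G))$ with $\cS(A^G)$ established above, so that everything can be phrased in the language of subshifts.

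For the statement that $\cF(G)$ is a topological Fraïssé class, hereditarity is immediate: a $G$-subalgebra of a $G$-Boolean algebra is again one, which dually says the relevant class of quantifier-free types is closed under the coordinate projections $\pi_x$. Joint embedding and amalgamation I would verify on the dual (subshift) side. Joint embedding amounts to the fact that any two subshifts $X \in \cS(A^G)$ and $Y \in \cS(B^G)$ have a common extension, namely the product $X \times Y \sub (A \times B)^G$, which is nonempty and factors onto each (the degenerate empty subshift, if admitted by the convention, being trivial); amalgamation amounts to the fact that, given factor maps $X \to Z$ and $Y \to Z$, the fiber product $X \times_Z Y$ is nonempty — factor maps are surjective — and factors onto $X$ and $Y$. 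Dually this is just the existence of coproducts and pushouts in the category of Boolean $G$-algebras, computed as (amalgamated) free products of Boolean algebras with the diagonal $G$-action; in particular $\cF(G)$ enjoys the full amalgamation property, hence a fortiori the weak amalgamation property. On the topological side, the type spaces $\tS_x(\cF(G)) \cong \cS(A^G)$ are compact metrizable, since, as observed above, the theory of $G$-Boolean algebras is universal first-order, and the projection maps $\pi_x \colon \cS(B^G) \to \cS(A^G)$ induced by maps $B \to A$ are continuous, being the restrictions to the hyperspaces of the continuous factor maps $B^G \to A^G$. I expect the remaining condition \autoref{i:tF:cond-pi} to hold here essentially for free: as explained in the introduction, it is the subtle point exactly when the type spaces fail to be compact, whereas here every $\cS(A^G)$ in play is compact.

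For the statement that $\Xi(G)$ is an admissible coding, recall that, following \autoref{ex:expansions}, $\Xi(G)$ is the set of expansions of the single fixed $\cL$-structure $\cB(\Omega)$ (enumerated by $c$) by a $G$-action; its topology, generated by the sets in \autoref{eq:subbasis-Xi-definition-topo-Homeo}, coincides with the coding topology and is induced by the continuous maps $\pi_A$, and the conjugation action of $\Homeo(\Omega)$ corresponds to re-enumeration of coding models, so that conjugacy classes are precisely isomorphism classes. The one clause with content is that the coding is rich enough: every relevant model of $\cF(G)$ is realized. Given a countable $G$-Boolean algebra $\cA$, set $Z = \tS(\cA)$ and let $W = Z \times 2^{G \times \N}$, with $G$ acting diagonally (by the shift on the second coordinate); then $W$ is compact, metrizable, zero-dimensional and perfect, hence homeomorphic to $\Omega$, the first projection $W \to Z$ is a $G$-factor map, and transporting the action on $W$ to $\Omega$ produces a $\xi \in \Xi(G)$ together with a $G$-equivariant embedding $\cA = \cB(Z) \hookrightarrow \cB(W) = \cB(\Omega)$. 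Since $\cB(\Omega)$ is the countable atomless Boolean algebra and is homogeneous, one can similarly realize prescribed finite configurations inside coded models, which should match whatever genericity requirement \autoref{df:admissible-coding} imposes.

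The main obstacle I anticipate is not any one hard estimate but careful bookkeeping in two places. First, in the amalgamation clause: pinning down the degenerate cases so that ``nonempty and factors onto'' is literally correct, and checking that the diagonal $G$-action on the amalgamated free product is well defined and that the structure maps are genuine embeddings of Boolean $G$-algebras. Second, on the coding side: confirming that fixing the underlying Boolean algebra to be $\cB(\Omega)$ — rather than allowing an arbitrary countable, a priori not even atomless, underlying algebra — still satisfies the surjectivity and density clauses of \autoref{df:admissible-coding}. The key point making the latter work is that $\Omega$ is absorbing for zero-dimensional $G$-flows: the product construction above shows that every such flow is a $G$-factor of a Cantor $G$-flow, so nothing is lost by restricting attention to expansions of the single algebra $\cB(\Omega)$.
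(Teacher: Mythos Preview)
Your proposal is correct and follows essentially the same approach as the paper: amalgamation via the fiber product $Y_1 \times_X Y_2$, the topological conditions \autoref{i:tF:Gdelta} and \autoref{i:tF:cond-pi} disposed of by compactness of $\cS(A^G)$, and admissibility of the coding via \autoref{ex:expansions}. The paper's proof is much terser---it simply cites \autoref{ex:expansions} for the coding part without spelling out the absorption argument---whereas you supply the product construction $Z \times 2^{G\times\N}$ that verifies the hypothesis of that example; this is a correct and useful elaboration, not a different route.
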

\begin{proof}
  To check amalgamation, let $X, Y_1, Y_2$ be $G$-subshifts and let $\pi_1 \colon Y_1 \to X$, $\pi_2 \colon Y_2 \to X$ be factor maps. Then
  \begin{equation*}
    Y_1 \times_X Y_2 = \set{(y_1, y_2) \in Y_1 \times Y_2 : \pi_1(y_1) = \pi_2(y_2)}
  \end{equation*}
  is a $G$-subshift which amalgamates $Y_1$ and $Y_2$ over $X$. The other two items of \autoref{df:top-Fraisse} are satisfied because each $\cS(A^G)$ is compact.

  That the coding $\Xi(G)$ is admissible follows from \autoref{ex:expansions}.
\end{proof}

We also note that the notion of a projectively isolated type (as in \autoref{df:proj-isolated}) coincides with the notion of a projectively isolated subshift discussed in \autoref{sec:subshifts}. From \autoref{c:proj-isolated-equivalence}, we obtain the following.

\begin{cor}[\cite{Doucha2022p}*{Theorem~3.1}]
  \label{c:all-actions-comeager-ex}
  Let $G$ be a countable group. Then the following are equivalent:
  \begin{enumerate}
  \item $\Xi(G)$ admits a comeager conjugacy class.
  \item Projectively isolated subshifts in $\cS(A^G)$ are dense in $\cS(A^G)$ for every $A$.
  \end{enumerate}
\end{cor}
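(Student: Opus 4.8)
The plan is to deduce the corollary directly from the general criterion of \autoref{c:proj-isolated-equivalence}, together with the dictionary set up just above the statement. By \autoref{p:all-subshift-Fraisse}, the class $\cF(G)$ of $G$-subshifts is a topological Fraïssé class in the sense of \autoref{df:top-Fraisse}, and $\Xi(G)$, equipped with the coding by expansions described before the statement, is an admissible coding for its models in the sense of \autoref{df:admissible-coding}. Hence \autoref{c:proj-isolated-equivalence} applies and tells us that the space of models coded by $\Xi(G)$ admits a comeager isomorphism class if and only if the projectively isolated types (in the sense of \autoref{df:proj-isolated}) are dense in $\tS_x(\cF(G))$ for every finite tuple of variables $x$. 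It then remains to translate both sides of this equivalence.

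For the left-hand side: under the identification of the coding space $\Xi(G)$ with the space of actions from \autoref{eq:defn-Xi}, two codes are isomorphic as coded structures exactly when the corresponding Boolean $G$-algebras are isomorphic over the fixed $\cL$-reduct, which by Stone duality means precisely that the two $G$-flows on $\Omega$ are conjugate; so a comeager isomorphism class is exactly a comeager conjugacy class. For the right-hand side: for $A = 2^x$, the map $\Psi^{-1} \circ \Phi$ is a homeomorphism $\tS_x(\cF(G)) \to \cS(A^G)$ that intertwines the projection maps, and, as already noted above, under it projectively isolated types correspond to projectively isolated subshifts; thus density of projectively isolated types in $\tS_x(\cF(G))$ becomes density of projectively isolated subshifts in $\cS\big((2^x)^G\big)$. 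Finally, quantifying over all finite tuples $x$ — equivalently, over all finite alphabets of the form $2^x$ — is equivalent to quantifying over all finite alphabets $A$ as in (ii), since an arbitrary finite alphabet embeds into some $2^x$, the resulting copy of $\cS(A^G)$ inside $\cS\big((2^x)^G\big)$ is clopen, and projective isolation of a subshift is unchanged by enlarging the ambient alphabet in this way. Putting these reformulations together gives the equivalence of (i) and (ii).

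No substantial obstacle arises: essentially all the content is carried by \autoref{c:proj-isolated-equivalence} and by the identifications established before the statement (the type space $\tS_x(\cF(G))$ with $\cS(A^G)$, isomorphism of coded models with conjugacy of actions, and projectively isolated types with projectively isolated subshifts). The only point that requires a moment's care is the elementary alphabet bookkeeping in the last step, matching the quantifier ``for every $A$'' in (ii) with the quantifier over finite variable tuples in the abstract criterion.
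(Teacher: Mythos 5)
Your proposal is correct and follows essentially the same route as the paper: the corollary is obtained there by combining \autoref{p:all-subshift-Fraisse} with \autoref{c:proj-isolated-equivalence}, using the identification of $\tS_x(\cF(G))$ with $\cS(A^G)$ and of projectively isolated types with projectively isolated subshifts set up just before the statement. Your extra care about passing between alphabets of the form $2^x$ and arbitrary finite alphabets is a harmless elaboration of a point the paper leaves implicit.
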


Next we turn to the class of minimal subshifts that we denote by $\cF_\Min(G)$. We also let $\Xi_\Min(G) = \Xi(G) \cap \Xi_0(\cF_\Min(G))$ and again, there is an obvious identification with the previous definition.
\begin{prop}
  \label{p:minimal-subshift-Fraisse}
  Let $G$ be a countable group. Then the class $\cF_\Min(G)$ is a topological Fraïssé class. If $G$ is infinite, then $\Xi_\Min(G)$ is an admissible coding for models of $\cF_\Min(G)$.
\end{prop}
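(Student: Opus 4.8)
The plan is to verify the two clauses of \autoref{p:minimal-subshift-Fraisse} separately, reducing as much as possible to \autoref{p:all-subshift-Fraisse}.

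\medskip

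\textbf{Step 1: $\cF_\Min(G)$ is a topological Fraïssé class.} I would first check that $\cF_\Min(G)$ is closed under the relevant operations: it is hereditary in the sense needed, since a factor of a minimal flow is minimal, so if $X$ is minimal and $X' \to X$... wait, the direction is that factor maps correspond to embeddings, so I need that a minimal subshift embeds over arbitrary smaller ones; concretely, the subclass of minimal subshifts is stable under taking factors, which is exactly what is required since factor maps are the morphisms. For amalgamation, the key point is that the fiber product $Y_1 \times_X Y_2$ of two minimal flows over a common minimal factor need \emph{not} be minimal, so the proof of \autoref{p:all-subshift-Fraisse} does not transfer verbatim. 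However, what is needed is only \df{weak} amalgamation (joint embedding plus the weak amalgamation property), together with the topological conditions of \autoref{df:top-Fraisse}. I would take $Y_1 \times_X Y_2$ and then replace it by any minimal subflow $Z \sub Y_1 \times_X Y_2$ (which exists by Zorn's lemma, since the space is compact); the two projections restricted to $Z$ are still surjective onto $X$ by minimality of $X$ (the image is a nonempty closed invariant subset of $X$), so $Z$ is a minimal amalgam of $Y_1$ and $Y_2$ over $X$. This even gives full amalgamation inside $\cF_\Min(G)$. Joint embedding over the trivial (one-point) flow is the special case $X = \{\pt\}$. The genuinely new point, and the one I expect to be the main obstacle, is the extra topological condition \autoref{i:tF:cond-pi} of \autoref{df:top-Fraisse}, which the introduction explicitly warns is needed precisely because the spaces $\cS_\Min(A^G)$ are \emph{not compact} (only $G_\delta$, by \autoref{p:trmin-subflows-Gdelta}). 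Verifying that condition will require a genuine argument about how minimal subshifts sit inside $\cS(A^G)$ — presumably using \autoref{c:SFT-basis-minimal} to understand basic open sets and showing that the relevant projection/intersection behaves well enough along the non-compact type space; this is where I would expect to spend the bulk of the work.

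\medskip

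\textbf{Step 2: admissibility of the coding, assuming $G$ infinite.} Here I would follow the pattern of \autoref{p:all-subshift-Fraisse}: the coding $\Xi_\Min(G) = \Xi(G) \cap \Xi_0(\cF_\Min(G))$ is obtained from the expansion coding of \autoref{ex:expansions}, which is already known to be admissible for $\cF(G)$, so the only thing to check is the clause in \autoref{df:admissible-coding} that distinguishes this subclass — essentially that $\Xi_\Min(G)$ is nonempty and that it is a $G_\delta$ (hence Polish) subset on which the relevant maps are defined, which is \autoref{p:trans-min-Gdelta}. Nonemptiness is exactly where the hypothesis that $G$ is infinite enters: every infinite countable group admits a faithful minimal action on the Cantor space (for instance via a minimal subshift, or the boundary of a suitable Bratteli–Vershik/odometer-type construction), whereas a finite group has no perfect minimal flow, so $\Xi_\Min(G)$ would be empty and the coding degenerate. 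I would cite or sketch such an example to close this point.

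\medskip

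So the skeleton is: (i) hereditariness from ``factor of minimal is minimal''; (ii) amalgamation by passing to a minimal subflow of the fiber product; (iii) the two topological conditions, one of which ($G_\delta$-ness, finiteness of fibered type spaces over a compact base) is routine and the other (\autoref{i:tF:cond-pi}) is the crux and must be checked by hand using the SFT basis for $\cS_\Min(A^G)$; (iv) admissibility reduces to \autoref{ex:expansions} plus nonemptiness of $\Xi_\Min(G)$, the latter using $G$ infinite. The main obstacle, as flagged in the introduction, is condition \autoref{i:tF:cond-pi}: the lack of compactness of the minimal type spaces means one cannot simply invoke ``compactness'' as in \autoref{p:all-subshift-Fraisse}, and a careful density/continuity argument along the $G_\delta$ type space is required.
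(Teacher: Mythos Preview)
Your skeleton matches the paper's proof, but there are two places where it slips or leaves a genuine gap.

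First, a small slip in the amalgamation step: when you pass to a minimal subflow $Z \subseteq Y_1 \times_X Y_2$, what you need is that the projections $Z \to Y_1$ and $Z \to Y_2$ are surjective, and this follows from the minimality of $Y_1$ and $Y_2$ (each image is a nonempty closed invariant subset), not from the minimality of $X$ as you wrote. This is exactly the paper's argument.

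Second, and more substantively, your Step~2 is incomplete. Admissibility via \autoref{ex:expansions} does \emph{not} reduce to mere nonemptiness of $\Xi_\Min(G)$: the hypothesis of \autoref{ex:expansions} is that \emph{every} type in $\tS_x(\cF_\Min(G))$ can be realized in a model whose $\cL_0$-reduct is $\cB(\Omega)$, i.e., that every minimal $G$-subshift $X$ arises as a factor of some minimal action $G \actson \Omega$. When $X$ is infinite this is immediate (a minimal infinite subshift is perfect, hence its underlying space is already a Cantor space). But when $X$ is finite you need an extra step, which is where the hypothesis ``$G$ infinite'' actually enters: take any minimal action $G \actson Z$ on a Cantor space, and observe that any minimal subset of $Z \times X$ is perfect and factors onto $X$. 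You did not flag this case, and ``nonemptiness plus $G_\delta$'' does not cover it.

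Finally, for condition~\autoref{i:tF:cond-pi}, which you rightly identify as the crux but leave open: the paper's argument is short and you should know it. For the projection $\pi$ induced by a surjection $B \to A$ and a basic open set $\cS_\Min(Z)$ of $\cS_\Min(B^G)$ (with $Z \subseteq B^G$ an SFT, cf.\ \autoref{c:SFT-basis-minimal}), one shows
\[
\pi[\cS_\Min(Z)] = \cS_\Min(\pi(Z)),
\]
which is \emph{closed} in $\cS_\Min(A^G)$. The inclusion $\subseteq$ is obvious. For $\supseteq$, given minimal $X \subseteq \pi(Z)$, take any minimal subflow $Y$ of $Z \cap \Pi^{-1}(X)$; then $\pi(Y) = X$ by minimality of $X$. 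So the image of every basic open set is closed, hence $F_\sigma$, and the condition follows.
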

\begin{proof}
  We verify the conditions of \autoref{df:top-Fraisse}.

  \autoref{i:tF:amalg} The proof is the same as in \autoref{p:all-subshift-Fraisse}, except that now we have to take a minimal subshift of $Y_1 \times_X Y_2$.

  \autoref{i:tF:Gdelta} This follows from \autoref{p:trmin-subflows-Gdelta}.

  \autoref{i:tF:cond-pi} Let $A, B$ be finite alphabets and fix some surjective map $B \to A$. Let $\Pi \colon B^G \to A^G$ denote the corresponding projection. We define $\pi \colon \cS(B^G) \to \cS(A^G)$ by $\pi(X) = \Pi[X]$. Let $\cS_\Min(Z)$, where $Z \in \cS(B^G)$ is an SFT, be a basic open set in $\cS_\Min(B^G)$ (see \autoref{c:SFT-basis-minimal}). We claim that
  \begin{equation*}
    \pi[\cS_\Min(Z)] = \cS_\Min(\pi(Z)),
  \end{equation*}
  which is a closed set. The $\sub$ inclusion is obvious. For the other, let $X \sub \pi(Z)$ be a minimal subshift and let $Y$ be any minimal subshift of $Z \cap \Pi^{-1}(X)$. Then $\pi(Y) = X$.

  To verify that $\Xi_\Min(G)$ is an admissible coding, according to \autoref{ex:expansions}, we only have to check that every minimal $G$-subshift $X$ can be realized as a factor of a minimal action $G \actson \Omega$. If $X$ is infinite, then the underlying space of $X$ is homeomorphic to $\Omega$, so we may assume that $X$ is finite. Let $G \actson Z$ be any minimal action on a Cantor space. Then any minimal subset of $Z \times X$ is homeomorphic to the Cantor set and has $X$ as a factor.
\end{proof}

Applying \autoref{c:proj-isolated-equivalence} to this situation, we obtain the following.
\begin{cor}
  \label{c:min-actions-comeager-ex}
  Let $G$ be a countable, infinite group. Then the following are equivalent:
  \begin{enumerate}
  \item There is a comeager conjugacy class in $\Xi_\Min(G)$.
  \item Minimal subshifts which are projectively isolated in $\cS_\Min(A^G)$ are dense in $\cS_\Min(A^G)$ for every $A$.
  \end{enumerate}
\end{cor}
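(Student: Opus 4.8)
The plan is to obtain this corollary mechanically from the abstract equivalence \autoref{c:proj-isolated-equivalence}, once its hypotheses have been verified and its conclusion transported back into dynamical language. Both tasks have essentially been carried out already. By \autoref{p:minimal-subshift-Fraisse}, the class $\cF_\Min(G)$ of minimal $G$-subshifts is a topological Fraïssé class in the sense of \autoref{df:top-Fraisse}, and --- this is where the assumption that $G$ is infinite is used --- the space $\Xi_\Min(G)$ is an admissible coding for its models in the sense of \autoref{df:admissible-coding}. Hence \autoref{c:proj-isolated-equivalence} applies and yields: $\Xi_\Min(G)$ has a comeager isomorphism class if and only if, for every finite set of variables $x$, the projectively isolated types (\autoref{df:proj-isolated}) are dense in the type space $\tS_x(\cF_\Min(G))$.

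It then remains to rephrase each side of this equivalence. For the left-hand side: $\Xi_\Min(G)$ is a coding by expansions whose underlying objects are enumerations of the clopen algebra $\cB(\Omega)$ equipped with a minimal $G$-action, so by Stone duality an isomorphism of codings is a conjugacy of the dual $G$-flows on $\Omega$, and the fixed enumeration $c$ of $\cB(\Omega)$ was chosen precisely so that this coincides with the conjugation action of $\Homeo(\Omega)$. Thus a comeager isomorphism class in the coding is exactly a comeager conjugacy class in $\Xi_\Min(G)$. For the right-hand side: the homeomorphisms $\Phi\colon \tS_x(\cF_\Min(G)) \to \tS_A^\Part(\cF_\Min(G))$ and $\Psi\colon \cS_\Min(A^G) \to \tS_A^\Part(\cF_\Min(G))$ from \autoref{sec:Fraisse-cantor-actions}, with $A = 2^x$, identify $\tS_x(\cF_\Min(G))$ with $\cS_\Min(A^G)$ compatibly with the projection maps (which become the alphabet-restriction maps $\cS_\Min(B^G) \to \cS_\Min(A^G)$), so a projectively isolated type is the same thing as a projectively isolated subshift, as already noted before \autoref{c:all-actions-comeager-ex}. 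Finally, quantifying over all finite variable sets $x$ is equivalent to quantifying over all finite alphabets $A$: an arbitrary finite $A$ injects into some $2^x$, realizing $\cS_\Min(A^G)$ as an open subspace of $\cS_\Min\big((2^x)^G\big)$, and a routine check shows that projective isolation passes between the two identifications in both directions.

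I do not expect a genuinely hard step: the substance has been absorbed into \autoref{p:minimal-subshift-Fraisse} and the appendix, the one delicate axiom being condition \autoref{i:tF:cond-pi}, which reflects the non-compactness of the spaces $\cS_\Min(A^G)$ and was verified there by passing to minimal subshifts of fibre products. The only point in the corollary itself that needs a moment of care is checking that the dictionary above is faithful on the nose --- that ``isomorphism of codings'' is dynamical conjugacy and that ``projectively isolated'' means the same on the model-theoretic and the dynamical sides --- so that the two conditions in the statement are literal translations of the two conditions produced by \autoref{c:proj-isolated-equivalence}.
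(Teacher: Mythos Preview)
Your proposal is correct and follows exactly the paper's approach: the paper's proof is the single sentence ``Applying \autoref{c:proj-isolated-equivalence} to this situation, we obtain the following,'' with \autoref{p:minimal-subshift-Fraisse} having just supplied the hypotheses. Your additional care in spelling out the dictionary between the model-theoretic and dynamical sides is not wrong but is unnecessary here, since that translation was set up once and for all earlier in \autoref{sec:Fraisse-cantor-actions}.
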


We have the following characterization of minimal subshifts which are projectively isolated in $\cS_\Min(A^G)$.
\begin{prop}
  \label{p:sofic-minimal-proj-isol}
  Let $G$ be a countable group and let $A$ be a finite alphabet. Then the following are equivalent:
  \begin{enumerate}
  \item $X \in \cS_\Min(A^G)$ is projectively isolated in $\cS_\Min(A^G)$.
  \item There exists a sofic $Y \in \cS(A^G)$ such that $X$ is the unique minimal subshift of $Y$.
  \end{enumerate}
  In particular, every sofic minimal subshift is projectively isolated in $\cS_\Min(A^G)$.
\end{prop}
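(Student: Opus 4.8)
The plan is to prove the two implications separately, with the final "in particular" clause following immediately from (2)$\Rightarrow$(1) together with the definition of sofic (take $Y = X$ itself, which is certainly sofic and has $X$ as its unique minimal subshift, being minimal).

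For the implication (2)$\Rightarrow$(1), suppose $Y \in \cS(A^G)$ is sofic with $X$ as its unique minimal subshift. Let $B$ be a finite alphabet, $\phi \colon B \to A$ a surjection inducing a factor map $\Phi \colon B^G \to A^G$, and $Z \in \cS(B^G)$ an SFT with $\Phi[Z] = Y$. I would then take the open neighborhood $\cU = \cS_\Min(Z) \subseteq \cS_\Min(B^G)$ (which is open by \autoref{c:SFT-basis-minimal}, since $Z$ is an SFT) and claim that $\pi[\cU] = \set{X}$, where $\pi \colon \cS_\Min(B^G) \to \cS_\Min(A^G)$ is the map $W \mapsto \Phi[W]$. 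Indeed, if $W \subseteq Z$ is a minimal subshift, then $\Phi[W]$ is a minimal subshift of $\Phi[Z] = Y$, hence $\Phi[W] = X$ by the uniqueness assumption. Conversely, $\cU$ is non-empty since $Z$ contains a minimal subshift, so this exhibits $X$ as projectively isolated in $\cS_\Min(A^G)$ via the factor map $\Phi$ and the open set $\cU$.

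For the implication (1)$\Rightarrow$(2), suppose $X$ is projectively isolated in $\cS_\Min(A^G)$: there is a finite alphabet $B$, a factor map $\Phi \colon B^G \to A^G$ (which we may assume comes from a surjection $\phi \colon B \to A$), and a non-empty open $\cU \subseteq \cS_\Min(B^G)$ with $\Phi[W] = X$ for every $W \in \cU$. By \autoref{c:SFT-basis-minimal} I would shrink $\cU$ to a basic open set, so without loss of generality $\cU = \cS_\Min(Z)$ for some SFT $Z \in \cS(B^G)$ with $\cS_\Min(Z) \neq \emptyset$. Now set $Y \coloneqq \Phi[Z] \in \cS(A^G)$; this is sofic, being the image of an SFT under a factor map induced by an alphabet map. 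It remains to check that $X$ is the unique minimal subshift of $Y$. That $X \subseteq Y$ follows by picking any minimal $W \subseteq Z$ (so $W \in \cU$), giving $X = \Phi[W] \subseteq \Phi[Z] = Y$. For uniqueness, let $X' \subseteq Y$ be any minimal subshift; I want to lift it to a minimal subshift of $Z$. Since $\Phi[Z] = Y \supseteq X'$, the set $Z \cap \Phi^{-1}(X')$ is a non-empty closed $G$-invariant subset of $Z$, hence contains a minimal subshift $W'$; then $W' \in \cS_\Min(Z) = \cU$, so $X = \Phi[W']$, and $\Phi[W'] \subseteq X'$ with $\Phi[W']$ minimal forces $\Phi[W'] = X'$. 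Hence $X' = X$.

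The argument is essentially a rerun of the proof of condition \autoref{i:tF:cond-pi} in \autoref{p:minimal-subshift-Fraisse}, and I expect no serious obstacle; the one point requiring a little care is the reduction in (1)$\Rightarrow$(2) to a basic open set of the special form $\cS_\Min(Z)$ with $Z$ an SFT over the \emph{same} alphabet $B$ — this is exactly what \autoref{c:SFT-basis-minimal} provides, so it is routine, but it is worth noting explicitly since projective isolation is a priori only about the existence of \emph{some} open set. One should also confirm that $Z \cap \Phi^{-1}(X')$ is genuinely closed and $G$-invariant (immediate, as $\Phi$ is continuous and equivariant and $X'$ is a subshift) so that the existence of a minimal subflow inside it is guaranteed by Zorn's lemma as usual.
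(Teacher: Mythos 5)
Your proposal is correct and follows essentially the same route as the paper: for (1)$\Rightarrow$(2) you shrink the isolating neighborhood to $\cS_\Min(Z)$ for an SFT $Z$ via \autoref{c:SFT-basis-minimal}, set $Y=\Phi[Z]$, and rule out a second minimal subshift $X'\sub Y$ by lifting it through $Z\cap\Phi^{-1}(X')$; for (2)$\Rightarrow$(1) you use the SFT witnessing soficity, exactly as in the paper. The ``in particular'' clause via $Y=X$ is also the intended argument.
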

\begin{proof}
  \begin{cycprf}
  \item[\impnext] Suppose that $X$ is projectively isolated. Then there exists a finite alphabet $B$, a map $\phi \colon B \to A$, and a non-empty open set $\cU \sub \cS(B^G)$ such that, denoting by $\Phi$ the $G$-map induced by $\phi$, $\Phi[Z] = X$ for every $Z \in \cU \cap \cS_\Min(B^G)$. By \autoref{c:SFT-basis-minimal}, we may assume that $\cU = \cS(Z_0)$ for some SFT $Z_0$. Then we can take $Y = \Phi[Z_0]$. It is clear that $X \sub Y$ and if $X'$ is another minimal subshift of $Y$, then any minimal subshift of $\Phi^{-1}(X') \cap Z_0$ does not map to $X$, contradicting the choice of $Z_0$.

  \item[\impfirst] Let $Z \in \cS(B^G)$ be an SFT and let $\Phi \colon B^G \to A^G$ be a factor map such that $\Phi[Z] = Y$. Now $\Phi$ must map every minimal subshift from the open set $\cS(Z)$ to a minimal subshift of $Y$, i.e., to $X$.
  \end{cycprf}
\end{proof}

\begin{question}
  \label{q:proj-isolated-sofic}
  Do there exist a countable group $G$ and a subshift in $\cS_\Min(A^G)$ which is projectively isolated in $\cS_\Min(A^G)$ but not sofic?
\end{question}
A negative answer to this question for $\Z^d$ would also imply a negative answer to \autoref{intro:q:intro:min-Zd}.

We finally consider the classes of periodic subshifts and periodic, transitive subshifts, that we will denote by $\cF_\Per(G)$ and $\cF_\Pertr(G)$, respectively.
We also denote $\Xi_\Per(G) = \Xi(G) \cap \Xi_0(\cF_\Per(G))$ and $\Xi_\Pertr(G) = \Xi(G) \cap \Xi_0(\cF_\Pertr(G))$. Actions in $\Xi_\Per(G)$ are often called \df{profinite} actions and they are exactly the equicontinuous actions on $\Omega$. A profinite action is in $\Xi_\Pertr(G)$ iff it is transitive iff it is minimal (an equicontinuous action preserves a metric, so if one orbit is dense, then all orbits are). We note that $\Xi_\Per(G)$ is a $G_\delta$ set in $\Xi(G)$.
\begin{prop}
  \label{p:periodic-subshifts}
  Let $G$ be a finitely generated group. Then:
  \begin{enumerate}
  \item \label{i:per:isolated} Each $X \in \cS_\Per(A^G)$ is isolated in $\cS(A^G)$.
  \item \label{i:per:Fraisse} $\cF_\Per(G)$ and $\cF_\Pertr(G)$ are topological Fraïssé classes.
  \item \label{i:per:admissible-coding-per} $\Xi_\Per(G)$ is an admissible coding for models of $\cF_\Per(G)$.
  \item \label{i:per:admissible-coding-pertr} If $G$ has arbitrarily large finite quotients, then $\Xi_\Pertr(G)$ is an admissible coding for models of $\cF_\Pertr(G)$.
  \end{enumerate}
\end{prop}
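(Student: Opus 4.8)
The plan is to prove (i) first, where the real content lies, and then deduce (ii)--(iv) by combining it with the template already used in \autoref{p:all-subshift-Fraisse} and \autoref{p:minimal-subshift-Fraisse}. The only genuinely nontrivial point is a claim buried in (i): over a finitely generated group, every finite subshift is an SFT. (This is false for non-finitely generated $G$, which is precisely why the hypothesis is there.) Once it is established, everything else is routine.

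For (i): let $X \in \cS_\Per(A^G)$, so $X$ is finite. The kernel $N$ of the action of $G$ on the finite set $X$ then has finite index in $G$, hence---$G$ being finitely generated---is itself finitely generated, and every point of $X$ is fixed by $N$. Thus $X$ is a subshift of $W \coloneqq \set{x \in A^G : n \cdot x = x \text{ for all } n \in N}$, and since it suffices to impose the relations coming from a finite generating set of $N$, the subshift $W$ is an SFT; moreover $W$ is finite, being $G$-equivariantly homeomorphic to $A^{N \backslash G}$. A subshift of a finite SFT is again an SFT: if $F$ is a finite window on which the finitely many points of $W$ have pairwise distinct restrictions, then $X$ is cut out of $W$ by additionally forbidding those $F$-patterns that occur in $W$ but not in $X$. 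So $\cS(X)$ is open in $\cS(A^G)$; and as $X$ is also finite, hence a discrete subspace of $A^G$, choosing for each $x \in X$ a clopen $V_x$ with $V_x \cap X = \set{x}$ gives $\set{X} = \cS(X) \cap \bigcap_{x \in X} \set{Y \in \cS(A^G) : Y \cap V_x \ne \emptyset}$, which is open. Hence $X$ is isolated. (For a single orbit, \autoref{l:isolated-point-SFT} applies directly.)

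For (ii) I would check the conditions of \autoref{df:top-Fraisse}. By (i), $\cS_\Per(A^G)$ and $\cS_\Pertr(A^G)$ consist of isolated points of $\cS(A^G)$, so they are open---hence $G_\delta$---and discrete; this gives \autoref{i:tF:Gdelta}, and makes \autoref{i:tF:cond-pi} automatic (images of basic open sets are closed in a discrete space). For amalgamation \autoref{i:tF:amalg}: given $X, Y_1, Y_2 \in \cF_\Per(G)$ with factor maps $Y_i \to X$, the fibre product $Y_1 \times_X Y_2$ is a finite subshift over the product alphabet that projects onto both $Y_1$ and $Y_2$ (using surjectivity of the given maps), so it amalgamates them inside $\cF_\Per(G)$; for $\cF_\Pertr(G)$, replace it by the orbit of any of its points---a finite, transitive subshift that still maps onto each $Y_i$ since $Y_i$ is transitive. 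For (iii), by \autoref{ex:expansions} it remains to realize every periodic subshift $X$ as a factor of a profinite action on $\Omega$: let $G$ act on $X \times \Omega$ by the given action on the first coordinate and trivially on the second. This action is by isometries of the product metric, hence equicontinuous, so profinite; $X \times \Omega$ is homeomorphic to $\Omega$; and the first projection is a factor map onto $X$. No assumption on $G$ beyond finite generation is used.

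For (iv), by \autoref{ex:expansions} I must realize every periodic, transitive subshift---equivalently, every finite transitive $G$-set $X \cong G/K$ with $[G : K] < \infty$---as a factor of some $\xi \in \Xi_\Pertr(G)$. Using that $G$ has arbitrarily large finite quotients, pick normal subgroups $N_i$ of $G$ with $[G : N_i] \to \infty$; set $K_0 = K$ and $K_i = K \cap N_1 \cap \cdots \cap N_i$ for $i \ge 1$, and let $Z = \varprojlim_i G / K_i$ with the translation action of $G$. Then $Z$ is an infinite profinite space, hence homeomorphic to $\Omega$; the action is equicontinuous and transitive (the orbit of the trivial coset surjects onto each $G/K_i$, so is dense), hence belongs to $\Xi_\Pertr(G)$; and the projection $Z \to G/K_0 = X$ is a factor map, as needed.
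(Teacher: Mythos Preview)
Your argument is correct and follows the same blueprint as the paper, with two minor differences worth noting.

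For (i), the paper is slightly more economical: rather than proving that $X$ itself is an SFT, it simply observes that your $W$ (the set of $N$-fixed points) is an SFT, so $\cS(W)$ is a clopen subset of $\cS(A^G)$, and $\cS(W)$ is finite since $W$ is finite---hence every element of $\cS(W)$, including $X$, is isolated. Your detour through ``$X$ is an SFT'' is correct but not needed.

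For (iv), the paper takes a more uniform route: it uses the profinite completion $\Gamma$ of $G$, which under the hypothesis is infinite, and observes that every finite transitive $G$-set is a factor of the translation action $G \actson \Gamma$. This avoids building a separate inverse limit for each $K$. There is also a small gap in your version: the sentence ``$Z$ is an infinite profinite space, hence homeomorphic to $\Omega$'' is not literally correct---an infinite Stone space need not be perfect. What saves you is that the $G$-action on $Z$ is minimal (which you do establish): if some point were isolated, all its translates would be isolated and dense, forcing $Z$ to be discrete and hence finite. With perfectness in hand, $Z$ is indeed a Cantor set. The paper's $\Gamma$ sidesteps this because an infinite compact group is automatically perfect by homogeneity.
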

\begin{proof}
  \autoref{i:per:isolated} Let $X \in \cS_\Per(A^G)$. There is a finite index subgroup $G_0 \leq G$ which fixes the generating partition $\hat A(X)$ of $\cB(X)$. As $G$ is finitely generated, $G_0$ is also finitely generated and ``$G_0$ fixes $\hat{A}(X)$'' is an open condition on $X$. Now it remains to notice that $A^G$ contains only finitely many $G_0$-fixed points, so there are only finitely many subshifts in this open set (including $X$).

  \autoref{i:per:Fraisse} It follows from \autoref{i:per:isolated} that each $\cS_\Per(A^G)$ is open and discrete, so we only have to check amalgamation. Note that if $Y_1$, $Y_2$ are periodic subshifts then $Y_1 \times_X Y_2$ is also periodic. This allows us to amalgamate periodic subshifts.

Similarly, to amalgamate two periodic transitive subshifts $Y_1$, $Y_2$, simply consider an orbit of $Y_1\times_X Y_2$.

  \autoref{i:per:admissible-coding-per} This is clear as every periodic flow $G \actson Z$ is a factor of the flow $G \actson Z \times \Omega$, where the action on the second coordinate is trivial.

  \autoref{i:per:admissible-coding-pertr} Let $\Gamma$ be the profinite completion of $G$. By our assumptions, $\Gamma$ is infinite and metrizable, and therefore, homeomorphic to $\Omega$. Moreover, every transitive periodic $G$-flow is a factor of the left translation action $G \actson \Gamma$.
\end{proof}
The next result is a direct application of \autoref{th:proj-atomic}.
\begin{cor}
  \label{c:profinite-actions}
  Let $G$ be a finitely generated group and let $\Gamma$ be its profinite completion. Then the isomorphism class of the action $G \actson \Gamma \times \Omega$, where the action on the first coordinate is left translation and on the second,   is trivial, is comeager in $\Xi_\Per(G)$. If $G$ has arbitrarily large finite quotients, then the isomorphism class of the action $G \actson \Gamma$ is comeager in $\Xi_\Pertr(G)$.
\end{cor}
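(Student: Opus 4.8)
The plan is to deduce this as a direct application of \autoref{th:proj-atomic} to the topological Fraïssé classes $\cF_\Per(G)$ and $\cF_\Pertr(G)$, using the structural facts already collected in \autoref{p:periodic-subshifts}. That proposition tells us that $\cF_\Per(G)$ and $\cF_\Pertr(G)$ are topological Fraïssé classes and that $\Xi_\Per(G)$, respectively $\Xi_\Pertr(G)$, is an admissible coding for their models (the latter under the hypothesis that $G$ has arbitrarily large finite quotients). The only extra observation needed to invoke \autoref{th:proj-atomic} is that its hypothesis on isolated types is automatic here: by \autoref{i:per:isolated}, each space $\cS_\Per(A^G)$ — and hence each $\cS_\Pertr(A^G)$ — is discrete, so \emph{every} type is isolated and, trivially, the isolated types are dense in the type space. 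Thus \autoref{th:proj-atomic} yields a comeager isomorphism class in each of $\Xi_\Per(G)$ and $\Xi_\Pertr(G)$, consisting of the coding models of the Fraïssé limit of the respective class.

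The remaining — and only nontrivial — task is to identify these Fraïssé limits with the concrete actions in the statement. Since $G$ is finitely generated, it has only finitely many subgroups of each finite index, so $\Gamma$ is metrizable; when $G$ has arbitrarily large finite quotients it is moreover infinite, hence, being an infinite compact metrizable topological group, it has no isolated points and is homeomorphic to $\Omega$. Likewise $\Gamma \times \Omega$ is always compact, metrizable, zero-dimensional and perfect, hence homeomorphic to $\Omega$. So both candidate actions can serve as coding models. Dually, a model of $\cF_\Pertr(G)$ is a transitive profinite $G$-flow and a model of $\cF_\Per(G)$ is an arbitrary profinite $G$-flow, and I would identify the limits through the two defining properties of a Fraïssé limit in their dual form: \emph{universality} — every finite (transitive) $G$-flow is a factor of the limit — and \emph{homogeneity} — for finite $G$-flows $X, Y$ in the class with a factor map $Y \to X$ and a factor map from the limit onto $X$, the latter lifts to a factor map from the limit onto $Y$ over $X$. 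For $G \actson \Gamma$, universality is the last assertion of \autoref{i:per:admissible-coding-pertr}, and homogeneity follows from the explicit description of the $G$-equivariant factor maps $\Gamma \to G/H$ (they all have the form $\gamma \mapsto \gamma\gamma_0\overline{H}$ for some $\gamma_0 \in \Gamma$, where $\overline{H}$ is the closure of $H$ in $\Gamma$): after conjugating so that the given map $Y = G/K \to X = G/H$ is the natural projection with $K \le H$, the map $\gamma \mapsto \gamma\gamma_0\overline{K}$ is the required lift. For $G \actson \Gamma \times \Omega$, universality uses that $\Gamma \times \Omega$ decomposes as a disjoint union of arbitrarily many clopen $G$-invariant copies of $\Gamma$ times a clopen piece of $\Omega$ (because $G$ acts trivially on the $\Omega$-coordinate), each of which maps onto any prescribed transitive finite $G$-flow; and for homogeneity one pulls back the partition of $X$ into $G$-orbits, reduces to the transitive case on each resulting clopen piece, and uses the freedom to split the corresponding $\Omega$-factor further in order to surject onto all components of $Y$ lying over a given component of $X$.

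The abstract half of the argument — producing a comeager class and recognizing it as a Fraïssé limit — is immediate once \autoref{th:proj-atomic} is available, so I do not expect any obstacle there. The point requiring care is the concrete identification of the limits, and within it the homogeneity verification: one must confirm that the dual extension property used above is precisely the one characterizing the Fraïssé limit in the framework of \autoref{sec:append-g_delta-isom}, keep track of factor maps between transitive finite $G$-flows only up to conjugacy of the corresponding finite-index subgroups, and — in the $\Xi_\Per(G)$ case — check that the trivial $\Omega$-factor genuinely provides enough room to lift an arbitrary fiber structure over each transitive component. None of this is deep, but it is where the actual content of the proof lies.
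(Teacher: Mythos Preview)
Your proposal is correct and follows exactly the paper's approach: the paper's entire proof is the one-line remark ``a direct application of \autoref{th:proj-atomic},'' leaving the identification of the Fraïssé limits with $G \actson \Gamma$ and $G \actson \Gamma \times \Omega$ implicit, whereas you spell out that identification (via universality and the dual extension/existential-closure property) in detail. The extra care you flag---matching the dual extension property to existential closure in the framework of \autoref{sec:append-g_delta-isom}, and handling the non-transitive case by splitting the $\Omega$-factor---is exactly what the paper is suppressing.
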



\section{Neighborhoods of actions of the free group}
\label{sec:neighb-free-group}

\subsection{Rauzy graphs}
\label{sec:rauzy-graphs}

Let $G$ be the free group freely generated by a finite set of generators $S_0$. In this section, we will give a description of neighborhood bases of the spaces $\Xi(G)$ and $\Xi_\Min(G)$. We let $S = S_0 \cup S_0^{-1}$.

\begin{defn}
  \label{df:Rauzy-graph}
  A \df{Rauzy graph (for $G$)} is a tuple $\cG = (V, E, \sigma, \rho, \ell, \bar{\cdot})$, where $V$ is a set of vertices, $E$ is a set of edges, $\sigma, \rho \colon E \to V$ are the source and range maps, $\ell \colon E \to S$ is a labeling of the edges, and $\bar{\cdot} \colon E \to E$ is an involution, satisfying the following for all $e \in E$:
  \begin{itemize}
  \item $\sigma(\bar e) = \rho(e)$, $\rho(\bar e) = \sigma(e)$;
  \item $\ell(\bar e) = \ell(e)^{-1}$;
  \item the map $E \to V \times V \times S$, $e \mapsto (\sigma(e), \rho(e), \ell(e))$ is injective;
  \item for every $v \in V$ and $s \in S$, there exists $e \in E$ with $\sigma(e) = v$ and $\ell(e) = s$.
  \end{itemize}
  If there is an edge labeled $s$ from $v_1$ to $v_2$, we will write $v_1 \edge{s} v_2$.
  A \df{morphism} between two Rauzy graphs $\cG_1$ and $\cG_2$ is a map $\pi \colon V(\cG_1) \to V(\cG_2)$ such that for all $v_1, v_2 \in V(\cG_1)$ and $s \in S$, if $v_1 \edge{s} v_2$, then $\pi(v_1) \edge{s} \pi(v_2)$. A morphism $\pi \colon \cG_1 \to \cG_2$ induces a map $E(\cG_1) \to E(\cG_2)$ (because the edge $\pi(v_1) \edge{s} \pi(v_2)$ is unique), which we will also denote by $\pi$. We will say that the morphism $\pi$ is \df{surjective} if the induced map on the edges is surjective. When the Rauzy graph $\cG$ is given, we denote by $V(\cG)$ and $E(\cG)$ its vertex and edge set, respectively. A \df{subgraph} $\cG'$ of $ \cG$ is given by $V' \subseteq V(\cG)$, $E' \subseteq E(\cG)$ such that $\cG'=(V', E', \sigma, \rho, \ell, \bar{\cdot})$ satisfies the axioms of a Rauzy graph. If $\cG_1$, $\cG_2$ are Rauzy graphs and $\pi \colon \cG_1 \to \cG_2$ is a morphism, then $\pi[\cG_1]= (\pi[V(\cG_1)],\pi[E(\cG_1)])$ is a subgraph of $\cG_2$.
  A Rauzy graph is \df{connected} if the underlying graph (forgetting the labeling) is connected.
  A Rauzy graph is \df{deterministic} if the map $E \to V \times S$, $e \mapsto (\sigma(e), \ell(e))$ is injective.
\end{defn}

We note that a deterministic Rauzy graph defines an action $G \actson V$ by $s \cdot v = \rho(e)$, where $e$ is the unique edge with $\sigma(e) = v$ and $\ell(e) = s^{-1}$. In that case, the Rauzy graph is often called the \df{Schreier graph} of the action.

An example of a deterministic Rauzy graph is the \df{Cayley graph of $G$} $\Cay(G)$ given by
\begin{itemize}
\item $V = G$, $E = G \times S$;
\item $\sigma(g, s) = g$, $\rho(g, s) = gs$, $\overline{(g, s)} = (gs, s^{-1})$;
\item $\ell(g, s) = s$.
\end{itemize}
The action defined by this graph is the right translation action $G \actson G$, $g \cdot x = xg^{-1}$.

Every finite Rauzy graph $\cG$ defines a non-empty SFT $X(\cG) \in \cS(V(\cG)^G)$ by
\begin{equation}
  \label{eq:defn-XG}
  X(\cG) = \set{\pi \colon G \to V(\cG) : \pi \text{ is a morphism } \Cay(G) \to \cG}.
\end{equation}
To see that $X(\cG)$ is an SFT, note that one can obtain it by forbidding the patterns $p \colon \set{1_G, s} \to V(\cG)$ (for $s \in S$) where there is no edge labeled $s$ from $p(1_G)$ to $p(s)$. To construct a point $x \in X(\cG)$, one can choose $x(1_G)$ arbitrarily and then inductively choose $x(gs)$ (for $g \in G$, $s \in S$) such that $x(g) \edge{s} x(gs)$.

Let now $A$ be a non-degenerate partition of $\Omega$. A \df{Rauzy graph on $A$} is a Rauzy graph with vertex set $A$. Every $\xi \in \Xi(G)$ defines a labeled graph $\cG(\xi, A)$ on $A$ by
\begin{equation}
  \label{eq:Rauzy-from-action}
  a \edge{s} b \iff a \cap \xi(s) \cdot b \neq \emptyset \quad \text{for } a, b \in A, s \in S.
\end{equation}
The following lemma is immediate.
\begin{lemma}
  \label{l:xi-defines-Rauzy}
  For every $\xi \in \Xi(G)$ and every non-degenerate partition $A$, $\cG(\xi, A)$ is a Rauzy graph.
\end{lemma}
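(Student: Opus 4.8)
The plan is simply to exhibit the data $(V,E,\sigma,\rho,\ell,\bar{\cdot})$ and verify the four bullet points of \autoref{df:Rauzy-graph} one at a time. I would take $V=A$, and let $E$ be the set of triples $(a,b,s)\in A\times A\times S$ with $a\cap\xi(s)\cdot b\neq\emptyset$, equipped with $\sigma(a,b,s)=a$, $\rho(a,b,s)=b$, $\ell(a,b,s)=s$, and $\overline{(a,b,s)}=(b,a,s^{-1})$. With this bookkeeping, the edge $a\edge{s}b$ of \autoref{eq:Rauzy-from-action} is exactly the triple $(a,b,s)$, and the injectivity of $e\mapsto(\sigma(e),\rho(e),\ell(e))$ is automatic since an edge \emph{is} such a triple.

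The only two points that require a (routine) argument are the well-definedness of $\bar{\cdot}$ and the existence of an outgoing edge for every label. For the first, if $(a,b,s)\in E$, i.e.\ $a\cap\xi(s)\cdot b\neq\emptyset$, then applying the homeomorphism $\xi(s)^{-1}=\xi(s^{-1})$ gives $\xi(s^{-1})\cdot a\cap b\neq\emptyset$, so $(b,a,s^{-1})\in E$; since $S$ is symmetric, $s^{-1}\in S$, so $\bar{\cdot}$ is a well-defined involution of $E$, and the identities $\sigma(\bar e)=\rho(e)$, $\rho(\bar e)=\sigma(e)$, $\ell(\bar e)=\ell(e)^{-1}$ are then read off the formulas. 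For the second, fix $a\in A$ and $s\in S$; non-degeneracy of $A$ gives $a\neq\emptyset$, hence $\xi(s)^{-1}\cdot a\neq\emptyset$, and since $A$ is a partition this set meets some $b\in A$, which says precisely that $a\cap\xi(s)\cdot b\neq\emptyset$, i.e.\ $(a,b,s)\in E$ has source $a$ and label $s$.

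I do not anticipate any real obstacle: the statement is just a transcription of ``each $\xi(s)$ is a bijection of $\Omega$ and $A$ is a partition into non-empty clopen sets'' into the edge-labelled language, and the proof is a direct check. The one thing to keep in mind is that non-degeneracy of $A$ is genuinely used — and only used — in the totality axiom, to rule out a label with no outgoing edge from an empty vertex.
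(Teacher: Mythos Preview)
Your proposal is correct and is exactly the routine verification the paper has in mind; the paper itself gives no proof beyond declaring the lemma immediate, and your argument spells out precisely the checks (well-definedness of the involution via applying $\xi(s)^{-1}$, and existence of an outgoing edge via non-degeneracy of $A$) that make it so.
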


If $B$ is a partition of $\Omega$ that refines $A$, then there is a surjective morphism $\pi \colon \cG(\xi, B) \to \cG(\xi, A)$ defined by
\begin{equation*}
  \pi(b) = a \iff b \sub a.
\end{equation*}

Every Rauzy graph $\cG$ on $A$ defines a clopen subset $\cN(\cG) \sub \Xi(G)$ by
\begin{equation*}
  \cN(\cG) = \set{\xi \in \Xi(G) : \cG(\xi, A) = \cG }.
\end{equation*}
These definitions give us an encoding of a basis of open sets for $\Xi(G)$.

\begin{prop}
  \label{p:nbhd-Rauzy-Xi}
  Each $\cN(\cG)$ is non-empty and the collection
  \begin{equation*}
    \set{\cN(\cG) : A \text{ is a non-degenerate partition of } \Omega \text{ and $\cG$ is a Rauzy graph on $A$}}
  \end{equation*}
  forms a basis of open sets for the topology of $\Xi(G)$. More precisely, for $\xi \in \Xi(G)$, the collection
  \begin{equation*}
    \set{\cN(\cG(\xi, A)) : A \text{ is a non-degenerate partition of } \Omega}
  \end{equation*}
  is a basis at $\xi$.
\end{prop}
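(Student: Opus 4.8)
The plan is to prove the two assertions separately: that each $\cN(\cG)$ is non-empty, and the neighborhood-basis property (from which it follows at once that the global collection is a basis). I would dispatch non-emptiness first, since it carries essentially all of the content. Given a Rauzy graph $\cG$ on a non-degenerate partition $A$ of $\Omega$, let $X(\cG) \in \cS(A^G)$ be the non-empty SFT of \autoref{eq:defn-XG}, and consider $Z \coloneqq X(\cG) \times \Omega$ with $G$ acting by the shift on the first coordinate and trivially on the second. For $a \in A$ write $\hat a \coloneqq \set{x \in X(\cG) : x(1_G) = a}$; the claim is that, under the clopen partition $A' \coloneqq \set{\hat a \times \Omega : a \in A}$ of $Z$, the shift action has Rauzy graph $\cG$ (identifying $\hat a \times \Omega$ with $a$). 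Since the second coordinate carries the trivial action, $(\hat a \times \Omega) \cap g \cdot (\hat b \times \Omega) = (\hat a \cap g \cdot \hat b) \times \Omega$ for $g \in S$, and $\hat a \cap g \cdot \hat b = \set{x \in X(\cG) : x(1_G) = a,\ x(g) = b}$, so the claim (together with non-degeneracy of $A'$) reduces to the statement that this set is non-empty iff $a \edge{g} b$ in $\cG$. One implication is immediate, since a point of $X(\cG)$ is a morphism $\Cay(G) \to \cG$ and $1_G \edge{g} g$ is an edge of $\Cay(G)$; the other follows because a partial morphism defined on that single edge extends to a full morphism $\Cay(G) \to \cG$ — this is precisely the inductive construction used right after \autoref{eq:defn-XG} to build points of $X(\cG)$, which works because $\Cay(G)$ is a tree and every vertex of a Rauzy graph has an outgoing edge of each label. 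Finally, each cell $\hat a \times \Omega$ is itself a Cantor set, so there is a homeomorphism $Z \to \Omega$ carrying $\hat a \times \Omega$ onto the clopen set $a$ for every $a \in A$; conjugating the shift on $Z$ by it produces an action $\xi' \in \Xi(G)$ with $\cG(\xi', A) = \cG$, i.e. $\xi' \in \cN(\cG)$.

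For the basis property, I would first recall that each $\cN(\cG)$ is clopen: the condition $\cG(\xi,A) = \cG$ is the conjunction of the finitely many clopen conditions ``$a \cap \xi(s) b = \emptyset$'' (over $a, b \in A$, $s \in S$ such that there is no $s$-edge from $a$ to $b$ in $\cG$) and ``$a \cap \xi(s) b \neq \emptyset$'' (over those with such an edge); see \autoref{eq:subbasis-Xi} and \autoref{eq:Rauzy-from-action}. Now fix $\xi \in \Xi(G)$ and an open set $O \ni \xi$. By \autoref{eq:subbasis-Xi}, the finite intersections of the clopen sets $\set{\eta : a \cap \eta(s) b = \emptyset}$ ($s \in S$, $a, b \in \cB(\Omega)$) form a basis for $\Xi(G)$, so there is such a set $N$ with $\xi \in N \subseteq O$; write $N = \bigcap_{i \le k} \set{\eta : a_i \cap \eta(s_i) b_i = \emptyset}$ and let $A$ be the non-degenerate clopen partition of $\Omega$ generated by the $a_i$ and $b_i$. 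Then $\cN(\cG(\xi, A)) \subseteq N$: if $\cG(\eta, A) = \cG(\xi, A)$, then expanding $a_i \cap \eta(s_i) b_i$ into the disjoint union of the $c \cap \eta(s_i) d$ over cells $c \subseteq a_i$, $d \subseteq b_i$ of $A$, each term vanishes because the corresponding edge $c \edge{s_i} d$ is absent from $\cG(\xi,A)$ (it would force $a_i \cap \xi(s_i) b_i \neq \emptyset$) and hence from $\cG(\eta,A)$ too. Since also $\xi \in \cN(\cG(\xi,A))$, the family $\set{\cN(\cG(\xi,A)) : A \text{ non-degenerate}}$ is a neighborhood basis at $\xi$; and since any non-empty $\cN(\cG)$ equals $\cN(\cG(\xi, V(\cG)))$ for any $\xi$ it contains, the global collection contains a neighborhood basis at each point and is therefore a basis for $\Xi(G)$.

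The only step that requires genuine care is non-emptiness, and there the heart of the matter is the extension of a partial morphism into $\cG$ to a global morphism $\Cay(G) \to \cG$, which is exactly where the freeness of $G$ — the Cayley graph being a tree — is used; the fattening by $\times \Omega$ then simultaneously makes $Z$ and each of its cells a Cantor set, so that the graph can be realized by an honest element of $\Xi(G)$. Everything else is routine manipulation of finite clopen partitions and of the standard subbases for the topology of $\Xi(G)$.
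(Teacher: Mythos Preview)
Your proof is correct and follows the same approach as the paper's. The non-emptiness argument is exactly the paper's one-line assertion that ``(a conjugate of) the product of $X(\cG)$ with a trivial action on $\Omega$'' lies in $\cN(\cG)$, which you have carefully unpacked; and your basis argument, generating a partition from the $a_i,b_i$ and reading the inclusion $\cN(\cG(\xi,A))\subseteq N$ off the absence of edges, is verbatim the paper's proof.
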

\begin{proof}
  $\cN(\cG)$ is non-empty because (a conjugate of) the product of $X(\cG)$ with a trivial action on $\Omega$ belongs to it.

  Let now $\xi_0 \in \Xi(G)$ and consider its basic neighborhood $U$ consisting of all $\xi$ such that $a_i \cap \xi(s_i) \cdot b_i = \emptyset$ for $i = 0, \dots, n-1$, where the $s_i$ are elements of $S$ and the $a_i, b_i$ are elements of $\cB(\Omega)$ (see \autoref{eq:subbasis-Xi}). Let $A$ be the partition of $\Omega$ generated by the $a_i$ and the $b_i$. It is easy to check that $\cN(\cG(\xi_0, A)) \sub U$: the condition $a_i \cap \xi(s_i) \cdot b_i = \emptyset$ is witnessed by the absence of edges labeled $s_i$ between the elements of the partition contained in $a_i$ and the ones contained in $b_i$.
\end{proof}

Rauzy graphs can also be defined from subshifts as follows. If $A$ is a finite alphabet and $F$ is a finite subset of $G$ containing $1_G$, we can define the graph $\cG(A^G, A^F)$ on $A^F$ in a way similar to \autoref{eq:Rauzy-from-action}:
\begin{equation*}
  p_1 \edge{s} p_2 \iff \text{$p_1$ and $s \cdot p_2$ are compatible}, \quad \text{ for } p_1, p_2 \in A^F,
\end{equation*}
where $s \cdot p_2$ is the pattern $sF \to A$ defined by $(s \cdot p_2)(f) = p_2(s^{-1}f)$ and two patterns are \df{compatible} if they agree on the intersection of their supports.
If $X \in \cS(A^G)$, we can similarly define $\cG(X, A^F)$, which is a subgraph of $\cG(A^G, A^F)$, by
\begin{equation*}
  V(\cG(X, A^F)) = \set{p \in A^F : p \text{ occurs in } X}
\end{equation*}
and
\begin{equation*}
  p_1 \edge{s} p_2 \iff \text{$p_1$ and $s \cdot p_2$ are compatible and $p_1 \cup s \cdot p_2$ occurs in $X$}.
\end{equation*}

We have a natural equivariant map $\iota_F \colon X \big(\cG(A^G, A^F) \big) \to A^G$ defined by
\begin{equation}
  \label{eq:defn-iotaF}
  \iota_F(x)(g) = x(g)(1_G) \quad \text{ for } x \in X \big(\cG(A^G, A^F) \big), \ g \in G.
\end{equation}
\begin{lemma}
  \label{l:iotaF-injective}
  Suppose that $F^{-1}$ is a connected subset of $\Cay(G)$. Then:
  \begin{enumerate}
  \item \label{i:l:iinj:1} $\iota_F(x)|_F = x(1_G)$ for all $x \in X \big(\cG(A^G, A^F) \big)$.
  \item \label{i:l:iinj:2} The map $\iota_F$ is an isomorphism from $X \big(\cG(A^G, A^F) \big)$ to $A^G$. More generally, if $Z \subseteq A^G$ is an SFT and $F$ is a defining window for $Z$, then the map $\iota_F$ defines an isomorphism between $X\big(\cG(Z, A^F) \big)$ and $Z$.
  \end{enumerate}
\end{lemma}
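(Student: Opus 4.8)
The plan is to derive both assertions from a single \emph{recovery claim}: for every $x \in X\big(\cG(A^G,A^F)\big)$, every $g \in G$ and every $f \in F$, one has $x(g)(f) = x(gf)(1_G)$, equivalently $x(g)(f) = \iota_F(x)(gf)$. Granting this, the first assertion is the case $g = 1_G$, which reads $\iota_F(x)|_F = x(1_G)$. To prove the claim, first unfold the definition of the domain: $x$ being a morphism $\Cay(G)\to\cG(A^G,A^F)$ means exactly that for all $g\in G$, $s\in S$ the patterns $x(g)$ (support $F$) and $s\cdot x(gs)$ (support $sF$) are compatible, i.e.\ $x(g)(u) = x(gs)(s^{-1}u)$ whenever $u, s^{-1}u \in F$; replacing $s$ by $s^{-1}$ and renaming, this is the same as: $x(a)(u) = x(as^{-1})(su)$ for all $a\in G$, $s\in S$, $u\in F$ with $su\in F$. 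Now fix $g,f$ and use that $F^{-1}$ is connected in $\Cay(G)$: since $1_G, f^{-1}\in F^{-1}$, pick a path $1_G = h_0, h_1, \dots, h_k = f^{-1}$ inside $F^{-1}$ with $h_j = h_{j-1}s_{j-1}$, $s_{j-1}\in S$, and set $b_j := h_j^{-1}\in F$, so $b_0 = 1_G$, $b_k = f$, $b_{j-1} = s_{j-1}b_j$. One proves by induction on $j$ that $x(gfb_j^{-1})(b_j) = x(gf)(1_G)$: the base $j=0$ is trivial and $j=k$ is the recovery claim, while the inductive step applies the local constraint with $a = gfb_j^{-1}$, $s = s_{j-1}$, $u = b_j$ (valid because $b_j, s_{j-1}b_j = b_{j-1}\in F$), giving $x(gfb_j^{-1})(b_j) = x(gfb_{j-1}^{-1})(b_{j-1})$, which together with the induction hypothesis closes the step.

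For the second assertion it is enough to prove the statement about a general SFT $Z$ with defining window $F$, since $A^G$ is the SFT with no forbidden patterns (for which any $F\ni 1_G$ is a defining window) and then $\cG(A^G,A^F) = \cG(Z,A^F)$ with $Z = A^G$. As $\cG(Z,A^F)$ is a subgraph of $\cG(A^G,A^F)$, we have $X\big(\cG(Z,A^F)\big)\sub X\big(\cG(A^G,A^F)\big)$, so the recovery claim applies to its elements; in particular $\iota_F$ is injective there, with $x$ reconstructed from $y = \iota_F(x)$ by $x(g) = (g^{-1}y)|_F$. Moreover each window $(g^{-1}y)|_F = x(g)$ is a vertex of $\cG(Z,A^F)$, hence occurs in $Z$, and by the local-to-global property of a defining window (a point of $A^G$ lies in $Z$ iff all of its $F$-windows occur in $Z$ --- immediate from the definition of an SFT), $y\in Z$. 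Conversely, given $y\in Z$, set $x(g) := (g^{-1}y)|_F$: each $x(g)$ occurs in $Z$, and for $g\in G$, $s\in S$ the patterns $x(g)$ and $s\cdot x(gs)$ both agree with $g^{-1}y$ on their supports, so they are compatible and $x(g)\cup s\cdot x(gs) = (g^{-1}y)|_{F\cup sF}$ occurs in $Z$; thus $x(g)\edge{s}x(gs)$ in $\cG(Z,A^F)$ and $x\in X\big(\cG(Z,A^F)\big)$, with $\iota_F(x) = y$. So $\iota_F$ is a bijection onto $Z$; being continuous and $G$-equivariant between compact metrizable $G$-flows, it is an isomorphism.

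I expect the only genuine work to be the recovery claim, specifically the bookkeeping in the path-chaining induction: one must track carefully which group element indexes the pattern and which element of $F$ it is evaluated at, and this is exactly where connectedness of $F^{-1}$ is used. The remaining ingredients --- identifying $A^G$ with the SFT with empty forbidden set, the local-to-global property for defining windows, and the continuity and equivariance of $\iota_F$ --- are routine.
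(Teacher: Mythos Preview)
Your proof is correct and follows essentially the same route as the paper's. The paper proves the special case $g=1_G$ of your recovery claim by walking along the reduced-word path from $1_G$ to $f$ (using that $\Cay(G)$ is a tree, so connectedness of $F^{-1}$ forces the geodesic to stay in $F^{-1}$), then defines the inverse map $j_F(y)(g)(f)=y(gf)$ and checks both compositions, invoking part~\ref{i:l:iinj:1} on $g^{-1}\cdot x$ for the nontrivial direction; your version packages the same computation as a single identity $x(g)(f)=\iota_F(x)(gf)$ valid for all $g$, obtained by walking along an arbitrary path in $F^{-1}$, which makes the inverse construction slightly more direct but is otherwise the same argument.
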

\begin{proof}
  \autoref{i:l:iinj:1} Fix $f \in F$. We have to show that
  \begin{equation}
    \label{eq:x-pattern-agree}
    x(f)(1_G) = x(1_G)(f).
  \end{equation}
  Write $f = s_0 \cdots s_{n-1}$ as a reduced word with $s_i \in S$ for all $i$. By the connectedness of $F^{-1}$, $s_i \cdots s_{n-1} \in F$ for all $i \leq n-1$. Note that for all $i$,
  \begin{equation*}
    x(s_0 \cdots s_{i-1}) \edge{s_i} x(s_0 \cdots s_i),
  \end{equation*}
  so, by the definition of the graph $\cG(A^G, A^F)$,
  \begin{equation*}
    \begin{split}
      x(s_0 \cdots s_{i-1})(s_i \cdots s_{n-1}) &= \big(s_i \cdot x(s_0 \cdots s_i) \big) (s_i \cdots s_{n-1}) \\
                                           &=  x(s_0 \cdots s_i)(s_{i+1} \cdots s_{n-1}).
    \end{split}
  \end{equation*}
  Applying this consecutively for $i = 0, \ldots, n-1$, we obtain \autoref{eq:x-pattern-agree}, as desired.

  \autoref{i:l:iinj:2} Define $j_F \colon A^G \to X \big(\cG(A^G, A^F) \big)$ by
 \begin{equation*}
     j_F(x)(g)(f)= x(gf) \quad \text{ for } x \in A^G, \ g \in G,\ f\in F.
 \end{equation*}
 We will check that $j_F$ is an inverse of $\iota_F$. By definition, for any $x \in A^G$, one has $(\iota_F \circ j_F)(x)(g)= j_F(x)(g)(1_G)=x(g) $. Conversely, given $x \in X\big(\cG(A^G, A^F) \big)$, $g \in G$, and $f \in F$, using \autoref{i:l:iinj:1}, we have:
\[j_F \big(\iota_F(x) \big)(g)(f)=\iota_F(x)(gf)=\iota_F(g^{-1} \cdot x)(f)=(g^{-1} \cdot x)(1_G)(f)=x(g)(f). \]

 For any $Z \in \cS(A^G)$ we have  $j_F [Z] \subseteq X\big(\cG(Z, A^F) \big)$. If $Z$ is an SFT with a defining window $F$, then $\iota_F[ X\big(\cG(Z, A^F) \big)] \subseteq Z$ since no forbidden $F$-pattern can occur in $\iota_F[ X\big(\cG(Z, A^F) \big)]$. So, in this situation, $\iota_F[ X\big(\cG(Z, A^F) \big)] = Z$.
\end{proof}

This lemma allows us, for any SFT $Z \subseteq A^G$ with defining window $F$, to view subshifts of $X \big(\cG(Z, A^F) \big)$ as elements of $\cS(Z)$.

\subsection{Minimal Rauzy graphs}
\label{sec:minimal-rauzy-graphs}

\begin{defn}
  \label{df:Rauzy-minimal}
  Let $\cG$ be a Rauzy graph. A \df{reduced path} in $\cG$ is a sequence of edges $(e_0, \ldots, e_{n})$ such that $\rho(e_i) = \sigma(e_{i+1})$ and $\ell(e_{i}) \neq \ell(e_{i+1})^{-1}$ for all $i < n$. The graph $\cG$ is called \df{minimal} if for all edges $e, f \in E(\cG)$, there exists a reduced path whose first edge is $e$ and whose last edge is $f$ or $\bar f$.
\end{defn}

For fixed $\xi \in \Xi(G)$ and partition $A$ of $\Omega$, every $\omega \in \Omega$ defines a morphism $\Cay(G) \to \cG(\xi, A)$, $g \mapsto A(g^{-1} \cdot \omega)$. This gives a natural way to obtain reduced paths from elements of the free group. More precisely,
let $\xi \in \Xi(G)$, let $A$ be a non-degenerate partition of $\Omega$, and let $\cG = \cG(\xi, A)$. Let $g \in G$ and $\omega \in \Omega$. Write $g = s_1 \cdots s_n$ as a reduced word in $S$ and let $\omega_i = (s_1 \cdots s_i)^{-1} \cdot \omega$. Then the path
\begin{equation}
  \label{eq:reduced-path}
A(\omega) \edge{s_1^{-1}} A(\omega_1) \edge{s_2^{-1}} \cdots \edge{s_n^{-1}} A(g^{-1} \cdot \omega)
\end{equation}
is reduced.

\begin{lemma}
  \label{l:Rauzy-xi-min}
  For every $\xi \in \Xi_{\Min}(G)$ and every non-degenerate partition $A$ of $\Omega$, $\cG(\xi, A)$ is a minimal Rauzy graph.
\end{lemma}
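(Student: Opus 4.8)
The plan is to show that any two edges $e,f$ of $\cG(\xi,A)$ can be joined by a reduced path (up to replacing $f$ with $\bar f$), using the minimality of $\xi$ to produce points whose orbit visits prescribed clopen sets. Write $e = (a_1 \edge{s} a_2)$, meaning $a_1 \cap \xi(s)\cdot a_2 \neq \emptyset$, and $f = (b_1 \edge{t} b_2)$. The first step is to pick a point $\omega \in a_1 \cap \xi(s)\cdot a_2$; then $\omega' := \xi(s)^{-1}\cdot \omega \in a_2$, and both $\omega$ and $\omega'$ are concrete points realizing the edge $e$ in the sense that $A(\omega)=a_1$, $A(\xi(s)^{-1}\cdot\omega)=a_2$.

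The key step is to use minimality of $\xi$: every non-empty clopen set is syndetic, so the orbit of $\omega'$ under $\xi$ is dense and, in particular, there is $g \in G$ with $\xi(g)^{-1}\cdot\omega' \in b_1 \cap \xi(t)\cdot b_2$ — concretely, there is a point in $b_1 \cap \xi(t)\cdot b_2$ lying in the orbit of $\omega'$. Actually it is cleaner to argue directly with the displayed construction \autoref{eq:reduced-path}: fix $\omega \in \Omega$ with $A(\omega)=a_1$ and $A(\xi(s)^{-1}\cdot\omega)=a_2$ (so the first edge of the path from $\omega$ determined by a reduced word starting with $s^{-1}$ is exactly $\bar e$, or, starting from $\xi(s)^{-1}\cdot\omega$ with a word starting with $s$, is exactly $e$). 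By density of the $\xi$-orbit of $\xi(s)^{-1}\cdot\omega$, choose $h \in G$ with $\xi(h)^{-1}\cdot(\xi(s)^{-1}\cdot\omega) \in b_1$. Then run \autoref{eq:reduced-path} with base point $\xi(s)^{-1}\cdot\omega$ and group element $h$: this gives a reduced path in $\cG(\xi,A)$ from $a_2$ to $b_1$. Prepending the edge $e$ (from $a_1$ to $a_2$) gives a path from $a_1$ to $b_1$; it may fail to be reduced at the junction, but one can always arrange the reduced word for $h$ to avoid starting with the letter that would create a cancellation — or, if that is impossible because of the combinatorics of $S$, one simply observes that one can perturb the choice of $h$ (replace $h$ by $sh'$ for a suitable $h'$ in the same coset giving density) to fix the first letter. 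Finally, append an edge labeled $t$ from $b_1$ to $b_2$ (which exists since $\xi(h)^{-1}\xi(s)^{-1}\cdot\omega \in b_1$ and we may further use density to land in $b_1 \cap \xi(t)\cdot b_2$), again arranging no cancellation at the last junction; this produces a reduced path from $e$ to $f$ (or to $\bar f$, depending on orientation bookkeeping).

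The main obstacle is the bookkeeping at the two junctions — ensuring the concatenation stays reduced. This is handled by noting that at each step we have freedom in the choice of the connecting group element (any element of $G$ whose action carries the base point into the desired clopen set works, and density guarantees infinitely many such elements, in particular ones whose reduced word begins or ends with a prescribed admissible letter), so cancellations can always be avoided. All the topological content is the single fact, recorded in \autoref{df:min-trans} and the discussion after it, that a minimal action has every orbit dense; the rest is the explicit translation between reduced words in $G$ and reduced paths in $\cG(\xi,A)$ already set up in \autoref{eq:reduced-path}.
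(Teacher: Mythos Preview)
Your approach is essentially the paper's --- pick a point realizing $e$, use minimality to reach the clopen set realizing $f$, and read off a reduced path via \autoref{eq:reduced-path} --- but the handling of the junctions has a genuine gap.

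You assert that because density gives infinitely many $h$ with $\xi(h)^{-1}\cdot\omega'$ in the target set, there are ``in particular ones whose reduced word begins or ends with a prescribed admissible letter.'' This inference is invalid: an infinite subset of a free group need not contain words with a given first letter (already in $G=\Z$ the set of positive elements is infinite but every element begins with the same generator). What is actually needed is that the return set $\{g \in G : g\cdot\omega \in b_0\}$, which is left \emph{syndetic} by minimality, meets the set of reduced words whose last letter is $s^{-1}$. The paper supplies exactly this: the set $L$ of words ending in $s^{-1}$ has finite symmetric difference with each of its left translates, hence is left \emph{thick}, and a thick set meets every syndetic set. Your suggested fix ``replace $h$ by $sh'$ for a suitable $h'$ in the same coset'' does not help, since changing $h$ changes where $\xi(h)^{-1}\cdot\omega'$ lands; the existence of a good $h$ is precisely the point at issue.

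For the terminal junction you again try to ``arrange no cancellation'' by choosing $h$ well, which would require simultaneous control over the last letter of $h$ --- an additional and unjustified constraint. The paper avoids this entirely with a short case analysis: since the endpoint lies in $b_0 = \sigma(f)\cap\xi(t)\cdot\rho(f)$, either the last edge of the path is already $\bar f$, or its label differs from $\ell(f)^{-1}$ and one may simply append $f$ while keeping the path reduced. No control over the last letter is needed.
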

\begin{proof}
  That $\cG(\xi, A)$ is a Rauzy graph follows from \autoref{l:xi-defines-Rauzy}. To check minimality, let $e, f \in E$ be given. Let $a = \sigma(e)$, $s = \ell(e)$, and $a_0 = a \cap \xi(s) \cdot \rho(e)$. Similarly, let $b = \sigma(f)$, $t = \ell(f)$, and $b_0 = b \cap \xi(t) \cdot \rho(f)$. By the definition of $\cG(\xi, A)$, $a_0 \neq \emptyset$ and $b_0 \neq \emptyset$. Let $\omega \in a_0$. Let
  \begin{equation*}
    L = \set{g \in G : g \text{ is a reduced word ending in } s^{-1}}
  \end{equation*}
  and note that $L$ is infinite and for every $g \in G$, the set $gL \sdiff L$ is finite. This implies that $L$ is left thick, so it intersects the set $\set{g \in G : g \cdot \omega \in b_0}$, which is left syndetic since $\xi$ is minimal. Let $g^{-1}$ be in this intersection and consider the reduced path constructed in \autoref{eq:reduced-path}, whose first edge is equal to $e$ (because $\omega \in a_0$ and $g^{-1} \in L$). Denoting by $s_n$ the label of the last edge of this path, there are two possibilities. If $s_n = t$, then the last edge of the path is $\bar f$ (because $g^{-1} \cdot \omega \in b_0$). If not, we can add $f$ to the end of the path and it will still be reduced.
\end{proof}

\begin{lemma}
  \label{l:right-synd-surj}
  Let $\cG$ be a Rauzy graph and let $x \colon \Cay(G) \to \cG$ be a morphism such that for every $e \in E(\cG)$, the set
  \begin{equation}
    \label{eq:preimage-edge}
    \set{\sigma(f) : f \in E(\Cay(G)), \ x(f) = e}
  \end{equation}
  is right syndetic in $G$. Then every $y \in \cl{G \cdot x}$ is a surjective morphism $\Cay(G) \to \cG$. (Here we view the morphism $x \colon \Cay(G) \to \cG$ as an element of the subshift $X(\cG)$ defined by \autoref{eq:defn-XG}.)
\end{lemma}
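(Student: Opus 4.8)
The plan is to reduce the statement to a pointwise ``realizability'' claim for individual edges of $\cG$, and then push that claim through the orbit closure by a pigeonhole/compactness argument using only the definition of the shift action.

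First I would recast the hypothesis concretely. Recall that the edges of $\Cay(G)$ are the pairs $(g,s)\in G\times S$ with $\sigma(g,s)=g$, and that a morphism $x\colon\Cay(G)\to\cG$ sends the edge $(g,s)$ to the unique edge of $\cG$ from $x(g)$ to $x(gs)$ with label $s$ (unique by the injectivity axiom for Rauzy graphs). Hence, for a fixed $e\in E(\cG)$ with $\ell(e)=s$, $\sigma(e)=v$, $\rho(e)=w$, we have $x(f)=e$ for $f=(g,s)$ exactly when $x(g)=v$ and $x(gs)=w$. So the hypothesis says precisely that $L_e\coloneqq\set{g\in G:x(g)=v\text{ and }x(gs)=w}$ is right syndetic, i.e.\ there is a finite $T_e\sub G$ with $G=\bigcup_{t\in T_e}L_e t$.

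Next, fix $y\in\cl{G\cdot x}$ and an edge $e$ as above; it suffices to find $g\in G$ with $y(g)=v$ and $y(gs)=w$, since then $y$ maps the edge $(g,s)$ to $e$, and $e$ is arbitrary. Pick a net $(h_n)$ in $G$ with $h_n\cdot x\to y$ (the orbit takes values in the countable set $x[G]$, so one could even restrict to a metrizable subspace and use a sequence). For each $n$, apply the covering $G=\bigcup_{t\in T_e}L_e t$ to the element $h_n^{-1}$: there is $t_n\in T_e$ with $h_n^{-1}t_n^{-1}\in L_e$, that is, $x(h_n^{-1}t_n^{-1})=v$ and $x(h_n^{-1}t_n^{-1}s)=w$, equivalently $(h_n\cdot x)(t_n^{-1})=v$ and $(h_n\cdot x)(t_n^{-1}s)=w$ by the formula $(h\cdot x)(g)=x(h^{-1}g)$. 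Since $T_e$ is finite, some fixed $t\in T_e$ has $t_n=t$ cofinally often (otherwise an upper bound of the finitely many threshold indices would give a $t_n$ outside $T_e$); along that subnet, evaluating $h_n\cdot x\to y$ at the two fixed group elements $t^{-1}$ and $t^{-1}s$ — where $V(\cG)$ is discrete, so pointwise convergence means eventual equality — yields $y(t^{-1})=v$ and $y(t^{-1}s)=w$. Thus $g=t^{-1}$ works, so the induced edge map of $y$ is surjective.

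The only points requiring care — and they are mild — are that ``surjective morphism'' refers to the edge map, so the realizability condition must be imposed at two adjacent vertices $g$ and $gs$ simultaneously (this is exactly why right syndeticity of the \emph{edge}-fibers, not the vertex-fibers, is the correct hypothesis), and the passage to a subnet via the pigeonhole on the finite set $T_e$. Beyond the definitions of Rauzy-graph morphism, of $X(\cG)$, and of the shift action, no further input is needed; there is no substantive obstacle.
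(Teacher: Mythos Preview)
The proposal is correct and takes essentially the same approach as the paper: both use right syndeticity of $L_e$ to produce a finite witness set (your $T_e^{-1}$, the paper's $F$) in which every translate $h\cdot x$ must realize the edge $e$, and then push this through the closure. The paper avoids the net-and-pigeonhole step by directly choosing one $g$ with $y|_{F\cup FS}=(g\cdot x)|_{F\cup FS}$ and reading off the required $h\in F$, but the mathematical content is identical.
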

\begin{proof}
  Let $y \in \cl{G \cdot x}$.
  Let $e \in E(\cG)$ and let $L$ be the set defined in \autoref{eq:preimage-edge}. As $L$ is right syndetic, there is a finite $F \sub G$ such that $LF^{-1} = G$. Let $g \in G$ be such that $y|_{F \cup FS} = (g \cdot x)|_{F \cup FS}$. Then there is $h \in F$ such that $g^{-1} \in Lh^{-1}$, i.e., $g^{-1}h \in L$. Together with the fact that $y(h) = x(g^{-1}h)$ and $y(h \ell(e)) = x(g^{-1}h \ell(e))$, this implies that $y$ maps the edge $h \edge{\ell(e)} h \ell(e)$ to $e$.
\end{proof}

If $\cG$ is a minimal Rauzy graph on a partition $A$ of $\Omega$, we denote
\begin{equation*}
  \cN_{\Min}(\cG) = \cN(\cG) \cap \Xi_{\Min}(G).
\end{equation*}
\begin{theorem}
  \label{th:minimal-Rauzy}
  If $\cG$ is minimal, the set $\cN_{\Min}(\cG)$ is non-empty and the collection
  \begin{multline*}
    \set{\cN_{\Min}(\cG) : A \text{ is a non-degenerate partition of } \Omega  \\
      \text{ and $\cG$ is a minimal Rauzy graph on $A$}}
  \end{multline*}
  forms a basis of open sets of the space $\Xi_{\Min}(G)$.
\end{theorem}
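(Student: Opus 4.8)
The plan is to prove the two assertions separately: the statement that the $\cN_\Min(\cG)$ form a basis is short and follows from what is already available, while the non-emptiness of $\cN_\Min(\cG)$ for a minimal $\cG$ is the substantial point.

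\emph{The basis statement.} Let $\xi \in \Xi_\Min(G)$ and let $U$ be an open neighbourhood of $\xi$ in $\Xi_\Min(G)$, say $U = U' \cap \Xi_\Min(G)$ with $U'$ open in $\Xi(G)$. By \autoref{p:nbhd-Rauzy-Xi} there is a non-degenerate partition $A$ of $\Omega$ with $\xi \in \cN(\cG(\xi,A)) \sub U'$, and by \autoref{l:Rauzy-xi-min} the Rauzy graph $\cG(\xi,A)$ is minimal; intersecting with $\Xi_\Min(G)$ gives $\xi \in \cN_\Min(\cG(\xi,A)) \sub U$. Since each $\cN(\cG)$ is clopen in $\Xi(G)$, each $\cN_\Min(\cG)$ is relatively open in $\Xi_\Min(G)$, so the collection is a basis as soon as non-emptiness is known.

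\emph{Non-emptiness.} Fix a minimal Rauzy graph $\cG$ on a non-degenerate partition $A$; since $V(\cG) = A$ is finite, $\cG$ is finite. Recall the SFT $X(\cG) \sub \cS(A^G)$, whose points are the morphisms $\Cay(G) \to \cG$. The crux is to construct one morphism $x \colon \Cay(G) \to \cG$ with the following coverage property: there is $N$ depending only on $\cG$ such that for every $g \in G$ and every edge $e$ of $\cG$, some reduced path in $\Cay(G)$ of length at most $N$ starting at $g$ has its last edge mapped by $x$ to $e$ or $\bar e$. Granting this, for every edge $e$ the set $L_e = \set{\sigma(f) : f \in E(\Cay(G)),\ x(f) = e}$ meets every ball of radius $N$ in $\Cay(G)$, hence is right syndetic in $G$ (the ambiguity between $e$ and $\bar e$ is harmless since $x(\bar f) = \overline{x(f)}$, so a point of $L_e$ lies within bounded distance of $g$ in either case). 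To build such an $x$, the key algebraic input is that minimality and finiteness of $\cG$ yield a single reduced path in $\cG$, of length bounded in terms of $\cG$ only, that visits every edge of $\cG$ up to reversal: one chains the reduced paths provided by minimality between successive edges of an enumeration of $E(\cG)$, and the reducedness condition at each junction holds automatically because the path given by minimality starts at the edge one is currently at. One then defines $x$ by a greedy induction along the balls of $\Cay(G)$, laying down along suitable reduced segments (preceded by bounded connectors again furnished by minimality) copies of such a path so that every vertex lies within bounded distance of a full traversal, and extending $x$ arbitrarily on the remaining vertices subject only to the constraint coming from the incoming edge.

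\emph{From $x$ to a minimal Cantor action.} With $x$ in hand, \autoref{l:right-synd-surj} shows that every $y \in \cl{G \cdot x}$ is a surjective morphism $\Cay(G) \to \cG$. Choose a minimal subshift $W \sub \cl{G \cdot x}$. Every point of $W$ is a surjective morphism, so every vertex of $\cG$ occurs as the value at $1_G$ of some point of $W$ and, for every edge $a \edge{s} b$, there is $y \in W$ with $y(1_G) = a$ and $y(s) = b$; combined with $W \sub X(\cG)$ (which rules out spurious edges), this says precisely that the shift action on $W$, equipped with its canonical partition $\hat A(W)$, has Rauzy graph $\cG$, and that $\hat A(W)$ is non-degenerate and indexed by all of $V(\cG)$. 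To obtain an action on the Cantor space, fix any infinite minimal action $Z$ of $G$ on the Cantor space (for instance the translation action on the profinite completion of $G$) and let $\tilde W$ be a minimal subflow of $Z \times W$; having no finite orbit, $\tilde W$ is a Cantor flow, and the coordinate projection $\tilde W \to W$ is a factor map. Pulling back $\hat A(W)$ along it yields a non-degenerate clopen partition $P$ of $\tilde W$, and, since the projection is onto $W$, the Rauzy graph of $\tilde W$ with respect to $P$ is again $\cG$. Transporting $(\tilde W, P)$ to $(\Omega, A)$ by a homeomorphism produces $\xi \in \Xi_\Min(G)$ with $\cG(\xi, A) = \cG$, i.e. an element of $\cN_\Min(\cG)$, which finishes the proof.

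The main obstacle is the construction of the morphism $x$ with the coverage property: one must make sure that minimality of $\cG$ genuinely yields a bound on path lengths uniform in $g$, and that the inductive laying-down of traversals — together with the benign ambiguity between an edge and its reverse — does not spoil the right syndeticity needed for \autoref{l:right-synd-surj}. Everything after that (invoking \autoref{l:right-synd-surj}, extracting a minimal subshift, and taking a product with a fixed minimal Cantor action to replace a possibly finite minimal subshift by one on $\Omega$) is routine.
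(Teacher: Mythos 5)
Your proposal is correct and follows essentially the same route as the paper: you construct a morphism $\Cay(G) \to \cG$ whose edge preimages are right syndetic by chaining the reduced paths furnished by minimality into bounded-length traversals of all edges and laying them down along the tree $\Cay(G)$, then apply \autoref{l:right-synd-surj} to a minimal subshift of the orbit closure and transport the result to $(\Omega, A)$. The only (harmless) deviation is that you handle a possibly finite minimal subshift uniformly, by always passing to a minimal subflow of its product with a fixed minimal Cantor flow, whereas the paper does this only in the finite case and otherwise uses the minimal subshift directly.
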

\begin{proof}
  We start by showing that $\cN_\Min(\cG)$ is non-empty. For each $v \in V(\cG)$ and $s \in S$, let $P_{v, s}$ be a reduced path starting from $v$ with an edge labeled $s$, which contains $e$ or $\bar e$ for every $e \in E(\cG)$. Such a path is easy to construct, visiting each edge consecutively, using the minimality of $\cG$. We will construct by induction a morphism $x \colon \Cay(G) \to \cG$ which satisfies the hypothesis of \autoref{l:right-synd-surj}. Start with $x_0$ with $\dom x_0 = \set{1_G}$ and $x_0(1_G) = v_0$, where $v_0$ is an arbitrary vertex of $\cG$.  Enumerate all edges of $\Cay(G)$ as $e_0, e_1, \dots$ (taking only one edge of each pair $\set{e, \bar e}$). Suppose that $x_n$ has been constructed and let $\cT_n = \dom x_n$. Let $e_i$ be the first edge of the enumeration which is not in $E(\cT_n)$ but is adjacent to $\cT_n$. We may assume that $v \coloneqq \sigma(e_i) \in V(\cT_n)$. Let $(f_0, \ldots, f_k)$ be the edges of the path $P_{v, \ell(e_i)}$ and define $x_{n+1}$ to be equal to $x_n$ on $\cT_n$ and set
  \begin{equation*}
    x_{n+1}\big(v \ell(f_0) \cdots \ell(f_j) \big) = \rho(f_j) \quad \text{ for all } j = 0, \ldots, k.
  \end{equation*}
  Finally, set $x = \bigcup_n x_n$. It is clear that $x$ is defined everywhere. Let $N$ be the maximum length of the paths $P_{v, s}$, for $v \in V(\cG), s \in S$ and let $B$ be the ball in $G$ around $1_G$ of radius $N$. For $e \in E(\cG)$, let $L_e$ be the set defined in \autoref{eq:preimage-edge}. It follows from the construction that $L_eB = G$ for all $e$, so the hypothesis of \autoref{l:right-synd-surj} is satisfied for $x$.

  Let $Y$ be any minimal subshift of $\cl{G \cdot x}$ and consider first the case where $Y$ is perfect. Recall that $A$ is a non-degenerate partition of $\Omega$, $V(\cG) = A$ and thus $Y \sub A^G$. Denote by $\eta$ the shift action on $Y$. By \autoref{l:right-synd-surj}, every $y \in Y$ is surjective on $A$, so $\hat A(Y)$ is a non-degenerate partition of $Y$. Let $\phi \colon Y \to \Omega$ be a homeomorphism sending the partition $\hat{A}$ to the partition $A$. It is clear that $\phi \eta \phi^{-1} \in \cN_\Min(\cG)$. If $Y$ is finite, we can consider instead a minimal action $G \actson \Omega$ which admits $Y$ as a factor (see the proof of \autoref{p:minimal-subshift-Fraisse}).

For the second part, it follows from \autoref{p:nbhd-Rauzy-Xi} that the sets $\cN_\Min(\cG(\xi, A))$, for $A$ a non-degenerate partition of $\Omega$, form a basis at $\xi$ and by \autoref{l:Rauzy-xi-min}, the graphs $\cG(\xi, A)$ are minimal.
\end{proof}

\begin{figure}[h!]
  \begin{subfigure}[t]{0.3\textwidth}
    \centering
    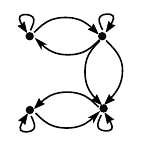
    \caption*{\autoref{i:rem:varRauzy:minimal} $\nRightarrow$ \autoref{i:rem:varRauzy:reduced-path-conn-edges}}
  \end{subfigure}%
  ~
  \begin{subfigure}[t]{0.5\textwidth}
    \centering
    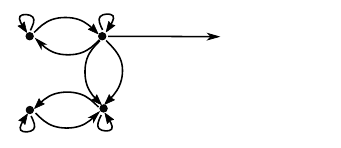
    \caption*{\autoref{i:rem:varRauzy:reduced-path-conn-vert} $\nRightarrow$ \autoref{i:rem:varRauzy:minimal}}
  \end{subfigure}
  \caption{Two examples of Rauzy graphs}
  \label{fig:RG}
\end{figure}

\begin{example}
  \label{ex:examples-Rauzy-graphs}
  We give two examples showing that some natural variants of \autoref{df:Rauzy-minimal} do not characterize minimality. We consider the following three conditions on a Rauzy graph $\cG$:

\begin{enumerate}
\item \label{i:rem:varRauzy:reduced-path-conn-vert} For any two vertices $v,w \in V(\cG)$, there exists a reduced path $(e_0,\ldots,e_n)$ with $\sigma(e_0)=v$ and $\rho(e_n)=w$.

\item \label{i:rem:varRauzy:minimal} $\cG$ is minimal in the sense of \autoref{df:Rauzy-minimal}.

\item \label{i:rem:varRauzy:reduced-path-conn-edges} For any two edges $e, f \in E(\cG)$, there exists a reduced path $(e_0,\ldots,e_n)$ with $e_0 = e$ and $e_n = f$.
\end{enumerate}

It is clear that \autoref{i:rem:varRauzy:reduced-path-conn-edges}  $\Rightarrow$ \autoref{i:rem:varRauzy:minimal} $\Rightarrow$ \autoref{i:rem:varRauzy:reduced-path-conn-vert}. The two examples in \autoref{fig:RG} (for the free group on two generators $s$, $t$) illustrate that both implications are strict.
\end{example}


\section{Sofic minimal subshifts of the free group}
\label{sec:sofic-minim-subsh}

In this section, we set to prove that for free groups, sofic minimal subshifts are dense among all minimal subshifts. We retain the notation of the previous section.

We start by describing a device that produces sofic subshifts.
\begin{defn}
  \label{df:edge-selector}
  Let $\cG$ be a Rauzy graph for $G$. An \df{edge selector for $\cG$} is a triple $T = (v_0, T_0, T_1)$, where $v_0 \in V(\cG)$, $T_0 \colon S \to E(\cG)$, $T_1 \colon E(\cG) \times S \to E(\cG)$ are such that $\sigma(T_0(s)) = v_0$, $\ell(T_0(s)) = s$, $\sigma(T_1(e, s)) = \rho(e)$, and $\ell(T_1(e, s)) = s$ for all $(e, s) \in E(\cG) \times S$.
\end{defn}

If $T = (v_0, T_0, T_1)$ is an edge selector, the function $T_1$ can be extended inductively to a function $E(\cG) \times G \to E(\cG)$ by:
\begin{align*}
  T_1(e, 1_G) &= e \\
  T_1(e, ws) &= T_1(T_1(e, w), s) \quad \text{ for a reduced word } ws.
\end{align*}
An edge selector $T$ defines a morphism $x_T \colon \Cay(G) \to \cG$ by
\begin{align*}
  x_T(1_G) &= v_0 \\
  x_T(sw) &= \rho \big(T_1(T_0(s), w) \big) \quad \text{ for a reduced word } sw.
\end{align*}
This morphism defines a subshift $X(T) \sub X(\cG)$ by
\begin{equation*}
  X(T) = \cl{G \cdot x_T}.
\end{equation*}
We note that while for simplicity of notation, the map $T_1$ is defined everywhere on $E(\cG) \times S$, the values $T_1(e, s)$ with $s = \ell(e)^{-1}$ will never be used and can be arbitrary.

We also observe that edge selectors exist in abundance for any Rauzy graph $\cG$: in particular, there is one with $v_0 = v$ for every $v \in V(\cG)$.

\begin{prop}
  \label{p:edge-selector-sofic}
  Let $\cG$ be a Rauzy graph and let $T$ be an edge selector for $\cG$. Then the subshift $X(T)$ is projectively isolated in $\cS \big(V(\cG)^G \big)$ and, in particular, sofic.
\end{prop}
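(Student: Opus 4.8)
The plan is to exhibit $X(T)$ as a letter-to-letter factor of an SFT over an enriched alphabet, arranged so that the factor map is "rigid" on a whole neighbourhood of that SFT; this is precisely what the definition of projective isolation in $\cS(V(\cG)^G)$ asks for.

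Write $A = V(\cG)$, and let $E_0 \sub E(\cG)$ be the (finite) set of edges occurring as values of the morphism $x_T$, equivalently the set of edges reachable from $\set{T_0(s) : s \in S}$ under iterated application of $T_1$. Put $B = E_0 \sqcup \set{\star}$ for a fresh symbol $\star$, define $\phi \colon B \to A$ by $\phi(e) = \rho(e)$ and $\phi(\star) = x_T(1_G)$, and let $\Phi \colon B^G \to A^G$ be the induced factor map. Lift $x_T$ to $\tilde x_T \in B^G$ by $\tilde x_T(1_G) = \star$ and $\tilde x_T(s_1 \cdots s_k) = T_1(T_0(s_1), s_2 \cdots s_k)$ for a reduced word of length $k \geq 1$ — the last edge on the path that $x_T$ traces from $1_G$. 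Then $\Phi(\tilde x_T) = x_T$, so with $W \coloneqq \cl{G \cdot \tilde x_T}$ we get $\Phi[W] = X(T)$. Next, let $Z \sub B^G$ be the SFT cut out by the local constraints at every pair of adjacent positions $g, gs$: (1) if $y(g) = \star$ then $y(gs) = T_0(s)$; (2) if $y(g) = e \in E_0$ and $\ell(e) \ne s^{-1}$ then $y(gs) = T_1(e, s)$; (3) one does not have simultaneously $y(g), y(gs) \in E_0$, $\ell(y(g)) = s^{-1}$ and $\ell(y(gs)) = s$ (no "$2$-cycle"). A routine verification gives $\tilde x_T \in Z$, hence $W \sub Z$. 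I would then take $\cU = \cS(Z) \cap \cU^+_{p_0}$, where $p_0$ is the one-letter pattern equal to $\star$ at $1_G$; this is open in $\cS(B^G)$ and contains $W$, so it is a non-empty open set, and the claim to prove is that $\Phi[Y] = X(T)$ for every $Y \in \cU$.

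The inclusion $X(T) \sub \Phi[Y]$ is the easy half: since $\star$ occurs in $Y$, some $y_0 \in Y$ has $y_0(1_G) = \star$, and constraints (1)--(2) propagate outward through $\Cay(G)$ to force $y_0 = \tilde x_T$; hence $W \sub Y$ and $X(T) = \Phi[W] \sub \Phi[Y]$. For the reverse inclusion it suffices to show $\Phi[Z] \sub X(T)$, and this is where the real work lies. Given $y \in Z$: if $\star$ occurs in $y$, the same propagation argument shows $y$ is a translate of $\tilde x_T$, so $\Phi(y) \in X(T)$. If $y$ is $\star$-free, I would argue that constraints (2)--(3) force the map $\pi \colon g \mapsto g\, \ell(y(g))^{-1}$ to orient every edge of $\Cay(G)$ in exactly one direction and to have no cycles, whence all $\pi$-orbits are geodesic rays converging to a single common end $\omega$; writing $v_0 = 1_G, v_1, v_2, \dots$ for the ray from $1_G$ to $\omega$, one checks that $v_n$ lies on the $\pi$-ray of every $x$ in the ball $B_n$ of radius $n$ about $1_G$, and then constraint (2) forces $y(x) = T_1(y(v_n), v_n^{-1}x)$ for all $x \in B_n$. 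Since $y(v_n) \in E_0$ we may write $y(v_n) = \tilde x_T(q)$ for some $q \ne 1_G$, and then $\Phi(y)$ agrees on $B_n$ with the corresponding translate of $x_T$. As $n$ is arbitrary, $\Phi(y) \in \cl{G \cdot x_T} = X(T)$. Finally, "in particular sofic" follows because projectively isolated subshifts are sofic.

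I expect the inclusion $\Phi[Z] \sub X(T)$ to be the main obstacle. The naive version of $Z$ — the full edge alphabet $E(\cG)$ with only constraints (1)--(2) — does not work, because it carries $\star$-free configurations whose $\Phi$-image escapes $X(T)$: these are "flows" on the tree that may either diverge from some edge, or use edges not in the range of the edge selector. Restricting the alphabet to $E_0$ eliminates the second pathology and the acyclicity constraint (3) eliminates the first, forcing every $\star$-free point of $Z$ to be locally a translate of $x_T$; making this precise relies on the tree geometry of the free group, namely that a cycle-free flow on the tree converges to a unique end and that finite balls are absorbed by initial segments of the ray to that end.
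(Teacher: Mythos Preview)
Your proof is correct and takes essentially the same approach as the paper: lift to the alphabet $E_0 \sqcup \{\star\}$, impose the edge-selector rules plus the no-backtracking constraint as an SFT $Z$, and show that every $\star$-free $y \in Z$ has $\Phi(y) \in X(T)$ by following a geodesic ray to infinity and matching with translates of $x_T$. The paper packages this slightly differently---it proves the stronger statement $Z = \cl{G \cdot \tilde x_T}$ and then invokes the general fact that an SFT with an isolated transitive point is isolated in $\cS(B^G)$---but the core analysis of $\star$-free points (your end-convergence argument versus the paper's explicit ray $h_n = h_{n-1}\,\ell(y(h_{n-1}))^{-1}$) is the same computation.
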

\begin{proof}
  Let $*$ be a new letter and let $B = E(\cG) \cup \set{*}$. Define a word $z_0 \in B^G$ as follows: $z_0(1_G) = *$, $z_0(s) = T_0(s)$ for $s \in S$, $z_0(ws) = T_1(z_0(w), s)$ for a reduced word $ws$. We note that
  \begin{equation}
    \label{eq:labels-z0}
    \ell(z_0(ws)) = s \quad \text{ for all reduced words } ws.
  \end{equation}
  Also, if $w_1 w_2$ is a reduced word, then
  \begin{equation}
    \label{eq:z0-determined}
    z_0(w_1 w_2) = T_1 \big(z_0(w_1), w_2 \big).
  \end{equation}

  Let $\phi \colon B \to V(\cG)$ be given by $\phi(*) = v_0$, $\phi(e) = \rho(e)$ for $e \in E(\cG)$, and let $\Phi \colon B^G \to V(\cG)^G$ be the corresponding $G$-map. It follows from the definitions of $x_T$ and $z_0$ that $\Phi(z_0) = x_T$. Now let $Z \in \cS(B^G)$ be the SFT defined by the following rules for patterns $p \colon S \cup \set{1_G} \to E(\cG)$ (see \autoref{fig:rules-Z}):
  \begin{enumerate}
  \item \label{i:edge-sel-rul:ran} $\ran p \sub \ran z_0$;
  \item \label{i:edge-sel-rul:reduced} if $p(1_G) \neq *$ and $p(s) \neq *$, then $\ell(p(s)) \neq \ell(p(1_G))^{-1}$ for $s \in S$;
  \item \label{i:edge-sel-rul:star} if $p(1_G) = *$, then $p(s) = T_0(s)$ for $s \in S$;
  \item \label{i:edge-sel-rul:not-star} if $\ell(p(1_G)) = s$, then for all $t \in S \sminus \set{s^{-1}}$, $p(t) = T_1(p(1_G), t)$, and $p(s^{-1}) = *$ or $p(1_G)=T_1(p(s^{-1}), s)$.
  \end{enumerate}

  \begin{figure}
    \begin{equation*}
      \begin{tikzcd}[row sep = large, column sep = large]
        & & & \bullet \\
        \bullet \arrow[r, "p(s^{-1})"', "?"] & \bullet \arrow[r, "s", "p(1_G)"'] & \bullet
        \arrow[ur, "s", "p(s)"', outer sep=-2pt]  \arrow[dr, "t^{-1}", "p(t^{-1})"', outer sep=-2pt]
        \arrow[r,  "t", "p(t)"'] &\bullet \\
        & & & \bullet
      \end{tikzcd}
    \end{equation*}
    \caption{The rules for $Z$ if $*$ does not appear}
    \label{fig:rules-Z}
  \end{figure}
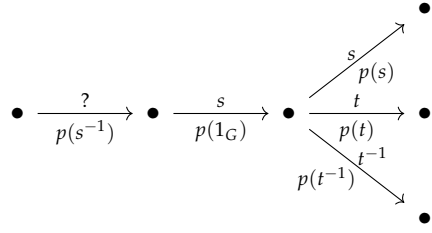

  We will show that $Z = \cl{G \cdot z_0}$. A direct verification shows that $z_0$ satisfies the rules, so we only have to prove that $G \cdot z_0$ is dense in $Z$. Let $z \in Z$. Suppose first that the symbol $*$ occurs in $z$. By translating by an appropriate element of $G$, we may assume that $z(1_G) = *$. But then, as $z_0$ is constructed inductively using rules \autoref{i:edge-sel-rul:star} and \autoref{i:edge-sel-rul:not-star}, we conclude that $z = z_0$. This shows that $z_0$ is an isolated point in $Z$.

  Next suppose that the symbol $*$ does not occur in $z$. Define inductively sequences $(s_n)_{n \in \N}$ of elements of $S$ and $(h_n)_{n \in \N}$ of elements of $G$ by:
\begin{equation*}
  h_0 = 1_G, \quad s_n = \ell(z(h_n))^{-1}, \quad h_{n+1} = h_n s_n.
\end{equation*}
It follows from rule \autoref{i:edge-sel-rul:reduced} that for all $n$, $s_n \neq s_{n+1}^{-1}$, and therefore, $h_n$ is a reduced word of length $n$. It also follows, using rule \autoref{i:edge-sel-rul:not-star}, that for all $n$ and for all $g \in G$ that do not start with $s_n$,
\begin{equation}
  \label{eq:z-determined}
  z(h_n g) = T_1(z(h_n), g).
\end{equation}
Let now $F \sub G$ be a finite set, let $n$ be larger than the length of all elements of $F$, and fix $f \in F$. The word $h_n^{-1}f$ starts with $s_{n-1}^{-1}$, so by \autoref{eq:z-determined},
\begin{equation*}
  z(f) = z\big(h_n(h_n^{-1}f)\big) = T_1 \big(z(h_n), h_n^{-1}f \big).
\end{equation*}
By rule \autoref{i:edge-sel-rul:ran}, there exists $g_0 \in G$ such that $z_0(g_0) = z(h_{n})$. It follows from \autoref{eq:labels-z0} that the last letter of $g_0$ is $s_{n}^{-1}$, so we can apply \autoref{eq:z0-determined} to the word $g_0(h_n^{-1}f)$ and obtain
\begin{equation*}
  z_0\big(g_0(h_n^{-1}f)\big) = T_1 \big(z_0(g_0), h_n^{-1}f \big) = T_1 \big(z(h_n), h_n^{-1}f \big) = z(f).
\end{equation*}
Thus $(h_ng_0^{-1}) \cdot z_0$ agrees with $z$ on $F$ and as $F$ was arbitrary, we conclude that $z \in \cl{G \cdot z_0}$.

Therefore $X(T) = \Phi[Z]$ and $Z$ is an isolated SFT by \autoref{l:isolated-point-SFT}. This implies that $X(T)$ is projectively isolated.
\end{proof}

\begin{theorem}[\cite{Doucha2022p}]
  \label{th:proj-isolated-free}
  Let $G$ be a finitely generated free group and let $A$ be a finite alphabet. Then projectively isolated subshifts are dense in $\cS(A^G)$.
\end{theorem}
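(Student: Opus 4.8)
The plan is to combine three facts already in place: SFTs are dense in $\cS(A^G)$; the higher-block identification provided by \autoref{l:iotaF-injective}; and the fact that edge-selector subshifts are projectively isolated (\autoref{p:edge-selector-sofic}), a property preserved both by images under sliding-block codes and by finite unions (\autoref{l:disjoint-union}).

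First I would pass to a local picture. Fix a non-empty open $\cU \sub \cS(A^G)$; we must produce a projectively isolated subshift inside $\cU$. Using density of SFTs, pick an SFT $X_0 \in \cU$, and replace $\cU$ by a basic neighborhood of $X_0$ of the form $\cU^+_{P_1} \cap \cU^-_{P_2}$, where $P_1$ is a finite set of patterns occurring in $X_0$ and $P_2$ a finite set of patterns not occurring in $X_0$. Fix a ball $F$ around $1_G$ whose radius is large enough that $F$ contains the supports of all patterns in $P_1 \cup P_2$ together with a defining window for $X_0$; then $F = F^{-1}$ is connected in $\Cay(G)$ and is itself a defining window for $X_0$, so \autoref{l:iotaF-injective} applies to the Rauzy graph $\cG \coloneqq \cG(X_0, A^F)$. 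Its vertex set is precisely the set of $F$-patterns occurring in $X_0$, the map $\iota_F$ identifies $X(\cG)$ with $X_0$, and, more generally, it identifies subshifts of $X(\cG)$ with subshifts of $X_0$.

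Next, for each $p \in P_1$ I would construct a single projectively isolated subshift of $X_0$ in which $p$ occurs. Since $p$ occurs in $X_0$, there is an $F$-pattern $v_p$ occurring in $X_0$ — that is, a vertex of $\cG$ — which restricts to $p$ on $\dom(p)$. Choose an edge selector $T_p$ for $\cG$ with base vertex $v_p$; such selectors exist by the remark following \autoref{df:edge-selector}. By the first part of \autoref{l:iotaF-injective}, $\iota_F(x_{T_p})|_F = x_{T_p}(1_G) = v_p$, so $Y_p \coloneqq \iota_F\big[X(T_p)\big] \sub X_0$ contains an occurrence of $v_p$, hence of $p$. Furthermore, $Y_p$ is projectively isolated in $\cS(A^G)$: by \autoref{p:edge-selector-sofic}, $X(T_p)$ is projectively isolated in $\cS\big(V(\cG)^G\big)$, and composing the factor map witnessing this with the sliding-block code $\iota_F \colon V(\cG)^G \to A^G$ shows that the image $Y_p$ is projectively isolated in $\cS(A^G)$.

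Finally I would glue the pieces together. The subshift $Y \coloneqq \bigcup_{p \in P_1} Y_p \sub X_0$ contains every pattern of $P_1$, so $Y \in \cU^+_{P_1}$; being contained in $X_0$ it contains no pattern of $P_2$, so $Y \in \cU^-_{P_2}$; hence $Y \in \cU$. Since $G$ is finitely generated and each $Y_p$ is projectively isolated in $\cS(A^G)$, the second statement of \autoref{l:disjoint-union}, applied inductively (the empty union being the projectively isolated subshift $\emptyset$), shows that $Y$ is projectively isolated, completing the argument. The step I expect to require the most care is the last claim of the third paragraph — checking that the image of an edge-selector subshift under $\iota_F$ (hence under the composition of block codes built in the proof of \autoref{p:edge-selector-sofic}) is genuinely projectively isolated in the \emph{target} space $\cS(A^G)$, rather than merely sofic; the decomposition pattern by pattern over $P_1$ is exactly what lets us sidestep the harder question of whether a single edge selector can be arranged to visit every vertex of the possibly disconnected graph $\cG(X_0, A^F)$.
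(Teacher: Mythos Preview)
Your proof is correct and essentially identical to the paper's: both pass to the Rauzy graph of an SFT in the given open set, take edge-selector subshifts at appropriate vertices, and combine \autoref{p:edge-selector-sofic}, \autoref{l:iotaF-injective}, and \autoref{l:disjoint-union} to produce a projectively isolated subshift in the neighborhood. The only cosmetic differences are that the paper forms the union before applying $\iota_F$ (you do it after) and takes an edge selector at \emph{every} vertex of $\cG$ rather than just those needed for $P_1$; your parenthetical about the empty union is a minor slip---just assume $P_1 \neq \emptyset$, which loses no generality.
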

\begin{proof}
  A basic open set in $\cS(A^G)$ is given by $\cS(Z) \cap \cU^+(P)$, where $Z \in \cS(A^G)$ is an SFT, $F \sub G$ is finite, and $P$ is the collection of $F$-patterns occurring in $Z$. We may further assume that $F^{-1}$ is connected. Let $\cG_0 = \cG(Z, A^F)$ and recall that $V(\cG_0) = P$. For every $p \in P$, let $T_p$ be an edge selector with $v_0 = p$. Let $X = \bigcup_{p \in P} X(T_p)$ and note that by \autoref{p:edge-selector-sofic} and \autoref{l:disjoint-union}, $X$ is projectively isolated. There is a natural map $\Phi \colon X \to A^G$, which is given by $\iota_F$ (as defined in \autoref{eq:defn-iotaF}) on each $X(T_p)$. It follows from the definition of $X(T_p)$ and \autoref{l:iotaF-injective} that $p$ occurs in $\Phi[X]$ for every $p \in P$. Thus $\Phi[X]$ belongs to the neighborhood $\cS(Z) \cap \cU^+(P)$ and it is projectively isolated.
\end{proof}

The following corollary is due to Kechris and Rosendal~\cite{Kechris2007a} for $G = \Z$ (see also \cite{Akin2008} for a different proof) and to Kwiatkowska~\cite{Kwiatkowska2012} in general. It implies that the group $\Homeo(\Omega)$ has ample generics, which has many further consequences. See \cite{Kwiatkowska2012} for more details.
\begin{cor}[\cite{Kwiatkowska2012}]
  \label{c:comeager-conj-class-free-group}
  Let $G$ be a finitely generated free group. Then the space $\Xi(G)$ has a comeager conjugacy class.
\end{cor}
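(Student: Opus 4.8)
The plan is to simply combine two results already established in the excerpt. By \autoref{c:all-actions-comeager-ex}, the space $\Xi(G)$ admits a comeager conjugacy class if and only if, for every finite alphabet $A$, the projectively isolated subshifts in $\cS(A^G)$ are dense in $\cS(A^G)$. Since $G$ is a finitely generated free group, \autoref{th:proj-isolated-free} tells us precisely that this density condition holds for every finite alphabet $A$. Hence the equivalence of \autoref{c:all-actions-comeager-ex} yields a comeager conjugacy class in $\Xi(G)$.

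So the only real work is invoking the right statements in the right order: first observe that $\Xi(G)$ fits the Fraïssé framework via \autoref{p:all-subshift-Fraisse} (so that \autoref{c:all-actions-comeager-ex} applies), then feed in the density of projectively isolated subshifts from \autoref{th:proj-isolated-free}. There is no genuine obstacle at this stage; all the difficulty has been absorbed into \autoref{th:proj-isolated-free}, whose proof rests on the edge-selector construction of \autoref{p:edge-selector-sofic} and the disjoint-union stability of \autoref{l:disjoint-union}. One may optionally remark that the comeager conjugacy class is, by the general theory (and \autoref{th:proj-atomic}), the Fraïssé limit of the projectively isolated subshifts, which in the free group case are exactly the sofic subshifts admitting an isolated point with dense orbit, but this is not needed for the bare statement.

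In short, the proof reads: ``By \autoref{th:proj-isolated-free}, projectively isolated subshifts are dense in $\cS(A^G)$ for every finite alphabet $A$; by \autoref{c:all-actions-comeager-ex}, this is equivalent to the existence of a comeager conjugacy class in $\Xi(G)$.''
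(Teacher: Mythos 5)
Your proposal is correct and is exactly the paper's proof: the authors deduce the corollary by citing \autoref{th:proj-isolated-free} together with \autoref{c:all-actions-comeager-ex}. (Your optional closing remark characterizing the projectively isolated subshifts is not established in the paper and should be omitted or weakened, but it plays no role in the argument.)
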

\begin{proof}
  This follows from \autoref{th:proj-isolated-free} and \autoref{c:all-actions-comeager-ex}.
\end{proof}

Next we isolate a special type of edge selectors $T$ for which the shift $X(T)$ is minimal. Let $\cG$ be a Rauzy graph for $G$. A sequence of distinct edges $C = (e_0, \ldots, e_{n-1})$ of $\cG$ is called a \df{simple reduced cycle} if $(e_0, \ldots, e_{n-1}, e_0)$ is a reduced path. We denote by $\bar C$ the cycle $(\bar e_{n-1}, \ldots, \bar e_0)$. Sometimes, we will abuse notation and also use the letter $C$ for the set of edges of the cycle. We will say that an edge selector $T$ \df{follows} the cycle $C$ if
\begin{equation*}
  T_1 \big(e_i, \ell(e_{i+1}) \big) = e_{i+1} \quad \text{ for all } i < n.
\end{equation*}
Here and below all calculations with the indices of the cycle are done mod $n$.

\begin{defn}
  \label{df:recurrent-edge-sel}
  Let $\cG$ be a minimal Rauzy graph for $G$, let $T = (v_0, T_0, T_1)$ be an edge selector for $\cG$, and let $C = (e_0, \ldots, e_{n-1})$ be a simple reduced cycle. We say that $T$ is \df{recurrent for $C$} if the following conditions hold:
  \begin{enumerate}
  \item \label{i:res:follows} $T$ follows $C$ and $\bar C$.
  \item \label{i:res:v0} $v_0 = \sigma(e_0) = \rho(e_{n-1})$.
  \item \label{i:res:T0} $T_0\big(\ell(e_0)\big) = e_0$ and $T_0\big(\ell(e_{n-1})^{-1}\big) = \bar e_{n-1}$.
  \item \label{i:res:out-of-cycle-agree} $T_1(e_i, s) = T_1(\bar e_{i+1}, s)$ for all $i < n$ and $s \neq \ell(e_i)^{-1}, \ell(e_{i+1})$.
  \item \label{i:res:reachable} for every edge $e \in E(\cG)$, there exists a reduced word $w$ such that $\ell(e) w$ is reduced and $T_1(e, w) \in C \cup \bar C$.
  \end{enumerate}
\end{defn}

\begin{prop}
  \label{p:edge-selector-minimal}
  Let $\cG$ be a Rauzy graph, let $C = (e_0, \ldots, e_{n-1})$ be a simple reduced cycle in $\cG$, and let $T$ be an edge selector for $\cG$ that is recurrent for $C$. Then the subshift $X(T)$ is minimal.
\end{prop}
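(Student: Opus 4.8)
The plan is to show that $x_T$ is an almost periodic point of the shift action $G \actson V(\cG)^G$; since $X(T) = \cl{G \cdot x_T}$ by definition, this is equivalent to $X(T)$ being minimal. Writing $B_m$ for the ball of radius $m$ around $1_G$ in $\Cay(G)$, almost periodicity amounts to: for every $m$, the return set $\Ret_m = \set{g \in G : x_T(gw) = x_T(w) \text{ for all } w \in B_m}$ is syndetic in $G$; since $G$ is free and finitely generated, this is the same as $\Ret_m$ being coarsely dense, i.e., there is $R = R(m)$ such that every vertex of $\Cay(G)$ lies within distance $R$ of $\Ret_m$. So the goal is to produce, near an arbitrary reduced word $g$, an occurrence of the pattern $x_T|_{B_m}$.

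First I would record the local structure of $x_T$. For a reduced word $g = s_1 \cdots s_k$ with $k \geq 1$, put $e(g) = T_1(T_0(s_1), s_2 \cdots s_k) \in E(\cG)$; then $\ell(e(g)) = s_k$, one has $x_T(g) = \rho(e(g))$, and $x_T$ restricted to the ``forward cone'' $\set{g v : v \text{ reduced}, v \text{ does not begin with } s_k^{-1}}$ depends only on $e(g)$. Because $C$ is a simple reduced cycle, the word $c = \ell(e_0)\cdots\ell(e_{n-1})$ satisfies that $c^k$ is reduced for every $k \in \Z$, so the $c^k$ together with the intermediate vertices trace out a bi-infinite geodesic $\gamma$ through $1_G$; conditions \autoref{i:res:follows}, \autoref{i:res:v0} and \autoref{i:res:T0} then show that $x_T|_\gamma$ is the $n$-periodic sequence $\bigl(\sigma(e_i)\bigr)_i$ and that the states along $\gamma$ are eventually $c$-periodic. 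Next, condition \autoref{i:res:out-of-cycle-agree} (the decorations hanging off the cycle depend only on the cyclic phase) together with the ``follows $\bar C$'' half of \autoref{i:res:follows} (the cycle read backwards behaves symmetrically) show that $x_T$, restricted to $\gamma$ and all the subtrees branching off it on the forward side $\set{c^k p : k \geq 1}$, is invariant under translation by $c$ (and symmetrically on the backward side). Consequently there is a fixed pattern $q_m^{*}$ such that $x_T|_{c^k B_m} = q_m^{*}$ for all sufficiently large $k$, and the same pattern $q_m^{*}$ is seen deep inside any ``spine'' of $x_T$ obtained by following $C \cup \bar C$.

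With this structure in place, the navigation step uses \autoref{i:res:reachable}: from the state $e(g)$ one follows $T_1$ along a reduced word of length at most $N := \max_{e} \abs{w_e}$ ($w_e$ as in \autoref{i:res:reachable}) to reach an edge of $C \cup \bar C$, then follows the cycle for a further $O(mn)$ steps; this produces, within distance $O(N + mn)$ of $g$, a vertex whose $B_m$-pattern is $q_m^{*}$, so $q_m^{*}$ already occurs syndetically. The main obstacle — and the heart of the proof — is to pass from occurrences of $q_m^{*}$ to occurrences of the actual target $q_m = x_T|_{B_m}$: the vertex $1_G$ is special, since the decorations off $\gamma$ at $1_G$ are governed by the maps $T_0(\cdot)$ rather than by $T_1(e_{n-1},\cdot)$, so in general $q_m \neq q_m^{*}$. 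One must therefore exhibit, within bounded distance of every $q_m^{*}$-occurrence (equivalently, of every vertex of $\Cay(G)$), a vertex realizing $q_m$. The delicate feature — already visible for $m=1$ on small examples — is that occurrences of $q_m$ can arise through \emph{backtracking}: a vertex $p$ whose ball $B_m(p)$ reproduces $q_m$ not because its forward state equals $T_0$ of something, but because one of its cones runs back into a region of $x_T$ that was already visited. Making this precise requires lining up the backward parts of the relevant balls, which is where \autoref{i:res:T0}, \autoref{i:res:v0}, \autoref{i:res:out-of-cycle-agree} and the minimality of the Rauzy graph $\cG$ (which rules out ``trapping'' configurations that would otherwise break minimality of $X(T)$) are used, while \autoref{i:res:reachable} is exactly what keeps all the distances involved bounded. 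Granting this, $\Ret_m$ is coarsely dense for every $m$, so $x_T$ is almost periodic and $X(T)$ is minimal.
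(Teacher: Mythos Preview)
Your setup and navigation step are essentially the paper's: show that $x_T$ is almost periodic by, starting from an arbitrary $g_0 \in G$, using condition~\autoref{i:res:reachable} to steer into $C \cup \bar C$ by a reduced word of bounded length, and then going around the cycle sufficiently many times to land at some $h$ within bounded distance of $g_0$. The claim one must then verify is that the $B_{nk}$-pattern of $x_T$ at $h$ coincides with the $B_{nk}$-pattern at $1_G$.

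Where your proposal falls short is that you do not verify this claim. You insert an intermediate object --- the ``eventual cycle pattern'' $q_m^{*}$ --- argue that $q_m^{*}$ occurs syndetically, identify the passage from $q_m^{*}$ to the actual target $q_m = x_T|_{B_m}$ as ``the heart of the proof,'' and then write ``Granting this.'' But that step \emph{is} the content of the proposition; everything prior is bookkeeping. The gestures toward ``backtracking,'' ``trapping configurations,'' and the minimality of $\cG$ do not constitute an argument, and the paper's proof never invokes minimality of $\cG$ (that hypothesis is only used elsewhere to \emph{construct} a recurrent selector, not to use one).

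The paper does not introduce $q_m^{*}$ at all. It sets $h = g_0\, w_1'\, c^k$ with $c = \ell(e_0)\cdots\ell(e_{n-1})$ and checks directly that $x_T(hu) = x_T(u)$ for every $u \in B_{nk}$, splitting on whether $hu$ has cancellation. The case with no cancellation is handled using \autoref{i:res:follows} and \autoref{i:res:T0}. The substantive case is when $u$ begins with $\ell(e_{n-1})^{-1}$: writing $hu$ in reduced form $g_0\, w_1'\, c^{q}\, t_0\cdots t_{m-1}\, u_1$, one computes
\[
  x_T(u) \;=\; \rho\bigl(T_1(\bar e_{m}, u_1)\bigr)
  \quad\text{and}\quad
  x_T(hu) \;=\; \rho\bigl(T_1(e_{m-1}, u_1)\bigr),
\]
and these agree precisely by condition~\autoref{i:res:out-of-cycle-agree}. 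This explicit matching via \autoref{i:res:out-of-cycle-agree} --- together with \autoref{i:res:T0} linking $T_0$ to the cycle edges --- is exactly what your sketch flags as delicate and then omits.
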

\begin{proof}
  Let $B_k$ denote the set of elements of $G$ that can be written as a reduced word of length at most $k$. As $X(T) = \cl{G \cdot x_T}$, it suffices to show that for every $k$, the set of $h \in G$ such that $(h^{-1} \cdot x_T)|_{B_{nk}} = x_T|_{B_{nk}}$ is right syndetic. To that end, let $k$ be given. Let $g_0 = s_0w_0$ be some non-identity element of $G$ (with $s_0 \in S$ and $s_0 w_0$ reduced). Let $e = T_1(T_0(s_0), w_0)$. By the definition of $x_T$, we have that $x_T(g_0) = \rho(e)$ and the last letter of $g_0$ is $\ell(e)$. By \autoref{df:recurrent-edge-sel} \autoref{i:res:reachable}, there exists $w_1$ such that $\ell(e) w_1$ is reduced and $T_1(e, w_1) \in C \cup \bar C$. By prolonging $w_1$ to follow the cycle if necessary, we may assume that $w_1$ is non-empty and $T_1(e, w_1) = e_0$ or $T_1(e, w_1) = \bar e_0$. Suppose that $T_1(e, w_1) = e_0$ (the other case is treated analogously). For $i < n$, let $t_i = \ell(e_i)$ and note that the last letter of $w_1$ is $t_0$, i.e., $w_1 = w_1' t_0$. Let $c = t_0 \cdots t_{n-1}$ and
  \begin{equation*}
    h = s_0 w_0 w_1' c^k
  \end{equation*}
  and note that the word on the right-hand side above is reduced.
  We claim that $(h^{-1} \cdot x_T)|_{B_{nk}} = x_T|_{B_{nk}}$. This is enough because the length of the word $w_1' c^k$ is uniformly bounded ($w_1'$ depends only on the edge $e$ and there are only finitely many edges).

  We aim to show that for every $u \in B_{nk}$, $x_T(u) = x_T(hu)$. First consider the case $u = 1_G$. Then
  \begin{equation}
    \begin{split}
    \label{eq:calc-xTh}
      x_T(h) &= T_1\big(T_0(s_0), w_0w_1'c^k\big) = T_1\big(e, w_1' c^k\big) \\
            &= T_1\big(e_0, (t_1 \cdots t_{n-1}) c^{k-1}\big) = v_0=x_T(1_G),
    \end{split}
  \end{equation}
  where the next-to-last equality uses the fact that $T$ follows $C$. The same argument shows that $x_T(hu) = x_T(u)$ if there is no cancellation in the concatenation of $h$ and $u$, i.e., if the first letter of $u$ is not $t_{n-1}^{-1}$. Suppose, finally, that there is cancellation. Recall that the length of $u$ is at most $nk$ and write $u = t_{n-1}^{-1} u_0 u_1$ as a reduced word so that
  \begin{equation*}
    hu = s_0 w_0 w_1' c^{q} t_0 \cdots t_{m-1} u_1
  \end{equation*}
  and the right-hand side of the equation above is a reduced word, where $0 \leq q < k$ and $0 \leq m < n$. Note that the last letter of $u_0$ is $t_{m}^{-1}$ and the first letter of $u_1$ is neither $t_{m}$ nor $t_{m-1}^{-1}$. On the one hand, using \autoref{df:recurrent-edge-sel} \autoref{i:res:T0}, we have
  \begin{equation}
    \label{eq:xTu}
    x_T(u) = \rho \Big( T_1\big(T_0(t_{n-1}^{-1}), u_0 u_1\big) \Big) = \rho \Big( T_1(\bar e_{n-1}, u_0 u_1) \Big)
    = \rho \Big( T_1(\bar e_{m}, u_1) \Big).
  \end{equation}
  On the other, using the same calculation as in \autoref{eq:calc-xTh}, we get
  \begin{equation}
    \label{eq:xThu}
    x_T(hu) = x_T(s_0 w_0 w_1' c^{q} t_0 \cdots t_{m-1} u_1) = \rho \Big( T_1(e_{m-1}, u_1) \Big).
  \end{equation}
  We conclude by noticing that, by \autoref{df:recurrent-edge-sel} \autoref{i:res:out-of-cycle-agree}, the expressions on the right in \autoref{eq:xTu} and \autoref{eq:xThu} are equal.
\end{proof}

Next we turn to the existence of recurrent edge selectors. \emph{From this point onward, we assume that $|S_0| \geq 2$, i.e., $G$ is non-abelian.}
\begin{lemma}
  \label{l:reduced-cycles-exist}
  Let $G$ be a non-abelian, finitely generated free group, let $\cG$ be a minimal Rauzy graph for $G$, and let $v \in V(\cG)$. Then there exists a simple reduced cycle $(e_0, \ldots, e_{n-1})$ with $\sigma(e_0) = \rho(e_{n-1}) = v$.
\end{lemma}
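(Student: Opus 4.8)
\emph{Proof plan.}
The plan is to restate the conclusion in a more flexible form, then build the desired cycle with a single use of minimality together with a gluing step.

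Call a reduced path $(r_0,\dots,r_N)$ in $\cG$ a \emph{closed walk at $v$} if $\sigma(r_0)=\rho(r_N)=v$, and say it is \emph{properly closed} if moreover $\ell(r_N)\neq\ell(r_0)^{-1}$. First I would check that it suffices to exhibit one nonempty properly closed walk at $v$. Indeed, among all such walks pick one, $W=(r_0,\dots,r_N)$, of minimal length. Then $(r_0,\dots,r_N,r_0)$ is a reduced path: the only new junction to check is $(r_N,r_0)$, and $\ell(r_N)\neq\ell(r_0)^{-1}$ by the properly closed condition. Moreover the edges $r_0,\dots,r_N$ are pairwise distinct, since if $r_i=r_j$ with $i<j$, then excising $r_{i+1},\dots,r_j$ (when $j<N$) or replacing $W$ by $(r_0,\dots,r_i)$ (when $j=N$) produces a strictly shorter nonempty properly closed walk at $v$, contradicting minimality. (One uses here that $\ell(e)\neq\ell(e)^{-1}$ for every edge $e$, which holds because $S$ contains no involutions.) Hence $W$ is a simple reduced cycle based at $v$.

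The heart of the proof is the following claim: for every edge $e$ with $\sigma(e)=v$, writing $a=\ell(e)$, either there is a properly closed walk at $v$, or there is a closed walk at $v$ whose first edge has label $a$ and whose last edge has label $a^{-1}$. If $\rho(e)=v$ this is immediate, as $(e)$ is already properly closed (using $a\neq a^{-1}$). Otherwise set $v_1=\rho(e)$, and, using the Rauzy axioms and $|S|\geq 2$, choose an edge $h$ with $\sigma(h)=v_1$ and $\ell(h)\neq a^{-1}$, and an edge $d'$ with $\rho(d')=v$ and $\ell(d')\neq a^{-1}$. By minimality of $\cG$ applied to the pair $(h,d')$ there is a reduced path $Q$ whose first edge is $h$ and whose last edge is $d'$ or $\bar{d'}$; prepending $e$ to $Q$ gives a reduced path $P$ (the junction $e,h$ is reduced because $a=\ell(e)\neq\ell(h)^{-1}$). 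If the last edge of $Q$ is $d'$, then $P$ is a closed walk at $v$, and it is properly closed since its last label is $\ell(d')\neq a^{-1}$. If the last edge of $Q$ is $\bar{d'}$, then since $\sigma(\bar{d'})=\rho(d')=v$ while $\sigma(h)=v_1\neq v$, the path $Q$ has length at least $2$ and reaches $v$ one step before its end; deleting that last edge turns $P$ into a closed walk $W$ at $v$, still starting with $e$, whose last edge $f$ satisfies $\ell(f)\neq\ell(\bar{d'})^{-1}=\ell(d')$. If $\ell(f)\neq a^{-1}$ then $W$ is properly closed; if $\ell(f)=a^{-1}$ then $W$ is a walk of the second type. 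This case split, forced by the ``$f$ or $\bar f$'' alternative in the definition of a minimal Rauzy graph, is the step I expect to be the main obstacle.

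To finish, use that $G$ is non-abelian, so $|S|\geq 4$: pick $a_1,a_2\in S$ with $a_1\neq a_2$ and $a_1\neq a_2^{-1}$ (for instance two distinct generators), and edges $e_1,e_2$ with $\sigma(e_i)=v$ and $\ell(e_i)=a_i$. Apply the claim to $e_1$ and to $e_2$. If either application produces a properly closed walk at $v$, we are done by the first paragraph. Otherwise we obtain closed walks $W_1,W_2$ at $v$ with respective first and last labels $(a_1,a_1^{-1})$ and $(a_2,a_2^{-1})$. Their concatenation $W_1W_2$ is again a closed walk at $v$: both walks begin and end at $v$, and the junction is reduced because $a_1^{-1}\neq a_2^{-1}$. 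Its first label is $a_1$ and its last label is $a_2^{-1}$, and $a_1\neq a_2=(a_2^{-1})^{-1}$, so it is properly closed. By the first paragraph, $\cG$ contains a simple reduced cycle based at $v$.
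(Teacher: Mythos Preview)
Your proof is correct and follows essentially the same strategy as the paper: for a chosen label, use minimality to produce either a properly closed walk at $v$ or a closed walk with first/last labels $a,a^{-1}$, then exploit the existence of two independent labels in the non-abelian case to concatenate two ``bad'' walks into a properly closed one. The only cosmetic differences are that the paper applies minimality directly to the pair $(e,f)$ with $\sigma(e)=\rho(f)=v$ and $\ell(e)=\ell(f)=s$ (rather than to your auxiliary $(h,d')$), and it reduces to a simple cycle by collapsing repeated-edge segments rather than by taking a minimal-length properly closed walk.
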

\begin{proof}
  First note that if we find a cycle that is reduced but not necessarily simple (i.e., one with possibly non-distinct edges), then we can convert it to a simple one by repeatedly replacing subcycles of the form $(e, \ldots, e)$ with the edge $e$. So for the rest of the proof, we will ignore the requirement that the edges be distinct.

  Let $s \in S$ and let $e, f \in E(\cG)$ with $\sigma(e) = \rho(f) = v$ and $\ell(e) = \ell(f) = s$. By minimality of $\cG$, there exists a reduced path $(e_0, \ldots, e_{n})$ with $e_0 = e$ and $e_{n} = f$ or $e_{n} = \bar f$. In the first case, $(e_0, \ldots, e_{n})$ is a reduced cycle and we are done. In the second, $\rho(e_{n-1}) = v$ and if $\ell(e_{n-1}) \neq s^{-1}$, then $(e_0, \ldots, e_{n-1})$ is a reduced cycle from $v$ to $v$. So we may assume that $\ell(e_{n-1}) = s^{-1}$.

  Let $t \in S \sminus \set{s, s^{-1}}$ and repeat the same argument as above to produce either a reduced cycle or a reduced path $(e_0', \ldots, e_{m-1}')$ from $v$ to $v$ with $\ell(e_0') = t$ and $\ell(e_{m-1}) = t^{-1}$. Then $(e_0, \ldots, e_{n-1}, e_0', \ldots, e_{m-1}')$ is a reduced cycle.
\end{proof}

\begin{lemma}
  \label{l:recurrent-selectors-exist}
  Let $\cG$ be a minimal Rauzy graph for $G$ and let $C$ be a simple reduced cycle in $\cG$. Then there exists an edge selector for $\cG$ recurrent for $C$.
\end{lemma}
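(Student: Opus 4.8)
The plan is to build $T=(v_0,T_0,T_1)$ by specifying it on the inputs that the conditions of \autoref{df:recurrent-edge-sel} force, completing it arbitrarily on the remaining inputs (respecting the source- and label-constraints in the definition of an edge selector), and then checking that the forced specifications are mutually consistent.

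Condition \autoref{i:res:v0} dictates $v_0=\sigma(e_0)$, which equals $\rho(e_{n-1})$ since $C$ is a cycle. For $T_0$, condition \autoref{i:res:T0} prescribes $T_0(\ell(e_0))=e_0$ and $T_0(\ell(e_{n-1})^{-1})=\bar e_{n-1}$; these are compatible because $C$ being reduced forces $\ell(e_0)\neq\ell(e_{n-1})^{-1}$, and both prescribed edges have source $v_0$ and the right label. On every other label $s$ we let $T_0(s)$ be an arbitrary edge with source $v_0$ and label $s$, which exists by the Rauzy axiom.

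The substance of the argument is the definition of $T_1$. On the edges lying in $C\cup\bar C$, conditions \autoref{i:res:follows} and \autoref{i:res:out-of-cycle-agree} pin $T_1$ down up to harmless choices: ``following $C$'' forces $T_1(e_i,\ell(e_{i+1}))=e_{i+1}$, ``following $\bar C$'' forces $T_1(\bar e_{i+1},\ell(e_i)^{-1})=\bar e_i$, and for every other label $s$ condition \autoref{i:res:out-of-cycle-agree} demands $T_1(e_i,s)=T_1(\bar e_{i+1},s)$, which we meet by choosing one edge with source $\rho(e_i)=\rho(\bar e_{i+1})$ and label $s$ and assigning it to both; the value $T_1(g,\ell(g)^{-1})$ is never used and may be set arbitrarily. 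For an edge $g\notin C\cup\bar C$ we must instead arrange condition \autoref{i:res:reachable}, and here we use minimality of $\cG$: put $\delta(g)=0$ for $g\in C\cup\bar C$, and for $g\notin C\cup\bar C$ let $\delta(g)$ be the least length of a reduced path from $g$ to an edge of $C\cup\bar C$, which is finite by \autoref{df:Rauzy-minimal}. For each such $g$ fix a shortest such path $(g=f_0,f_1,\dots,f_k)$ and set $T_1(g,\ell(f_1))=f_1$; reducedness of the path gives $\ell(f_1)\neq\ell(g)^{-1}$ and $\delta(f_1)<\delta(g)$. Complete $T_1$ on all remaining inputs arbitrarily. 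An induction on $\delta(g)$ now yields, for every edge $g$, a reduced word $w$ with $\ell(g)w$ reduced and $T_1(g,w)\in C\cup\bar C$ (take $w$ empty when $\delta(g)=0$, and $w=\ell(f_1)w'$ with $w'$ obtained inductively from $f_1$ otherwise), which is exactly \autoref{i:res:reachable}.

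It remains to check that these prescriptions never assign two distinct values to a single input of $T_1$; this is the step I expect to be the main obstacle. Inputs $(g,s)$ with $g\notin C\cup\bar C$ are touched only in the routing step, so the question concerns edges of $C\cup\bar C$. Here one uses repeatedly that $G$ is torsion-free (so $\ell(e)\neq\ell(e)^{-1}$ for every edge) and that $C$ is reduced (so the labels of consecutive edges of $C$ are never inverse to one another) in order to rule out clashes between the ``follow $C$'', ``follow $\bar C$'' and \autoref{i:res:out-of-cycle-agree} prescriptions; the delicate configuration is the one in which an edge of $C$ and its reverse both occur in $C$, i.e.\ $e_i=\bar e_j$ for some $i\neq j$, where the ``follow $C$'' value at $e_i$ and the ``follow $\bar C$'' value at $\bar e_j$ must be reconciled. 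Once consistency is verified, $T$ is a well-defined edge selector, and it is recurrent for $C$ by construction.
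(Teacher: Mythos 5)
Your proposal follows the same route as the paper's proof: fix $v_0$ and $T_0$ via \autoref{df:recurrent-edge-sel}~\autoref{i:res:v0} and \autoref{i:res:T0}, prescribe $T_1$ on $C \cup \bar C$ so as to satisfy \autoref{i:res:follows} and \autoref{i:res:out-of-cycle-agree}, and use minimality of $\cG$ to route every other edge back into $C \cup \bar C$, giving \autoref{i:res:reachable}; your induction on the distance $\delta$ to $C \cup \bar C$ is a correct, slightly streamlined version of the paper's layer-by-layer extension, and your check that the two constraints on $T_0$ are compatible is right. What you have not done is the one step that is not bookkeeping: showing that the prescriptions on $C \cup \bar C$ are mutually consistent. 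Flagging it as ``the main obstacle'' and concluding ``once consistency is verified'' leaves the statement unproved, and this is precisely where the content of the lemma sits.

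Moreover, the obstacle is substantive, not a routine verification. In a non-deterministic Rauzy graph two distinct edges may share their source and their label (they differ only in range), so if $e_{j+1} = \bar e_i$ with $\ell(e_j)^{-1} = \ell(e_{i+1})$ but $\bar e_j \neq e_{i+1}$, then following $C$ forces $T_1\big(e_i, \ell(e_{i+1})\big) = e_{i+1}$ while following $\bar C$ forces the same input to take the value $\bar e_j$. Such configurations really occur: on two vertices $u, v$ take the edges $u \edge{s} v$, $v \edge{t} u$, $v \edge{t} v$, $u \edge{s^{-1}} v$, $u \edge{t} u$ together with their reverses (one checks this is a minimal Rauzy graph), and the simple reduced cycle $\big(u \edge{s} v,\ v \edge{t} u,\ u \edge{s^{-1}} v,\ v \edge{t^{-1}} v,\ v \edge{s^{-1}} u,\ u \edge{t} u\big)$; here following $C$ demands $T_1(u \edge{s} v,\, t) = (v \edge{t} u)$ while following $\bar C$ demands $T_1(u \edge{s} v,\, t) = (v \edge{t} v)$. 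Torsion-freeness of the free group and reducedness of $C$, which you invoke, only rule out the nearby coincidences ($e_{i+1} = \bar e_i$ or $e_{i+2} = \bar e_i$), not this one. So the consistency you defer cannot be established for an arbitrary simple reduced cycle; closing the gap requires an additional idea, e.g.\ arranging that $C$ contains no edge together with its reverse (which also means revisiting \autoref{l:reduced-cycles-exist}, whose construction does not guarantee this), or altering the prescriptions at the clashing inputs and re-proving \autoref{p:edge-selector-minimal} for the modified selector. The paper's own proof dispatches this step in a single sentence, so your instinct that it is the crux is sound; but as written your argument does not establish the lemma.
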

\begin{proof}
  Write $C = (e_0, \ldots, e_{n-1})$. We will construct $T = (v_0, T_0, T_1)$ according to \autoref{df:recurrent-edge-sel}. First use \autoref{i:res:v0} and \autoref{i:res:T0} to define $v_0$ and $T_0$ (condition \autoref{i:res:T0} gives only partial constraints for $T_0$; for the other letters, define it arbitrarily). Next define $T_1$ on $C$ and $\bar C$ in order to satisfy conditions \autoref{i:res:follows} and \autoref{i:res:out-of-cycle-agree}.

  We will build collections of edges $E_0 \subset E_1 \subset \cdots$ and inductively extend the definition of $T_1$, so that at step $k$, condition \autoref{i:res:reachable} is satisfied for all edges in $E_k$ and the projection of $\dom T_1$ on $E(\cG)$ is contained in $E_k$.
  Set $E_0 = C \cup \bar C$ and note that for edges in $E_0$, \autoref{i:res:reachable} holds trivially (one can take $w$ to be the empty word). Suppose now that $E_k$ has been defined and that $E_k \neq E(\cG)$. Let $e \in E(\cG) \sminus E_k$ and using the minimality of $\cG$, let $(f_0, \ldots, f_{q})$ be a reduced path such that $f_0 = e$ and $f_{q} \in E_0$. Let $m$ be the least such that $f_m \in E_k$. By the inductive assumption, condition \autoref{i:res:reachable} is satisfied for $f_m$, so there exists a word $w$ such that $\ell(f_m) w$ is reduced and $T_1(f_m, w) \in E_0$. Moreover, $T_1$ is not defined on $f_i$ for any letter for any $i < m$. Now define $E_{k+1} = E_k \cup \set{f_i : i < m}$ and set $T_1 \big(f_i, \ell(f_{i+1}) \big) = f_{i+1}$ for $i < m$. Then for all $i < m$,
  \begin{equation*}
    T_1\big(f_i, \ell(f_{i+1}) \cdots \ell(f_m) w\big) = T_1(f_m, w) \in E_0
  \end{equation*}
  and the word $\ell(f_i) \ell(f_{i+1}) \cdots \ell(f_m) w$ is reduced, thus verifying \autoref{i:res:reachable} for the edges in $E_{k+1}$.

  As $E(\cG)$ is finite, for some $k$, $E_k = E(\cG)$. At this point, condition \autoref{i:res:reachable} is satisfied for all $e \in E(\cG)$ and we can extend $T_1$ arbitrarily to obtain a fully defined edge selector.
\end{proof}

\begin{theorem}
  \label{th:minimal-sofic-dense}
  Let $G$ be a non-abelian, finitely generated free group and let $A$ be a finite alphabet. Then the set of sofic minimal subshifts is dense in $\cS_\Min(A^G)$.
\end{theorem}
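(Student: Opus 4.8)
The plan is to use the basis of $\cS_\Min(A^G)$ provided by \autoref{c:SFT-basis-minimal}: every non-empty basic open set has the form $\cS(Z) \cap \cS_\Min(A^G)$ for some SFT $Z \in \cS(A^G)$, so it suffices to produce, inside each such set, a single sofic minimal subshift. Fix such a $Z$ and pick for it a defining window $F$ that is a ball around $1_G$ in $\Cay(G)$, so that $1_G \in F$ and $F = F^{-1}$ is connected; the symmetry and connectedness of $F$ are exactly what is needed to invoke \autoref{l:iotaF-injective} later. Then choose any minimal subshift $Y \sub Z$ (this exists by compactness; concretely one may take $Y$ to be the subshift we are approximating). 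If $Y$ is finite it is periodic, hence an SFT, hence sofic, and we are done; so we may assume $Y$ is infinite, hence perfect, hence homeomorphic to $\Omega$.

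The key observation is that $\cG \coloneqq \cG(Y, A^F)$ is a \emph{minimal} Rauzy graph. Indeed, identifying $Y$ with $\Omega$ via a homeomorphism and letting $B$ be the partition of $Y$ into the non-empty cylinders $\set{x \in Y : x|_F = p}$, one checks directly that $\cG(Y, A^F) = \cG(\eta, B)$, where $\eta \in \Xi_\Min(G)$ is (a copy of) the shift action on $Y$; minimality of $\cG$ as a Rauzy graph then follows from \autoref{l:Rauzy-xi-min}. It is essential here that one cannot simply work with $\cG(Z, A^F)$, which need not be a minimal Rauzy graph: passing from $Z$ to a minimal subflow $Y$ is precisely what makes the associated Rauzy graph minimal.

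With $\cG$ in hand, I would run the edge-selector machinery developed in this section. Since $G$ is non-abelian, \autoref{l:reduced-cycles-exist} provides a simple reduced cycle $C$ in $\cG$, and \autoref{l:recurrent-selectors-exist} provides an edge selector $T$ for $\cG$ recurrent for $C$; then \autoref{p:edge-selector-minimal} shows that the subshift $X(T) \sub X(\cG)$ is minimal and \autoref{p:edge-selector-sofic} shows that it is sofic. Finally, transport $X(T)$ into $\cS(Z)$: since $\cG$ is a subgraph of $\cG(Z, A^F)$ we have $X(T) \sub X\big(\cG(Z, A^F)\big)$, and by \autoref{l:iotaF-injective} the map $\iota_F$ restricts to an isomorphism of $X\big(\cG(Z, A^F)\big)$ onto $Z$; hence $\iota_F[X(T)]$ is a subshift of $Z$. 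Being a continuous equivariant image of $X(T)$ it is minimal, and being a factor of the sofic subshift $X(T)$ it is sofic, so $\iota_F[X(T)] \in \cS(Z) \cap \cS_\Min(A^G)$ is the required witness.

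I expect the substantive difficulties to have been absorbed already into the preceding lemmas — in particular into the notion of a recurrent edge selector and into the proof of \autoref{p:edge-selector-minimal}, which is the delicate one. For the theorem itself the two things to get right are organizational: choosing $F$ to be a symmetric defining window so that $\iota_F$ is available, and — the only genuinely new idea in the assembly — replacing $Z$ by a minimal subflow $Y$ so that the relevant Rauzy graph becomes a minimal Rauzy graph, which then reduces to \autoref{l:Rauzy-xi-min}. The one remaining point needing a line of care is the finite case, handled by the fact that periodic subshifts are SFTs.
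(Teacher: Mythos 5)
Your proposal is correct and follows essentially the same route as the paper: reduce to basic neighborhoods $\cS_\Min(Z)$ via \autoref{c:SFT-basis-minimal}, pass to a minimal subshift $X_0 \sub Z$ whose Rauzy graph $\cG(X_0,A^F)$ is minimal by \autoref{l:Rauzy-xi-min}, apply \autoref{l:reduced-cycles-exist}, \autoref{l:recurrent-selectors-exist}, \autoref{p:edge-selector-sofic}, and \autoref{p:edge-selector-minimal} to get a sofic minimal $X(T)$, and transport it into $Z$ via $\iota_F$. Your extra care in splitting off the finite case and in identifying $\cG(Y,A^F)$ with a graph of the form $\cG(\eta,B)$ is a harmless refinement of the same argument.
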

\begin{proof}
  Using \autoref{c:SFT-basis-minimal}, it suffices to find a sofic subshift in all neighborhoods of the form $\cS_\Min(Z)$, where $Z \in \cS(A^G)$ is an SFT. Let $F \sub G$ be a defining window for $Z$ with $1_G \in F$ and $F^{-1}$ connected. Let $X_0 \in \cS_\Min(Z)$ and let $\cG_0 = \cG(X_0, A^F)$. By \autoref{l:Rauzy-xi-min}, $\cG_0$ is minimal. Moreover, as $X_0 \sub Z$, the patterns in $V(\cG_0)$ are allowed in $Z$ and using the map $\iota_F$ from \autoref{l:iotaF-injective}, we can identify $X(\cG_0)$ with a subshift of $Z$.

Let $v \in V(\cG_0)$ be arbitrary and use \autoref{l:reduced-cycles-exist} to produce a simple reduced cycle starting at $v$. By \autoref{l:recurrent-selectors-exist}, there exists an edge selector $T$ for $\cG_0$ recurrent for $C$. Then by \autoref{p:edge-selector-sofic} and \autoref{p:edge-selector-minimal}, the subshift $X(T)$ is sofic and minimal. We have that $X(T) \sub X(\cG_0) \sub Z$ and thus $X(T) \in \cS_\Min(Z)$, completing the proof.
\end{proof}

\begin{remark}
  \label{rem:sofic-eq-proj-isolated-frgr}
It follows from Theorem \ref{th:minimal-sofic-dense} that for a non-abelian free group $G$, the minimal subshifts which are projectively isolated among minimal subshifts are precisely the sofic minimal subshifts. Indeed, let $X \sub A^G$ be projectively isolated among minimal subshifts; then there exists a non-empty open subset $\cU$ of some $\cS_\Min(B^G)$ and a factor map $\pi \colon B^G \to A^G$ such that $\pi[Y]=X$ for every $Y \in \cU$. Since $\cU$ contains a sofic subshift, $X$ is itself sofic.
\end{remark}

\begin{cor}
  \label{c:sofic-minimal-amalgamation}
  Let $G$ be a non-abelian, finitely generated free group. Then the sofic minimal subshifts of $G$ form a Fraïssé class.
\end{cor}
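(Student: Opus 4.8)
The plan is to verify directly the four defining properties of a (countable) Fra\"iss\'e class, for the class whose objects are the sofic minimal subshifts of $G$ and whose morphisms are factor maps --- equivalently, under Stone duality, the finitely generated Boolean $G$-algebras $\cB(X)$ with $X$ sofic minimal, together with their embeddings.

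I would first dispose of the routine items. For \emph{essential countability}: every sofic subshift is, up to isomorphism, the image of an SFT over a finite alphabet under a factor map, and a factor map between finite-alphabet subshifts is a sliding block code, hence finite data; since there are only countably many SFTs over finite alphabets, there are only countably many isomorphism types of sofic, and a fortiori of sofic minimal, subshifts. For the \emph{hereditary property}: a finitely generated sub-$G$-algebra of $\cB(X)$ is generated by the atoms of a finite clopen partition of $X$, hence corresponds to a factor $X \to X'$ with $X'$ a subshift over a finite alphabet, and a factor of a minimal subshift is minimal while a factor of a sofic subshift is sofic, so $X'$ again lies in the class. For \emph{joint embedding}: the one-point flow --- the full shift over a one-letter alphabet --- is an SFT and is minimal, so it belongs to the class, and it is a factor of every subshift; hence it is a least object, and joint embedding will follow from amalgamation over it.

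The substance is the \emph{amalgamation property}. Given sofic minimal $Y_1,Y_2$ and factor maps $\pi_i \colon Y_i \to X$ onto a sofic minimal $X$, I would fix SFTs $W_i \sub B_i^G$ over finite alphabets with factor maps $\alpha_i \colon W_i \to Y_i$ and form the fibre product of $W_1$ and $W_2$ over $X$ along $\pi_1\alpha_1$ and $\pi_2\alpha_2$:
\[ W = \set[\big]{(w_1,w_2) \in W_1 \times W_2 : \pi_1\alpha_1(w_1) = \pi_2\alpha_2(w_2)} \sub (B_1 \times B_2)^G . \]
Since the maps $\pi_i\alpha_i$ are sliding block codes, the condition cutting $W$ out of the SFT $W_1 \times W_2$ is checkable on a single finite window, so $W$ is again an SFT, and it is non-empty because $Y_1 \times_X Y_2$ is non-empty (both $\pi_i$ are surjective) and lifts through the $\alpha_i$. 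By \autoref{c:SFT-basis-minimal}, $\cS_\Min(W)$ is then a non-empty open subset of $\cS_\Min\big((B_1\times B_2)^G\big)$, so by \autoref{th:minimal-sofic-dense} it contains a sofic minimal subshift $M' \sub W$. Let $M$ be the image of $M'$ under $(w_1,w_2) \mapsto (\alpha_1 w_1, \alpha_2 w_2)$; then $M \sub Y_1 \times_X Y_2$ is sofic (a factor of the sofic subshift $M'$) and minimal (a factor of the minimal subshift $M'$). Finally, the two coordinate projections restrict to factor maps $M \to Y_1$ and $M \to Y_2$, each with image a non-empty subshift of the respective minimal $Y_i$, hence all of $Y_i$; and they agree after composition with $\pi_1$, resp.\ $\pi_2$, since this already holds on $Y_1 \times_X Y_2$. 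Thus $M$, equipped with these two factor maps, amalgamates $Y_1$ and $Y_2$ over $X$ inside the class.

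The single genuinely non-routine point --- and the one I expect to be the main obstacle --- is keeping the amalgam \emph{sofic}. The obvious candidate, a minimal subshift of $Y_1 \times_X Y_2$ itself, is minimal and amalgamates the data, but need not be sofic, since a minimal subshift of a sofic subshift of a free group need not be sofic. The fix is to work one level up, with the SFT $W$: because $W$ is an SFT, $\cS_\Min(W)$ is genuinely \emph{open}, so the density theorem \autoref{th:minimal-sofic-dense} applies to produce a sofic minimal subshift inside it, which is then pushed forward. This is precisely why the density result, rather than a purely formal amalgamation argument as for the class of all minimal subshifts, is needed here.
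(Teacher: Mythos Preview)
Your proof is correct, but it takes a different route from the paper's. The paper's argument is two lines: by \autoref{rem:sofic-eq-proj-isolated-frgr}, the sofic minimal subshifts coincide with the projectively isolated types in $\cS_\Min$, and by the abstract \autoref{c:proj-isolated-Fraisse}, dense projectively isolated types automatically form a Fra\"iss\'e class (the amalgamation property being read off from the homogeneity of the projectively atomic model, via the classical Fra\"iss\'e theorem in reverse).

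You instead verify the Fra\"iss\'e axioms by hand, and in particular construct the amalgam explicitly: lift $Y_1,Y_2$ to SFTs $W_1,W_2$, form the fibred product $W$ (still an SFT), and invoke \autoref{th:minimal-sofic-dense} on the open set $\cS_\Min(W)$ to find a sofic minimal subshift inside, which pushes forward to the desired amalgam. Both proofs rest on \autoref{th:minimal-sofic-dense}; the paper packages its use through the general machinery of the appendix, while you apply it directly. Your approach has the merit of being self-contained and making the amalgam concrete; the paper's buys generality (the same one-line argument works for any topological Fra\"iss\'e class once density of projectively isolated types is established) at the cost of invoking the existence and homogeneity of the Fra\"iss\'e limit before one even knows the class is Fra\"iss\'e.
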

\begin{proof}
 This follows from \autoref{rem:sofic-eq-proj-isolated-frgr} and \autoref{c:proj-isolated-Fraisse}.
\end{proof}

\begin{cor}
  \label{c:minimal-free-comeager}
  Let $G$ be a non-abelian, finitely generated free group. Then the space $\Xi_\Min(G)$ has a comeager conjugacy class elements of which are isomorphic to the Fraïssé limit of the sofic minimal subshifts.
\end{cor}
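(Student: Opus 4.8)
The plan is to assemble results already proved above. For the existence of a comeager conjugacy class, I would first invoke \autoref{th:minimal-sofic-dense} to know that sofic minimal subshifts are dense in $\cS_\Min(A^G)$ for every finite alphabet $A$, and then the last assertion of \autoref{p:sofic-minimal-proj-isol}, which says that every sofic minimal subshift is projectively isolated in $\cS_\Min(A^G)$. Combining the two, the minimal subshifts that are projectively isolated in $\cS_\Min(A^G)$ are dense in $\cS_\Min(A^G)$, for every $A$. Since a non-abelian finitely generated free group is countable and infinite, condition (ii) of \autoref{c:min-actions-comeager-ex} is satisfied, so $\Xi_\Min(G)$ has a comeager conjugacy class.

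To identify this class, I would use the general criterion of the appendix, \autoref{th:proj-atomic}, applied to the topological Fraïssé class $\cF_\Min(G)$ together with the admissible coding $\Xi_\Min(G)$, both furnished by \autoref{p:minimal-subshift-Fraisse}: once a comeager conjugacy class exists, it is the isomorphism class of the Fraïssé limit of the class of projectively isolated types, this class being a Fraïssé class by \autoref{c:sofic-minimal-amalgamation} (equivalently \autoref{c:proj-isolated-Fraisse}), so that the limit exists. Now \autoref{rem:sofic-eq-proj-isolated-frgr} identifies, for a non-abelian free group, the minimal subshifts that are projectively isolated among minimal subshifts as precisely the sofic minimal subshifts. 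Hence the comeager conjugacy class of $\Xi_\Min(G)$ is the isomorphism class of the Fraïssé limit of the sofic minimal subshifts, where ``Fraïssé limit'' is read through the Stone duality of \autoref{sec:Fraisse-cantor-actions} turning countable $G$-Boolean algebras into zero-dimensional $G$-flows and their finite-partition quotients into subshifts.

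I do not expect a genuine obstacle here, since the corollary only recombines earlier statements; the one point deserving care is the translation between the type-space formalism of the appendix and the dynamical formalism, but this dictionary --- the homeomorphisms $\tS_x(\cF(G)) \cong \tS_A^\Part(\cF(G)) \cong \cS(A^G)$ and the description of $\Xi_\Min(G)$ as an admissible coding for $\cF_\Min(G)$ --- has already been set up in \autoref{sec:Fraisse-cantor-actions}. With that dictionary in hand, the corollary falls out as a direct instance of the abstract machinery.
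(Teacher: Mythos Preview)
Your proposal is correct and follows the same approach as the paper, which simply cites \autoref{th:minimal-sofic-dense}, \autoref{p:sofic-minimal-proj-isol}, and \autoref{c:min-actions-comeager-ex}. You are actually more careful than the paper's terse proof in spelling out the identification of the comeager class with the Fraïssé limit via \autoref{th:proj-atomic}, \autoref{c:proj-isolated-Fraisse}, and \autoref{rem:sofic-eq-proj-isolated-frgr}, which the paper leaves implicit.
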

\begin{proof}
 This follows from \autoref{th:minimal-sofic-dense}, \autoref{p:sofic-minimal-proj-isol}, and \autoref{c:min-actions-comeager-ex}.
\end{proof}

\begin{remark}
  \label{rem:also-true-for-Z}
  \autoref{th:minimal-sofic-dense}, \autoref{c:sofic-minimal-amalgamation}, and \autoref{c:minimal-free-comeager} also hold for $G = \Z$. One just has to notice that in this case, the sofic minimal subshifts are precisely the periodic transitive subshifts. See \autoref{c:min-Z}.
\end{remark}


\section{Measure-preserving actions of the free group}
\label{sec:meas-pres-acti}

We keep the notation of the previous two sections; we allow again the case $G = \Z$.

\subsection{Measured Rauzy graphs}
\label{sec:meas-rauzy-graphs}

\begin{defn}
  \label{df:measured-Rauzy}
  A \df{measured Rauzy graph} is a Rauzy graph $\cG$, equipped with two functions $\mu \colon V(\cG) \to \R^+$ and $m \colon E(\cG) \to \R^+$ (where $\R^+$ is the set of non-negative reals) such that $m(e) = m(\bar e)$ for all $e \in E(\cG)$ and
  \begin{equation}
    \label{eq:measured-Rauzy_defn}
    \sum_{\set{e : \sigma(e) = v, \ell(e) = s}} m(e) = \sum_{\set{e : \rho(e) = v, \ell(e) = s}} m(e) = \mu(v) \quad \text{ for all } v \in V(\cG), s \in S.
  \end{equation}
  We say that a measured Rauzy graph has \df{full support} if $m(e) > 0$ for all $e \in E(\cG)$.
\end{defn}

If an action $\xi \in \Xi(G)$ admits a finite invariant measure $\mu$ and $A$ is a partition of $\Omega$, then the graph $\cG(\xi, A)$ becomes a measured Rauzy graph by defining
\begin{equation}
  \label{eq:measured-Rauzy}
  m(e) = \mu \big(\sigma(e) \cap \ell(e) \cdot \rho(e) \big) \quad \text{ for all } e \in E(\cG(\xi, A)).
\end{equation}
If $\xi$ is minimal, then $\cG(\xi, A)$ is minimal by \autoref{l:Rauzy-xi-min} and $m$ and $\mu$ are strictly positive because in a minimal, measure-preserving system, every non-empty open set has positive measure (as it is syndetic).

\begin{lemma}
  \label{l:integer-measured-Rauzy}
  Let $(\cG, \mu, m)$ be a measured Rauzy graph with full support. Then there exist $\mu'$ and $m'$, which take integer values, such that $(\cG, \mu', m')$ is again a measured Rauzy graph with full support.
\end{lemma}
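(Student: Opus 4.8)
The plan is to observe that, with the finite Rauzy graph $\cG$ held fixed, the conditions defining a measured Rauzy graph structure are a system of \emph{homogeneous linear equations with integer coefficients} in the unknowns $\big(m(e)\big)_{e \in E(\cG)}$ and $\big(\mu(v)\big)_{v \in V(\cG)}$. Indeed, the symmetry requirement $m(e) = m(\bar e)$ and both flow equations in \autoref{eq:measured-Rauzy_defn} (for each $v \in V(\cG)$ and each $s \in S$) have all coefficients in $\set{-1,0,1}$. Hence the set $W$ of all real-valued pairs $(\mu, m)$ for which $(\cG,\mu,m)$ is a measured Rauzy graph is a linear subspace of $\R^{E(\cG)} \times \R^{V(\cG)}$ of the form $W = \ker M$ for an integer matrix $M$.

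First I would record the elementary fact that such a $\Q$-rational subspace has its rational points dense: by row reduction over $\Q$, $\ker M$ admits a basis of rational vectors, and the induced linear isomorphism $\R^{\dim W} \to W$ carries $\Q^{\dim W}$ onto $W \cap \Q^{E(\cG) \times V(\cG)}$ and is in particular a homeomorphism, so $W \cap \Q^{E(\cG) \times V(\cG)}$ is dense in $W$. Next I would use the full support hypothesis: we are given a point $(\mu,m) \in W$ with $m(e) > 0$ for all $e$, and then, since by the last axiom of \autoref{df:Rauzy-graph} every vertex $v$ has at least one outgoing edge with label $s$, \autoref{eq:measured-Rauzy_defn} forces $\mu(v) > 0$ for all $v$ as well. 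Thus $(\mu,m)$ lies in the set $W^{+}$ of points of $W$ all of whose coordinates are strictly positive, which is open in $W$ and therefore non-empty and open.

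By density there is a rational point $(\mu'', m'') \in W^{+}$. Let $N \in \N$ be a common denominator of the finitely many coordinates of $(\mu'', m'')$ and put $\mu' = N\mu''$ and $m' = N m''$. Since $W$ is a linear subspace it is closed under scalar multiplication, so $(\mu', m') \in W$, i.e. $(\cG, \mu', m')$ is a measured Rauzy graph; and every coordinate of $(\mu', m')$ is a positive integer, so it has full support and takes integer values, as required. I do not expect a genuine obstacle in this argument; the only step worth stating carefully is the density of rational points in the rational solution space, which is standard linear algebra, after which the result is just a clearing of denominators.
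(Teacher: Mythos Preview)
Your argument is correct and is essentially identical to the paper's: both treat the measured Rauzy graph constraints as a homogeneous integer linear system, note that its kernel has a rational basis (so rational solutions are dense), pick a nearby rational solution with all $m(e)>0$, and clear denominators. Your write-up is slightly more detailed (you explicitly include the symmetry equations $m(e)=m(\bar e)$ and spell out why $\mu(v)>0$), but the approach is the same.
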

\begin{proof}
  The equations in \autoref{eq:measured-Rauzy_defn} form a homogeneous system of linear equations with unknowns $\mu(v), m(e)$ for $v \in V(\cG), e \in E(\cG)$. Let $M$ be the matrix of this system and note that it has integer entries. It follows from Gaussian elimination that $\ker M$ has a rational basis, so rational solutions of the system are dense in all solutions. It follows that it has a strictly positive rational solution, which can be converted into an integer one by multiplying it by an appropriate integer.
\end{proof}

\subsection{Density of the periodic flows}
\label{sec:dens-periodic-flows}

\begin{lemma}
  \label{l:finite-actions-dense}
  Let $G$ be a finitely generated free group and let $(\cG, \mu, m)$ be a measured Rauzy graph for $G$ with full support. Then there exists an action $G \actson^\eta W$ with $W$ finite and a partition $A$ of $W$ such that $\cG(\eta, A) \cong \cG$.
\end{lemma}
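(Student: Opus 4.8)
The goal is to realize a measured Rauzy graph $(\cG, \mu, m)$ with full support as the Rauzy graph of a finite action equipped with an invariant partition. By \autoref{l:integer-measured-Rauzy}, we may assume that $\mu$ and $m$ take strictly positive integer values. The idea is to build the finite $G$-set $W$ by taking, for each vertex $v \in V(\cG)$, a ``fiber'' $W_v$ of cardinality $\mu(v)$, and to let the partition $A$ be $\set{W_v : v \in V(\cG)}$. It remains to define the action of the generators $s \in S_0$ so that the associated Rauzy graph is exactly $\cG$, i.e., so that $W_v \cap \xi(s) \cdot W_w \ne \emptyset$ precisely when $v \edge{s} w$.

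First I would fix $s \in S_0$ and try to define the bijection $\xi(s) \colon W \to W$. The constraint \autoref{eq:measured-Rauzy_defn} says that for each vertex $v$, the sum of $m(e)$ over edges $e$ labeled $s$ with $\sigma(e) = v$ equals $\mu(v)$, which is $|W_v|$; and similarly the sum over edges labeled $s$ with $\rho(e) = v$ equals $\mu(v) = |W_v|$. So I would partition each fiber $W_v$ into blocks indexed by the $s$-labeled edges out of $v$, the block for edge $e$ having size $m(e)$, and independently partition $W_v$ into blocks indexed by the $s$-labeled edges into $v$, again with the block for edge $e$ having size $m(e)$. Then, for each $s$-labeled edge $e$ from $v$ to $w$, I choose any bijection from the ``out-block'' of $e$ inside $W_v$ to the ``in-block'' of $e$ inside $W_w$; since $m(e) = m(\bar e)$ and $\bar e$ has label $s^{-1}$, the map $\xi(s)$ defined by gluing these bijections is a bijection of $W$ (its inverse being built from the $s^{-1}$-labeled edges in the same way, using $\bar e$), and it is compatible with the reversal involution. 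Doing this independently for each $s \in S_0$ and using freeness of $G$ to extend to an action $G \actson W$, I obtain $\eta \in \Xi(G)$ on the finite set $W$.

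Next I would check that $\cG(\eta, A) \cong \cG$ under the identification $v \mapsto W_v$. By construction, for $s \in S_0$ we have $W_v \cap \eta(s) \cdot W_w \ne \emptyset$ iff some point of the out-block structure of $W_v$ is sent into $W_w$, which happens iff there is an $s$-labeled edge $v \edge{s} w$; since every edge has $m(e) > 0$, every such edge is genuinely witnessed, so the edge sets match exactly for generators, and hence for all of $S = S_0 \cup S_0^{-1}$ by the involution. Thus $\cG(\eta, A) = \cG$ (up to the relabeling of vertices), which is what we want. One should also remark that $\mu$, viewed as the counting measure on $W$ giving mass $1$ to each point (equivalently, $A(\omega) = v \Rightarrow$ weight as prescribed), is $\eta$-invariant, so this $\eta$ lies in $\Xi_\Mp(G)$; this observation is what makes the lemma useful for the surrounding arguments about periodic pmp actions.

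The main obstacle is purely bookkeeping: one has to choose the two partitions of each fiber $W_v$ (the ``outgoing'' and the ``incoming'' one) so that the gluing across all edges fits together into a single well-defined permutation, and one must verify the reversal axiom $\ell(\bar e) = \ell(e)^{-1}$ and $\sigma(\bar e) = \rho(e)$ are respected — but this is automatic once we build $\xi(s^{-1})$ as the inverse of $\xi(s)$ and note the in-block of $e$ for label $s$ coincides with the out-block of $\bar e$ for label $s^{-1}$ because $m(e) = m(\bar e)$. There is no amalgamation or limiting argument here; the only genuinely necessary input beyond elementary combinatorics is \autoref{l:integer-measured-Rauzy} to reduce to integer weights and the freeness of $G$ to produce an honest action from an arbitrary assignment of permutations to the free generators.
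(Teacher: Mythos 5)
Your construction is correct in substance and follows essentially the same route as the paper: after reducing to integer weights via \autoref{l:integer-measured-Rauzy}, both arguments build, one generator $s$ at a time, a permutation of $W=\bigsqcup_{v} W_v$ with $|W_v|=\mu(v)$ lying over the $s$-labelled edges of $\cG$ with multiplicities $m(e)$, and then use freeness of $G$ to assemble these permutations into an action; full support guarantees every edge of $\cG$ is actually witnessed. The only difference is how the existence of such a permutation is established: the paper extends ``partial edge relations'' greedily, adding one lifted edge at a time by a counting argument, whereas you note directly that the identity $\sum_{\sigma(e)=v,\,\ell(e)=s} m(e)=\mu(v)=\sum_{\rho(e)=v,\,\ell(e)=s} m(e)$ lets you partition each fiber once into out-blocks and once into in-blocks of sizes $m(e)$ and match corresponding blocks by arbitrary bijections; this is a slightly more direct existence argument and is equally valid. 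One small point to fix: with the paper's convention $a \edge{s} b \iff a \cap \eta(s)\cdot b \neq \emptyset$, your $\eta(s)$ should carry the block of $W_{\rho(e)}$ attached to an $s$-labelled edge $e$ into $W_{\sigma(e)}$, not the other way around; as written, your verification sentence reads off $\eta(s)[W_v]\cap W_w\neq\emptyset$, which yields the graph with labels $s$ and $s^{-1}$ interchanged rather than $\cG$ itself. Since your construction is symmetric under $e\leftrightarrow\bar e$ and $m(e)=m(\bar e)$, reversing the chosen bijections repairs this with no other change.
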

\begin{proof}
  By \autoref{l:integer-measured-Rauzy}, we may assume that $m$ and $\mu$ take integer values. For an integer $n$, we denote $[n] \coloneqq \set{0, \ldots, n-1}$ and we let
  \begin{equation}
    \label{eq:defn-W}
    W = \bigsqcup_{v \in V(\cG)} \set{v} \times [\mu(v)].
  \end{equation}
  We define $\pi \colon W \to V(\cG)$ by $\pi(v, i) = v$ and our goal is to define a deterministic Rauzy graph with vertex set $W$ such that $\pi$ becomes a surjective morphism. A \df{partial edge relation} is a subset $E \sub W \times W \times S$ such that:
  \begin{enumerate}
  \item \label{i:partial-edge-atm1} for all $w_1 \in W, s \in S$, $|\set{w_2 \in W : (w_1, w_2, s) \in E}| \leq 1$;
  \item \label{i:partial-edge-inv} for all $w_1,w_2 \in W, s \in S$, $(w_1, w_2, s) \in E \implies (w_2, w_1, s^{-1}) \in E$;
  \item \label{i:partial-edge-morph} for all $w_1,w_2 \in W, s \in S$, $(w_1, w_2, s) \in E \implies \pi(w_1) \edge{s} \pi(w_2)$;
  \item \label{i:partial-edge-meas} for all $v_1, v_2 \in V(\cG), s \in S$,
    \begin{equation*}
      |\set{(w_1, w_2, s) \in E : \pi(w_1) = v_1, \pi(w_2) = v_2}| \leq m(v_1 \edge{s} v_2).
    \end{equation*}
  \end{enumerate}
  A partial edge relation is \df{total} if the inequalities in \autoref{i:partial-edge-meas} are equalities. For a total $E$ and $v \in V(\cG), s \in S$, we have that
  \begin{equation*}
    \begin{split}
    |\set{(w_1, w_2, s) \in E : \pi(w_1) = v}| & \leq |\set{w_1 \in W \colon \pi(w_1)=v}| \\
                                            & = \mu(v) \\
                                            &= \sum_{v' \in V(\cG)} m(v \edge{s} v') \\
                                            &= |\set{(w_1, w_2, s) \in E : \pi(w_1) = v}|,
    \end{split}
  \end{equation*}
  so we must have equality also in \autoref{i:partial-edge-atm1}.

  Thus a total edge relation makes $(W, E)$ into a deterministic Rauzy graph by defining $\sigma(w_1, w_2, s) = w_1, \rho(w_1, w_2, s) = w_2$, and $\ell(w_1, w_2, s) = s$. The map $\pi$ is a morphism because of \autoref{i:partial-edge-morph}. It is also surjective because of the equality in \autoref{i:partial-edge-meas} and the fact that $(\cG, \mu, m)$ has full support.

  To finish the proof, it suffices to check that to any partial edge relation which is not total, we can add an edge. Let $E$ be such a relation and let $v_1, v_2 \in V(\cG)$ and $s \in S$ be such that we have strict inequality in \autoref{i:partial-edge-meas}. Then
  \[\sum_{v' \in V(\cG)} m(v_1 \edge{s} v') > |\set{(w_1, w_2, s) \in E : \pi(w_1) = v_1}|\]
  and, using the same computations as above, we conclude that there exists $w_1 \in \pi^{-1}(v_1)$ such that $\set{w_2' \in W : (w_1, w_2', s) \in E} = \emptyset$. Using \autoref{i:partial-edge-inv} and applying the same reasoning to $\set{(w_2,w_1,s^{-1}) : \pi(w_2)=v_2}$, we find $w_2 \in \pi^{-1}(v_2)$ such that $\set{w_1' \in W : (w_1', w_2, s) \in E} = \emptyset$. Then $E \cup \set{(w_1, w_2, s), (w_2, w_1, s^{-1})}$ is still a partial edge relation.
\end{proof}

\begin{lemma}
  \label{l:finite-tr-actions-dense}
  Let $G$ be a finitely generated free group and let $(\cG, \mu, m)$ be a connected, measured Rauzy graph for $G$ with full support. Then there exists a transitive action $G \actson^\eta W$ with $W$ finite and a partition $A$ of $W$ such that $\cG(\eta, A) \cong \cG$.
\end{lemma}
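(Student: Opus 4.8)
The plan is to bootstrap from \autoref{l:finite-actions-dense}, which already produces a finite action $G\actson^\eta W$ together with a partition $A$ of $W$ and an isomorphism $\cG(\eta,A)\cong\cG$; the only thing missing is transitivity. Identify $V(\cG)$ with $A$ along this isomorphism and let $\pi\colon W\to V(\cG)$ be the map sending $w$ to the part of $A$ containing it, so that $\pi$, regarded as a map from the Schreier graph of $\eta$ to $\cG$, is a surjective morphism. A finite action is transitive exactly when it has a single orbit, so if $\eta$ is not transitive I would modify it by a finite sequence of local ``rewirings'', each keeping $\cG(\eta,A)$ equal to $\cG$ while strictly decreasing the number of orbits.

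First I would observe that if $\eta$ is not transitive, then two distinct orbits meet a common fiber of $\pi$. Each orbit $O$ carries a transitive action, hence has a connected Schreier graph, and $\pi$ maps it onto a connected subgraph $\cG_O$ of $\cG$ in the sense of \autoref{df:Rauzy-graph}. These subgraphs cover $\cG$: each edge $v\edge{s}v'$ of $\cG$ corresponds to the edge $\pi^{-1}(v)\edge{s}\pi^{-1}(v')$ of $\cG(\eta,A)$, i.e., to a point $w\in\pi^{-1}(v)\cap\eta(s)\cdot\pi^{-1}(v')$, and $w$ together with the edge witnessing this lies inside a single orbit. Were the vertex sets of the $\cG_O$ pairwise disjoint, they would partition $V(\cG)$, and no edge of $\cG$ could then join two classes, contradicting the connectedness of $\cG$; so some two orbits $O_1,O_2$ share a vertex $v$, and in particular $|\pi^{-1}(v)|\ge 2$.

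The rewiring move is: fix a free generator $s$ of $G$ and a permutation $\tau$ of the finite set $\pi^{-1}(v)$, extended by the identity on the rest of $W$; let $\eta'$ be the action with $\eta'(s)=\eta(s)\circ\tau$ and $\eta'(t)=\eta(t)$ for the other free generators $t$ (this is an action since $G$ is free). Because $\tau$ maps the part $\pi^{-1}(v)$ onto itself and fixes every other part of $A$ setwise, $\eta'(s)$ and $\eta(s)$ send each part of $A$ to the same subset of $W$; hence $\cG(\eta',A)=\cG(\eta,A)=\cG$. It remains to choose $\tau$ so that $\eta'$ has fewer orbits: taking $\tau$ to cyclically permute one chosen point of $\pi^{-1}(v)\cap O_i$ over all orbits $O_i$ meeting $\pi^{-1}(v)$ creates, for each such $O_i$, a new $s$-edge running into the next orbit of the cycle, which tends to fuse all of them into one.

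The hard part will be ensuring that such a move genuinely decreases the orbit count. The obstruction is that deleting an $s$-edge which happens to be a \emph{bridge} of a Schreier graph breaks that orbit into two pieces, and the new edges might merely re-pair the pieces, leaving the count unchanged. Overcoming this needs a careful choice of the generator $s$, of the representatives, and of $\tau$ --- exploiting that $|\pi^{-1}(v)|\ge 2$ and that connectedness of $\cG$ persists through the process --- together, if necessary, with several rewirings (possibly at different vertices) before the count drops. Once it does, the procedure terminates after finitely many steps in a transitive action $\eta$ with $\cG(\eta,A)\cong\cG$, as required. An alternative route would be to build the total edge relation in the proof of \autoref{l:finite-actions-dense} in an order guided by a spanning tree of $\cG$, so that the resulting Schreier graph is connected by construction; the bookkeeping needed to respect the multiplicity function $m$ is of comparable difficulty.
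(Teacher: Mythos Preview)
Your overall framework---start from the action given by \autoref{l:finite-actions-dense} and rewire a single generator by precomposing with a permutation of one fiber $\pi^{-1}(v)$---is correct and is exactly what the paper does. You also correctly identify the key point left unresolved: when the removed $s$-edges are bridges in their respective orbits, a transposition swap can leave the number of $G$-orbits unchanged (your ``re-pairing'' scenario is real, not hypothetical). As written, the proposal does not close this gap.

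The paper sidesteps the bridge problem entirely with two observations you are missing. First, it proves the reduction: if every fiber $\pi^{-1}(v)$ lies in a single connected component of the Schreier graph, then the whole Schreier graph is connected. (Otherwise the components would push down to a nontrivial partition of $V(\cG)$ with no $\cG$-edges between parts, contradicting connectedness of $\cG$.) Second, and this is the crucial trick, the paper does \emph{not} try to decrease the number of $G$-orbits directly. It fixes one generator $t$, lets $\tau = \eta(t)$, and whenever two points $w_1,w_2$ of the same fiber lie in distinct $\tau$-\emph{cycles}, it swaps their $\tau$-images. For a single permutation, swapping the images of two points in different cycles \emph{always} merges those cycles into one---there is no bridge phenomenon for cycles. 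Iterating, one reaches a total edge relation in which every fiber is contained in a single $t$-cycle, hence a fortiori in a single connected component, and the first observation finishes the proof.

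So the fix is: replace ``decrease the number of $G$-orbits'' by ``decrease the number of $t$-cycles for one fixed generator $t$'', and add the lemma that fiberwise connectedness implies global connectedness.
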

\begin{proof}
  We define $W$ as in \autoref{eq:defn-W} and $\pi \colon W \to V(\cG)$ as above. Let $E$ be a total edge relation on $W$ as constructed in \autoref{l:finite-actions-dense}. Our goal is to modify $E$ to become a connected total edge relation.
  Note that for $E$ to be connected, it suffices that for every $v \in V(\cG)$, the vertices in $\pi^{-1}(v)$ are in the same $E$-connected component. Indeed, assume this holds and let $R$ be the equivalence relation on $W$ generated by $E$ and suppose that $R$ has at least two classes $C_1$ and $C_2$. Then, by assumption, $\pi^{-1}(\pi[C_1]) = C_1$ and $\pi^{-1}(\pi[C_2]) = C_2$, and by surjectivity of $\pi$, there is no edge in $\cG$ between $\pi[C_1]$ and $\pi[C_2]$. This contradicts the connectedness of $\cG$.

  Fix now $t \in S$ and let $\tau \colon W \to W$ be the bijection with graph $\set{(w_1, w_2) : (w_1, w_2, t) \in E}$. Suppose that there is $v \in V(\cG)$ such that $\pi^{-1}(v)$ intersects two distinct $\tau$-orbits, say in the points $w_1$, $w_2$. Define $\tau' \colon W \to W$ by $\tau'(w) = \tau(w)$ for all $w \neq w_1, w_2$, $\tau'(w_1) = \tau(w_2), \tau'(w_2) = \tau(w_1)$. Then $\tau'$ is a bijection which merges the $\tau$-orbits of $w_1$ and $w_2$. Moreover,
  \begin{equation*}
    \big(E \sminus (\tau \times \set{t} \cup \tau^{-1} \times \set{t^{-1}}) \big) \cup \big(\tau' \times \set{t} \cup \tau'^{-1} \times \set{t^{-1}}\big)
  \end{equation*}
  is a total edge relation. By iterating this procedure, we obtain a total edge relation such that for every $v$, all vertices in $\pi^{-1}(v)$ are in the same $t$-cycle. By our observation above, this is enough.
\end{proof}

\begin{theorem}
  \label{th:min-pmp-free-group}
  Let $G$ be a finitely generated free group. Then:
  \begin{enumerate}
  \item \label{i:min-pmp:xi-comeager-conj} $\Xi_\Mpmp(G)$ has a comeager conjugacy class whose elements are isomorphic to the left translation action of $G$ on its profinite completion.
  \item \label{i:min-pmp:subshifts-dense} For every $A$, $\cS_\Pertr(A^G)$ is dense in $\cS_\Mpmp(A^G)$.
  \end{enumerate}
\end{theorem}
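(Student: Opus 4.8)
The plan is to prove \autoref{i:min-pmp:subshifts-dense} first, using the measured Rauzy graph machinery of the present section, and then to deduce \autoref{i:min-pmp:xi-comeager-conj} from it together with \autoref{c:profinite-actions}. Throughout, $G$ is a finitely generated free group (the case $G=\Z$ included).

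For \autoref{i:min-pmp:subshifts-dense}, fix a finite alphabet $A$ and, by \autoref{c:SFT-basis-minimal}, a basic non-empty open set $\cS(Z) \cap \cS_\Mpmp(A^G)$ with $Z$ an SFT; pick $X_0$ in it. If $X_0$ is finite it is already periodic and transitive, so assume it is infinite and fix a $G$-invariant probability measure $\mu$ on $X_0$. Choose a defining window $F$ for $Z$ with $1_G \in F$ and $F^{-1}$ connected, and form $\cG_0 = \cG(X_0, A^F)$, which is a minimal Rauzy graph by \autoref{l:Rauzy-xi-min} (as in the proof of \autoref{th:minimal-sofic-dense}), hence connected. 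Make $\cG_0$ a measured Rauzy graph by setting $\mu(p) = \mu(C_p \cap X_0)$ on vertices and $m(e) = \mu(C_{p_1 \cup s\cdot p_2} \cap X_0)$ for an edge $e \colon p_1 \edge{s} p_2$; the balance equations of \autoref{df:measured-Rauzy} follow from $G$-invariance of $\mu$ (partitioning a window-cylinder according to the symbols read on a translate of the window), and $m$ has full support since in a minimal, measure-preserving system every non-empty clopen set is syndetic, hence of positive measure. Now \autoref{l:finite-tr-actions-dense} yields a transitive action $G \actson^\eta W$ on a finite set with a partition $A'$ such that $\cG(\eta, A') \cong \cG_0$; transporting through this isomorphism, each $w \in W$ defines a morphism $\Cay(G) \to \cG_0$, and since $W$ is a single $\eta$-orbit these form a single finite $G$-orbit $\widetilde X \sub X(\cG_0)$, i.e.\ a periodic transitive subshift. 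As $X_0 \sub Z$ and $F$ is a defining window for $Z$, we have $\cG_0 \sub \cG(Z, A^F)$, so by \autoref{l:iotaF-injective} the map $\iota_F$ sends $X(\cG_0)$ into $Z$; hence $\iota_F[\widetilde X]$ is a periodic transitive subshift of $Z$, lying in $\cS(Z) \cap \cS_\Pertr(A^G) \sub \cS(Z) \cap \cS_\Mpmp(A^G)$, as required.

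For \autoref{i:min-pmp:xi-comeager-conj}, let $\Gamma$ be the profinite completion of $G$. As $G$ is a non-trivial free group it has arbitrarily large finite quotients, so $\Gamma$ is infinite, compact, metrizable, totally disconnected and perfect, hence homeomorphic to $\Omega$, and left translation $G \actson \Gamma$ lies in $\Xi_\Mpmp(G)$ (it is profinite, so transitive implies minimal, and it preserves Haar measure). The set $\Xi_\Pertr(G)$ is contained in $\Xi_\Mpmp(G)$ and is $G_\delta$ there, being the intersection of $\Xi_\Per(G)$ (which is $G_\delta$ in $\Xi(G)$) with $\Xi_\Tr(G)$ (\autoref{p:trans-min-Gdelta}). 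It is also dense in $\Xi_\Mpmp(G)$: given $\xi \in \Xi_\Mpmp(G)$ and a basic neighborhood $\pi_A^{-1}(\cU)$, shrink $\cU$ so that every subshift in it uses all symbols of $A$ (a clopen condition, satisfied by $\pi_A(\xi) \in \cS_\Mpmp(A^G)$); by \autoref{i:min-pmp:subshifts-dense} choose a periodic transitive $X' \in \cU$, realize $X'$ as a factor of $G \actson \Gamma$ (last item of \autoref{p:periodic-subshifts}), pull back the canonical partition of $X'$, and conjugate by a homeomorphism of $\Omega$ carrying it onto $A$; this produces $\xi' \in \Xi_\Pertr(G)$ with $\pi_A(\xi') = X' \in \cU$. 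Thus $\Xi_\Pertr(G)$ is dense $G_\delta$, hence comeager, in $\Xi_\Mpmp(G)$. By \autoref{c:profinite-actions} the conjugacy class of $G \actson \Gamma$ is comeager in $\Xi_\Pertr(G)$, and a comeager subset of a dense $G_\delta$ subspace is comeager in the ambient space; so this conjugacy class is comeager in $\Xi_\Mpmp(G)$.

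The substantive content is already isolated in \autoref{l:finite-tr-actions-dense} and, beneath it, the linear-algebra input \autoref{l:integer-measured-Rauzy} (rational solvability of the balance equations). What remains above is (a) verifying that an invariant measure on $X_0$ really induces a full-support measured structure on $\cG(X_0, A^F)$ — a routine unwinding of invariance of $\mu$ — and (b) upgrading density of subshifts to density of actions, which is the standard realization bookkeeping (encapsulated by the admissibility of the coding $\Xi_\Pertr(G)$ for $\cF_\Pertr(G)$). I expect (a) to be the most error-prone step to write out carefully, and the reduction to \autoref{l:finite-tr-actions-dense} to be where one must check that the finite transitive subshift produced indeed lands inside $Z$.
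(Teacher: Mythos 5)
Your proof is correct, and it rests on the same engine as the paper's — measured Rauzy graphs with full support obtained from the invariant measure of a minimal system, the finite-model \autoref{l:finite-tr-actions-dense} (with \autoref{l:integer-measured-Rauzy} underneath), and \autoref{c:profinite-actions} — but it reverses the decomposition. The paper proves \autoref{i:min-pmp:xi-comeager-conj} first, working directly in the action space: it takes a basic neighborhood $\cN(\cG)\cap\Xi_\Mpmp(G)$ from \autoref{th:minimal-Rauzy}, where $\cG$ is a minimal Rauzy graph on a partition $B$ of $\Omega$, measures it via \autoref{eq:measured-Rauzy}, applies \autoref{l:finite-tr-actions-dense}, and conjugates a profinite realization so that its graph on $B$ is exactly $\cG$; part \autoref{i:min-pmp:subshifts-dense} then falls out in one line by pushing forward along the continuous surjection $\pi_A$. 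You instead prove \autoref{i:min-pmp:subshifts-dense} first at the subshift level, running the Rauzy-graph argument on $\cG(X_0,A^F)$ exactly as in \autoref{th:minimal-sofic-dense} and re-embedding the resulting finite orbit into $Z$ via $\iota_F$, and then lift to \autoref{i:min-pmp:xi-comeager-conj}; this direction is the one requiring extra care, since a subshift not using all symbols of a non-degenerate partition $A$ is never of the form $\pi_A(\xi)$, and your shrinking of $\cU$ to the clopen set of subshifts realizing every symbol is precisely the fix (the paper's order sidesteps this because the vertex set of $\cG$ is the non-degenerate partition $B$ itself). In short: the paper's route makes the subshift statement a corollary of the action statement; yours keeps the combinatorics parallel to \autoref{sec:sofic-minim-subsh} at the cost of a realization/lifting argument for the action statement, which you carry out correctly.
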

\begin{proof}
  \autoref{i:min-pmp:xi-comeager-conj} By \autoref{c:profinite-actions}, it suffices to prove that $\Xi_\Pertr(G)$ is dense in $\Xi_\Mpmp(G)$. Let $\xi \in \Xi_\Mpmp(G)$ and let $\cN(\cG) \cap \Xi_{\Mpmp}(G)$ be a neighborhood of $\xi$, where $\cG$ is a minimal Rauzy graph on some partition $B$ of $\Omega$ (see \autoref{th:minimal-Rauzy}). As $\xi$ admits an invariant measure, we can use \autoref{eq:measured-Rauzy} to convert $\cG$ into a measured Rauzy graph $(\cG, \mu, m)$. As $\xi$ is minimal, $(\cG, \mu, m)$ is connected and has full support. Let $G \actson^\eta W$ and the partition $A$ of $W$ be as given by \autoref{l:finite-tr-actions-dense}. We can realize $\eta$ as a factor of a profinite action $\xi'$ of $G$ on $\Omega$, so that $A$ can be identified with a partition of $\Omega$. Now if $f \in \Homeo(\Omega)$ is such that $f \cdot A = B$, we have that $f \cdot \xi'$ is profinite and belongs to $\cN(\cG)$.

  \autoref{i:min-pmp:subshifts-dense} This follows from \autoref{i:min-pmp:xi-comeager-conj} and the facts that the map $\pi_A$ from $\Xi_\Mpmp(G)$ to $\cS_\Mpmp(A^G)$ defined by \autoref{eq:defn-piA} is continuous and surjective and that for a profinite $\xi$, $\pi_A(\xi)$ is periodic.
\end{proof}

\begin{remark}
  \label{rem:mpmp-convergence-fixed-measure}
  \autoref{l:finite-tr-actions-dense} only requires that the graph $\cG$ be connected, so the proofs above imply the following more precise result: if $\xi$ is a \emph{transitive} action of $G$ on $\Omega$ and $\mu$ is \emph{any} $G$-invariant measure with full support, then there exists a sequence of profinite actions $(\xi_n)_n$ such that $\xi_n \to \xi$ and $\mu_n \to \mu$, where $\mu_n$ denotes the unique $\xi_n$-invariant measure on $\Omega$.

  Moreover, it follows that, for measured Rauzy graphs with full support, the three conditions in \autoref{ex:examples-Rauzy-graphs} are all equivalent to being connected.
\end{remark}

Applying \autoref{th:min-pmp-free-group} to $G = \Z$ and using the fact that $\Xi_\Mpmp(\Z) = \Xi_\Min(\Z)$ and $\cS_\Mpmp(A^\Z) = \cS_\Min(A^\Z)$ (because $\Z$ is amenable), we obtain the following.
\begin{cor}[\cite{Hochman2008}]
  \label{c:min-Z}
  The following hold:
  \begin{enumerate}
  \item $\Xi_\Min(\Z)$ has a comeager conjugacy class whose elements are isomorphic to the universal odometer.
  \item For every $A$, $\cS_\Pertr(A^\Z)$ is dense in $\cS_\Min(A^\Z)$.
  \end{enumerate}
\end{cor}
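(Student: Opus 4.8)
The plan is to deduce both statements directly from \autoref{th:min-pmp-free-group} applied to the free group on one generator $G = \Z = \bF_1$, which (as noted at the start of this section) is a permitted case. The only thing to verify is that, when $G = \Z$, the various spaces appearing in \autoref{th:min-pmp-free-group} collapse onto the ones in the statement, and this is where amenability enters.

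First I would record the effect of amenability. Since $\Z$ is amenable, every $\Z$-flow---in particular every $\Z$-subshift---admits an invariant Borel probability measure (Krylov--Bogolyubov). Hence $\Xi_\Mp(\Z) = \Xi(\Z)$, and therefore
\[
  \Xi_\Mpmp(\Z) = \Xi_\Min(\Z) \cap \Xi_\Mp(\Z) = \Xi_\Min(\Z).
\]
For the same reason, for every finite alphabet $A$, every subshift of $A^\Z$ carries an invariant measure, so $\cS_\Mpmp(A^\Z) = \cS_\Min(A^\Z)$. Second, I would observe that the universal profinite action of $\Z$, i.e.\ the left translation action of $\Z$ on its profinite completion $\hat\Z$ (the inverse limit of the finite quotients $\Z/n\Z$), is by definition the universal odometer.

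With these two identifications in place, item (1) is exactly \autoref{th:min-pmp-free-group} \autoref{i:min-pmp:xi-comeager-conj} for $G = \Z$, and item (2) is exactly \autoref{th:min-pmp-free-group} \autoref{i:min-pmp:subshifts-dense} for $G = \Z$. I do not expect any serious obstacle; the only points deserving a word of care are to confirm that the whole chain of results leading to \autoref{th:min-pmp-free-group}---in particular \autoref{l:finite-tr-actions-dense}, whose statement imposes no non-abelianity hypothesis---indeed goes through for $G = \Z$, and that the amenability remark legitimizes the passage from $\Xi_\Mpmp$ (resp.\ $\cS_\Mpmp$) to $\Xi_\Min$ (resp.\ $\cS_\Min$).
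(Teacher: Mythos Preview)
Your proposal is correct and follows essentially the same route as the paper: apply \autoref{th:min-pmp-free-group} with $G=\Z$ and use amenability of $\Z$ to identify $\Xi_\Mpmp(\Z)=\Xi_\Min(\Z)$ and $\cS_\Mpmp(A^\Z)=\cS_\Min(A^\Z)$. Your extra remarks (Krylov--Bogolyubov for the invariant measures, and the identification of the universal profinite action of $\Z$ with the universal odometer) just spell out what the paper leaves implicit.
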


\begin{remark}
  \label{rem:Piantadosi}
  Periodic subshifts of free groups have also been studied by Piantadosi~\cite{Piantadosi2008}. Among other results, he has a combinatorial characterization of the SFTs containing a periodic point. It is a consequence of the results of this section that SFTs defined by measured Rauzy graphs contain periodic points; it follows that measured Rauzy graphs satisfy the combinatorial condition of \cite{Piantadosi2008}*{Theorem~3.4}. Conversely, any SFT with a periodic point admits an invariant measure, so its corresponding graph can be made measured (but the measure may not have full support). We are grateful to Ville Salo for making us aware of the paper \cite{Piantadosi2008}.
\end{remark}


\section{Non-finitely generated groups}
\label{sec:non-finit-gener}

\subsection{Co-induction}

If $G$ is a group and $H \leq G$, the \df{restriction functor} takes a $G$-flow and produces the $H$-flow which is the restriction of the action to $H$. Co-induction is the right adjoint functor to restriction. It takes an $H$-flow and it produces a $G$-flow whose restriction to $H$ is an extension of the given $H$-flow and which is universal with respect to this property. The construction is similar to the classical notion of induced representation in representation theory.

Co-induction is particularly useful for understanding properties in $\Xi(G)$ when $G$ is not finitely generated: the topology on $\Xi(G)$ is defined by looking at finitely many elements of $G$ at a time, which implies that actions co-induced from finitely generated subgroups are dense (see the proof of \autoref{p:non-fg-gen-n-transitive}).

We briefly recall the construction and some of its basic properties.
Let $H \actson Z$ be an $H$-flow. Then $G$ acts on $Z^G$ via left-shift on the coordinates. Define $\pi \colon Z^G \to Z$ by $\pi(\tilde{z}) = \tilde{z}(1_G)$ and consider the set
\[
  \tilde Z = \{ \tilde{z} \in Z^G : \forall g \in G \ \forall h \in H \quad \tilde{z}(g h)  = h^{-1} \cdot \tilde{z}(g)\},
\]
where the symbol $\cdot$ refers to the action of $H$ on $Z$.
By definition, $\tilde Z$ is a closed, $G$-invariant subset of $Z^G$; furthermore, $\pi \colon \tilde Z \to Z$ is $H$-equivariant. The \emph{co-induced action} of $H \actson Z$ (from $H$ to $G$) is the action $G \actson \tilde Z$.

Let $T$ be a transversal for the left cosets of $H$ in $G$. Then any $\tilde{z} \in \tilde Z$ is uniquely determined by its values on $T$, since for all $h \in H$, we have $\tilde{z}(th) = h^{-1} \cdot \tilde{z}(t)$; and conversely, any element $\tilde{z}$ of $Z^T$ can be extended to an element of $\tilde Z$ by the same formula. In particular, $\tilde Z$ is homeomorphic to $Z^T$.

\begin{lemma}
  \label{l:coind-not-minimal}
  Let $H \leq G$ and let $H \actson Z$ be a flow which is not minimal. Then the co-induced flow $G \actson \tilde{Z}$ is not minimal either.
\end{lemma}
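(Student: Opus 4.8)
The plan is to transport a proper subflow of $Z$ up through the co-induction construction. Since $H \actson Z$ is not minimal, fix a non-empty, proper, closed, $H$-invariant subset $Y$ of $Z$ (for instance, the orbit closure of a point whose orbit is not dense). The natural candidate for a witness to the non-minimality of $G \actson \tilde Z$ is $\tilde Y \coloneqq \tilde Z \cap Y^G$, i.e., the co-induction from $H$ to $G$ of the flow $H \actson Y$, sitting inside $\tilde Z$; this is well-defined precisely because $Y$ is $H$-invariant, so that the defining relation $\tilde z(gh) = h^{-1}\cdot \tilde z(g)$ keeps all values in $Y$ once one of them is.

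First I would check that $\tilde Y$ is closed and $G$-invariant. Closedness is immediate: $Y^G$ is closed in $Z^G$ and $\tilde Z$ is closed, so $\tilde Y$ is closed in $\tilde Z$. For $G$-invariance, recall that $G$ acts by shifting coordinates; this preserves membership in $Y^G$ (as $(g\cdot\tilde z)(g') = \tilde z(g^{-1}g') \in Y$) and preserves the co-induction relation, so $g \cdot \tilde Y = \tilde Y$ for every $g \in G$. Next I would verify that $\tilde Y$ is non-empty and proper. Both are clean via the homeomorphism $\tilde Z \cong Z^T$ given by restriction to a transversal $T$ of the left cosets of $H$ in $G$ with $1_G \in T$: since $Y \neq \emptyset$, any choice of a point of $Y$ on each coset extends to an element of $\tilde Y$, so $\tilde Y \neq \emptyset$; and for properness, pick $z_1 \in Z \sminus Y$ and take the element $\tilde z \in \tilde Z$ extending the function on $T$ that sends $1_G$ to $z_1$ and is arbitrary elsewhere, so that $\tilde z(1_G) = z_1 \notin Y$ and hence $\tilde z \in \tilde Z \sminus \tilde Y$.

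Combining these, $\tilde Y$ is a non-empty, proper, closed, $G$-invariant subset of $\tilde Z$, and therefore $G \actson \tilde Z$ is not minimal. There is no real obstacle in this argument; the only step that requires a moment's attention is confirming that $\tilde Y$ is simultaneously non-empty and proper, and both are handled at once by the transversal description of $\tilde Z$ recalled above.
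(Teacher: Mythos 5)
Your proof is correct and is essentially the paper's argument in dual form: the paper fixes the complementary $H$-invariant open set $U = Z \sminus Y$ and a point $z_0 \notin U$, builds exactly the kind of point of $\tilde Z \cap Y^G$ you describe via a transversal, and concludes that $G \cdot \pi^{-1}(U)$ (whose complement is your $\tilde Y$) is proper. So both proofs transport the same witness of non-minimality through co-induction, and no gap is present.
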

\begin{proof}
  Let $U$ be an $H$-invariant, non-empty, open subset of $Z$ and let $z_0 \in Z \setminus U$. We claim that $G \cdot \pi^{-1}(U)$ is a proper subset of $\tilde{Z}$, where $\pi \colon \tilde{Z} \to Z$ denotes the natural projection. Let $T$ be a transversal for the left $H$-cosets in $G$ and set $\tilde{z}_0(th) = h^{-1} \cdot z_0$ for all $t \in T, h \in H$. Then it is clear that $\tilde{z}_0 \in \tilde{Z} \setminus G \cdot \pi^{-1}(U)$.
\end{proof}

It is straightforward to check that the co-induced action $\tilde X$ of an $H$-subshift $X \in \cS(A^H)$ is a $G$-subshift in the same alphabet. Indeed, in that case $\tilde X$ can be identified with
\[
  \{x \in A^G : \forall g \in G \ \big(h \mapsto x(gh) \big) \in X\}
\]
via the $G$-equivariant, continuous, injective map $\tau \colon \tilde X \to A^G$ defined by $\tau(\tilde{x})(g) = \tilde{x}(g)(1_G)$. In other words, elements of $\tilde X$ are obtained by copying independently elements of $X$ inside each left $H$-coset. It follows that the co-induced action of an SFT is an SFT (with the same forbidden patterns) and the co-induced action of a sofic subshift is sofic (because of functoriality).

\begin{defn}
Let $G$ be a group and $n \in \N$. A $G$-flow $G \actson Z$ is \emph{topologically $n$-transitive} (or simply \emph{$n$-transitive}) if the diagonal action $G \actson Z^n$ is transitive.
\end{defn}

\begin{prop}
  \label{p:coind-n-transitive}
Let $G$ be a group, let $H$ be a subgroup of $G$ of infinite index, and let $H \actson Z$ be an $H$-flow. Then the co-induced action $G \actson \tilde Z$ is $n$-transitive for all $n$.
\end{prop}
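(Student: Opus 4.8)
The plan is to verify topological transitivity of the diagonal action $G \actson \tilde Z^n$ directly on a basis of open sets. Fix a transversal $T$ for the left cosets of $H$, so that $\tilde Z$ is identified with $Z^T$ as in the discussion preceding the statement (any $\tilde z \in Z^T$ extends to a point of $\tilde Z$ via $\tilde z(th) = h^{-1}\cdot\tilde z(t)$). A basic non-empty open subset of $\tilde Z^n$ is then of the form $N(F,(O^i_t)) = \{(\tilde z^1,\dots,\tilde z^n) : \tilde z^i(t)\in O^i_t \text{ for all } i\leq n,\ t\in F\}$, where $F\sub T$ is finite and each $O^i_t$ is a non-empty open subset of $Z$. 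Given two such basic sets, by enlarging $F$ and inserting copies of $Z$ we may assume they use the same finite set $F$; call them $N=N(F,(O^i_t))$ and $N'=N(F,(P^i_t))$. It then suffices to produce $g\in G$ with $g\cdot N\cap N'\neq\emptyset$.

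Next I would unwind the action in the coordinates indexed by $T$. For $g\in G$ and $t\in T$ write $g^{-1}t=\theta_g(t)\,h_g(t)$ with $\theta_g(t)\in T$ and $h_g(t)\in H$ (the decomposition along left cosets; $\theta_g$ is exactly the permutation of $G/H$ induced by $g$, and so is injective on $F$). Then $(g\cdot\tilde z)(t)=\tilde z(g^{-1}t)=h_g(t)^{-1}\cdot\tilde z(\theta_g(t))$. Hence $g\cdot N\cap N'\neq\emptyset$ holds as soon as there exist points $\tilde z^i\in\tilde Z$ with $\tilde z^i(t)\in O^i_t$ and $\tilde z^i(\theta_g(t))\in h_g(t)\cdot P^i_t$ for all $t\in F$. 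The first family of conditions constrains the coordinates in $F$, the second those in $\theta_g(F)$. So if $F\cap\theta_g(F)=\emptyset$, the two families involve disjoint, finite sets of coordinates, each $O^i_t$ and each $h_g(t)\cdot P^i_t$ is open and non-empty, and one may simply pick values of $\tilde z^i$ on $F\cup\theta_g(F)$ satisfying all of them (using injectivity of $\theta_g$ on $F$ so the requirements on $\theta_g(F)$ are unambiguous) and extend arbitrarily to an element of $Z^T\cong\tilde Z$. This finishes the proof, modulo producing a suitable $g$.

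The remaining point — finding $g$ with $F\cap\theta_g(F)=\emptyset$ — is where the hypothesis $[G:H]=\infty$ is used, and I expect it to be the only real obstacle. Unravelling the definitions, $\theta_g(t)\in F$ for some $t\in F$ exactly means $g\in tHs^{-1}$ for some $s,t\in F$; thus $F\cap\theta_g(F)=\emptyset$ precisely when $g$ avoids $D\coloneqq\bigcup_{s,t\in F}tHs^{-1}$. Each $tHs^{-1}$ is a left coset of the conjugate subgroup $sHs^{-1}$, which has index $[G:H]=\infty$ in $G$. By B.~H.~Neumann's theorem that a group is never the union of finitely many cosets of subgroups of infinite index, $D$ is a proper subset of $G$; any $g\in G\setminus D$ then works. (If $[G:H]$ were finite, $T$ would be finite and such a $g$ need not exist, consistently with the conclusion failing when $H=G$.)
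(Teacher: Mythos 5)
Your proof is correct and follows essentially the same route as the paper's: reduce to cylinder sets over finitely many coset coordinates, use B.~H.~Neumann's lemma on cosets of the infinite-index conjugates of $H$ to find $g$ moving the finite set of cosets off itself, and then prescribe the (independent) coordinates freely. The only cosmetic difference is that the paper glues together two chosen points $\tilde y \in U$, $\tilde z \in V$ rather than choosing coordinate values in the (translated) open sets directly.
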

\begin{proof}
Fix $n$ and two non-empty open subsets $U$, $V$ of $\tilde Z^n$. Let $T$ be a transversal for the left $H$-cosets in $G$. Identifying $T$ with $G/H$, the transitive action $G \actson G/H$ gives us a transitive action $G \actson T$ (explicitly, $g \cdot t_1=t_2$ if $g t_1H= t_2H$).

We may assume that there exist a finite subset $F$ of $T$ and two finite families of open subsets $U_{i,f}$, $V_{i,f}$, for $i < n$, $f \in F$, of $X$ such that
\begin{align*}
U &= \{\tilde{z} \in \tilde Z^n \colon \forall f \in F \ \forall i < n \ \tilde{x}_i(f) \in U_{i,f} \} \\
V &= \{\tilde{z} \in \tilde Z^n \colon \forall f \in F \ \forall i < n \ \tilde{x}_i(f) \in V_{i,f} \}.
\end{align*}
Pick some $\tilde{y} \in U$ and $\tilde{z} \in V$. Since $G/H$ is infinite, Neumann's lemma gives us $g \in G$ such that $gFH \cap FH = \emptyset$. Then we can define $\tilde{w} \in \tilde Z^n$ by setting $\tilde{w}_i(f) = \tilde{y}_i(f)$ and $\tilde{w}_i(gf) = \tilde{z}_i(f)$ for all $f \in F$, and extending $\tilde{w}$ arbitrarily to the other cosets. By definition, $\tilde{w} \in U$ and $g^{-1} \cdot \tilde{w} \in V$.
\end{proof}

This has the following immediate consequence.
\begin{prop}
  \label{p:non-fg-gen-n-transitive}
Let $G$ be a countable group which is not finitely generated. Then a generic element of $\Xi(G)$ is $n$-transitive for all $n$.
\end{prop}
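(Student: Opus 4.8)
The plan is to prove that for each fixed $n$, the set $\Xi_n(G)$ of $n$-transitive elements of $\Xi(G)$ is a dense $G_\delta$ subset of $\Xi(G)$. Since $\Xi(G)$ is Polish and there are only countably many values of $n$, the set $\bigcap_n \Xi_n(G)$ is then comeager, which is exactly the assertion that a generic element of $\Xi(G)$ is $n$-transitive for all $n$.

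First I would check that each $\Xi_n(G)$ is $G_\delta$, arguing as in \autoref{p:trans-min-Gdelta}. The space $\Omega^n$ is again a Cantor space, with a countable basis of clopen boxes $a_1 \times \cdots \times a_n$, $a_i \in \cB(\Omega)$, and $\xi$ is $n$-transitive iff for every pair of non-empty boxes $\prod_i a_i$, $\prod_i b_i$ there is $g \in G$ with $\xi(g) \cdot a_i \cap b_i \neq \emptyset$ for all $i < n$. For fixed $g$ this last condition is a finite conjunction of conditions open by \autoref{eq:subbasis-Xi}, hence open; a union over $g \in G$ and then a countable intersection over pairs of boxes exhibit $\Xi_n(G)$ as $G_\delta$.

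The density of $\Xi_n(G)$ is the substance of the argument and is where co-induction enters. Let $O \subseteq \Xi(G)$ be non-empty open; as in the proof of \autoref{p:factor-closure-conj-class} we may assume $O = \{\xi : \xi(g) \cdot a = \xi_0(g) \cdot a \text{ for all } g \in K, a \in A\}$ for some $\xi_0 \in O$, a finite $K \subseteq G$ containing $1_G$, and a finite clopen partition $A$ of $\Omega$. Put $H = \langle K \rangle$; as $H$ is finitely generated and $G$ is not, $H$ has infinite index in $G$ (otherwise a finite transversal together with a finite generating set of $H$ would generate $G$), so any transversal of $H$ is countably infinite. Let $G \actson \widetilde\Omega$ be the action co-induced from $H \actson (\Omega, \xi_0|_H)$; then $\widetilde\Omega \cong \Omega^{\N} \cong \Omega$, the projection $\pi \colon \widetilde\Omega \to \Omega$ is continuous, surjective and $H$-equivariant, and by \autoref{p:coind-n-transitive} the action $G \actson \widetilde\Omega$ is $m$-transitive for every $m$, in particular for $m = n$. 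It remains to re-coordinatize. Let $\mathcal C$ be the finite subalgebra of $\cB(\Omega)$ generated by $A \cup \{\xi_0(g) \cdot a : g \in K, a \in A\}$. Every atom $c$ of $\mathcal C$ is a non-empty clopen subset of the Cantor space $\Omega$, hence homeomorphic to $\Omega$, and likewise $\pi^{-1}(c)$ is a non-empty clopen subset of $\widetilde\Omega$, hence homeomorphic to $\Omega$; gluing homeomorphisms $\pi^{-1}(c) \to c$ over the atoms of $\mathcal C$ produces a homeomorphism $\phi \colon \widetilde\Omega \to \Omega$ with $\phi(\pi^{-1}(c)) = c$ for every $c \in \mathcal C$. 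The conjugate action $g \mapsto \phi \circ \tilde\xi(g) \circ \phi^{-1}$ is $n$-transitive (conjugation preserves $n$-transitivity), and for $g \in K$, $a \in A$ it maps $a = \phi(\pi^{-1}(a))$ to $\phi(\tilde\xi(g) \cdot \pi^{-1}(a)) = \phi(\pi^{-1}(\xi_0(g) \cdot a)) = \xi_0(g) \cdot a$, where the middle equality uses the $H$-equivariance of $\pi$. Hence this conjugate lies in $O \cap \Xi_n(G)$, so $\Xi_n(G)$ is dense.

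I expect the only delicate point to be this last step: transporting the co-induced action, which a priori lives on the abstract space $\widetilde\Omega$, into $O \subseteq \Hom(G, \Homeo(\Omega))$ while matching the finitely many clopen constraints defining $O$. This is handled by the standard device, already used in \autoref{p:factor-closure-conj-class}, that every non-empty clopen subset of the Cantor space is again a Cantor space; the rest is routine bookkeeping.
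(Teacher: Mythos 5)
Your proposal is correct and follows essentially the same route as the paper: note that $n$-transitivity is a $G_\delta$ condition, then get density by co-inducing $\xi_0|_H$ from the finitely generated (hence infinite-index) subgroup $H=\langle K\rangle$, invoking \autoref{p:coind-n-transitive}, and conjugating by a homeomorphism $\phi$ that matches the finitely many clopen constraints. Your extra details (the explicit atom-by-atom construction of $\phi$ and the remark that $H$ must have infinite index) are exactly the points the paper leaves implicit, and they check out.
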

\begin{proof}
Being $n$-transitive is a $G_\delta$ condition, so we only need to prove that $n$-transitive actions are dense in $\Xi(G)$. Consider a basic open set $U$ in $\Xi(G)$ of the form
\[
  U = \{\xi \in \Xi(G) : \forall f \in F \ \forall a \in \cA \ \xi(f)a= \xi_0(f)a\}
\]
where $\xi_0 \in \Xi(G)$, $F$ is a finite subset of $G$, and $\cA$ is a finite subalgebra of $\cB(\Omega)$.

Let $\eta$ be the restriction of $\xi_0$ to the subgroup $H$ of $G$ generated by $F$, and denote by $\tilde \eta \colon G \actson \tilde \Omega$ the co-induced action of $\eta|_H$. Note that $\tilde \Omega$ is homeomorphic to $\Omega$. Denoting by $\pi \colon \tilde \Omega \to \Omega$ the $H$-equivariant factor map, we may pick a homeomorphism $\phi \colon \tilde \Omega \to \Omega$ such that $\phi \big(\pi^{-1}(\xi_0(f)a) \big)=\xi_0(f)a$ for all $a \in \cA$ and all $f \in F$.

Then for every $f \in F$ and every $a \in \cA$, we have $(\phi \tilde \eta \phi^{-1})(f)a =
\xi_0(f)a$, which shows that $\phi \tilde \eta \phi^{-1}$ belongs to $U$. On the other hand, by \autoref{p:coind-n-transitive}, $\tilde{\eta}$ is $n$-transitive for all $n$ and we are done.
\end{proof}

As pointed out in \cite{Frisch_Kechris_Shinko}*{Corollary~4.4.7}, \autoref{p:non-fg-gen-n-transitive} admits a converse.  Indeed, let $G$ be finitely generated and let $\set{a_1, a_2}$ be a non-degenerate partition of $\Omega$. Then the set of $\xi \in \Xi(G)$ satisfying the condition ``$a_1$ is $\xi$-invariant'' is open, disjoint from the set of transitive actions, and non-empty (because the trivial action belongs to it). Thus we obtain a dynamical characterization of finitely generated groups: a group $G$ is finitely generated iff a generic element of $\Xi(G)$ is not transitive.

\subsection{Density of minimality}
We saw above that the density of $\Xi_\Tr(G)$ in $\Xi(G)$ is characterized by $G$ not being finitely generated. It is natural to ask for which groups $G$, $\Xi_\Min(G)$ is dense in $\Xi_\Tr(G)$ (this problem was brought to our attention by A.~S.~Kechris). For instance, it is easy to see that this is the case when $G$ is a free group on infinitely many generators. While we do not have a general criterion, we provide an answer for the class of non-finitely generated amenable groups (see \autoref{c:character-amen-non-fg}). Before turning to that, we point out the following obstruction.

\begin{prop}
Let $G$ be a countable group such that its center $Z(G)$ contains an element of infinite order. Then $\Xi_\Min(G)$ is not dense in $\Xi(G)$.
\end{prop}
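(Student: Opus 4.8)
The plan is to exhibit a nonempty open subset $U \subseteq \Xi(G)$ that contains no minimal action. Fix $z \in Z(G)$ of infinite order and a clopen set $a \subseteq \Omega$ with $\emptyset \neq a \neq \Omega$, and set
\[
  U = \set{\xi \in \Xi(G) : \xi(z) \cdot a \cap (\Omega \setminus a) = \emptyset \text{ and } \xi(z) \cdot (\Omega \setminus a) \cap a \neq \emptyset}.
\]
By the description of the subbasis in \autoref{eq:subbasis-Xi}, $U$ is clopen, and since $\xi(z)$ is a bijection, the two conditions together say exactly that $\xi(z) \cdot a \subsetneq a$.

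To check that $U \neq \emptyset$ I would use co-induction from $\langle z \rangle \cong \Z$. Let $Y = (\Z \cup \set{+\infty,-\infty}) \times \Omega$, where $\langle z \rangle$ acts by the integer shift on the first factor (fixing $\pm\infty$) and trivially on $\Omega$; then $Y$ is a Cantor space and $\hat a = (\set{n \ge 0} \cup \set{+\infty}) \times \Omega$ is a proper nonempty clopen subset with $z \cdot \hat a \subsetneq \hat a$. Let $G \actson \tilde Y$ be the flow co-induced from $\langle z \rangle \actson Y$. Since $\tilde Y$ is homeomorphic to $Y^{G/\langle z\rangle}$ and a countable power of a Cantor space is a Cantor space, $\tilde Y$ is again a Cantor space; moreover the canonical projection $\pi \colon \tilde Y \to Y$ is $\langle z\rangle$-equivariant, so $\pi^{-1}(\hat a)$ is a proper nonempty clopen subset of $\tilde Y$ with $z \cdot \pi^{-1}(\hat a) = \pi^{-1}(z \cdot \hat a) \subsetneq \pi^{-1}(\hat a)$. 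Conjugating the $G$-action on $\tilde Y$ by a homeomorphism $\Omega \to \tilde Y$ carrying $a$ to $\pi^{-1}(\hat a)$ produces an element of $\Xi(G)$ lying in $U$.

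To check that $U$ contains no minimal action, suppose $\xi \in U$ and put $W = a \setminus \xi(z) \cdot a$. This is clopen, and it is nonempty because $\xi(z)\cdot(\Omega\setminus a)$ meets $a$ while being disjoint from $\xi(z)\cdot a$. As the sets $\xi(z^n)\cdot a$ ($n \ge 0$) are decreasing and $W$ is disjoint from $\xi(z)\cdot a$, one gets $\xi(z^n)\cdot W \cap W = \emptyset$ for every $n \in \Z \setminus \set{0}$; that is, $W$ is $z$-\emph{wandering}. Because $z$ is central, $\xi(g)\cdot W$ is $z$-wandering for every $g \in G$, since $\xi(z^n)\xi(g)\cdot W \cap \xi(g)\cdot W = \xi(g)\big(\xi(z^n)\cdot W \cap W\big)$. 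Now if $\xi$ were minimal, then $W$ would be syndetic (\autoref{df:min-trans}), so $\Omega = \bigcup_{g \in F}\xi(g)\cdot W$ for some finite $F \subseteq G$; thus $\Omega$ would be covered by finitely many $z$-wandering clopen sets. This is impossible for a homeomorphism of a nonempty compact space: fixing any $\omega$, the points $\xi(z^n)\cdot\omega$ ($n \in \N$) lie in finitely many of these sets, so there are $n_1 < n_2$ with $\xi(z^{n_1})\cdot\omega,\ \xi(z^{n_2})\cdot\omega$ in the same $\xi(g)\cdot W$; since $\xi(z^{n_2})\cdot\omega = \xi(z^{n_2-n_1})\big(\xi(z^{n_1})\cdot\omega\big)$, this contradicts that $\xi(g)\cdot W$ is $z$-wandering.

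The main obstacle is the nonemptiness of $U$: one must produce a $G$-action in which the central element $z$ genuinely shrinks a clopen set, and this is precisely where infinite order of $z$ is used — if $z^k = 1_G$ then $\xi(z)^k$ is the identity for every $\xi$ and no such action exists, so $U$ would be empty. Co-induction is the natural tool here, and the only point needing care is that the co-induced flow is carried by a Cantor space, which holds because arbitrary countable powers of a Cantor space are Cantor spaces.
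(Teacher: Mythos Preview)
Your proof is correct and follows the same overall strategy as the paper's: exhibit a nonempty open set $U\subseteq\Xi(G)$ determined by how the central element acts on a fixed clopen partition, use co-induction from $\langle z\rangle$ to show $U$ is nonempty, and use a pigeonhole argument along the $\langle z\rangle$-orbit to rule out minimality.

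The execution differs in a way worth noting. The paper works with a three-piece partition $\{a_1,a_2,a_3\}$ and a homeomorphism $\phi$ satisfying $\phi(a_2)=a_3$, $\phi(a_3)\subsetneq a_2$, $\phi(a_1)\cap a_2\neq\emptyset$; it then argues that the centralizer $C(\phi)$ cannot act minimally by tracking how iterates of $\phi$ push a certain clopen $b\subseteq a_1$ into $a_2\cup a_3$ forever. Your version distills this to a two-piece partition and the single open condition ``$\xi(z)\cdot a\subsetneq a$'', after which the set $W=a\setminus\xi(z)\cdot a$ is wandering for $\xi(z)$; centrality of $z$ propagates wandering to all $G$-translates of $W$, and a minimal action cannot be covered by finitely many wandering sets. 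This is a cleaner and more conceptual packaging of the same obstruction (indeed, in the paper's setup $a_2\cup a_3$ is exactly a clopen set strictly shrunk by $\phi$). The paper's formulation has the mild advantage of isolating the general fact that $C(\phi)$ does not act minimally, independent of any ambient group action; yours has the advantage of being shorter and invoking a standard dynamical notion.
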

\begin{proof}
We fix a clopen partition $\{a_1,a_2,a_3\}$ of $\Omega$, with each $a_i$ non-empty, as well as a homeomorphism $\phi$ of $\Omega$ such that $\phi(a_1) \cap a_2 \ne \emptyset$, $\phi(a_2)=a_3$, and $\phi(a_3) \subset a_2$.

We first observe that the centralizer $C(\phi) \subset \Homeo(\Omega)$ does not act minimally on $\Omega$. To see this, consider $b = a_1 \cap \phi^{-1}(a_2)$, which is clopen and non-empty. Assume that $C(\phi)$ acts minimally; then there exists a finite $F \subseteq C(\phi)$ such that for all $\omega \in \Omega$ there exists $f \in F$ with $f \cdot \omega \in b$. Now pick some $\omega \in b$; since $F$ is finite, there exists $f \in F$ such that $f \cdot \phi^n(\omega) \in b$ for infinitely many $n \in \N$. Since $f \cdot \phi^n(x) = \phi^n(f \cdot x)$ for all $n$, we see that as soon as $\phi^n(f \cdot x) \in b$, we must have $\phi^m(f \cdot x) \in a_2 \cup a_3$ for all $m \ge n+1$, a contradiction.

Now let $h \in Z(G)$ be of infinite order and consider the action $\langle h \rangle \actson \Omega$ given by $h \cdot \omega = \phi(\omega)$. Using co-induction, we see that the open set $U$ of all actions $\xi \in \Xi(G)$ such that $\xi(h)(a_i)=\phi(a_i)$ for $i = 1,2,3$ is non-empty. For any $\xi \in U$, $\xi[G] \subseteq \Homeo(\Omega)$ is contained in the centralizer of $\xi(h)$, which does not act minimally by the argument given in the previous paragraph. Hence no element of $U$ acts minimally.
\end{proof}

Now we focus on amenable groups and we consider two cases depending on whether the group is locally finite or not. We start with the latter.

\begin{defn}
Let $G$ be a countable group and let $G \actson Z$ be a zero-dimensional flow. We say that $a \in \cB(Z)$ is \emph{shrinkable} if it can be equidecomposed with a proper subset of itself, i.e., if there exist a clopen partition $a_0, \ldots, a_{n-1}$ of $a$ and $g_0, \ldots, g_{n-1} \in G$ such that $\bigsqcup_i g_i \cdot a_i \subsetneq a$.

We say that the flow is \emph{shrinking} if there exists a shrinkable $a \in \cB(\Omega)$.
\end{defn}

We note that being shrinking is an open condition in $\Xi(G)$.

The shrinking condition is an obstruction to the existence of an invariant measure with full support. In particular, we have the following.
\begin{lemma}
  \label{l:not-min-shrinking-amenable}
Assume that $G$ is an amenable group and $G \actson Z$ is a minimal, zero-dimensional flow. Then it is not shrinking.
\end{lemma}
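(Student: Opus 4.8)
The plan is to use amenability to produce an invariant probability measure and then derive a contradiction from the fact that a shrinking decreases volume. First I would invoke the classical fact that an amenable group acting on a non-empty compact Hausdorff space admits a $G$-invariant Borel probability measure; so let $\mu$ be such a measure on $Z$. (This is exactly the point where amenability is used, and it does not require $Z$ to be metrizable.)

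Next I would observe that, by minimality, $\mu$ has full support in the following sense: every non-empty open (in particular every non-empty clopen) subset of $Z$ has strictly positive $\mu$-measure. Indeed, if $U \sub Z$ is non-empty open, then $U$ is syndetic by \autoref{df:min-trans}, so $Z = \bigcup_{g \in F} g \cdot U$ for some finite $F \sub G$, and hence $1 = \mu(Z) \le \sum_{g \in F} \mu(g \cdot U) = |F| \cdot \mu(U)$, forcing $\mu(U) > 0$.

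Finally, suppose for contradiction that some $a \in \cB(Z)$ is shrinkable, witnessed by a clopen partition $a_0, \ldots, a_{n-1}$ of $a$ and elements $g_0, \ldots, g_{n-1} \in G$ with $\bigsqcup_{i < n} g_i \cdot a_i \subsetneq a$. Using invariance of $\mu$ and that $(a_i)_{i<n}$ partitions $a$, one computes
\[
  \mu(a) = \sum_{i < n} \mu(a_i) = \sum_{i < n} \mu(g_i \cdot a_i) = \mu\Big( \bigsqcup_{i < n} g_i \cdot a_i \Big),
\]
where the last equality uses that the sets $g_i \cdot a_i$ are pairwise disjoint. On the other hand, $a \sminus \bigsqcup_{i<n} g_i \cdot a_i$ is a non-empty clopen subset of $Z$, so it has positive measure by the previous paragraph, whence $\mu\big( \bigsqcup_{i<n} g_i \cdot a_i \big) < \mu(a)$ — a contradiction.

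There is essentially no serious obstacle here; the proof is a one-line volume computation once the invariant measure is in hand. The only points requiring a little care are that the existence of the invariant measure is precisely where amenability enters, and that the passage from minimality to full support must go through the syndeticity formulation of minimality (\autoref{df:min-trans}) rather than through the existence of a dense orbit, since $Z$ is not assumed metrizable.
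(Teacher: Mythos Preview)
Your proof is correct and follows essentially the same approach as the paper's: both use amenability to obtain an invariant probability measure, compute that the shrinking witnesses force the non-empty clopen remainder to have measure zero, and then use syndeticity (from minimality) to derive a contradiction. The only difference is cosmetic ordering---you establish full support first and then get the contradiction, while the paper first computes $\mu(b)=0$ and then invokes syndeticity to reach $\mu(Z)=0$.
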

\begin{proof}
Suppose that $a \in \cB(Z)$ is shrinkable, as witnessed by $a_0, \ldots, a_{n-1}$ and $g_0, \ldots, g_{n-1}$. Then $b = a \setminus \bigsqcup g_i \cdot a_i$ is open, non-empty. Let $\mu$ be a $G$-invariant probability measure on $Z$. On the one hand, by invariance, $\mu(a) = \mu(\bigsqcup_i g_i \cdot a_i)$, so $\mu(b) = 0$. On the other, by minimality, finitely many translates of $b$ cover $Z$, so $\mu(Z) = 0$, contradiction.
\end{proof}

We also need the following well-known fact.
\begin{lemma}[K\H{o}nig]
  \label{l:inf-geodesic-path}
  Let $\Gamma$ be a connected, infinite, locally finite graph and let $v_0$ be a vertex of $\Gamma$. Then $\Gamma$ has an infinite ray starting at $v_0$.
\end{lemma}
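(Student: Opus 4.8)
The plan is to build the ray one vertex at a time via the classical \emph{infinite branch} argument. I will construct a sequence of vertices $v_0, v_1, v_2, \dots$ with $v_n$ adjacent to $v_{n+1}$ and $v_{n+1} \notin \set{v_0, \dots, v_n}$ for every $n$, while maintaining the inductive invariant that the set
\[
  W_n = \set{w \in V(\Gamma) : \text{there is a path from } v_n \text{ to } w \text{ avoiding } v_0, \dots, v_{n-1}}
\]
is infinite. Granting the construction, the sequence $(v_n)_{n \in \N}$ is an infinite ray starting at $v_0$: consecutive vertices are adjacent by construction, and since at each step $v_{n+1}$ differs from all previously chosen vertices, the $v_n$ are pairwise distinct.

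The base case is immediate: since $\Gamma$ is connected, $W_0$ is the whole vertex set, which is infinite by hypothesis. For the inductive step, I would assume $v_0, \dots, v_n$ chosen with $W_n$ infinite. Since $W_n \ne \set{v_n}$ and every vertex of $W_n$ is joined to $v_n$ by a path lying in $W_n$, the vertex $v_n$ has at least one neighbour in $W_n$, and by local finiteness it has only finitely many, say $u_1, \dots, u_k$. Looking at the second vertex of a witnessing path shows that every $w \in W_n \sminus \set{v_n}$ is reachable from some $u_j$ by a path avoiding $v_0, \dots, v_n$; hence $W_n \sminus \set{v_n} = \bigcup_{j=1}^k W_n^{(j)}$, where $W_n^{(j)}$ is the set of vertices reachable from $u_j$ avoiding $v_0, \dots, v_n$. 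As $W_n$ is infinite and $k$ is finite, the pigeonhole principle yields some $j$ with $W_n^{(j)}$ infinite; I set $v_{n+1} = u_j$. Then $v_{n+1}$ is adjacent to $v_n$, it lies in $W_n$ (hence is distinct from $v_0, \dots, v_{n-1}$) and is distinct from $v_n$ (the graph has no loops), and $W_{n+1} = W_n^{(j)}$ is infinite, closing the induction.

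There is no serious obstacle here; the argument is entirely elementary. The only step that genuinely uses the hypotheses is the pigeonhole step, where local finiteness guarantees that $v_n$ has only finitely many neighbours, so that an infinite set gets split into finitely many pieces, one of which must be infinite. The one point requiring a little care is to phrase $W_n$ in terms of paths that \emph{avoid} the already-chosen vertices $v_0, \dots, v_{n-1}$, which is exactly what makes the resulting sequence a genuine self-avoiding ray rather than merely a walk.
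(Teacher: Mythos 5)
Your proof is correct: it is the standard back-and-forth-free, pigeonhole construction of a ray in the proof of K\H{o}nig's lemma, with the key bookkeeping (defining $W_n$ via paths avoiding the previously chosen vertices, and using local finiteness to split the infinite set $W_n \setminus \{v_n\}$ over the finitely many neighbours of $v_n$) handled properly. The paper itself states this as a well-known fact and gives no proof, so there is nothing to compare against; your argument is exactly the classical one.
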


\begin{prop}
  \label{p:generic_behavior_amenable_non_lf}
Let $G$ be a non-locally finite, countable group. Then the set of shrinking actions of $G$ is an open, dense subset of $\Xi(G)$.
If $G$ is moreover amenable, then $\Xi_\Min(G)$ is not dense in $\Xi(G)$.
\end{prop}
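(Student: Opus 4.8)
The plan is to establish the first statement and then deduce the second. Since the excerpt already records that being shrinking is an open condition in $\Xi(G)$, the first statement amounts to a density claim, and the second then follows at once: if $G$ is amenable, \autoref{l:not-min-shrinking-amenable} says that no minimal action is shrinking, so $\Xi_\Min(G)$ is disjoint from the nonempty open set of shrinking actions and hence cannot be dense.

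To prove density I would reduce, via co-induction, to a single \emph{key lemma}: every infinite, finitely generated group admits a shrinking action on $\Omega$. Granting this, let $U \subseteq \Xi(G)$ be a basic open set determined by $\xi_0 \in \Xi(G)$, a finite set $F \subseteq G$, and a finite clopen partition $\cA$ of $\Omega$. Since $G$ is not locally finite, I can choose a finitely generated infinite subgroup $H \le G$ with $F \subseteq H$. Let $\zeta$ be a shrinking action of $H$ on $\Omega$ given by the key lemma, and let $\eta = (\xi_0|_H) \times \zeta$, an action of $H$ on $\Omega \times \Omega \cong \Omega$. Then $\eta$ is still shrinking — a shrinkable clopen set for $\zeta$, crossed with $\Omega$, is shrinkable for $\eta$ — and $\eta$ factors onto $\xi_0|_H$. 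Co-inducing $\eta$ from $H$ to $G$ produces an action $\tilde\eta \in \Xi(G)$ (its underlying space is again homeomorphic to $\Omega$), and $\tilde\eta$ is still shrinking: if $a = \bigsqcup_i a_i$ together with elements $h_i \in H$ witnesses that $\eta$ is shrinking and $\pi \colon \tilde\Omega \to \Omega$ is the canonical $H$-equivariant surjection, then $\pi^{-1}(a) = \bigsqcup_i \pi^{-1}(a_i)$ together with the same $h_i \in H \le G$ witnesses that $\tilde\eta$ is shrinking. Finally, since $\tilde\eta|_H$ factors onto $\xi_0|_H$, the argument of \autoref{p:non-fg-gen-n-transitive} lets me conjugate $\tilde\eta$ into $U$; as shrinking is conjugacy invariant, $U$ contains a shrinking action.

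The heart of the argument is the key lemma, which is where I would use \autoref{l:inf-geodesic-path}. Let $H$ be infinite and finitely generated, fix a finite symmetric generating set $S$, and let $\Gamma = \Cay(H)$. Applying \autoref{l:inf-geodesic-path} to the (infinite, connected, locally finite) tree of shortest paths from $1_H$ yields a \emph{geodesic} ray $(g_0 = 1_H, g_1, g_2, \dots)$, with $d(g_i,g_j) = |i-j|$ for all $i,j$; in particular this ray leaves every ball of $\Gamma$. Put $R = \set{g_k : k \ge 0}$ and let $X = \cl{H \cdot \mathbf{1}_R} \subseteq \set{0,1}^H$ be the orbit closure of the indicator $\mathbf{1}_R$ under the shift action; since $R$ leaves every ball, the all-zero configuration $0$ belongs to $X$. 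To obtain a Cantor-space action I would work with $X \times \Omega$, the second coordinate carrying the trivial action, which does not affect the existence of a shrinkable clopen set. The shrinkable set will be $a = \set{x \in X : x(1_H) = 1}$, a clopen subset of $X$ not containing the fixed point $0$, so that every element of $a$ can be moved freely. For $x = \mathbf{1}_L \in a$ the support $L$ is a translate of $R$ or a limit of such, hence a geodesic subset of $\Gamma$ through $1_H$, so $1_H$ has at most two $S$-neighbours in $L$, and exactly one only when $x = \mathbf{1}_R$. I would partition $a$ into finitely many clopen pieces according to which generators lie in the support near $1_H$, and translate each piece one step ``back along the ray'' — a piece on which a fixed generator $s$ lies in every support gets translated by $s^{-1}$. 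Each such translation keeps $1_H$ in the support, hence maps $a$ into $a$, and with a careful choice of the generator used on each piece the resulting map is injective (so the images of the pieces are pairwise disjoint) and misses $\mathbf{1}_R$; thus the $\bigsqcup$ of the translated pieces is a proper clopen subset of $a$, which is exactly what it means for $a$ to be shrinkable.

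The step I expect to be the main obstacle is precisely the last one. Besides the translates of $\mathbf{1}_R$, the orbit closure $X$ contains ``limit'' configurations whose supports are longer rays or bi-infinite geodesic lines, and one must verify that the finite partition of $a$ and the translations can be chosen so that $a$ is genuinely carried onto a proper subset of itself with the images of the pieces disjoint; this requires keeping track of those limit configurations and dealing with small-order generators (for instance involutions, which is the relevant case for groups such as the Grigorchuk group). Everything else — the reduction to finitely generated subgroups, the behaviour of shrinking under products and co-induction, and the conjugation into a prescribed open set — is routine and follows the pattern already present in the excerpt.
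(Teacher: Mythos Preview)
Your overall plan is sound, but you work harder than necessary in one place and leave a genuine gap in another.

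For density you do not need the co-induction/conjugation machinery at all. The set of shrinking actions is not only open but \emph{conjugacy-invariant}, and $\Xi(G)$ always has a dense conjugacy class (\autoref{p:dense-conj-class-Xi}); hence any nonempty open conjugacy-invariant subset of $\Xi(G)$ is automatically dense. So the whole first statement reduces to exhibiting a \emph{single} shrinking $G$-action on $\Omega$. Your key lemma is exactly the right target, and your reduction via products and co-induction is correct, just unnecessary.

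The real gap is in your construction of that one action. Working in the orbit closure $X=\cl{H\cdot\mathbf 1_R}$ forces you to control limit configurations, and you have not shown---and correctly flag as the obstacle---how to choose the partition of $a$ and the translating generators so that the images are pairwise disjoint and miss a point; bi-infinite geodesics and involutive generators make this genuinely delicate. The paper sidesteps this by carrying out the shrinking computation at the level of subsets of $G$ \emph{before} passing to any flow. With $T=\{h_n:n\ge 0\}$ the ray in the left Cayley graph, set $T_s=\{h_n\in T:h_{n+1}=sh_n\}$; then $T=\bigsqcup_{s\in S}T_s$ and $\bigsqcup_{s\in S} sT_s=\{h_{n+1}:n\ge 0\}=T\setminus\{h_0\}$, a purely set-theoretic identity with no limits to worry about (injectivity of the ray is all that is used---the geodesic hypothesis is irrelevant). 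Now interpret this in $G\actson\beta G$: there the closures $\cl T$, $\cl{T_s}$ are clopen and $G$ acts by translation, so $\cl T$ is shrinkable with the same witnesses. Passing to the Stone space of a countable $G$-invariant subalgebra of $\cB(\beta G)$ containing the $\cl{T_s}$ (and crossing with a trivial Cantor factor if there are isolated points) yields a shrinking $G$-action on $\Omega$.
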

\begin{proof}
  The set of shrinking actions is open and conjugacy-invariant, so by \autoref{p:dense-conj-class-Xi}, we only need to build one shrinking action on a Cantor space. Let $H$ be a finitely generated, infinite subgroup of $G$ and fix a finite, symmetric generating set $S$ of $H$. Consider the left Cayley graph $\Gamma$ for $H$ and $S$: that is, the graph with vertex set $H$ and edges between $h$ and $sh$ for every $h \in H$ and $s \in S$. Then we can apply \autoref{l:inf-geodesic-path} to find an infinite ray $(h_n)_{n \in \N}$ in $\Gamma$.
Let $T = \{h_n : n \in \N\}$ and for $s \in S$, let $T_s = \{h_n \in T : h_{n+1}= s h_n\}$. Then $\bigsqcup_{s \in S} T_s = T$ and $sT_s = \set{h_{n+1} : h_{n+1} = s h_n}$, so $\bigsqcup_{s \in S} s T_s = T \setminus \{h_0\}$.

Now consider the flow $G \actson \beta G$, where $\beta G$ denotes the Stone--\v{C}ech compactification of $G$. It follows from the calculation above that $\cl{T}$, which is a clopen subset of $\beta G$, is shrinkable. Let $\cA$ be a countable, $G$-invariant subalgebra of $\cB(\beta G)$ which contains all $\cl{T_s}$. Then the Stone space of $\cA$ gives us a metrizable, shrinking $G$-flow $Z$, with the same witnesses as in $\beta G$. If $Z$ has isolated points, we can take the product of $Z$ with the trivial action on the Cantor space to obtain an action on $\Omega$ that is still shrinking.

The second claim follows from the first and \autoref{l:not-min-shrinking-amenable}.
\end{proof}

Next we turn to the locally finite case.
We start with a simple observation.
\begin{lemma}
  \label{l:ext-finite-groups}
  Let $K \leq H$ be finite groups and let $K \actson A$ be an action on a finite set with $|A| \leq [H : K]$. Then there exists a free, transitive action $H \actson B$ and a $K$-factor map $B \to A$.
\end{lemma}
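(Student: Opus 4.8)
The plan is to take $B = H$ equipped with the left translation action $h \cdot x = hx$; this is manifestly free and transitive, so the only real content of the lemma is the construction of the $K$-factor map $B \to A$. For this I would first analyze $B$ as a $K$-set: restricting the left translation action to $K$, the orbits are precisely the cosets $Kx$, $x \in H$, and there are $[H:K]$ of them, each being a free transitive $K$-set (i.e., a copy of the regular $K$-action). So, fixing a transversal $R \subseteq H$ for $K \backslash H$ with $|R| = [H:K]$, I get a $K$-equivariant decomposition $B = \bigsqcup_{g \in R} Kg$ into $[H:K]$ copies of the regular representation of $K$.

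Next I would write $A = O_1 \sqcup \cdots \sqcup O_r$ as its partition into $K$-orbits. Since each orbit is non-empty, $r \le |A| \le [H:K] = |R|$, so I may choose distinct elements $g_1, \dots, g_r \in R$. For each $j \le r$, pick $a_j \in O_j$ and define on the piece $Kg_j$ the map $kg_j \mapsto k \cdot a_j$; this is well-defined (because $K$ acts freely on $Kg_j$) and $K$-equivariant, and it is surjective onto $O_j = K \cdot a_j$. For each remaining piece $Kg$ with $g \in R \setminus \{g_1, \dots, g_r\}$, I would simply map it to $O_1$ by the same recipe $kg \mapsto k \cdot a_1$. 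Assembling these partial maps over the disjoint pieces of $B$ produces a single map $\varphi \colon B \to A$.

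Finally I would verify the two required properties of $\varphi$: it is $K$-equivariant because $K$ preserves each piece $Kg$ of $B$ and $\varphi$ is equivariant on each piece, and it is surjective because already its restriction to $Kg_1 \sqcup \cdots \sqcup Kg_r$ has image $O_1 \cup \cdots \cup O_r = A$. There is no genuine obstacle in this argument; the one point that has to be used is exactly the numerical inequality — that $[H:K]$ copies of the regular $K$-action suffice to cover a $K$-set with at most $[H:K]$ elements, since the number of orbits of such a set is at most its cardinality. (It is implicit, and clear from the way the lemma is used, that $A \ne \emptyset$.)
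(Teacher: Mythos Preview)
Your proof is correct and follows essentially the same approach as the paper: take $B = H$ with left translation, choose a transversal $R$ for the right cosets $K \backslash H$, and extend a surjection $R \to A$ to a $K$-equivariant map by $kg \mapsto k \cdot \pi(g)$. The paper's version is slightly terser---it simply says ``let $\pi \colon T \to A$ be an arbitrary surjection'' (which exists since $|T| = [H:K] \geq |A|$) and extends by equivariance, without decomposing $A$ into $K$-orbits---but your explicit construction via orbit representatives amounts to the same thing.
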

\begin{proof}
  Set $B = H$ and let $T$ be a transversal for the right cosets of $K$ in $H$. Let $\pi \colon T \to A$ be an arbitrary surjection and extend it to all of $H$ by $K$-equivariance: define $\pi(kt) = k \cdot \pi(t)$ for all $k \in K, t \in T$.
\end{proof}

For locally finite groups, we also obtain information about the invariant measures for a generic action. In order to state the theorem, we recall that the \emph{clopen value set} of a Borel probability measure $\mu$ on $\Omega$ is the countable set
\[
  \left\{ \mu(a) : a \in \cB(\Omega) \right\} \sub [0, 1].
\]
Following Akin, we say that a probability measure $\mu$ on $\Omega$ is \df{good} if it is atomless, has full support, and is such that for any two $a, b \in \cB(\Omega)$ with $\mu(a) < \mu(b)$, there exists $c \in \cB(\Omega)$ such that $c \sub b$ and $\mu(c) = \mu(a)$. Using results of Akin and Giordano--Putnam--Skau, the clopen value sets of good invariant measures can be used to characterize orbit equivalence (see \autoref{c:generic-OE-loc-finite}).

\begin{theorem}
  \label{th:generic_behavior_locally_finite}
  Let $G$ be a countably infinite, locally finite group. Then a generic element of $\Xi(G)$ is minimal and uniquely ergodic, its unique invariant measure is good, and the clopen value set of the measure is equal to
\begin{equation*}
  V_G = \set[\big]{i/|K| : i \in \{0, \ldots, |K|\}, \  K \text{ is a finite subgroup of } G }.
\end{equation*}
\end{theorem}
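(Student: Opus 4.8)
The plan is to prove that the set $\cG \subseteq \Xi(G)$ of all $\xi$ which are minimal, uniquely ergodic, with \emph{good} unique invariant measure of clopen value set exactly $V_G$, contains a comeager subset; since $\cG$ is conjugacy invariant and Borel, it is then comeager by \autoref{p:dense-conj-class-Xi}. Fix throughout an increasing chain $G_0 \leq G_1 \leq \cdots$ of finite subgroups with $\bigcup_n G_n = G$, chosen (after passing to a subsequence) so that the indices $[G_{n+1}:G_n]$ grow fast enough — at least $|G_n|$ for $n \geq 1$, and as large as needed for $n=0$ once the base below is fixed; this is possible because $G$ is infinite and locally finite. The crucial point is that this chain is a Følner sequence for $G$ \emph{consisting of subgroups}, which makes averages over $G_n$ behave rigidly.

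First, density. Given a finite $G_0$-set $B_0$, apply \autoref{l:ext-finite-groups} repeatedly to build free transitive $G_n$-sets $B_n$ together with $G_{n-1}$-equivariant surjections $B_n \to B_{n-1}$, and set $B_\infty = \varprojlim_n B_n$. The bonding maps have fibers of size $\geq 2$, so $B_\infty$ is a Cantor space, and each $g \in G$ (lying in some $G_m$) acts by homeomorphisms on $\varprojlim_{n \geq m} B_n = B_\infty$, giving $\xi_{B_\infty} \in \Xi(G)$. This action is minimal (transitivity of $G_n \actson B_n$ lets one push the image in $B_n$ of any point onto that of any other) and uniquely ergodic (an invariant measure restricts to the normalized counting measure on each $B_n$, $n \geq 1$, hence is their inverse limit). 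Its invariant measure $\mu$ gives a cylinder over $B_n$ mass $1/|G_n|$, so $\mathrm{CVS}(\mu) = \bigcup_{n \geq 1}\bigl(\tfrac{1}{|G_n|}\mathbb{Z} \cap [0,1]\bigr) = V_G$, and $\mu$ is good: it is atomless with full support, and the divisibility clause follows by writing a clopen $b$ as a union of cylinders over a large $B_n$ and taking suitably many of them. Finally, choosing $B_0$ to be a finite factor of $\xi_0|_{G_0}$ that records a prescribed finite partition of $\Omega$ (as in the proof of \autoref{p:minimal-subshift-Fraisse}) and conjugating $\xi_{B_\infty}$ by a suitable homeomorphism, we see that such actions — call them \emph{standard} — are dense in $\Xi(G)$.

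Second, genericity. By \autoref{p:trans-min-Gdelta}, $\Xi_\Min(G)$ is $G_\delta$; and unique ergodicity is $G_\delta$ as well, for the following reason. For a clopen $a$, the quantities $\max_x \tfrac{1}{|G_n|}\#\{g \in G_n : \xi(g)x \in a\}$ and $\min_x(\cdots)$ are locally constant in $\xi$ and, since $G_{n+1}$ is a disjoint union of cosets of $G_n$, respectively non-increasing and non-decreasing in $n$; hence $\xi \mapsto \inf_n \max_x(\cdots)$ is upper semicontinuous, $\xi \mapsto \sup_n \min_x(\cdots)$ lower semicontinuous, and $\xi$ is uniquely ergodic iff these two agree for every clopen $a$ (their common value then being $\mu_\xi(a)$) — a $G_\delta$ condition. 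Let $M$ be the resulting $G_\delta$ set; on $M$ the map $\xi \mapsto \mu_\xi$ is continuous, and every $\xi \in M$ automatically has an atomless invariant measure of full support, since $\Omega$ is infinite, minimal flows have infinite orbits, and every non-empty open set is syndetic hence non-null. Now fix a clopen $a$ and a standard $\xi_{B_\infty}$ on $\Omega$: for $m_0$ large $a$ is a union of cylinders over $B_{m_0}$, and the computation above shows that $\tfrac{1}{|G_{m_0}|}\#\{g \in G_{m_0} : \xi_{B_\infty}(g)x \in a\} = \mu(a)$ for \emph{every} $x$. This is an open condition on the action, so it holds on a neighborhood $U$ of $\xi_{B_\infty}$; and for $\xi \in U \cap M$ and $n \geq m_0$ the $G_n$-average of $1_a$ is an average of the constant $\mu(a)$, hence equals it, so $\mu_\xi(a) = \mu(a) \in V_G$ and is constant on $U \cap M$. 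Therefore $\{\xi \in M : \mu_\bullet(a)\text{ is constant near }\xi\text{ with value in }V_G\}$ contains a dense open subset of $M$; intersecting over all clopen $a$ gives a comeager subset of $M$ on which $\mathrm{CVS}(\mu_\xi) \subseteq V_G$.

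Third, two more countable intersections of comeager sets complete the argument, and they also locate the main obstacle. For each finite $K \leq G$, the set of $\xi$ for which $\xi(k)$ is fixed-point free for all $k \in K \setminus \{1\}$ is open, contains every standard action (which is $G_n$-free for all $n$), hence is comeager; on it a clopen fundamental domain for the free $K$-action yields $\{i/|K| : 0 \leq i \leq |K|\} \subseteq \mathrm{CVS}(\mu_\xi)$, so intersecting over $K$ gives $\mathrm{CVS}(\mu_\xi) \supseteq V_G$ generically. For each pair of clopen $a,b$, on the comeager set where all $\mu_\bullet(\cdot)$ are locally constant, the set of $\xi$ with $\mu_\xi(a) \geq \mu_\xi(b)$ or with $\mu_\xi(c) = \mu_\xi(a)$ for some clopen $c \subseteq b$ is open and contains all standard actions (split into cylinders over a large $B_n$), hence comeager; this supplies the divisibility clause of goodness. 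Intersecting all of these with $M$ produces a comeager set contained in $\cG$, as desired. The genuinely hard point is the local-constancy step of the previous paragraph: since $V_G$ is merely a countable dense subset of $[0,1]$, the condition ``$\mu_\xi(a) \in V_G$'' cannot be shown generic by any soft descriptive argument, and one must use the rigidity forced by the subgroup-Følner structure — that near a standard action the $G_{m_0}$-average of an indicator is \emph{exactly} constant — to make $\xi \mapsto \mu_\xi(a)$ locally constant and thereby confine its values to $V_G$ on a comeager set.
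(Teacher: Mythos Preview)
Your approach is correct but takes a genuinely different route from the paper. The paper isolates a single condition on $\xi$ --- for every non-degenerate partition $A$ and finite $F \subseteq G$ there exist a finite subgroup $H \supseteq F$ and an $\xi(H)$-invariant refinement $B$ of $A$ on which $\xi|_H$ acts freely and transitively --- proves directly that this condition is dense $G_\delta$ (density via one application of \autoref{l:ext-finite-groups} followed by co-induction, not an inverse limit), and then reads off minimality, unique ergodicity, goodness, and the exact clopen value set from this one condition in a few lines. You instead build explicit ``standard'' inverse-limit actions, verify each desired property is separately generic, and rely on the pleasant observation that near a standard action the $G_{m_0}$-average of an indicator is \emph{exactly} constant, forcing $\xi \mapsto \mu_\xi(a)$ to be locally constant on the uniquely-ergodic locus. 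The paper's argument is shorter and yields a clean intrinsic description of the generic action; yours makes the mechanism behind $\mathrm{CVS}(\mu_\xi) \subseteq V_G$ more transparent and would transfer to settings where one already has a dense family of model systems in hand. One expositional wrinkle worth fixing: you ``fix throughout'' a single chain $(G_n)$ but then need $G_0$ and $[G_1:G_0]$ to depend on the basic open set in the density step --- simply allow the chain to vary with the standard action being constructed (your unique-ergodicity characterisation and the local-constancy argument are insensitive to which cofinal chain of finite subgroups is used, since any such chain is F{\o}lner and averages over a larger subgroup decompose along right cosets of a smaller one).
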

\begin{proof}
  We first show that the set of $\xi \in \Xi(G)$ satisfying the condition
  \begin{enumerate}[label=($*$)]
  \item \label{i:condition-loc-fin}
    for any non-degenerate partition $A$ of $\Omega$ and any finite set $F \sub G$, there exist a finite subgroup $H \leq G$ containing $F$ and a non-degenerate partition $B$ refining $A$ such that $B$ is $\xi(H)$-invariant and $\xi|_H$ is free and transitive on $B$
  \end{enumerate}
  is dense $G_\delta$.

  By the Baire category theorem, it suffices to see that the condition \autoref{i:condition-loc-fin} for fixed $A$ and $F$ is dense (it is obviously open). Let $U$ be a non-empty open set in $\Xi(G)$. By refining $A$ if necessary, we may assume that there is a finite subgroup $K \leq G$ such that $A$ is $K$-invariant and $U$ is given by the action of $K$ on $A$. Let $H$ be a finite subgroup of $G$ containing $K$ and $F$ such that $[H : K] \geq |A|$. Apply \autoref{l:ext-finite-groups} to produce a free transitive action $H \actson B$ that factors onto $K \actson A$. Let $B_0$ be a partition of $\Omega$ refining $A$ that realizes the factor map $B \to A$. Let $\eta_0$ be the action of $G$ co-induced from $H \actson B$ on the space $\tilde{B}$. Let $\phi \colon \tilde{B} \to \Omega$ be a homeomorphism sending $B$ to $B_0$ (here we view $B$ as a partition of $\tilde{B}$ using the canonical factor map $\tilde{B} \to B$). Then $\phi \eta \phi^{-1}$ belongs to $U$ and satisfies \autoref{i:condition-loc-fin} for $A$ and $F$.

  We have established that set of $\xi \in \Xi(G)$ satisfying \autoref{i:condition-loc-fin} is dense $G_\delta$. Let now $\xi$ satisfy \autoref{i:condition-loc-fin} and let $\mu$ be any $\xi$-invariant measure. Let $a \in \cB(\Omega)$ be arbitrary and apply \autoref{i:condition-loc-fin} to the partition $\set{a, \Omega \setminus a}$  to obtain $H$ and $B$. As the action $H \actson B$ is transitive, we get that $H \cdot a = \Omega$, which proves the minimality of $\xi$. As this action is moreover free and $\mu$ is $H$-invariant, we must have $\mu(b) = 1/|H|$ for all $b \in B$. Since $a$ is a union of elements of $B$, this determines $\mu(a)$ uniquely and $\mu(a) \in V_G$. To see that every element of $V_G$ is realized, let $K$ and $i$ be given and apply \autoref{i:condition-loc-fin} to the trivial partition and $K$ to obtain $H \geq K$ and $B$. Then the union of $i|H|/|K|$ elements of $B$ has measure $i/|K|$.

  Finally, to check that $\mu$ is good, let $a, b \in \cB(\Omega)$ with $\mu(a) < \mu(b)$. Let $B$ and $H$ be given by \autoref{i:condition-loc-fin} such that $a$ and $b$ can be written as unions of elements of $B$. Now we can take $c$ to be the union of $|H| \mu(a)$ elements of $B$ contained in $b$.
\end{proof}

Akin~\cite{Akin2005} has shown that if $\mu_1$ and $\mu_2$ are two good measures with the same clopen value sets, then there is $f \in \Homeo(\Omega)$ with $f_* \mu_1 = \mu_2$. By the work of Giordano--Putnam--Skau, this is connected with the notion of orbit equivalence. Recall that two actions $G \actson \Omega$, $H \actson \Omega$ are \df{orbit equivalent} if there exists  $f \in \Homeo(\Omega)$ such that $f[G \cdot \omega] = [H \cdot f(\omega)]$ for all $\omega \in \Omega$.
\begin{cor}
  \label{c:generic-OE-loc-finite}
  Let $G$ be a countable, locally finite group. Then a generic pair of actions in $\Xi(G)$ are orbit equivalent. More generally, if $G$ and $H$ are locally finite, a generic action of $G$ is orbit equivalent to a generic action of $H$ iff $V_G = V_H$.
\end{cor}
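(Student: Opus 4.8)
The plan is to deduce the statement from \autoref{th:generic_behavior_locally_finite}, combined with Akin's theorem on good measures \cite{Akin2005} and the Giordano--Putnam--Skau classification of minimal AF equivalence relations up to orbit equivalence. First I would invoke \autoref{th:generic_behavior_locally_finite} to fix a comeager set of actions in $\Xi(G)$ that are minimal and uniquely ergodic, with good invariant measure whose clopen value set is $V_G$, and likewise a comeager set in $\Xi(H)$ with value set $V_H$; since the product of two comeager sets is comeager in $\Xi(G)\times\Xi(H)$, both assertions reduce to showing that whenever $\xi$, $\eta$ are taken from these respective sets, $\xi$ is orbit equivalent to $\eta$ if and only if $V_G=V_H$ (the first assertion being the case $H=G$, where $V_G=V_H$ holds automatically).

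For the forward implication I would argue as follows. If $f\in\Homeo(\Omega)$ witnesses that $\xi$ and $\eta$ are orbit equivalent, then $f$ conjugates the orbit equivalence relation of $\xi$ to that of $\eta$; since a probability measure is invariant under a countable group action precisely when it is invariant under the induced orbit equivalence relation, the pushforward $f_*\mu_\xi$ of the unique $\xi$-invariant measure is $\eta$-invariant, hence equal to $\mu_\eta$ by unique ergodicity. As a homeomorphism maps $\cB(\Omega)$ bijectively onto itself, $\mu_\xi$ and $f_*\mu_\xi=\mu_\eta$ have the same clopen value set, that is, $V_G=V_H$.

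For the reverse implication, assuming $V_G=V_H$, the measures $\mu_\xi$ and $\mu_\eta$ are good with the same clopen value set, so Akin's theorem \cite{Akin2005} produces $f\in\Homeo(\Omega)$ with $f_*\mu_\xi=\mu_\eta$. Since the conjugate action $f\cdot\xi$ is orbit equivalent to $\xi$ and has unique invariant measure $f_*\mu_\xi=\mu_\eta$, it suffices to show that $f\cdot\xi$ and $\eta$ are orbit equivalent. Here I would use that a minimal action of a locally finite group $G=\bigcup_n G_n$ (an increasing union of finite subgroups) has, as orbit equivalence relation, the increasing union of the finite subequivalence relations induced by the $G_n$, which are compact and open in the étale topology; minimality of the action makes every class dense, so this is a minimal AF equivalence relation on the Cantor set, and the same holds for $\eta$. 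These two relations share the single invariant probability measure $\mu_\eta$, so the Giordano--Putnam--Skau criterion for orbit equivalence of minimal AF equivalence relations (orbit equivalence holds as soon as some homeomorphism carries the simplex of invariant measures of one onto that of the other; here the identity serves) shows that $f\cdot\xi$ and $\eta$, and therefore $\xi$ and $\eta$, are orbit equivalent.

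The step I expect to be the main obstacle is the verification that the Giordano--Putnam--Skau theory genuinely applies, namely that the orbit equivalence relation of a minimal locally finite group action is an AF equivalence relation in their precise sense and that the relevant orbit-equivalence invariant reduces, for such relations, to the simplex of invariant probability measures (rather than to the full dimension group, as in the stronger notion of strong orbit equivalence). Once this is pinned down, the remaining arguments are routine applications of the Baire category theorem and of the elementary functoriality of orbit equivalence with respect to invariant measures and clopen value sets.
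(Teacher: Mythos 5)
Your outline matches the paper's: both directions reduce to \autoref{th:generic_behavior_locally_finite}, Akin's theorem on good measures, and \cite{Giordano1995}*{Theorem~2.3}, and your forward implication (unique ergodicity plus invariance of the clopen value set under orbit equivalence) is exactly what the paper means by ``one direction follows directly.'' The genuine gap is the step you yourself flag as the main obstacle and then do not close: the verification that the orbit equivalence relation of the generic action is an AF relation in the precise sense needed for the Giordano--Putnam--Skau theorem. As you state it (``a minimal action of a locally finite group $G=\bigcup_n G_n$ has, as orbit equivalence relation, the increasing union of the finite subequivalence relations induced by the $G_n$, which are compact and open in the \'etale topology''), the claim is not justified: if the action of some finite subgroup is not free, the subrelation $R_{G_n}\subseteq\Omega\times\Omega$ need not be \'etale, because at a pair $(\omega,g\cdot\omega)$ with $g\cdot\omega=h\cdot\omega$ but $g\cdot x\neq h\cdot x$ for $x$ arbitrarily close to $\omega$ (equivalently, when fixed-point sets of group elements are not clopen) the source projection fails to be locally injective. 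Minimality, unique ergodicity and goodness of the measure --- the only properties you import from the statement of \autoref{th:generic_behavior_locally_finite} --- do not rule this out: a minimal action factoring through a proper quotient of $G$ can have all of these properties, so freeness is genuinely extra information.

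This is precisely the point the paper's proof is about. It does not work from the statement of \autoref{th:generic_behavior_locally_finite} but from the condition $(*)$ established in its proof, which gives in particular that the generic action is \emph{free}, and then verifies the hypothesis of \cite{Giordano1995} by showing that the topological full group of the action is locally finite and that each of its elements has clopen fixed-point set (freeness is used exactly there); the local finiteness of the full group is itself a nontrivial argument using $(*)$. Your route can be repaired along similar lines: once you know the generic action is free (either by quoting $(*)$, or by a separate Baire-category argument, freeness being a $G_\delta$ condition), each $R_{G_n}$ is a disjoint union of graphs of the finitely many homeomorphisms $\xi(g)$, $g\in G_n$, hence a compact open \'etale subrelation, and $R_G=\bigcup_n R_{G_n}$ is a minimal AF relation; you should then also check that \cite{Giordano1995}*{Theorem~2.3} is applicable in the form you use it (orbit equivalence of minimal AF relations follows from a homeomorphism matching the invariant measures), which is how the paper invokes it. Without the freeness input, however, the central step of your proof is missing.
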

\begin{proof}
  We prove the more general statement.
  One direction follows directly from \autoref{th:generic_behavior_locally_finite}. For the other, let $\xi$ and $\eta$ be actions of $G$ and $H$, respectively, satisfying condition \autoref{i:condition-loc-fin} in the proof of \autoref{th:generic_behavior_locally_finite}. In particular, they are free. To show that they are orbit equivalent, we will employ \cite{Giordano1995}*{Theorem~2.3}. Using Akin's result mentioned above and the conclusion of \autoref{th:generic_behavior_locally_finite}, we only need to check that the orbit equivalence relations of $\xi$ and $\eta$ are generated by AF actions in the terminology of \cite{Giordano1995}. We will do this for $\xi$, the argument for $\eta$ being analogous.

  Let $\oset{\xi}$ be the \df{topological full group} of $\xi$, i.e., the group of all homeomorphisms $f$ of $\Omega$ for which there exists a clopen partition $A$ of $\Omega$ such that for all $a \in A$, there is $g_a \in G$ with $f|_a = \xi(g_a)|_a$. It is clear that $\oset{\xi}$ generates the same orbit equivalence relation as $\xi$. As $\xi$ is free, the set $\set{\omega \in \Omega : f(\omega) = \omega}$ is clopen for every $f \in \oset{\xi}$. The only remaining condition to check is that $\oset{\xi}$ is locally finite. To simplify notation, we will identify $G$ with its copy in $\oset{\xi}$. Let $F \sub \oset{\xi}$ be finite and let $L = \gen{F}$. Let $A$ be a clopen partition of $\Omega$ and $K \leq G$ be finite such that for every $f \in F$ and $a \in A$, there exists $g \in K$ with $f|_a = g|_a$. Let $\cA$ be a finite $K$-invariant subalgebra of $\cB(\Omega)$ containing $A$ and note that $\cA$ is $L$-invariant. By \autoref{i:condition-loc-fin}, we may further assume that the action of $K$ on the atoms of $\cA$ is free. We claim that the map $L \to \Aut(\cA), f \mapsto f|_\cA$ is injective, which will complete the proof. Indeed, let $f$ belong to the kernel of this map and let $a$ be an atom of $\cA$. By the construction of $\cA$, there exists $g_a \in K$ such that $f|_a = g_a|_a$. In particular, $g_a(a) = f(a) = a$, and by the freeness of the action of $K$, we must have that $g_a = 1_G$, i.e., $f|_a = \id|_a$. As this holds for all atoms $a$, we are done.
\end{proof}

In view of \autoref{c:generic-OE-loc-finite}, it is natural to ask whether there is a comeager conjugacy class in $\Xi(G)$ for $G$ locally finite. This is not the case by \autoref{c:no_comeager_class_amenable_non_fg}.

Combining \autoref{th:generic_behavior_locally_finite} with \autoref{p:generic_behavior_amenable_non_lf} we obtain the following characterization.
\begin{cor}
  \label{c:character-amen-non-fg}
  Let $G$ be a countably infinite, amenable group. Then the following are equivalent:
  \begin{itemize}
  \item $G$ is locally finite;
  \item $\Xi_\Min(G)$ is dense in $\Xi(G)$.
  \end{itemize}
\end{cor}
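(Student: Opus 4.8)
The plan is to derive this corollary directly from the two main results of the subsection, \autoref{th:generic_behavior_locally_finite} and \autoref{p:generic_behavior_amenable_non_lf}, handling the two implications separately.

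For the implication ``$G$ locally finite $\Rightarrow$ $\Xi_\Min(G)$ dense'', I would invoke \autoref{th:generic_behavior_locally_finite}: when $G$ is countably infinite and locally finite, a generic element of $\Xi(G)$ is minimal. Since $\Xi_\Min(G)$ is a $G_\delta$ subset of $\Xi(G)$ (\autoref{p:trans-min-Gdelta}) containing a comeager set, it is itself comeager, and in particular dense. So this direction is essentially immediate.

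For the reverse implication, I would argue by contraposition: assume $G$ is amenable and \emph{not} locally finite, and show $\Xi_\Min(G)$ is not dense in $\Xi(G)$. This is precisely the second assertion of \autoref{p:generic_behavior_amenable_non_lf}, whose proof runs as follows: by the first part of that proposition the shrinking actions form a non-empty open subset of $\Xi(G)$, and by \autoref{l:not-min-shrinking-amenable} no minimal zero-dimensional flow of an amenable group is shrinking; hence this non-empty open set is disjoint from $\Xi_\Min(G)$, so $\Xi_\Min(G)$ cannot be dense.

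There is essentially no obstacle here, since both directions are packaged by the two cited results; the only thing to be careful about is making sure the hypotheses line up (countably infinite, amenable) and that one records that $\Xi_\Min(G)$ being $G_\delta$ upgrades ``comeager'' to ``dense'' in the locally finite case. I would simply write: ``One direction follows from \autoref{th:generic_behavior_locally_finite}, since a generic, hence dense, set of actions is minimal. The other is the content of the second assertion of \autoref{p:generic_behavior_amenable_non_lf}.''
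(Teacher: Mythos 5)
Your proposal is correct and matches the paper's argument exactly: the paper derives the corollary by combining \autoref{th:generic_behavior_locally_finite} (locally finite implies the minimal actions are comeager, hence dense) with the second assertion of \autoref{p:generic_behavior_amenable_non_lf} (amenable and not locally finite implies $\Xi_\Min(G)$ is not dense). The only slight redundancy is your appeal to the $G_\delta$ property of $\Xi_\Min(G)$; comeagerness in the Polish space $\Xi(G)$ already gives density.
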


\subsection{Comeager conjugacy classes in $\Xi(G)$}
It was asked in \cite{Doucha2022p} whether there exists a non-finitely generated group $G$ such that $\Xi(G)$ has a comeager conjugacy class. In the end of this subsection, we provide an answer for amenable groups. We know from Corollary \ref{c:all-actions-comeager-ex} that this is equivalent to the density of projectively isolated subshifts in $\cS(A^G)$ for every finite $A$.

\begin{theorem}
  \label{th:projectivly_isolated_iff_minimal_sofic}
Let $G$ be a countable group which is not finitely generated, let $A$ be a finite alphabet, and let $X \in \cS(A^G)$ be a subshift. Then $X$ is projectively isolated iff it is minimal and sofic.
\end{theorem}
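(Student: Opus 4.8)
The plan is to prove the two implications separately; the reverse one is routine and in fact holds for every countable $G$, while the forward one is where the hypothesis that $G$ is not finitely generated enters.

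\smallskip
\noindent\emph{($X$ minimal and sofic $\Rightarrow$ $X$ projectively isolated.)} Since $X$ is sofic, after enlarging the alphabet we may write $X = \Phi[Y]$ for an SFT $Y \in \cS(B^G)$, where $\Phi \colon B^G \to A^G$ is induced by a surjection $\phi \colon B \to A$. As $Y$ is an SFT, $\cS(Y)$ is open in $\cS(B^G)$. Fixing a pattern $q$ that occurs in $Y$ and setting $\cU \coloneqq \cS(Y) \cap \cU_q^+$ yields a non-empty open subset of $\cS(B^G)$ all of whose elements are \emph{non-empty} subflows of $Y$. For $Z \in \cU$, the set $\Phi[Z]$ is then a non-empty subflow of $\Phi[Y] = X$, hence equals $X$ by minimality of $X$. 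Thus $X$ is projectively isolated (this direction does not use that $G$ is non-finitely generated).

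\smallskip
\noindent\emph{($X$ projectively isolated $\Rightarrow$ $X$ minimal and sofic.)} That $X$ is sofic was already recorded in \autoref{sec:subshifts}, so the content is minimality. Fix witnesses: a factor map $\Phi \colon B^G \to A^G$ induced by $\phi \colon B \to A$, and a non-empty open $\cU \subseteq \cS(B^G)$ with $\Phi[Z] = X$ for every $Z \in \cU$. Choose $Z_0 \in \cU$ and shrink $\cU$ to a basic neighborhood of $Z_0$ — the set of subshifts of $B^G$ having exactly the same patterns with support in $F$ as $Z_0$, for some finite $F \ni 1_G$ — and put $H = \gen{F}$. The crucial observation is that, since a group with a finitely generated subgroup of finite index is finitely generated, $[G:H] = \infty$. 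I would then bring in co-induction from $H$, using three facts: \textbf{(i)} co-induction from $H$ leaves the set of patterns with support in $F$ unchanged (because $F \subseteq H$), so every $H$-subshift $W$ with the same $F$-patterns as $Z_0$ co-induces to an element $\widetilde W$ of $\cU$; \textbf{(ii)} co-induction commutes with the symbol map, $\Phi[\widetilde W] = \widetilde{\Phi_H[W]}$, where $\Phi_H \colon B^H \to A^H$ is the map induced by $\phi$; \textbf{(iii)} co-induction is injective on subshifts ($W_1 \subsetneq W_2$ implies $\widetilde{W_1} \subsetneq \widetilde{W_2}$, by copying a point of $W_2 \setminus W_1$ along every coset). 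Running (i)--(iii) over all admissible $W$ then shows that $\Phi_H[W]$ is a single $H$-subshift $S$, independent of $W$, and that $X = \widetilde S$.

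\smallskip
\noindent It remains to extract minimality of $X$ from this structure together with the projective isolation. I would reduce it to the statement that $\Phi[\overline{G \cdot z}] = X$ for every $z \in Z_0$ (orbit closures taken inside $Z_0$): granting this, $\overline{G\cdot\Phi(z)} = \Phi[\overline{G\cdot z}] = X$ for all $z \in Z_0$, so every point of $X = \Phi[Z_0]$ has dense orbit and $X$ is minimal. To prove this, one starts from a minimal subflow $M \subseteq \overline{G \cdot z}$, restricts to $H$, and uses the co-induction machinery to assemble from $M$ and suitable co-induced ``filler'' pieces placed on the infinitely many $H$-cosets a subshift $Z \in \cU$ whose window-$F$ pattern set equals that of $Z_0$ while its image under $\Phi$ is controlled by $\Phi[M]$; since necessarily $\Phi[Z] = X$, this forces $\Phi[M] = X$. \textbf{The main obstacle is precisely this construction}: one must simultaneously realize all of the window-$F$ patterns of $Z_0$ (to stay in $\cU$) and keep control of the $\Phi$-image, and since an $H$-subshift need not contain a minimal subshift with a prescribed set of patterns, the infinite index of $H$ — the availability of arbitrarily many independent cosets on which to carry filler pieces — must be used in an essential way, together with a careful analysis of which patterns of $X$ are forced by the window $F$. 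The qualitative behavior of co-induction recorded in \autoref{l:coind-not-minimal} and \autoref{p:coind-n-transitive} is what makes this strategy plausible.
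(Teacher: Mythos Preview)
Your reverse implication is correct (and as you note, holds for arbitrary $G$). In the forward direction, the setup is also right: the reduction to a window $F$, the subgroup $H=\langle F\rangle$ of infinite index, and your observation via (i)--(iii) that $X=\widetilde{X|_H}$ all match the paper. The gap is in the last paragraph.

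Your plan is to take a minimal $M\subseteq\overline{G\cdot z}$ and ``assemble from $M$ and suitable co-induced filler pieces'' a subshift $Z\in\cU$ whose $\Phi$-image is ``controlled by $\Phi[M]$.'' But the fillers are there precisely to realize \emph{all} $F$-patterns of $Z_0$, and their $\Phi$-images will in general range over all of $X$, not just over $\Phi[M]$; so $\Phi[Z]$ cannot be pinned down this way, and you cannot conclude $\Phi[M]=X$. You identify this as ``the main obstacle,'' and it is a genuine one: this route does not close.

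The paper sidesteps the difficulty by arguing by contradiction and aiming only for $\Phi[Z]\subsetneq X$, not for $\Phi[Z]=\Phi[M]$. Assume $X$ is not minimal, enlarge $F$ so that some $F$-pattern $p_0$ occurs in $X$ but fails to occur in some $x_0\in X$, and lift $x_0$ to $y_0\in Y$. Let $Q=\{q\in B^F:\phi\circ q=p_0\}$ and take
\[
Z=\{\,y\in Y:\text{patterns in }Q\text{ occur in at most one left $H$-coset of }y\,\}.
\]
For each $F$-pattern $p$ of $Y$, placing a point of $Y|_H$ realizing $p$ on the identity coset and $y_0$ on every other coset gives a point of $Z$; hence $Z_F=Y_F$ and $Z\in\cU$. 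On the other hand, pick $x_1\in X|_H$ in which $p_0$ occurs and copy $x_1$ on every coset: by your own fact $X=\widetilde{X|_H}$ this lies in $X$, but it has $p_0$ on at least two cosets, so it cannot lie in $\Phi[Z]$. Thus $\Phi[Z]\subsetneq X$, contradicting projective isolation. The key idea you are missing is to use a single \emph{point} $y_0$ avoiding (the preimages of) one specific pattern as the universal filler, rather than a minimal subsystem $M$.
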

\begin{proof}
  One implication is obvious, since minimal sofic subshifts are projectively isolated for any group. Similarly, a projectively isolated subshift is always sofic. So we only need to prove that a projectively isolated subshift in $\cS(A^G)$ is minimal.

  If $X \in \cS(A^G)$ and $F \sub G$ is finite, we denote by $X_F$ the collection of $F$-patterns that occur in $X$. For $H \leq G$, denote by $X|_H$ the image of the restriction map $A^G \to A^H$ and note that $X|_H \in \cS(A^H)$.

  Let $X \in \cS(A^G)$ be projectively isolated.
  Then there exist a finite set $F \sub G$, a finite alphabet $B$, a map $\pi \colon B \to A$ giving rise to a map $\Pi \colon B^G \to A^G$, and an SFT $Y \subseteq B^G$ such that $\Pi[Z] = X$ for any $Z \in \cS(B^G)$ with $Z_F = Y_F$. Suppose, towards a contradiction, that $X$ is not minimal. By enlarging $F$ if necessary, we may assume that there is an $F$-pattern $p_0$ occurring in $X$, and $x_0 \in X$ such that $p_0$ does not occur in $x_0$.

Let
\begin{equation*}
  Q = \set{q \in B^F : \pi \circ q = p_0}
\end{equation*}
and let $y_0 \in Y$ be such that $\Pi(y_0) = x_0$. No element of $Q$ occurs in $y_0$.

Let $H = \gen{F}$ and note that $Y$ is the subshift co-induced of $Y|_H$, i.e., elements of $Y$ are obtained by copying independently elements of $Y|_H$ on each left $H$-coset. As $\Pi$ is defined using a map on the alphabets, the same is also true for $X$.

 Let $Z \sub Y$ be the subshift consisting of all $y$ such that patterns in $Q$ occur in $y$ in at most one left $H$-coset. Explicitly,
\begin{equation*}
Z = \set[\big]{y \in Y : \forall g_1, g_2 \ \big( (g_1^{-1} \cdot y)|_F, (g_2^{-1} \cdot y)|_F \in Q \implies g_1H = g_2H \big)}.
\end{equation*}
For each $p \in Y_F$, pick some $y_p \in Y|_H$ such that ${y_p}|_F = p$. Let $T$ be a transversal for the left cosets of $H$ in $G$ with $1_G \in T$.
For each $p \in P$, define $z_p \in Y$ by setting, for $t \in T$ and $h \in H$,
\[
  z_p(th) =
  \begin{cases}
    y_p(h) & \text{ if } t = 1_G, \\
    y_0(h) & \text{ otherwise.}
  \end{cases}
\]
Since patterns in $Q$ do not occur in $y_0$, $z_p$ belongs to $Z$. Thus $Z$ realizes all the patterns in $Y_F$, whence $Z_F = Y_F$.

On the other hand, $\Pi[Z]$ is a proper subshift of $X$: indeed, let $x_1 \in X|_H$ be a point in which $p_0$ occurs and copy $x_1$ on every $H$-coset to obtain an element of $X$ which is not in $\Pi[Z]$ (note that as $H$ is finitely generated and $G$ is not, we must have $H \lneq G$). This contradicts our choice of neighborhood that projectively isolates $X$.
\end{proof}

\begin{cor}
  \label{c:a_generic_must_be_minimal}
Let $G$ be a countable group which is not finitely generated. If there exists a comeager conjugacy class in $\Xi(G)$, then the generic element of $\Xi(G)$ is minimal.
\end{cor}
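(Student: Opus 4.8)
The plan is to chase the following chain of implications: a comeager conjugacy class in $\Xi(G)$ forces minimal subshifts to be dense in $\cS(A^G)$ for every finite alphabet $A$, and this density in turn forces $\Xi_\Min(G)$ to be comeager in $\Xi(G)$, which is exactly the assertion that the generic element of $\Xi(G)$ is minimal. The easy half is the first implication: by \autoref{c:all-actions-comeager-ex}, the existence of a comeager conjugacy class in $\Xi(G)$ is equivalent to the density of the projectively isolated subshifts in $\cS(A^G)$ for every $A$, and since $G$ is not finitely generated, \autoref{th:projectivly_isolated_iff_minimal_sofic} says that every projectively isolated subshift is (minimal and) sofic, in particular minimal. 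So from this point on we may assume that minimal subshifts are dense in $\cS(A^G)$ for every finite alphabet $A$, and we must deduce that $\Xi_\Min(G)$ is comeager.

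Since $\Xi_\Min(G)$ is $G_\delta$ by \autoref{p:trans-min-Gdelta}, and a dense $G_\delta$ subset of a Polish space is automatically comeager (an open set containing a dense set is dense, so a dense $G_\delta$ is a countable intersection of dense open sets), it is enough to prove that $\Xi_\Min(G)$ is dense. Here I would use that the topology of $\Xi(G)$ is induced by the maps $\pi_A\colon \Xi(G)\to\cS(A^G)$ of \autoref{eq:defn-piA}, and that for two partitions $A_1,A_2$ with common (non-degenerate) refinement $A$, the map $\pi_{A_i}$ factors as $q_i\circ\pi_A$, where $q_i\colon\cS(A^G)\to\cS(A_i^G)$ is the projection induced by the map of alphabets $A\to A_i$. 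Consequently every non-empty basic open $U\sub\Xi(G)$ has the form $U=\pi_A^{-1}(\mathcal{V})$ for some non-degenerate clopen partition $A$ of $\Omega$ and some non-empty open $\mathcal{V}\sub\cS(A^G)$. Picking $\xi_0\in U$ we get $\pi_A(\xi_0)\in\mathcal{V}$, and since $A$ is non-degenerate this subshift uses every letter of $A$; hence the open set $\mathcal{V}'=\mathcal{V}\cap\bigcap_{a\in A}\cU_{p_a}^+$, where $p_a$ is the one-letter pattern $1_G\mapsto a$, is still non-empty (it contains $\pi_A(\xi_0)$) and all of its members use every letter of $A$. By our density hypothesis there is a minimal subshift $X\in\mathcal{V}'$.

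It remains to realize $X$ as $\pi_A(\xi)$ for some $\xi\in\Xi_\Min(G)$, for then $\xi\in\pi_A^{-1}(\mathcal{V}')\sub U$ and we are done. If $X$ is infinite it is homeomorphic to the Cantor space, the shift action on $X$ is minimal, and $\pi_{\hat A(X)}$ of this action equals $X$ (the canonical generating partition is non-degenerate here, as $X$ uses every letter). If $X$ is finite, take any minimal action $G\actson Z$ on the Cantor space (such an action exists because $G$ is infinite, as was already used in the proof of \autoref{p:minimal-subshift-Fraisse}) and let $W$ be a minimal subflow of $Z\times X$; then $W$ is a Cantor minimal flow with a factor map $\psi\colon W\to X$, and pulling the canonical partition of $X$ back through $\psi$ gives a partition $A_W$ of $W$ with $\pi_{A_W}$ of this action equal to $X$. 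In both cases we obtain a minimal action $\eta$ on a Cantor space $W$ together with a clopen partition $A_W$ such that $\pi_{A_W}(\eta)=X$; because $X$ uses every letter of $A$, the partition $A_W$ is non-degenerate and has exactly $|A|$ parts, so there is a homeomorphism $f\colon W\to\Omega$ carrying $A_W$ onto $A$ part by part (each part is a clopen, hence Cantor, subset of a Cantor space). Then $\xi=f\eta f^{-1}\in\Xi_\Min(G)$ and $\pi_A(\xi)=X\in\mathcal{V}$, as desired.

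Putting the pieces together: the hypothesis gives density of minimal subshifts in every $\cS(A^G)$, the argument above gives that $\Xi_\Min(G)$ is comeager in $\Xi(G)$, and comeagerness of $\Xi_\Min(G)$ is precisely the statement that the generic element of $\Xi(G)$ is minimal. I expect the main obstacle to be the density lemma of the middle paragraphs — not because it is deep, but because of the bookkeeping required to transfer a density statement at the level of subshifts (type spaces) to one at the level of actions on $\Omega$: one must keep track of exactly which letters occur, so as to realize the chosen minimal subshift by a genuine Cantor action carrying the prescribed generating partition.
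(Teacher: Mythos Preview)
Your proof is correct and takes a genuinely different route from the paper's. The paper's argument is a one-liner using the Fra\"iss\'e machinery of the appendix: the generic element, when it exists, is the projectively atomic model and hence an inverse limit of projectively isolated subshifts (\autoref{th:proj-atomic}); by \autoref{th:projectivly_isolated_iff_minimal_sofic} these are all minimal, and an inverse limit of minimal flows is minimal. Your argument instead stays entirely at the level of the Polish spaces $\Xi(G)$ and $\cS(A^G)$: from density of projectively isolated (hence minimal) subshifts you lift to density of $\Xi_\Min(G)$ in $\Xi(G)$ by realizing a carefully chosen minimal subshift as $\pi_A(\xi)$ for a minimal $\xi$, and then conclude via the fact that $\Xi_\Min(G)$ is $G_\delta$. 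The paper's route is shorter and more conceptual but leans on \autoref{th:proj-atomic}; yours is more elementary and self-contained, using only \autoref{c:all-actions-comeager-ex}, \autoref{th:projectivly_isolated_iff_minimal_sofic}, and the basic realization argument already present in the proof of \autoref{p:minimal-subshift-Fraisse}. Both approaches pivot on the same key input, \autoref{th:projectivly_isolated_iff_minimal_sofic}.
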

\begin{proof}
A generic element of $\Xi(G)$, if it exists, is obtained as an inverse limit of projectively isolated subshifts (see \autoref{th:proj-atomic} and \autoref{sec:Fraisse-cantor-actions} or \cite{Doucha2022p}). We just proved that if $G$ is not finitely generated then projectively isolated subshifts must be minimal. An inverse limit of minimal flows is minimal.
\end{proof}

\begin{prop}
  \label{p:no_minimal_sofic_locally_finite_group}
Let $G$ be an infinite, locally finite group. Then the only minimal sofic $G$-subshifts are singletons.
\end{prop}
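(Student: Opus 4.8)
\emph{Plan.} Let $X\in\cS(A^G)$ be a minimal sofic subshift; I will show $|X|=1$. The first step is to use soficness together with local finiteness to write $X$ as a co-induced subshift from a \emph{finite} subgroup. By the standard reduction recalled in \autoref{sec:subshifts}, we may write $X=\Pi[Y]$ with $Y\sub B^G$ an SFT and $\Pi\colon B^G\to A^G$ induced by an alphabet map $\pi\colon B\to A$. Let $F$ be a defining window for $Y$ and put $H=\gen F$; since $G$ is locally finite, $H$ is finite, and $F\sub H$. Because the forbidden patterns defining $Y$ are supported in $H$, membership in $Y$ is decided independently on each left coset of $H$, i.e.\ $Y=\widetilde{Y|_H}$ is co-induced from the $H$-subshift $Y|_H\in\cS(B^H)$. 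As $\Pi$ is applied coordinatewise it respects this coset decoupling, so $X=\Pi[Y]=\widetilde{X|_H}$, where $X|_H:=\pi[Y|_H]=(\Pi[Y])|_H\in\cS(A^H)$.

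The key point is that this co-induction description upgrades to every intermediate finite subgroup. Using local finiteness again, write $G=\bigcup_n G_n$ as an increasing union of finite subgroups, which we may assume all contain $H$ (replace $G_n$ by $\gen{G_n\cup H}$). Grouping the left $H$-cosets inside each left $G_n$-coset, one gets $X=\widetilde{X|_{G_n}}$, with $X|_{G_n}=\mathrm{CoInd}_H^{G_n}(X|_H)$ a \emph{finite} $G_n$-flow of cardinality $|X|_H|^{[G_n:H]}$. Now suppose, towards a contradiction, that $c:=|X|_H|\ge 2$. Since $G$ is infinite and $H$ is finite, $[G_n:H]\to\infty$, so we may fix $n$ with $m:=[G_n:H]$ large enough that $2^m>m\,|H|$. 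Then
\[
  |X|_{G_n}|=c^{m}\ge 2^{m}>m\,|H|=|G_n|,
\]
so the finite $G_n$-flow $X|_{G_n}$ cannot be a single $G_n$-orbit, and therefore $G_n\actson X|_{G_n}$ is not minimal. Applying \autoref{l:coind-not-minimal} with the subgroup $G_n$, the co-induced flow $X=\widetilde{X|_{G_n}}$ is not $G$-minimal, contradicting minimality of $X$. Hence $c=1$, i.e.\ $|X|_H|=1$, and since the co-induction of a one-point flow is a one-point flow, $|X|=1$, as desired.

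\emph{On the main obstacle.} The tempting approach is to produce by hand a proper subflow of a co-induced flow $\widetilde W$ with $|W|\ge 2$ — for instance a ``diagonal'' or ``constant'' copy of $W$ — but this does not work in general: when $G$ is an infinite locally finite \emph{simple} group (such as the finitary alternating group) there are no nontrivial finite transitive $G$-flows at all, so $\widetilde W$ has no nontrivial finite subflow, and attempts to build an infinite proper subflow are obstructed by the cocycle twisting the coordinates. The point of the argument above is precisely to \emph{not} build any subflow explicitly: one passes to a sufficiently large intermediate finite subgroup, where non-minimality of the restricted flow becomes a triviality of cardinality counting, and then feeds this into the already-established \autoref{l:coind-not-minimal}. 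The remaining ingredients are routine: that an SFT with defining window inside $H$ equals $\widetilde{Y|_H}$, that an alphabet map commutes with co-induction, and that co-induction is transitive through $H\le G_n\le G$ — all of which follow immediately from the ``copy independently on each left coset'' description of co-induced subshifts recorded in \autoref{sec:non-finit-gener}.
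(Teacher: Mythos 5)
Your proof is correct and follows essentially the same route as the paper: soficness plus local finiteness gives that $X$ is co-induced from $X|_H$ for a finite subgroup $H$ (via an SFT extension with alphabet factor map and defining window generating $H$), then restricting to large intermediate finite subgroups and comparing $|X|_H|^{[K:H]}$ with $|K|$, using \autoref{l:coind-not-minimal} to transfer (non-)minimality. The only difference is cosmetic: you argue by contradiction, while the paper argues directly that minimality of $X$ forces $|X|_K|\le|K|$ for all finite $K\ge H$ and hence $|X|_H|=1$.
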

\begin{proof}
  Let $X$ be a minimal sofic subshift. As in the proof of \autoref{th:projectivly_isolated_iff_minimal_sofic} and adopting the notation thereof, we see that there exists a finitely generated (so, finite) subgroup $H \leq G$ such that $X$ is  co-induced of $X|_H$. Furthermore for any finite $K$ such that $H \leq K \leq G$, $X$ is co-induced of $X|_K$ (from $K$ to $G$) and $X|_K$ is co-induced of $X|_H$ (from $H$ to $K$). It follows from \autoref{l:coind-not-minimal} that $X|_K$ is minimal, which, as $K$ is finite, implies that $\abs[\big]{X|_K} \leq |K|$. We have that
  \begin{equation*}
    |K| \geq \abs[\big]{X|_K} = \abs[\big]{(X|_H)^{[K : H]} } = \abs[\big]{X|_H}^{\abs{K} / \abs{H}},
  \end{equation*}
  which, applied for big enough $K$, implies that $X|_H$, and therefore, $X$ is a singleton.
\end{proof}

\begin{cor}
  \label{c:no_comeager_class_amenable_non_fg}
Let $G$ be a countable amenable group which is not finitely generated. Then there is no comeager conjugacy class in $\Xi(G)$.
\end{cor}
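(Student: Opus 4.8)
The plan is to argue by contradiction, using the Fraïssé--theoretic characterization together with the structural results established above, splitting into two cases according to whether $G$ is locally finite.

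So suppose $\Xi(G)$ admits a comeager conjugacy class. By \autoref{c:all-actions-comeager-ex} this is equivalent to the density of projectively isolated subshifts in $\cS(A^G)$ for every finite alphabet $A$, and since $G$ is not finitely generated, \autoref{th:projectivly_isolated_iff_minimal_sofic} identifies the projectively isolated subshifts as exactly the minimal sofic ones. Hence minimal sofic subshifts are dense in $\cS(A^G)$ for every $A$.

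First suppose $G$ is not locally finite. Then by \autoref{c:a_generic_must_be_minimal} the generic element of $\Xi(G)$ is minimal, so $\Xi_\Min(G)$ is comeager in $\Xi(G)$, in particular dense. This contradicts \autoref{p:generic_behavior_amenable_non_lf} (equivalently \autoref{c:character-amen-non-fg}), which says that for $G$ amenable and not locally finite, $\Xi_\Min(G)$ is not dense in $\Xi(G)$. Now suppose $G$ is locally finite. Applying the density conclusion with $A = \set{0,1}$, minimal sofic subshifts would be dense in $\cS(\set{0,1}^G)$; but by \autoref{p:no_minimal_sofic_locally_finite_group} the only minimal sofic $G$-subshifts are singletons, i.e. the two constant points of $\set{0,1}^G$. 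A two-point set cannot be dense in $\cS(\set{0,1}^G)$: for instance, taking a pattern $q$ on $\set{1_G,g}$ (with $g\neq 1_G$) that takes both values, the basic open set $\cU_q^+$ is nonempty (it contains the full shift) yet contains no singleton subshift. This contradiction finishes the proof.

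The proof is essentially a synthesis of results already in place, so there is no genuinely hard step; the only point requiring care is the case division itself, since the two cases are excluded by obstructions of different nature — the failure of density of minimal actions when $G$ is not locally finite, versus the scarcity of minimal sofic subshifts when $G$ is locally finite.
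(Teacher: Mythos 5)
Your proof is correct and follows essentially the same route as the paper: the same split into the locally finite and non-locally-finite cases, using \autoref{c:a_generic_must_be_minimal} together with \autoref{p:generic_behavior_amenable_non_lf} in the first case, and \autoref{c:all-actions-comeager-ex}, \autoref{th:projectivly_isolated_iff_minimal_sofic}, and \autoref{p:no_minimal_sofic_locally_finite_group} in the second. The only difference is that you spell out explicitly why the singleton subshifts fail to be dense, which the paper leaves implicit.
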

\begin{proof}
If $G$ is not locally finite, then $\Xi_\Min(G)$ is not dense in $\Xi(G)$ by \autoref{p:generic_behavior_amenable_non_lf}, so \autoref{c:a_generic_must_be_minimal} applies and there is no comeager conjugacy class in $\Xi(G)$.

If $G$ is locally finite, then by \autoref{th:projectivly_isolated_iff_minimal_sofic} and \autoref{p:no_minimal_sofic_locally_finite_group}, projectively isolated subshifts are not dense, whence there is no comeager conjugacy class in $\Xi(G)$ by \autoref{c:all-actions-comeager-ex}.
\end{proof}

\begin{question}
  \label{q:sofic-min-exist}
  Does there exist a non-finitely generated group which admits:
  \begin{enumerate}
  \item non-trivial, minimal SFTs?
  \item non-trivial, minimal, sofic subshifts?
  \end{enumerate}
\end{question}


\section{Transitive actions}
\label{sec:transitive-actions}

\subsection{Some general criteria}
\label{sec:some-gener-crit}

We turn to the study of the space $\Xi_\Tr(G)$ of transitive actions of a countable group $G$. It is claimed by Hochman in \cite{Hochman2012} that there is a dense conjugacy class in $\Xi_\Tr(G)$ for $G = \Z^d$ (see Proposition~4.4 of \cite{Hochman2012}). However, the argument given there is incorrect, and the statement is not true for $d \geq 2$ (see \autoref{th:polycyclic-no-dense-conj-class}). Hochman's approach, which is general and does not depend on particular properties of $\Z^d$, is as follows. Let $\set{\xi_n : n \in \N}$ be a dense set in $\Xi_\Tr(G)$ and let $x_n$ be a transitive point for $\xi_n$. Then the closure $Z$ of the orbit of $(x_0, x_1, \dots)$ in the product $\prod_n \xi_n$ is a transitive system whose conjugates can approximate all of the $\xi_n$. While this is indeed true, the problem is that the point $(x_0, x_1, \dots)$ may be isolated in $Z$ and even worse, sometimes $Z$ cannot even be realized as a factor of a flow on a perfect space. We are thus led to characterize the flows which can be realized in such a way.

If $G$ is a group, $G \actson Z$ is a $G$-flow, $z \in Z$, and $U \sub Z$ is open, we denote
\begin{equation}
  \label{eq:defn-return-time}
  \Ret(z, U) \coloneqq \set{g \in G : g \cdot z \in U}.
\end{equation}
We say that a point $z \in Z$ is \df{recurrent} if $\Ret(z, U)$ is infinite for every open $U \ni z$. We note that if $G$ is infinite, then every flow contains recurrent points because every point belonging to a minimal subflow is recurrent.
A basic observation is that a transitive point which is not isolated is necessarily recurrent (because all of its neighborhoods have infinitely many disjoint, non-empty, open subsets). This leads to the following criterion.

\begin{prop}
  \label{p:criterion_factor_transitive_Cantor}
  Let $G$ be a countable group, let $G \actson Z$ be a zero-dimensional, metrizable $G$-flow, and let $z_0 \in Z$ be a transitive point. Then the following are equivalent:
  \begin{enumerate}
  \item The flow $Z$ is a factor of a transitive flow on a Cantor space;
  \item The point $z_0$ is recurrent;
  \item The point $z_0$ is not isolated or it has infinite stabilizer.
  \end{enumerate}
\end{prop}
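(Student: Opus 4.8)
The plan is to prove $(2)\Leftrightarrow(3)$, $(1)\Rightarrow(3)$, and $(3)\Rightarrow(1)$. The equivalence $(2)\Leftrightarrow(3)$ is elementary. If $z_0$ is isolated, then $\set{z_0}$ is open and $\Ret(z_0,\set{z_0})$ is exactly the stabilizer of $z_0$, so $z_0$ is recurrent iff this stabilizer is infinite. If $z_0$ is not isolated, then $z_0$ is automatically recurrent: every neighbourhood of $z_0$ is infinite (a finite neighbourhood would make $\set{z_0}$ open), hence by iterated Hausdorff separation contains infinitely many pairwise disjoint non-empty open sets, each of which meets the dense orbit $G\cdot z_0$, producing infinitely many distinct elements of $\Ret(z_0,U)$. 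Conversely, an infinite stabilizer is contained in $\Ret(z_0,U)$ for every open $U\ni z_0$, so $(3)\Rightarrow(2)$.

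For $(1)\Rightarrow(3)$ I would argue by contraposition. Assume $z_0$ is isolated with finite stabilizer, and suppose $q\colon W\to Z$ is a factor map with $W$ a transitive flow on a Cantor space. Pick a transitive point $w_0\in W$; then $q(w_0)$ has dense orbit in $Z$, and since $\set{z_0}$ is open it meets this orbit, so after replacing $w_0$ by a translate we may assume $q(w_0)=z_0$. Now $q^{-1}(\set{z_0})$ is a non-empty clopen subset of the perfect space $W$, hence infinite, so the dense orbit $G\cdot w_0$ meets it in an infinite set; but $g\cdot w_0\in q^{-1}(\set{z_0})$ iff $g\cdot z_0=z_0$, so this set is the image of $\mathrm{Stab}(z_0)$ under $g\mapsto g\cdot w_0$, of cardinality at most $\lvert\mathrm{Stab}(z_0)\rvert<\infty$ — a contradiction. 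Combined with $(2)\Leftrightarrow(3)$ this gives $(1)\Rightarrow(2)$.

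The substantial direction is $(3)\Rightarrow(1)$. If $z_0$ is not isolated, then $Z=\overline{G\cdot z_0}$ has no isolated points at all (an isolated point of $Z$ would lie on the dense orbit and hence be a translate of $z_0$), so $Z$ is itself a transitive flow on a Cantor space and the identity works. So assume $z_0$ is isolated; by $(3)$ the subgroup $H\coloneqq\mathrm{Stab}(z_0)$ is infinite. Fix a minimal action $H\actson M$ on the Cantor set, let $G\actson\widetilde M$ be the co-induced flow with its natural $H$-equivariant projection $\pi\colon\widetilde M\to M$, and choose a minimal $H$-subflow $N\sub\widetilde M$ of the restriction $\widetilde M|_H$. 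Since $\pi[N]$ is a non-empty subflow of the minimal flow $M$, it equals $M$; hence $N$ surjects onto $M$, so $N$ is infinite, and therefore a perfect (Cantor) flow. Fix $\tilde m_0\in N$ and set $W\coloneqq\overline{G\cdot(z_0,\tilde m_0)}\sub Z\times\widetilde M$, a compact, metrizable, zero-dimensional $G$-flow. Then $\mathrm{pr}_1\colon W\to Z$ is continuous, $G$-equivariant, and surjective (its image is closed and contains the dense set $G\cdot z_0$), hence a factor map, and $W$ is transitive with transitive point $(z_0,\tilde m_0)$. It remains to see that $W$ is perfect, for which it suffices that $(z_0,\tilde m_0)$ be non-isolated in $W$; the key point is that $\set{z_0}\times\widetilde M$ is clopen, so $\overline{A}\cap(\set{z_0}\times\widetilde M)=\overline{A\cap(\set{z_0}\times\widetilde M)}$ for $A=G\cdot(z_0,\tilde m_0)$, and since $g\cdot(z_0,\tilde m_0)\in\set{z_0}\times\widetilde M$ exactly when $g\in H$, this yields $W\cap(\set{z_0}\times\widetilde M)=\set{z_0}\times\overline{H\cdot\tilde m_0}=\set{z_0}\times N$. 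As $N$ is perfect, $(z_0,\tilde m_0)$ is not isolated in $\set{z_0}\times N$, hence not in $W$.

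The main obstacle is precisely this last construction and the verification that $W$ is perfect. The two ingredients that make it go through are: feeding the co-induction an \emph{infinite minimal} $H$-flow, so that the minimal $H$-subflow of $\widetilde M|_H$ is forced to surject onto it and hence to be infinite (a merely transitive or perfect $M$ would not suffice, as its minimal subflows could be finite); and the elementary identity $\overline{A}\cap O=\overline{A\cap O}$ for $O$ clopen, which is what transports the perfectness of the fibre $N$ over the isolated point $z_0$ to all of $W$. One should also remark that when $G$ is finite all three conditions fail trivially, and that when $z_0$ is a fixed point the construction above degenerates (the co-induction from $G$ to $G$ being trivial) to producing a minimal Cantor $G$-flow over the one-point flow $Z=\set{z_0}$, using that $G=H$ is infinite in that case.
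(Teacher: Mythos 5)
Your proof is correct, and its skeleton matches the paper's: your (1)$\Rightarrow$(3) is the paper's argument in contrapositive form (a transitive point of the Cantor extension lying over the isolated point $z_0$ forces the stabilizer to be infinite), (2)$\Leftrightarrow$(3) is the elementary observation the paper records just before the proposition, and in (3)$\Rightarrow$(1) you handle the non-isolated case exactly as the paper does, by noting that $Z$ itself is then perfect. The one genuine divergence is the isolated case of (3)$\Rightarrow$(1). With $H$ the (infinite) stabilizer of $z_0$, the paper simply takes a point $x_0 \in 2^G$ with dense orbit for the shift $H \actson 2^G$; since the $H$-orbit closure of $x_0$ is the perfect space $2^G$, there are $h_n \in H$ with $h_n \cdot x_0 \to x_0$ and $h_n \cdot x_0 \neq x_0$, and then $h_n \cdot (x_0, z_0) \to (x_0,z_0)$ shows directly that $\cl{G \cdot (x_0,z_0)}$ is perfect. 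You instead co-induce a minimal Cantor $H$-flow $M$ up to $G$, pass to a minimal $H$-subflow $N$ of the restriction, and use minimality of $M$ to force $N$ to be infinite, hence perfect; your clopen-fibre identity then transports the perfectness of $N$ to $W$. Both arguments are sound, but yours costs more for no extra gain: it invokes the existence of an infinite minimal Cantor $H$-flow for an arbitrary countably infinite group $H$ (true but nontrivial; the paper does assume this fact elsewhere, e.g.\ in the proof of \autoref{p:minimal-subshift-Fraisse}, so it is admissible, but this proposition does not need it) as well as the co-induction machinery. Note also that your remark that a merely transitive auxiliary flow would not suffice is an artifact of your route through minimal subflows: the paper's construction shows that a transitive auxiliary flow does suffice, provided one keeps the transitive point itself, whose $H$-orbit closure is the whole perfect space, rather than passing to a minimal subflow.
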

\begin{proof}
  \begin{cycprf}
  \item[\impnnext] Suppose that $\pi \colon \Omega \to Z$ is a factor map for some transitive flow $G \actson \Omega$. If $z_0$ is not isolated, we are done by the observation above. Suppose now that $z_0$ is isolated. Let $U_0 = \pi^{-1}(\set{z_0})$. Then $U_0$ is open in $\Omega$ and therefore contains a transitive point $\omega_0$. Then $\pi(\omega_0) = z_0$ and $\Ret(\omega_0, U_0)$ is infinite and is contained in the stabilizer $G_{z_0}$.

  \item[\impfirst] Assume that $z_0$ is isolated with infinite stabilizer $H$ (otherwise, there is nothing to prove). The shift action $H \actson 2^G$ is transitive; let $x_0 \in 2^G$ be a point with dense orbit. Let $Y=\overline{G \cdot(x_0,z_0)} \sub 2^G \times Z$. We prove that $Y$ is perfect, which amounts to showing that $(x_0,z_0)$ is not isolated in $Y$. Since $2^G$ is perfect and $x_0$ has a dense orbit, there exists a sequence $(h_n)_{n \in \N}$ of elements of $H$ such that $h_n \cdot x_0 \ne x_0$ for all $n$, and $h_n \cdot x_0$ converges to $x_0$. Then $h_n \cdot (x_0,z_0) = (h_n \cdot x_0, z_0)$ converges to $(x_0, z_0)$, and we are done.
  \end{cycprf}
\end{proof}

\begin{theorem}
  \label{th:dense-conj-transitive}
  Let $G$ be an infinite countable group. Then the following are equivalent:
  \begin{enumerate}
  \item \label{i:p:dct:1} $\Xi_\Tr(G)$ has a dense conjugacy class;
  \item \label{i:p:dct:2} For all SFTs $X_1 \in \cS(A_1^G)$, $X_2 \in \cS(A_2^G)$, and all patterns $p_1, p_2$, if there exist recurrent points $x_i \in X_i$ such that $p_i$ occurs in $x_i$ for $i = 1,2$, then there exist $x_i' \in X_i$ such that $p_i$ occurs in $x'_i$ for $i = 1,2$, and $(x_1', x_2')$ is recurrent in $X_1 \times X_2$.
  \end{enumerate}
\end{theorem}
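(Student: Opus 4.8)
The plan is to reduce condition \autoref{i:p:dct:1} to the topological transitivity of the conjugation action and then to verify each implication by a common template that passes between transitive flows on the Cantor space and transitive subshifts with recurrent transitive points, using \autoref{p:criterion_factor_transitive_Cantor}. Recall the standard fact that a continuous action of a Polish group on a non-empty Polish space has a dense orbit if and only if for all non-empty open $U, V$ there is a group element $f$ with $f \cdot U \cap V \neq \emptyset$ (if $(V_n)$ enumerates a basis, topological transitivity makes each $\bigcup_f f \cdot V_n$ a dense open set, so the points with dense orbit form a comeager set). Since $\Xi_\Tr(G)$ is Polish by \autoref{p:trans-min-Gdelta}, condition \autoref{i:p:dct:1} is equivalent to: for all non-empty open $U, V \sub \Xi_\Tr(G)$ there is $f \in \Homeo(\Omega)$ with $f \cdot U \cap V \neq \emptyset$. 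Three elementary observations will be used throughout. (a) Recurrence is preserved by continuous equivariant maps (pull back neighborhoods). (b) If $\xi \in \Xi_\Tr(G)$, $A$ is a non-degenerate partition of $\Omega$, and $\omega_0$ is a transitive point of $\xi$, then $\Pi^A_\xi(\omega_0)$ is a recurrent transitive point of $\pi_A(\xi)$ — here recurrence follows because $\omega_0$, lying in the perfect space $\Omega$, is not isolated, hence recurrent — and every letter of $A$ occurs in $\pi_A(\xi)$. (c) Conjugating by $f$ replaces the generating partition $A$ by $f^{-1}(A) = \set{f^{-1}(a) : a \in A}$, in the sense that $\pi_A(f \cdot \xi) = \pi_{f^{-1}(A)}(\xi)$; conversely, any two non-degenerate partitions of $\Omega$ with the same number of parts arise this way, because every non-empty clopen subset of $\Omega$ is homeomorphic to $\Omega$.

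For \autoref{i:p:dct:2} $\Rightarrow$ \autoref{i:p:dct:1}: given non-empty open $U, V \sub \Xi_\Tr(G)$, I would first shrink them to sets of the form $\pi_{A_i}^{-1}(\cS(X_i) \cap \cU^+_{p_i}) \cap \Xi_\Tr(G)$ with $X_i$ an SFT and $p_i$ a single pattern, and, using (b) together with non-emptiness, I would arrange that $p_i$ is the restriction of a suitable transitive point to a sufficiently large window, so that occurrence of $p_i$ in a subshift forces every letter of $A_i$ to occur (this also absorbs the reduction from finitely many patterns to one). Picking $\xi_i$ in the respective set with a transitive point $\omega_i$, observation (b) yields a recurrent point $y_i = \Pi^{A_i}_{\xi_i}(\omega_i) \in X_i$ in which $p_i$ occurs; applying \autoref{i:p:dct:2} gives $x'_i \in X_i$ with $p_i$ occurring in $x'_i$ and $(x'_1, x'_2)$ recurrent in $X_1 \times X_2$. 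Then $Y = \cl{G \cdot (x'_1, x'_2)} \sub X_1 \times X_2$ is a transitive subshift with a recurrent transitive point, so \autoref{p:criterion_factor_transitive_Cantor} supplies $\eta \in \Xi_\Tr(G)$ together with a factor map onto $Y$. Composing this map with the two coordinate projections of $X_1 \times X_2$ and using (a) and (c) — the pulled-back partitions are non-degenerate precisely because of the way $p_i$ was chosen — I obtain $f_1, f_2$ with $f_1 \cdot \eta \in U$ and $f_2 \cdot \eta \in V$, so $f = f_2 f_1^{-1}$ witnesses $f \cdot U \cap V \neq \emptyset$.

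For \autoref{i:p:dct:1} $\Rightarrow$ \autoref{i:p:dct:2}: let $[\eta]$ be the dense conjugacy class and let $X_1, X_2, p_1, p_2$ and recurrent points $x_i \in X_i$ be as in \autoref{i:p:dct:2}. Then $Z_i = \cl{G \cdot x_i} \sub X_i$ is a transitive subshift with a recurrent transitive point, so by \autoref{p:criterion_factor_transitive_Cantor} it is a factor of some $\zeta_i \in \Xi_\Tr(G)$; after discarding the finitely many letters of $A_i$ not appearing in $Z_i$, observation (c) shows that the set $W_i \sub \Xi_\Tr(G)$ of actions admitting a non-degenerate generating partition (under a bijection with the alphabet $A_i$) whose associated subshift is contained in $X_i$ and in which $p_i$ occurs is open, conjugation-invariant, and non-empty. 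By density $[\eta]$ meets both $W_1$ and $W_2$, so, after conjugating, I may assume that $\eta$ itself carries two non-degenerate generating partitions, one producing a transitive subshift $Y_1 \sub X_1$ in which $p_1$ occurs and one producing $Y_2 \sub X_2$ in which $p_2$ occurs. Passing to the common refinement of these two partitions yields a transitive subshift of $\eta$ with a recurrent transitive point $\tilde y$ factoring onto both $Y_1$ and $Y_2$, hence diagonally onto a subshift of $X_1 \times X_2$; by (a), the image of $\tilde y$ is a recurrent pair $(x'_1, x'_2)$ in $X_1 \times X_2$, and $p_i$ occurs in $x'_i$ because $x'_i$ has dense orbit in $Y_i$ while $p_i$ occurs in $Y_i$.

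The main obstacle is bookkeeping rather than ideas: one must faithfully track the correspondence between generating partitions of $\Omega$ and abstract alphabets under the conjugation action and, crucially, keep every partition in sight non-degenerate so that the conjugating homeomorphisms of observation (c) actually exist. The device that makes this work at every step is to enlarge each pattern $p_i$ so that its mere occurrence forces all letters of its alphabet to appear, combined with the fact that every non-empty clopen subset of $\Omega$ is homeomorphic to $\Omega$. The remaining points — that a general basic open subset of $\cS(A^G)$ reduces to the one-SFT-one-pattern form above, and that recurrence passes correctly to and from subshifts, products, and factors — are routine.
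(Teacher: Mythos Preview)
Your argument is correct and follows essentially the same route as the paper: translate the existence of a dense conjugacy class into the ability to meet two basic open sets of the form $\pi_{A_i}^{-1}(\cS(X_i)\cap\cU^+_{p_i})\cap\Xi_\Tr(G)$ with a single (conjugacy class of a) transitive Cantor action, and pass back and forth between such actions and subshifts with recurrent transitive points via \autoref{p:criterion_factor_transitive_Cantor}. In both directions you and the paper do the same thing; in particular, your use of the common refinement of the two generating partitions in $(\mathrm{i})\Rightarrow(\mathrm{ii})$ is exactly what underlies the paper's formula $x_i'=\Pi^{A_i}_{\phi_i\cdot\xi_0}(\phi_i(\omega_0))$, once one observes that $\Pi^{A_i}_{\phi_i\cdot\xi_0}\circ\phi_i=\Pi^{\phi_i^{-1}(A_i)}_{\xi_0}$.

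The one point where you are more careful than the paper is the non-degeneracy bookkeeping: if some letter of $A_i$ fails to occur in $x_i$, the paper's open set $\cU_i$ can be empty as written, whereas you explicitly discard unused letters (for $(\mathrm{i})\Rightarrow(\mathrm{ii})$) or enlarge $p_i$ to force all letters to appear (for $(\mathrm{ii})\Rightarrow(\mathrm{i})$). This is the right fix and your proof goes through; the paper is tacitly assuming the same reduction.
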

\begin{proof}
  \begin{cycprf}
  \item[\impnext]
    Let $\xi_0 \in \Xi_\Tr(G)$ have a dense conjugacy class. Let $X_i, p_i, x_i$ for $i = 1,2$ be given. Identify $A_1$ and $A_2$ with some arbitrary non-degenerate partitions of $\Omega$. Let
    \begin{equation}
      \label{eq:p:dct-1}
        \cU_i = \set{\xi \in \Xi_\Tr(G) : \pi_{A_i}(\xi) \sub X_i, p_i \text{ occurs in } \pi_{A_i}(\xi) }, \quad i = 1,2,
    \end{equation}
    where the maps $\pi_{A_i}$ are defined as in \autoref{eq:defn-piA}, and note that $\cU_1$ and $\cU_2$ are open. They are also non-empty by the assumptions and \autoref{p:criterion_factor_transitive_Cantor}. Then there exist $\phi_i \in \Homeo(\Omega)$ such that $\phi_i \cdot \xi_0 \in \cU_i$. If $\omega_0$ is any transitive point for $\xi_0$, we can take $x_i' = \Pi^{A_i}_{\phi_i \cdot \xi_0} \big(\phi_i(\omega_0) \big)$, where the maps $\Pi$ are defined by \autoref{eq:defin-PiAxi}.

  \item[\impfirst]
    Let $\cU_1$ and $\cU_2$ be two non-empty open subsets of $\Xi_\Tr(G)$, which, we may assume are given by \autoref{eq:p:dct-1}. Let $\xi_i \in \cU_i$ and let $\omega_i$ be a transitive point for $\xi_i$. Then it follows from \autoref{p:criterion_factor_transitive_Cantor} that $\Pi^{A_i}_\xi(\omega_i)$ witnesses that the hypotheses of \autoref{i:p:dct:2} are satisfied. Let $x_1', x_2'$ be as in the conclusion of \autoref{i:p:dct:2}. By \autoref{p:criterion_factor_transitive_Cantor}, there exists an action $\xi \in \Xi_\Tr(G)$ which factors onto $\cl{G \cdot (x_1', x_2')}$. Then $\xi$ has conjugates in both $\cU_1$ and $\cU_2$.
  \end{cycprf}
\end{proof}

Next we study how this property behaves with respect to finite index subgroups.
\begin{lemma}
  \label{l:recurrent-finite-index}
  Let $G \actson Z$ be a $G$-flow and let $H \leq G$ be a subgroup of finite index. If the point $z \in Z$ is recurrent for $G$, then it is also recurrent for $H$.
\end{lemma}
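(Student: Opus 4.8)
The plan is to split the argument into two cases according to whether the stabilizer $G_z = \set{g \in G : g \cdot z = z}$ is finite; for a point that need not be transitive this plays the role of the dichotomy used in \autoref{p:criterion_factor_transitive_Cantor}. Throughout, write $n = [G : H]$, fix left coset representatives $1_G = g_1, \dots, g_n$ so that $G = \bigsqcup_{i=1}^n g_i H$, and set $X = \cl{G \cdot z}$ and $Y = \cl{H \cdot z}$. Two facts will be used repeatedly. First, since $G \cdot z = \bigcup_{i=1}^n g_i \cdot (H \cdot z)$ and taking closures commutes both with finite unions and with the homeomorphisms $y \mapsto g_i \cdot y$, one has $X = \bigcup_{i=1}^n g_i \cdot Y$; moreover $Y$ is $H$-invariant, so each $h \in H$ acts on $Y$ as a homeomorphism. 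Second, if $U \ni z$ is open with $U \cap X = \set{z}$, then $\Ret(z, U) = G_z$, so recurrence of $z$ for $G$ forces $G_z$ to be infinite or $z$ not to be isolated in $X$; on the other hand, as recalled just before \autoref{p:criterion_factor_transitive_Cantor}, a non-isolated transitive point of a flow is recurrent, so it will be enough to show that $z$ is not isolated in the $H$-flow $Y$ (or that $H \cap G_z$ is infinite).

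If $G_z$ is infinite, then $H \cap G_z$ has index at most $n$ in $G_z$ — its cosets in $G_z$ inject into $G/H$ — hence is infinite, and since $H \cap G_z \subseteq \Ret(z, U)$ for every open $U \ni z$, the point $z$ is recurrent for $H$.

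The substance is the case $G_z$ finite. By the second fact, $z$ is then not isolated in $X$; since $G$ acts on $X$ by homeomorphisms and $G \cdot z$ is dense in $X$, no point of $G \cdot z$ is isolated in $X$, and since an isolated point of $X$ would lie in the dense set $G \cdot z$, the space $X$ is perfect. It remains to show that $z$ is not isolated in $Y$, which I would prove by contradiction. If $\set{z}$ were open in $Y$, then, translating by elements of $H$, every $\set{h \cdot z}$ would be open in $Y$, so $H \cdot z$ would be open in $Y$. As $X$ is compact Hausdorff, hence a Baire space, and is covered by the finitely many closed sets $g_1 \cdot Y, \dots, g_n \cdot Y$, some $g_{i_0} \cdot Y$ would have non-empty interior in $X$; applying the homeomorphism $g_{i_0}^{-1}$ of $X$ shows that $Y$ itself would have non-empty interior $O$ in $X$. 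Then $O \subseteq Y$ is open in $Y$ too, so by density some $h \cdot z$ lies in $O$. Writing $\set{h \cdot z} = W \cap Y$ with $W$ open in $X$, one gets $O \cap W = O \cap W \cap Y = O \cap \set{h \cdot z} = \set{h \cdot z}$, which is open in $X$ — so $h \cdot z$ would be an isolated point of the perfect space $X$, a contradiction. Hence $z$ is not isolated in $Y$ and therefore recurrent for $H$.

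The step I expect to be the main obstacle is precisely the finite-stabilizer case: the naive approach of decomposing the return set $\Ret(z, U)$ along the cosets of $H$ fails, because translating a return $g$ (with $g \cdot z \in U$) by an element of $H$ need not keep its image inside $U$ — equivalently, $g^{-1} \cdot U$ need not be a neighborhood of $z$. Passing to the orbit closures $X$ and $Y$ and running a Baire category argument on the finite cover $X = \bigcup_{i} g_i \cdot Y$ is the device that circumvents this.
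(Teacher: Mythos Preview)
Your proof is correct and follows essentially the same route as the paper: reduce to the orbit closure, split according to the dichotomy ``infinite stabilizer vs.\ not isolated,'' and in the perfect case use the finite cover $X=\bigcup_i g_i Y$ to see that $Y$ has non-empty interior in $X$, hence $z$ cannot be isolated in $Y$. The only cosmetic differences are that the paper splits on ``$z$ isolated in $X$'' rather than on ``$G_z$ finite'' (equivalent under recurrence), and that it gets non-empty interior of $Y$ directly from the elementary fact that a finite union of closed nowhere dense sets is nowhere dense---so invoking Baire is not needed.
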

\begin{proof}
  Without loss of generality, we may assume that $Z = \cl{G \cdot z}$ and apply \autoref{p:criterion_factor_transitive_Cantor}. If $z$ is isolated, then the stabilizer $G_z$ is infinite, and $H_z = G_z \cap H$ is also infinite. Suppose now that $z$ is not isolated. Then $Z$ is perfect. Let $T$ be a set of representatives for the left cosets of $H$. We have that $Z = \cl{TH \cdot z} = \bigcup_{t \in T} t \cl{H \cdot z}$, so $\cl{H \cdot z}$ has non-empty interior. Therefore $\cl{H \cdot z}$ contains a perfect open subset and $z$ cannot be isolated in $\cl{H \cdot z}$. It is therefore $H$-recurrent.
\end{proof}

\begin{prop}
  \label{p:dense-conj-class-finite-index}
  Let $G$ be an infinite countable group and let $H \leq G$ be a subgroup of finite index. If $\Xi_\Tr(H)$ has a dense conjugacy class, then so does $\Xi_\Tr(G)$.
\end{prop}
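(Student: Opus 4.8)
The plan is to verify, for $G$, the combinatorial criterion \autoref{i:p:dct:2} of \autoref{th:dense-conj-transitive}; since $H$ has finite index in the infinite countable group $G$, it is itself infinite and countable, so the hypothesis that $\Xi_\Tr(H)$ has a dense conjugacy class is, by that same theorem applied to $H$, equivalent to \autoref{i:p:dct:2} holding for $H$. The idea is to transport the data for $G$ down to $H$ by the standard ``restriction to a finite-index subgroup'' encoding. Fix a finite transversal $T$ for the right cosets of $H$ in $G$ with $1_G \in T$, so that $G = \bigsqcup_{t \in T} Ht$, and for a finite alphabet $A$ let $\Psi \colon A^G \to (A^T)^H$ be $\Psi(x)(h)(t) = x(ht)$. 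A direct check shows that $\Psi$ is an $H$-equivariant homeomorphism for the shift action of $H$ on $(A^T)^H$, and since the defining window of an SFT is finite it meets only finitely many cosets $Ht$; hence $\Psi$ sends every $G$-SFT in $\cS(A^G)$ to an $H$-SFT in $\cS((A^T)^H)$ (the forbidden $H$-patterns over the enlarged alphabet $A^T$ being read off from the forbidden $G$-patterns and the finitely many relevant values of $t$).

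Concretely, given $G$-SFTs $X_1 \in \cS(A_1^G)$, $X_2 \in \cS(A_2^G)$, patterns $p_1, p_2$, and $G$-recurrent points $x_i \in X_i$ in which $p_i$ occurs, I would first replace each $x_i$ by a suitable $G$-translate so that $p_i$ occurs at $1_G$, i.e.\ $x_i|_{\dom p_i} = p_i$ (a translate of a recurrent point is recurrent, e.g.\ by the characterization in \autoref{p:criterion_factor_transitive_Cantor}, since being isolated in the orbit closure and having infinite stabilizer are both translation invariant). Put $\hat X_i = \Psi_i[X_i]$ (an $H$-SFT over $A_i^T$) and $\hat x_i = \Psi_i(x_i) \in \hat X_i$. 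Let $D = \{1_G\} \cup \dom p_1 \cup \dom p_2$, let $D_H \sub H$ be the set of ``$H$-parts'' of the elements of $D$ in the decomposition $G = \bigsqcup_{t \in T} Ht$, and let $\hat p_i = \hat x_i|_{D_H}$. The defining formula for $\Psi_i$ gives $(h_0 \cdot \Psi_i^{-1}(\hat y))(ht) = (h_0 \cdot \hat y)(h)(t)$ for all $\hat y$, $h_0, h \in H$, $t \in T$; consequently, if the pattern $\hat p_i$ occurs at some $h_0 \in H$ in a point $\hat y \in \hat X_i$, then $(h_0 \cdot \Psi_i^{-1}(\hat y))|_D = x_i|_D$, and in particular $p_i$ occurs in $\Psi_i^{-1}(\hat y)$. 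Also, $\hat p_i$ occurs in $\hat x_i$ by construction, and by \autoref{l:recurrent-finite-index} the points $x_i$ are recurrent for $H$, hence so are the $\hat x_i$ in $\hat X_i$.

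It then remains to apply \autoref{i:p:dct:2} of \autoref{th:dense-conj-transitive} for $H$ to the data $(\hat X_1, \hat X_2, \hat p_1, \hat p_2, \hat x_1, \hat x_2)$: this produces $\hat x_i' \in \hat X_i$ in which $\hat p_i$ occurs and with $(\hat x_1', \hat x_2')$ recurrent for $H$ in $\hat X_1 \times \hat X_2$. Setting $x_i' = \Psi_i^{-1}(\hat x_i') \in X_i$, the pattern $p_i$ occurs in $x_i'$ by the previous paragraph, and since $\Psi_1 \times \Psi_2$ is an $H$-equivariant homeomorphism $X_1 \times X_2 \to \hat X_1 \times \hat X_2$, the pair $(x_1', x_2')$ is recurrent for $H$ in $X_1 \times X_2$; as the $H$-return time to any neighbourhood is contained in the $G$-return time, $(x_1', x_2')$ is recurrent for $G$ as well. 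This establishes \autoref{i:p:dct:2} for $G$ and, via \autoref{th:dense-conj-transitive}, the existence of a dense conjugacy class in $\Xi_\Tr(G)$. None of the steps is deep; I expect the main (but routine) obstacle to be the coset bookkeeping involved in checking that the $H$-encoding of a $G$-SFT is again an SFT and in translating the occurrence of $p_i$ into an occurrence in the encoded subshift — the two recurrence transfers being immediate, one from \autoref{l:recurrent-finite-index} and the other trivial.
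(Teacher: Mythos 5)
Your proof is correct and takes essentially the same route as the paper's: verify criterion \autoref{i:p:dct:2} of \autoref{th:dense-conj-transitive} for $G$ by re-encoding the $G$-SFTs as $H$-SFTs over the alphabets $A_i^T$ via $\Phi(x)(h)(t)=x(ht)$, transferring recurrence of the $x_i$ down to $H$ with \autoref{l:recurrent-finite-index}, applying the criterion for $H$, and noting that $H$-recurrence implies $G$-recurrence. Your explicit bookkeeping with the translated patterns $\hat p_i$ merely spells out a step the paper leaves implicit.
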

\begin{proof}
  We use \autoref{th:dense-conj-transitive}. Let $X_i \in \cS(A_i^G)$, $p_i$, and $x_i$ be as in \autoref{th:dense-conj-transitive}~\autoref{i:p:dct:2}. Let $T$ be a set of representatives for the right cosets of $H$ in $G$ and consider the map $\Phi \colon A_i^G \to (A_i^T)^H$ given by
  \begin{equation*}
    \Phi(x)(h)(t) = x(ht).
  \end{equation*}
  Note that $\Phi$ is an isomorphism of $H$-flows and it is not difficult to see that $\Phi[X_i]$ is an SFT as an $H$-shift (see, e.g., \cite{Bitar2024}*{Lemma~1.5.11}). It follows from \autoref{l:recurrent-finite-index} that $x_i$ is recurrent for $H$. Thus there exist $x_1', x_2'$ such that $p_i$ occurs in $x_i'$ for $i=1,2$ and $(x_1', x_2')$ is recurrent for $H$ and therefore also for $G$.
\end{proof}
We do not know whether \autoref{p:dense-conj-class-finite-index} works in the other direction: that is, whether the property of having a dense conjugacy class in $\Xi_\Tr(G)$ goes down to subgroups of finite index.

\subsection{The special symbol property and polycyclic groups}
\label{sec:spec-symb-prop}

\begin{defn}
  \label{df:special-symbol}
  Let $G$ be a group and let $H \leq G$. We denote by $\chi_H \in 2^G$ the characteristic function of $H$ and we say that $G$ has the \df{special symbol property relative to $H$} if the subshift $\cl{G \cdot \chi_H} \in \cS(2^G)$ is sofic. We say that $G$ has the \df{special symbol property} if $G$ has the special symbol property relative to the trivial subgroup.
\end{defn}

We note that the point $\chi_H$ is isolated in $\cl{G \cdot \chi_H}$ because it is the only point in its orbit which belongs to the open set $\set{x \in 2^G : x(1_G) = 1}$. Its stabilizer is equal to $H$ and therefore it is recurrent iff $H$ is infinite.

The special symbol property (in its non-relative version) was introduced by Dahmani and Yaman in \cite{Dahmani2008}; the equivalent formulation that we use here comes from \cite{Aubrun2017}. It was proved in \cite{Dahmani2008} that hyperbolic groups have the special symbol property and that this property is stable under group extensions. All groups that have the special symbol property are finitely generated.

The following gives a simple criterion for the non-existence of a dense conjugacy class in $\Xi_\Tr(G)$, which applies, for example, to hyperbolic groups (see \autoref{th:hyperbolic-groups}).
\begin{prop}
  \label{p:no-dense-class-2-subgroups}
  Let $G$ be a countable group and let $H_1, H_2$ be two infinite subgroups of $G$ such that:
  \begin{itemize}
  \item $G$ has the special symbol property relative to $H_1$ and to $H_2$;
  \item for every $g \in G$, $g H_1 g^{-1} \cap H_2$ is finite.
  \end{itemize}
  Then there is no dense conjugacy class in $\Xi_\Tr(G)$.
\end{prop}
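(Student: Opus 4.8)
The plan is to apply the criterion of \autoref{th:dense-conj-transitive}: it suffices to produce SFTs $X_1 \in \cS(A_1^G)$, $X_2 \in \cS(A_2^G)$ and patterns $p_1, p_2$ for which condition \autoref{i:p:dct:2} fails, i.e.\ its hypothesis holds but its conclusion does not. The natural choice is dictated by the special symbol property: let $Y_i = \cl{G \cdot \chi_{H_i}}$, which is sofic by assumption, fix an SFT $X_i \in \cS(A_i^G)$ and a map $\pi_i \colon A_i \to 2$ whose induced $G$-map $\Pi_i \colon A_i^G \to 2^G$ satisfies $\Pi_i[X_i] = Y_i$. Since $\chi_{H_i}$ is the unique point of $Y_i$ with value $1$ at $1_G$, it is isolated in $Y_i$, so there is a finite window $F_i \ni 1_G$ such that, writing $q_i = \chi_{H_i}|_{F_i}$, one has $Y_i \cap C_{q_i} = \set{\chi_{H_i}}$. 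The content of this step is that the single pattern $q_i$ completely pins down the special symbol.

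Next I would produce the recurrent witness for the hypothesis of \autoref{i:p:dct:2}. The set $K_i = X_i \cap \Pi_i^{-1}(\chi_{H_i})$ is non-empty, compact, and invariant under the stabilizer $H_i$ of $\chi_{H_i}$; since $H_i$ is infinite, any point $\hat x_i$ lying in an $H_i$-minimal subflow of $K_i$ is $H_i$-recurrent, hence $G$-recurrent in $X_i$. Set $p_i = \hat x_i|_{F_i}$. Then $\pi_i \circ p_i = (\Pi_i(\hat x_i))|_{F_i} = q_i$, and $p_i$ occurs in the recurrent point $\hat x_i$, so the hypothesis of \autoref{i:p:dct:2} is satisfied.

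To see that the conclusion fails, suppose $x'_i \in X_i$ is such that $p_i$ occurs in $x'_i$, say $(g_i \cdot x'_i)|_{F_i} = p_i$ for some $g_i \in G$. Applying $\Pi_i$ pointwise gives $(g_i \cdot \Pi_i(x'_i))|_{F_i} = \pi_i \circ p_i = q_i$; since $g_i \cdot \Pi_i(x'_i) \in Y_i$, the pinning-down property forces $\Pi_i(x'_i) = g_i^{-1} \cdot \chi_{H_i}$. Hence $w \coloneqq \big(\Pi_1(x'_1), \Pi_2(x'_2)\big) = (g_1^{-1} \chi_{H_1}, g_2^{-1} \chi_{H_2})$. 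Its stabilizer is $g_1^{-1} H_1 g_1 \cap g_2^{-1} H_2 g_2$, which is conjugate to $H_1 \cap (g_1 g_2^{-1}) H_2 (g_1 g_2^{-1})^{-1}$, hence finite by the hypothesis on $H_1$, $H_2$. Moreover $w$ is isolated in $\cl{G \cdot w}$: the clopen set $\set{(u_1, u_2) : u_1(g_1^{-1}) = 1,\ u_2(g_2^{-1}) = 1}$ meets $\cl{G \chi_{H_1}} \times \cl{G \chi_{H_2}} \supseteq \cl{G \cdot w}$ exactly in $\set{w}$. By \autoref{p:criterion_factor_transitive_Cantor} (the equivalence of recurrence with ``not isolated or infinite stabilizer''), $w$ is not recurrent in $\cl{G \cdot w}$. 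But $\Pi_1 \times \Pi_2$ restricts to a factor map $\cl{G \cdot (x'_1, x'_2)} \to \cl{G \cdot w}$ sending $(x'_1, x'_2)$ to $w$, and recurrence passes to factors, so $(x'_1, x'_2)$ is not recurrent in $X_1 \times X_2$. Thus the conclusion of \autoref{i:p:dct:2} fails, and by \autoref{th:dense-conj-transitive}, $\Xi_\Tr(G)$ has no dense conjugacy class.

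I expect the main difficulty to be organizational rather than a genuine obstacle: one must choose the window $F_i$ so that it simultaneously isolates $\chi_{H_i}$ in $Y_i$ and reads off a pattern from a point that is \emph{actually $G$-recurrent in the SFT $X_i$} (which is why one descends to $K_i$ and uses an $H_i$-minimal subflow), and one must track the conjugations carefully so that the finiteness hypothesis is invoked for the correct group element. The only genuinely new inputs beyond \autoref{th:dense-conj-transitive} and \autoref{p:criterion_factor_transitive_Cantor} are the elementary facts that $\chi_H$ is isolated in $\cl{G \cdot \chi_H}$ with stabilizer $H$, and that recurrence is preserved under factor maps.
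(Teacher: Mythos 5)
Your proof is correct and follows essentially the same route as the paper: pass to SFT extensions of $\cl{G\cdot\chi_{H_i}}$, find a recurrent point in the fiber over $\chi_{H_i}$ using that $H_i$ is infinite, choose a pattern that forces the image to lie in the orbit of $\chi_{H_i}$, and observe that the relevant return sets are contained in a conjugate of $H_1$ intersected with a conjugate of $H_2$, hence finite, so \autoref{th:dense-conj-transitive}~\autoref{i:p:dct:2} fails. The only (cosmetic) difference is that you deduce non-recurrence of $(x_1',x_2')$ by factoring onto $\cl{G\cdot w}$ and invoking \autoref{p:criterion_factor_transitive_Cantor}, while the paper computes directly that $\Ret\big(x_i',\pi_i^{-1}(\set{\pi_i(x_i')})\big)$ equals the stabilizer of $\pi_i(x_i')$.
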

\begin{proof}
  Let $y_i = \chi_{H_i}$ for $i = 1,2$. We verify that \autoref{th:dense-conj-transitive}~\autoref{i:p:dct:2} fails. We let $\pi_i \colon X_i \to \cl{G \cdot y_i}$ witness the soficity of $\cl{G \cdot y_i}$, where $X_i$ is an SFT. Consider the set $\pi_i^{-1}(\set{y_i}) \sub X_i$. It is closed and $H_i$-invariant and, as $H_i$ is infinite, it must contain a recurrent point $x_i$. Let $p_i$ be a pattern which occurs in $x_i$ and such that if $p_i$ occurs in $x_i'$, then $\pi_i(x_i') \in G \cdot y_i$, and in particular, $\pi_i(x_i')$ is isolated (here we use that $y_i$ is isolated in $\cl{G \cdot y_i}$). Now $X_i, x_i, p_i$ witness the hypothesis of \autoref{th:dense-conj-transitive}~\autoref{i:p:dct:2}. On the other hand, if $p_i$ occurs in $x_i'$ for $i = 1,2$, then $\Ret \big(x_i', \pi_i^{-1}(\set{\pi_i(x_i')}) \big)$ is equal to the stabilizer of $\pi_i(x_i')$, which is a conjugate of $H_i$. As the intersection of a conjugate of $H_1$ and a conjugate of $H_2$ is always finite by hypothesis, we conclude that $(x_1', x_2')$ cannot be recurrent.
\end{proof}

Next we give a different sufficient condition for the non-existence of a dense conjugacy class in $\Xi_\Tr(G)$ that applies to some group extensions.

\begin{lemma}
  \label{l:ssp-relative-quotient}
  Let $G$ be a group and let $H \unlhd G$ be finitely generated. If $G/H$ has the special symbol property, then $G$ has the special symbol property relative to $H$.
\end{lemma}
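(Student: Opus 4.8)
The plan is to exhibit $\cl{G \cdot \chi_H}$ as an \emph{inflation} of the special-symbol subshift of $G/H$ along the quotient map, and to check that inflating a sofic subshift along a quotient by a finitely generated normal subgroup yields a sofic subshift. Let $q \colon G \to G/H$ be the quotient homomorphism, fix a finite generating set $S_H$ of $H$, and for a finite alphabet $B$ let $q^*_B \colon B^{G/H} \to B^G$ be the map $q^*_B(y)(g) = y(q(g))$. This map is a continuous injection, and it is $G$-equivariant when $G$ acts on $B^{G/H}$ through $q$ (a one-line check). Its image is exactly the set $W_B \sub B^G$ of configurations that are constant on each left coset $gH$, and since $H$ is finitely generated this is the SFT obtained by forbidding all patterns $p \colon \set{1_G, s} \to B$ with $p(1_G) \neq p(s)$, $s \in S_H$; in particular $q^*_B$ is a homeomorphism of $B^{G/H}$ onto the closed set $W_B$. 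Writing $q^* = q^*_2$, one has $q^*(\chi_{\set{1_{G/H}}}) = \chi_H$, so by equivariance and continuity $q^* \big(\cl{(G/H) \cdot \chi_{\set{1_{G/H}}}}\big) = \cl{G \cdot \chi_H}$. Hence it suffices to prove: if $Y \sub 2^{G/H}$ is sofic over $G/H$, then $q^*(Y)$ is sofic over $G$.

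For this, let $\psi \colon Z \to Y$ witness soficity of $Y$, where $Z \sub B^{G/H}$ is an SFT; by the Curtis--Hedlund--Lyndon theorem $\psi$ is a sliding block code with some finite window $F_\psi \sub G/H$, and $Z$ has a finite defining window $F_Z \sub G/H$. Put $\tilde Z = q^*_B[Z] \sub W_B \sub B^G$. I claim $\tilde Z$ is an SFT over $G$: membership in $W_B$ is an SFT condition as above, and an element of $W_B$ descends to a point of $Z$ iff it avoids the pull-backs of the forbidden patterns of $Z$ along any fixed finite section of $q$ over (a translate-generating set of) $F_Z$ — this is the routine bookkeeping step, using that $q(g\hat f) = q(g)\,q(\hat f)$ ranges over $q(g)F_Z$ as $\hat f$ ranges over a finite lift $\hat F_Z$ of $F_Z$. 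Now pick a finite $\hat F_\psi \sub G$ mapped bijectively onto $F_\psi$ by $q$ and define $\tilde\psi \colon \tilde Z \to 2^G$ by the same local rule as $\psi$, read through $\hat F_\psi$. Then $\tilde\psi$ is continuous and $G$-equivariant, and for $x = q^*_B(\bar x)$ with $\bar x \in Z$ the identity $(g^{-1}\cdot x)|_{\hat F_\psi} = (q(g)^{-1}\cdot \bar x)|_{F_\psi}$ (under $q|_{\hat F_\psi}$) gives $\tilde\psi(x) = q^*\big(\psi(\bar x)\big)$. Therefore $\tilde\psi[\tilde Z] = q^*[\psi[Z]] = q^*[Y]$, so $q^*[Y]$ is a sofic $G$-subshift, and applying this to $Y = \cl{(G/H) \cdot \chi_{\set{1_{G/H}}}}$ finishes the proof.

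The only place the hypothesis is used is to ensure that $W_B$, and hence $\tilde Z$, is of finite type: without finite generation of $H$, ``constant on left $H$-cosets'' is not a finite-type condition and the construction breaks down. This, together with keeping the pattern pull-back bookkeeping correct, is the one point that needs care; the rest is a mechanical translation between $(G/H)$- and $G$-subshifts via $q$.
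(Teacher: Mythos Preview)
Your proof is correct and follows essentially the same approach as the paper's: pull back the special-symbol subshift of $G/H$ along the quotient map to obtain $\cl{G\cdot\chi_H}$, and verify that the pull-back of a sofic subshift is sofic by showing that the lifted SFT is still an SFT (this is where finite generation of $H$ enters, via the ``constant on $H$-cosets'' rules). The only difference is that the paper invokes the standard fact that the factor map $X\to Y$ witnessing soficity can be taken to be a $1$-block (letter-to-letter) code, so no window $F_\psi$ needs to be lifted; your argument works without this simplification, at the cost of the extra section/bookkeeping step.
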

\begin{proof}
  Let $X \sub A^{G/H}$ be an SFT that witnesses the soficity of $\cl{(G/H) \cdot \chi_{\set{1_{G/H}}}}$ via a map $A^{G/H} \to 2^{G/H}$ induced from a map $\pi \colon A \to \set{0, 1}$. Define the embedding $\iota \colon A^{G/H} \to A^G$ by $\iota(x)(g) = x(gH)$ and let $X' = \iota[X]$. It is clear that the map $A^G \to 2^G$ induced by $\pi$ sends $X'$ onto $\cl{G \cdot \chi_H}$, so we only have to check that $X'$ is an SFT. This follows from the fact that $H$ is finitely generated: we just have to copy the rules for $X$ (using arbitrary representatives of the cosets) and add the rules that $x(s) = x(1_G)$ for all and all $s$ from a finite generating set for $H$, thus ensuring that all $x \in X$ are constant on $H$-cosets.
\end{proof}

The following fact is a relativized version of \cite{Dahmani2008}*{Proposition~4.2}.
\begin{prop}
  \label{p:relative_ssp_stable_under_extension}
Assume that $K \le H \unlhd G$ are finitely generated groups such that $H$ has the special symbol property relative to $K$ and the quotient $G/H$ has the special symbol property. Then $G$ has the special symbol property relative to $K$.
\end{prop}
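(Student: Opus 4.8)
The plan is to adapt the proof of the non-relative statement \cite{Dahmani2008}*{Proposition~4.2}: I will build a subshift of finite type over $G$ by stacking a sofic presentation of the ``base'' $\cl{G \cdot \chi_H}$ (which is sofic by \autoref{l:ssp-relative-quotient}, using that $G/H$ has the special symbol property and $H$ is finitely generated) together with a sofic presentation of the ``fibre'' $\cl{H \cdot \chi_K}$, the latter grafted onto a single $H$-coset. Throughout I write $\chi_K$ both for the characteristic function of $K$ in $2^G$ and for its characteristic function $\chi_K \in 2^H$, disambiguated by context, and I set $\Sigma = \cl{H \cdot \chi_K} \sub 2^H$, a sofic subshift by hypothesis. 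Since $H \unlhd G$, each left coset $C = gH$ carries an identification $2^H \to 2^C$, $z \mapsto (gh \mapsto z(h))$, which changes by a left translation of $2^H$ when the representative $g$ is changed; as $\Sigma$ is invariant under left translations of $H$, it transports to a well-defined closed set $\Sigma_C \sub 2^C$, and this is compatible with the $G$-action in the sense that $g \cdot \Sigma_C = \Sigma_{gC}$.

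The first step is to record the elementary description
\[
  \cl{G \cdot \chi_K} = \set[\big]{x \in 2^G : \text{there is at most one coset } C \text{ with } x|_C \ne 0,\ \text{and for that } C_0,\ x|_{C_0} \in \Sigma_{C_0}},
\]
up to the degenerate point: the zero configuration belongs to $\cl{G \cdot \chi_K}$ precisely when $[G:K] = \infty$, i.e.\ when $G/H$ is infinite or $[H:K]$ is infinite. This is proved by analysing a convergent sequence $\chi_{g_n K}$: either the cosets $g_n H$ are eventually constant, in which case the support of the limit lies in a single coset where it is a limit of translates of $\chi_K$, hence lies in $\Sigma$ transported; or they are not, in which case the limit is $0$. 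When $[G:K] < \infty$ everything is finite and $\cl{G \cdot \chi_K}$ is periodic, hence an SFT by \autoref{p:periodic-subshifts} and in particular sofic (note $G$ is finitely generated here, as an extension of the finitely generated group $G/H$ by $H$); so one may assume $[G:K] = \infty$, though the construction below is in fact uniform.

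Next I fix the two presentations. Following \autoref{l:ssp-relative-quotient}, let $Y_1 \sub A_1^G$ be an SFT and $\alpha_1 \colon A_1 \to \set{0,1}$ a letter map whose induced factor map is onto $\cl{G \cdot \chi_H}$; here $Y_1$ consists of configurations constant on $H$-cosets, and for each $y_1 \in Y_1$ the configuration $\alpha_1 \circ y_1$ is either identically $0$ or the characteristic function of a single coset $C_0$ (the ``marked coset''). Since $H$ has the special symbol property relative to $K$ and is finitely generated, let $Y_2 \sub A_2^H$ be an SFT and $\alpha_2 \colon A_2 \to \set{0,1}$ a letter map with $\alpha_2[Y_2] = \Sigma$, and let $W_2 \sub H$ be a finite defining window for $Y_2$. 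Put $B = A_1 \times (A_2 \sqcup \set{\star})$, write the components of $y \in B^G$ as $y_1, y_2$, and let $Y \sub B^G$ be defined by: (i) $y_1 \in Y_1$; (ii) for all $g$, $y_2(g) = \star$ iff $\alpha_1(y_1(g)) = 0$; (iii) for every forbidden pattern $p$ of $Y_2$, forbid in $Y$ any pattern $q$ on $W_2$ with $\alpha_1(q_1(w)) = 1$ for all $w \in W_2$ and $q_2|_{W_2} = p$. These are finitely many local conditions (for (iii), $W_2 \sub H \sub G$ is finite), so $Y$ is an SFT. Rules (i)--(ii) say that $y$ has a well-defined marked coset $C_0$ or none, with $y_2 = \star$ exactly off $C_0$, and rule (iii) says precisely that $y_2|_{C_0}$, transported to $2^H$ by any representative, lies in $Y_2$ --- this is meaningful exactly because $Y_2$ is $H$-translation invariant, and the condition is local because requiring $\alpha_1(q_1(w)) = 1$ on all of $W_2 \sub H$ forces all the positions in question to lie in the marked coset. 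Finally let $\beta \colon B \to \set{0,1}$ be given by $\beta(a_1, \star) = 0$ and $\beta(a_1, a_2) = \alpha_2(a_2)$ for $a_2 \in A_2$, and let $\Phi \colon B^G \to 2^G$ be the induced map. Unwinding the definitions against the description in the first step gives $\Phi[Y] = \cl{G \cdot \chi_K}$: an element of $Y$ with no marked coset maps to $0$; one with marked coset $C_0$ maps to a configuration supported on $C_0$ whose transport lies in $\alpha_2[Y_2] = \Sigma$; conversely every such configuration is attained, since $Y_1 \to \cl{G \cdot \chi_H}$ and $Y_2 \to \Sigma$ are surjective and $y_2$ can be prescribed freely on $C_0$ within $Y_2$. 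Hence $\cl{G \cdot \chi_K} = \Phi[Y]$ is sofic.

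The main obstacle --- and the step where normality of $H$ really enters, through the hypothesis that $G/H$ is a group with the special symbol property and through the use of $H$-cosets as blocks for the $G$-action --- is the bookkeeping around the identification $2^H \cong 2^C$: one must verify carefully that ``$y_2|_{C_0}$ transported lies in $Y_2$'' is independent of the chosen coset representative (using left-invariance of $Y_2$) and is expressible by finitely many local rules (using that $W_2$ is a finite subset of $H$, hence of $G$, and that the marked coset is detectable locally from $\alpha_1 \circ y_1$). The only other thing requiring attention is the collection of small degenerate cases (when $G/H$ or $H/K$ is finite), which are routine.
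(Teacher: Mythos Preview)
Your argument is correct and follows essentially the same strategy as the paper: build a $G$-SFT by stacking a base witness for $\cl{G\cdot\chi_H}$ with a fibre witness for $\cl{H\cdot\chi_K}$, then read off with a letter map. The only cosmetic differences are that the paper places the fibre SFT on \emph{every} coset and kills the unmarked ones via the product map $\phi(a_1,a_2)=\phi_1(a_1)\phi_2(a_2)$ (so no extra symbol $\star$ is needed), and it works directly with the $G/H$-SFT rather than first invoking \autoref{l:ssp-relative-quotient}; your version instead grafts the fibre only onto the marked coset, which costs the placeholder $\star$ but is equally valid. Note incidentally that your proof does not actually use that $Y_1$ is constant on cosets---only that $\alpha_1\circ y_1\in\cl{G\cdot\chi_H}$, which already forces $\{g:\alpha_1(y_1(g))=1\}$ to be empty or a single coset.
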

\begin{proof}
Let $X_1 \sub A_1^{H}$ be an SFT that witnesses the soficity of $\cl{H \cdot \chi_K}$ via a map $\Phi_1 \colon A_1^{H} \to 2^{H}$ induced from a map $\phi_1 \colon A_1 \to \set{0, 1}$. Let $E_1$ be a finite defining window for $X_1$; we may assume that $E_1$ generates $H$ and contains $1_G$.

Similarly, let $X_2 \sub A_2^{G/H}$ be an SFT witnessing the soficity of $\cl{(G/H) \cdot \chi_{\set{1_{G/H}}}}$ via a map $\Phi_2 \colon A_2^{G/H} \to 2^{G/H}$ induced from a map $\phi_2 \colon A_2 \to \set{0, 1}$. Let $E_2$ be a finite defining window for $X_2$ which generates $G/H$ and contains $1_{G/H} = H$. We pick a map $r \colon E_2 \to G$ such that $r(H)=1_G$ and for every $e \in E_2$, one has $e = r(e)H$.

We set $B = A_1 \times A_2$ and denote by $\alpha_1$, $\alpha_2$ the coordinate projections.
Consider the SFT $Z \sub B^G$, with defining window $\set{gh  : g \in r[E_2], \, h \in E_1}$ for which a pattern $p$ is allowed iff it satisfies the following conditions:
\begin{itemize}
    \item For every $g \in r[E_2]$, the pattern $h \mapsto \alpha_1 \circ p(gh)$ (with support equal to $E_1$) is an allowed pattern for $X_1$.
    This ensures that for any $z \in Z$ and any $g \in G$ the map $h \mapsto \alpha_1 \circ z(gh)$ belongs to $X_1$.

    \item For every $h \in E_1$, $\alpha_2 \circ p(h)= \alpha_2 \circ p (1_G)$.
This implies that for any $z \in Z$ the map $gH \mapsto \alpha_2 \circ z(g)$ is well-defined.

    \item The pattern $f \mapsto \alpha_2 \circ p(r(f))$ (with support $E_2$) is an allowed pattern for $X_2$.
Along with the previous condition, this guarantees that for all $z \in Z$ and all $g \in G$ the map $gH \mapsto \alpha_2 \circ z(g)$ is an element of $X_2$.
 \end{itemize}

We define $\phi \colon B \to \set{0,1}$ by $\phi(a_1,a_2)=\phi_1(a_1) \phi_2(a_2)$ and denote by $\Phi \colon B^G \to 2^G$ the associated factor map. We now prove that $\Phi[Z] = \cl{G \cdot \chi_K}$.

Choose $x_1 \in X_1$ such that $\Phi_1(x_1)= \chi_K$ and $x_2 \in X_2$ such that $\Phi_2(x_2)= \chi_{1_{G/H}}$. Let $S \sub G$ be a transversal for the $H$-cosets of $G$ which contains $1_G$ and define $z \in B^G$ by setting
\[
  \alpha_1 \circ z(s h) = x_1(h) \And \alpha_2 \circ z(sh)=x_2(sH) \quad \text{ for all } s \in S, h \in H.
\]
It is straightforward to check that $z \in Z$ and $\Phi(z)= \chi_K$.

Finally, we prove that $\Phi[Z] \subseteq \cl{G \cdot \chi_K}$. Pick $z \in Z$. Since $gH \mapsto \alpha_2 \circ z(gH)$ belongs to $X_2$, there is at most one $H$-coset on which $\Phi(z)$ is non-zero. If $\Phi(z)$ is the constant $\bZero$, necessarily $\bZero$ must belong to $\cl{H \cdot \chi_K}$ in $2^H$ or $\bZero$ belongs to $\cl{(G/H) \cdot \chi_{\set{1_{G/H}}}}$, hence $K$ has infinite index in $G$ and $\bZero$ belongs to $\cl{G \cdot \chi_K}$ in $2^G$.
We are left with the case where there exists a unique coset $gH$ on which $\Phi(z)$ is not identically $0$. Since $h \mapsto \alpha_1 \circ z(gh) \in X_1$, $h \mapsto \Phi(g^{-1} \cdot z)(h)$ belongs to the closure of $H \cdot \chi_K$ in $2^H$. Since $g^{-1} \cdot \Phi(z)$ is equal to $0$ on all $H$-cosets other than $H$, it follows that $g^{-1} \cdot  \Phi(z)$ belongs to the closure of $H \cdot \chi_K$ in $2^G$. Hence $\Phi(z) \in \cl{G \cdot \chi_K}$.
\end{proof}

\begin{prop}
  \label{p:suff-cond-nodense-special-symbol}
  Let $G$ be a group and let $H \leq G$ satisfy the following conditions:
  \begin{itemize}
  \item $H$ is normal, finitely generated, infinite, and of infinite index;
  \item $H$ and $G/H$ have the special symbol property.
  \end{itemize}
  Then there is no dense conjugacy class in $\Xi_\Tr(G)$.
\end{prop}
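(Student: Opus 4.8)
The plan is to verify the hypotheses of Proposition~\ref{p:no-dense-class-2-subgroups}: we must exhibit two infinite subgroups $H_1, H_2 \le G$ such that $G$ has the special symbol property relative to each, and such that $g H_1 g^{-1} \cap H_2$ is finite for every $g \in G$. Once this is done, Proposition~\ref{p:no-dense-class-2-subgroups} immediately yields that $\Xi_\Tr(G)$ has no dense conjugacy class.

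The first subgroup is easy: take $H_1 = H$. Since $H$ is finitely generated and normal and $G/H$ has the special symbol property, Lemma~\ref{l:ssp-relative-quotient} (equivalently, Proposition~\ref{p:relative_ssp_stable_under_extension} applied with $K = H$, using that $H$ trivially has the special symbol property relative to itself) shows that $G$ has the special symbol property relative to $H$, and $H$ is infinite by hypothesis. Because $H$ is normal we have $g H_1 g^{-1} = H$ for every $g$, so the second condition of Proposition~\ref{p:no-dense-class-2-subgroups} reduces to the single requirement that $H \cap H_2$ be finite; in particular, conjugates of $H_2$ never enter the picture.

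The substantive step is producing $H_2$: an infinite subgroup of $G$ meeting $H$ in a finite set and relative to which $G$ has the special symbol property. Here I would use that $Q := G/H$ is infinite (as $H$ has infinite index) and finitely generated (every group with the special symbol property is finitely generated), hence contains an element $\bar t$ of infinite order; for a lift $t \in G$, the cyclic subgroup $H_2 := \gen{t}$ meets $H$ trivially, since $t^n \in H$ forces $\bar{t}^n = 1$ in $Q$, hence $n = 0$. It then remains to check that $G$ has the special symbol property relative to $\gen{t}$. The natural route is through $M := \gen{t} H = q^{-1}(\gen{\bar t})$, an extension of $H$ by $\Z$ that splits with complement $\gen{t}$: one shows that such an $M \cong H \rtimes \Z$, with $H$ having the special symbol property, has the special symbol property relative to its $\Z$-complement — by stacking copies of the sofic cover of $\cl{H \cdot \chi_{\set{1_H}}}$ along the $\Z$-direction with a shift-coherence rule, in the spirit of Lemma~\ref{l:free-group-ssp-cyclic-subg} and the proof of Proposition~\ref{p:relative_ssp_stable_under_extension} — and then pushes this up to $G$ via a further application of Proposition~\ref{p:relative_ssp_stable_under_extension} along a normal subgroup of $G$ between $H$ and $G$ (for instance, choosing $\bar t$ inside a finitely generated $\bar N \unlhd Q$ with $Q/\bar N$ having the special symbol property). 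When $Q$ is virtually cyclic one may instead take $\gen{\bar t}$ of finite index in $Q$, in which case the orbit of $\chi_{\gen{\bar t}}$ in $2^Q$ is finite and soficity is immediate; this already covers all the virtually polycyclic applications. With $H_1$ and $H_2$ in hand, $g H_1 g^{-1} \cap H_2 = H \cap \gen{t} = \set{1}$ is finite for all $g$, and Proposition~\ref{p:no-dense-class-2-subgroups} finishes the argument.

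I expect the main obstacle to be precisely this last point: the relative special symbol property for the ``transversal-type'' subgroup $H_2$. The two tools available, Lemma~\ref{l:ssp-relative-quotient} and Proposition~\ref{p:relative_ssp_stable_under_extension}, propagate a relative special symbol property up an extension only when the distinguished subgroup lies inside the kernel, whereas $\gen{t}$ is essentially a complement. Bridging this gap requires either an explicit symbolic construction adapted from the proofs cited above, or an appeal to the relative form of the Dahmani--Yaman results.
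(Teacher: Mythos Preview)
Your approach has a genuine gap and, even modulo that gap, is incomplete in a way you yourself flag.

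First, the claim that $Q = G/H$ ``contains an element $\bar t$ of infinite order'' is unjustified: infinite finitely generated groups can be torsion, and nothing in the hypotheses (not even the special symbol property for $Q$) rules this out. Second, even granting such a $\bar t$, you correctly identify that establishing the special symbol property of $G$ relative to the transversal subgroup $\gen{t}$ lies outside the reach of Lemma~\ref{l:ssp-relative-quotient} and Proposition~\ref{p:relative_ssp_stable_under_extension}, and your sketch for bridging this (stacking sofic covers along $\Z$, then pushing up through a well-chosen normal $\bar N \unlhd Q$) is both vague and imposes additional structural assumptions on $Q$ that are not part of the proposition.

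The paper sidesteps both problems by \emph{not} seeking a second subgroup $H_2$ at all. Instead of invoking Proposition~\ref{p:no-dense-class-2-subgroups}, it verifies the failure of Theorem~\ref{th:dense-conj-transitive}~\ref{i:p:dct:2} directly. The first SFT $X_1$ is obtained exactly as you suggest, forcing $\Ret(x_1',U_1) \subseteq H$. For the second, one takes the $H$-subshift $Y = \cl{H \cdot \chi_{\{1_H\}}}$ (sofic by hypothesis) and an SFT $X_2$ covering it, and then \emph{co-induces} both to $G$. The co-induced $\tilde X_2$ is still an SFT; it is perfect and transitive by Proposition~\ref{p:coind-n-transitive}, so recurrent points are dense. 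In $\tilde Y$ each left $H$-coset carries at most one occurrence of $1$, and since $H$ is normal the same holds for right cosets; hence for any $\tilde x_2'$ in which the relevant pattern occurs one gets $\Ret(\tilde x_2',U_2) \cap H = \{1_G\}$. Combining this with $\Ret(x_1',U_1) \subseteq H$ shows $(x_1',\tilde x_2')$ is never recurrent. No infinite-order element, no relative special symbol property for a complement, and no extra hypotheses on $Q$ are needed.
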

\begin{proof}
  We check the failure of \autoref{th:dense-conj-transitive}~\autoref{i:p:dct:2}. First, using \autoref{l:ssp-relative-quotient}, as in the proof of \autoref{p:no-dense-class-2-subgroups}, we find an SFT $X_1 \in \cS(A_1^G)$, a recurrent point $x_1 \in X_1$ and a pattern $p_1$ which occurs in $x_1$ such that for all $x_1' \in X_1$ in which $p_1$ occurs, there is an open set $U_1$ such that $\Ret(x_1', U_1)$ is a subset of a conjugate of $H$. As $H$ is normal, we can replace this by simply
  \begin{equation}
    \label{eq:p:scnss:1}
    \Ret(x_1', U_1) \sub H.
  \end{equation}

  Next let $Y = \cl{H \cdot \chi_{\set{1_H}}}$ and let $X_2$ be an SFT witnessing the soficity of $Y$. Let $\tilde Y$ and $\tilde X_2$ be the corresponding subshifts co-induced to $G$ and recall that $\tilde X_2$ is an SFT. The subshift $\tilde Y$ has a simple description: it contains all $\tilde y \in 2^G$ which have at most one occurrence of the symbol $1$ in each left $H$-coset. Let $\tilde{\pi} \colon \tilde X_2 \to \tilde Y$ be the map co-induced from $X_2 \to Y$. Let $p_2$ be a pattern occurring in $\tilde X_2$ such that if $p_2$ occurs in $\tilde x \in \tilde X_2$, then $1$ occurs in $\tilde{\pi}(\tilde x)$. By the remarks before \autoref{l:coind-not-minimal} and \autoref{p:coind-n-transitive}, $\tilde X_2$ is perfect and transitive, so recurrent points are dense, and in particular, there exists a recurrent point $\tilde x_2 \in \tilde X_2$ in which $p_2$ occurs. Now let $\tilde x_2'$ be any point in $\tilde X_2$ in which $p_2$ occurs. Then there exists $g \in G$ such that $\tilde{\pi}(\tilde x_2')(g) = 1$. Let $U_2 = \tilde{\pi}^{-1} \big(\set{\tilde y \in \tilde Y : \tilde y(g) = 1} \big)$. We claim that
  \begin{equation}
    \label{eq:p:scnss:2}
    \Ret(\tilde{x}_2', U_2) \cap H = \set{1_G}.
  \end{equation}
  Indeed, let $h \in H$ be such that $h \cdot \tilde x_2' \in U_2$. Then $\tilde{\pi}(\tilde x_2')(h^{-1} g) = 1$. However, $\tilde{\pi}(\tilde x_2')(g) = 1$ and there is at most one occurrence of $1$ in each right $H$-coset (as $H$ is normal, left and right cosets coincide). This implies that $h = 1_G$.

  Combining \autoref{eq:p:scnss:1} and \autoref{eq:p:scnss:2}, we see that whenever $p_1$ occurs in $x_1'$ and $p_2$ occurs in $\tilde x_2'$, the point $(x_1', \tilde x_2')$ cannot be recurrent, which concludes the proof.
\end{proof}

\begin{theorem}
  \label{th:polycyclic-no-dense-conj-class}
Let $G$ be a virtually polycyclic, infinite group. Then there is a dense conjugacy class in $\Xi_\Tr(G)$ iff $G$ is  virtually cyclic.
\end{theorem}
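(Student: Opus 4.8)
The plan is to treat the two implications separately. The ``if'' direction is immediate: if $G$ is infinite and virtually cyclic it has a finite-index subgroup isomorphic to $\Z$, and since $\Xi_\Tr(\Z)$ has a dense (in fact comeager) conjugacy class by \cite{Hochman2008}, \autoref{p:dense-conj-class-finite-index} yields a dense conjugacy class in $\Xi_\Tr(G)$. The bulk of the work is the converse: assuming $G$ is virtually polycyclic, infinite, and not virtually cyclic, I will show that $\Xi_\Tr(G)$ has no dense conjugacy class. Throughout I will use that every virtually polycyclic group has the special symbol property --- indeed $\Z$ and all finite groups are hyperbolic, hence have it by \cite{Dahmani2008}, the property is stable under extensions of finitely generated groups (\cite{Dahmani2008}*{Proposition~4.2}, equivalently the case $K=\{1_G\}$ of \autoref{p:relative_ssp_stable_under_extension}), and a virtually polycyclic group is built from cyclic groups by finitely many such extensions. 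I will also use that subgroups and quotients of virtually polycyclic groups are again virtually polycyclic and finitely generated (so they too have the special symbol property), and that such a group is virtually cyclic iff its Hirsch length is $\leq 1$; hence in our situation the Hirsch length is $\geq 2$.

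The argument will split into two cases. In the first, $G$ has a normal subgroup $N$ that is infinite and of infinite index; then $N$ is finitely generated and both $N$ and $G/N$ have the special symbol property, so \autoref{p:suff-cond-nodense-special-symbol} applies directly. In the second case, every normal subgroup of $G$ is finite or of finite index, and I will first argue that $G$ must then be virtually $\Z^k$ with $k$ equal to the Hirsch length, hence $k\geq 2$. For this I would let $T$ be the largest finite normal subgroup of $G$ --- it exists because $G$ is Noetherian, so any ascending union of finite normal subgroups is finitely generated and locally finite, hence finite. Then $G/T$ has no nontrivial finite normal subgroup, and transporting the case hypothesis through $G\to G/T$ shows that $G/T$ is just infinite (every nontrivial normal subgroup has finite index). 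Taking a normal, finite-index, torsion-free polycyclic subgroup $M$ of $G/T$, the last nontrivial term of its derived series is a nontrivial characteristic --- hence normal in $G/T$ --- torsion-free abelian subgroup, so isomorphic to some $\Z^k$; by just infiniteness it has finite index, so $G/T$, and therefore $G$, is virtually $\Z^k$.

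In this second case I would then pick a normal finite-index subgroup $M\cong\Z^k$ of $G$ (the normal core of a finite-index copy of $\Z^k$) with $k\geq 2$, so that $Q\coloneqq G/M$ is finite and acts on $M\cong\Z^k$ by conjugation. Choosing a primitive vector $a_1\in M$, the vectors $q\cdot a_1$ for $q\in Q$ span only finitely many lines in $M\otimes\Q\cong\Q^k$, so since $k\geq 2$ I can choose a primitive $a_2\in M$ spanning a line distinct from all of them. With $H_i=\gen{a_i}$ ($i=1,2$), which are infinite, every conjugate $gH_1g^{-1}=\gen{(gM)\cdot a_1}$ spans a line different from $\Q a_2$, so $gH_1g^{-1}\cap H_2$ is trivial; and $G$ has the special symbol property relative to each $H_i$ because, by \autoref{p:relative_ssp_stable_under_extension} applied to $H_i\leq M\unlhd G$, it suffices that $G/M$ (finite) have it and that $M$ have it relative to $H_i$, which follows from \autoref{l:ssp-relative-quotient} since $H_i\unlhd M$ and $M/H_i\cong\Z^{k-1}$ has the special symbol property. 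Then \autoref{p:no-dense-class-2-subgroups} gives that $\Xi_\Tr(G)$ has no dense conjugacy class.

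The hard part is the second case: for virtually $\Z^k$ groups that have no normal subgroup of infinite order and infinite index --- e.g.\ crystallographic groups whose holonomy acts $\Q$-irreducibly --- \autoref{p:suff-cond-nodense-special-symbol} is inapplicable, and one is forced to construct by hand two cyclic subgroups with the intersection property of \autoref{p:no-dense-class-2-subgroups} while simultaneously controlling the relative special symbol property, which is exactly what the relativized permanence statements \autoref{p:relative_ssp_stable_under_extension} and \autoref{l:ssp-relative-quotient} make possible. A more routine, but still necessary, point is the structural reduction showing that ``no normal subgroup of infinite order and infinite index'' forces the virtually-$\Z^k$ situation.
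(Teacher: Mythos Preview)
Your proof is correct and follows essentially the same strategy as the paper: the ``if'' direction via \autoref{p:dense-conj-class-finite-index}, and the converse by splitting into a case handled by \autoref{p:suff-cond-nodense-special-symbol} and a virtually-$\Z^k$ case handled by \autoref{p:no-dense-class-2-subgroups} with two cyclic subgroups of the free abelian part, the relative special symbol property coming from \autoref{l:ssp-relative-quotient} and \autoref{p:relative_ssp_stable_under_extension}. The only notable difference is how you reach the free abelian normal subgroup: the paper simply invokes \cite{Segal1983}*{p.~16, Lemma~6} to obtain a short exact sequence $1\to\Z^d\to G\to Q\to 1$ and then splits on whether $Q$ is finite, whereas you split on the existence of an infinite normal subgroup of infinite index and, in the negative case, rederive the virtually-$\Z^k$ structure via a just-infinite argument---correct but longer than the one-line citation.
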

\begin{proof}
  One direction follows from the fact that $\Xi_\Tr(\Z)$ has a dense conjugacy class (see \cite{Hochman2008}) and \autoref{p:dense-conj-class-finite-index}.

  For the other, note that, by \cite{Dahmani2008}*{Proposition~4.2}, virtually polycyclic groups have the special symbol property. By \cite{Segal1983}*{p.~16, Lemma~6}, there exists a short exact sequence
  \begin{equation*}
    1 \to \Z^d \to G \to Q \to 1
  \end{equation*}
  with $d \geq 1$. We have two cases. If $Q$ is infinite, then we are done by \autoref{p:suff-cond-nodense-special-symbol}. So assume that $Q$ is finite. As $G$ is not virtually cyclic, we have that $d \geq 2$. Let $a = (1, 0, \dots) \in \Z^d$ and let $\set{a_0, \dots, a_{n-1}} \sub \Z^d$ be the (finite) conjugacy class of $a$ in $G$. As $d \geq 2$, $\Z^d$ is not the union of finitely many cyclic subgroups, so there exists $b \in \Z^d$ which does not belong to $\bigcup_{i < n} \gen{a_i}$. We claim that the subgroups $\gen{a}$ and $\gen{b}$ of $G$ satisfy the hypotheses of \autoref{p:no-dense-class-2-subgroups}, which allows us to conclude. First, by \autoref{l:ssp-relative-quotient}, $\Z^d$ has the special symbol property relative to both $\gen{a}$ and $\gen{b}$ and, by \autoref{p:relative_ssp_stable_under_extension}, so does $G$. Second, $\gen{b} \cap \gen{a_i} = \set{1_G}$ for all $i$ because $a_i$ is the image of $a$ by an automorphism of $\Z^d$, so if $b^k = a_i^n$ for some $k, n \in \Z$, then $k | n$ and $b \in \gen{a_i}$, contradicting the choice of $b$.
\end{proof}

\subsection{Hyperbolic groups}
\label{sec:hyperbolic-groups}

We recall some facts about finite automata and hyperbolic groups.
Given a finite alphabet $S$, we denote by $S^*$ the set of all finite words on $S$; for a word $w \in S^*$, we denote its length by $|w|$.

\begin{defn}
  \label{df:automaton}
  A \df{finite automaton} is a tuple $\mathcal{A}=(\Sigma, S, \mu, e, P)$, where $\Sigma$ is a finite set of \df{states}, $S$ is a finite alphabet, $\mu \colon \Sigma \times S \to \Sigma$ is the \emph{transition function}, $e\in A$ is the \emph{initial state}, and $P \subseteq \Sigma$ is the set of \emph{accepting states}.

  The transition function $\mu$ extends to a map $\Sigma \times S^* \to \Sigma$, which we still denote by $\mu$, defined by induction as follows:
  \begin{equation*}
    \mu(p, \emptyset) = p; \quad \mu(p, ws) = \mu \bigl(\mu(p, w), s \bigr), \text{ for } p \in \Sigma, w \in S^*, s \in S.
  \end{equation*}
  A word $w \in S^*$ is \emph{accepted} by $\mathcal{A}$ if $\mu(e, w)\in P$.
  A language $L \sub S^*$ is called \df{regular} if it is the set of words accepted by some finite automaton.
\end{defn}

\begin{defn}
  Let $(S, \leq)$ be a finite linearly ordered set. The \emph{shortlex order} $\preceq$ on $S^*$ is defined by setting $u \preceq v$ iff $|u| < |v|$ or $|u| = |v|$ and $u$ is smaller than $v$ for the lexicographic order on $S^{|u|}$ associated to $\leq$.
\end{defn}

Note that, as a linear order, $(S^*, \preceq)$ is isomorphic to $(\N, \leq)$.

Let $G$ be a finitely generated group and fix a finite, symmetric generating set $S \sub G$ which does not contain $1_G$.
There is a natural surjective \emph{evaluation map} $\pi \colon S^* \to G$, mapping the word $s_1 \cdots s_n \in S^*$ to the product $s_1 \cdots s_n \in G$ (and $\emptyset$ to $1_G$).
To avoid clutter, we will often suppress the map $\pi$ from the notation.
For $g \in G$, we denote by $|g|$ the shortest length of a word $w \in S^*$ such that $\pi(w) = g$. In particular, $|1_G|=0$ and $|g^{-1}| = |g|$ for every $g \in G$.
We view $G$ as a metric space endowed with the left-invariant word metric $d$ given by $d(g, h) = |g^{-1}h|$.

Let now $S$ be endowed with some arbitrary fixed linear order.
Given $g \in G$, there are in general many words from $S^*$ that evaluate to $g$; however, there is always a least one with respect to the shortlex order, which we denote by $\overline{g}$. Notice that for every $g\in G$, $|\overline{g}|=|g|$, so the word $\overline{g}$ represents a shortlex minimal geodesic connecting $1_G$ and $g$ in the Cayley graph of $G$.
We will say that a word $w \in S^*$ is \df{shortlex} if $w = \overline{g}$ for some $g \in G$.
It follows from the definition that any subword of a shortlex word is also shortlex.

The triple $(G, S, \leq)$, where $G$ is a group, $S$ is a finite, symmetric generating set for $G$, and $\leq$ is a linear order on $S$, is called \df{shortlex automatic} if the language $\set{\overline{g} : g \in G}$ is regular.

Recall that the finitely generated group $G$ is \emph{hyperbolic} if there exists a \df{hyperbolicity constant} $\delta \ge 0$ such that each side of any geodesic triangle in $G$ is in the $\delta$-neighborhood of the union of the other two sides.
The property of being hyperbolic does not depend on the generating set $S$; we refer to \cite[Chapter III.$\Gamma$]{BriHaebook} for an introduction to hyperbolic groups.
Typical examples are free groups as well as the fundamental groups of surfaces of genus at least $2$ and many $3$-manifolds.
By \cite{Can} (see also \cite{HRbook}*{Section 5.8} for a modern treatment), $(G,S,\leq)$ is shortlex automatic whenever $G$ is hyperbolic, for any generating set $S$ and any linear order $\leq$ on $S$.

\begin{defn}
Let $G$ be a hyperbolic group generated by a finite, symmetric set $S$. A subgroup $H \le G$ is \emph{quasi-convex} if there exists a constant $K$ such that for every $h \in H$, the geodesic represented by $\overline{h}$ is in the $K$-neighborhood of $H$. This property does not depend on the choice of $S$.
\end{defn}

The following generalizes \cite{Dahmani2008}*{Proposition~4.1}, but the proof is different.
\begin{prop}
  \label{p:SSP-quasi-convex}
  Let $G$ be a hyperbolic group and let $H \leq G$ be a quasi-convex subgroup. Then $G$ has the special symbol property relative to $H$.
\end{prop}

To prove \autoref{p:SSP-quasi-convex}, we will require some preparation. We fix a finite, symmetric, generating set $S$ for $G$ and a linear order on $S$.
By \cite{GS}*{Theorem 2.2} (see also \cite{HRbook}*{Section 8.1}), the language $\{\overline{h} : h \in H\}$ is regular.
So we fix automata $\mathcal{A} = (\Sigma, S, \mu, e, P)$ and $\mathcal{A}' = (\Sigma', S, \mu', e', P')$ which accept the languages $\{\overline{g} : g \in G\}$ and $\{\overline{h} : h \in H\}$, respectively. It follows from the properties of shortlex words that $e \in P$ and for all $p, q \in P$, if $\mu(p, w) = q$, then $w$ is shortlex.

Set $B = S \times P \times \Sigma'$ and $C = B \cup \{*\}$, where $* = (\dag, e, e')$ and $\dag$ is a new symbol which does not belong to $S$.
For $c\in C$, we write $c=(c_1, c_2, c_3)$ with $c_1 \in S \cup \set{\dag}$, $c_2 \in P$ and $c_3 \in \Sigma'$.

In order to prove the special symbol property, we define a point $x \in C^G$ such that $\cl{G \cdot x} \sub C^G$ is an SFT of which $\cl{G \cdot \chi_H}$ is a factor.
The word $x$ has the property that for every $g \in G$, the sequence $\bigl(x(\overline{g}|_i) \bigr)_{i < |g|}$ records the runs of the automata $\cA$ and $\cA'$ on the word $\overline{g}$.
More formally, set $x(1_G) = *$ and for $g\in G \setminus \{1_G\}$ such that $\overline{g} = ws$ with $w \in S^*, s \in S$, define
\begin{equation}
  \begin{split}
    \label{eq:xg-local-rules}
    x(g)_1 &= s^{-1}, \\
    x(g)_2 &= \mu\bigl(x(w)_2, s\bigr), \\
    x(g)_3 &= \mu'\bigl(x(w)_3, s\bigr).
  \end{split}
\end{equation}
It follows that $x(g)_2 = \mu(e, \overline{g})$ and $x(g)_3 = \mu'(e', \overline{g})$ for all $g \in G$.
Note also that $x(g)_2 \in P$ for all $g$ because $\overline{g}$ is a word accepted by $\mathcal{A}$.

\begin{lemma}
  \label{l:Gx-SFT}
  The subshift $\cl{G \cdot x}$ is an SFT.
\end{lemma}
\begin{proof}
Let $\delta \geq 1$ be a hyperbolicity constant for $(G, S)$ and let
\[
  D = \{g\in G : |g| \leq \delta \}.
\]
In particular, $S \cup \set{1_G} \sub D$. We denote by $X \subseteq C^G$ the SFT defined by the $D$-patterns that appear in $x$. In other words, the finite set of forbidden patterns for $X$ is $C^D\setminus \{(g \cdot x) |_D\colon g\in G\}$.

Our goal is to show that $X = \cl{G \cdot x}$.
The inclusion $\supseteq$ being clear, we prove the other. We start with two claims that allow us to make calculations along shortlex geodesics in elements of $X$.
\begin{claim}
  \label{cl:Gx-SFT-back}
  Let $y\in X$, $g\in G$, and let $s = y(g)_1 \in S$. Then:
  \begin{equation*}
    y(g)_2 = \mu \bigl(y(gs)_2, s^{-1} \bigr) \quad \And \quad y(g)_3 = \mu' \bigl(y(gs)_3, s^{-1} \bigr).
  \end{equation*}
\end{claim}
\begin{proof}
  By the definition of $X$, there exists $g'\in G \setminus \set{1_G}$ such that $y(g) = x(g')$ and $y(gs) = x(g's)$. Denoting $\overline{g'} = wt$ with $w \in S^*, t \in S$, we observe that $s = y(g)_1 = x(g')_1 = t^{-1}$. We also have $\mu \bigl(x(w)_2, t \bigr) = x(g')_2$. Since
  \[
    y(gs)_2 = x(g's)_2 = x(g't^{-1})_2 = x(w)_2,
  \]
  we get $y(g)_2 = \mu \bigl(y(gs)_2,s^{-1} \bigr)$. The argument for $y(g)_3$ is analogous.
\end{proof}

\begin{claim}
  \label{cl:Gx-SFT-forw}
  Let $y \in X, g \in G, s \in S$, and suppose that $\mu \bigl(y(g)_2, s \bigr) \in P$. Then:
  \begin{equation*}
    y(gs)_1 = s^{-1}, \quad y(gs)_2 = \mu \bigl(y(g)_2, s \bigr), \quad y(gs)_3 = \mu' \bigl(y(g)_3, s \bigr).
  \end{equation*}
\end{claim}
\begin{proof}
  Choose $g'\in G$ such that $y(g) = x(g')$ and $y(gs) = x(g's)$. Since $\mu \bigl(y(g)_2, s \bigr) \in P$ and $x(g')_2 = y(g)_2$, the word $\overline{g'} s$ is shortlex, so $\overline{g's} = \overline{g'} s$. Thus, by the definition of $x$, $x(g's)_1 = s^{-1}$, $x(g's)_2 = \mu \bigl(y(g)_2, s \bigr)$, and $x(g's)_3 = \mu' \bigl(y(g)_3, s \bigr)$. Since $y(gs) = x(g's)$, the claim follows.
\end{proof}

Let now $y\in X$ in order to show that $y \in \cl{G \cdot x}$.
Suppose first that there is $g \in G$ such that $y(g) = *$. Then it is enough to consider the case $g = 1_G$ and by applying \autoref{cl:Gx-SFT-back} and \autoref{eq:xg-local-rules} inductively, we conclude that $y = x$ in this case.

So suppose that the symbol $*$ does not occur in $y$ and let $F = \set{g \in G : |g| \leq k}$ for some $k \in \N$.
Our goal is to find the pattern $y|_F$ in some translate of $x$.
The word $x$ is set up in such a way that when $y \in \cl{G \cdot x}$ and $*$ does not occur in $y$, we may think of $*$ as pushed to the boundary of $G$ along a geodesic.
To see this, we define inductively an infinite word $\alpha \in S^\N$ and the corresponding ray $\gamma \in G^\N$.
Set $\gamma_{0} = 1_G$. Assuming that $\gamma_{i}$ has been defined, we set $\alpha_{i} = y(\gamma_{i})_1$ and $\gamma_{i+1} = \gamma_{i} \alpha_{i}$.
By \autoref{cl:Gx-SFT-back}, $\mu \bigl(y(\gamma_{i+1})_2, \alpha_{i}^{-1} \bigr) = y(\gamma_{i})_2$ for all $i$.
It follows that $\mu \bigl(y(\gamma_{i+1})_2, \alpha_{i}^{-1} \cdots \alpha_0^{-1} \bigr) = y(1_G)_2$.
As $y(\gamma_{i+1})_2, y(1_G)_2 \in P$, the word $\alpha_{i}^{-1} \cdots \alpha_0^{-1}$ is shortlex.
In particular, $|\gamma_{i+1}^{-1}| = i + 1$, and hence also $|\gamma_{i+1}| = i + 1$. We conclude that the ray $\gamma$ is geodesic.

Let $n \geq k + \delta + 1$.
By the definition of $X$, there exists $g_0 \in G$ such that $y(\gamma_n d)=x(g_0 d)$ for all $d \in D$.
Let $\phi \colon G \to G$ be the isometry defined by $\phi(g) = g_0 \gamma_n^{-1} g$ for $g \in G$.
Let $F' = \phi[F]$ and note that $\phi[\gamma_nD] = g_0D$.

We prove by induction, going backwards from $n$ to $0$, that
\begin{equation}
  \label{eq:l:gs:1}
  y(\gamma_i) = x(\phi(\gamma_i)) \quad \text{for all } i = n, \dots, 0.
\end{equation}
For $i = n$, this is true by the choice of $g_0$ (as $1_G \in D$).
Suppose it is true for $i+1$ in order to prove it for $i$.
By the definition of $\alpha$, $y(\gamma_i)_1 = \alpha_i$.
Then by \autoref{cl:Gx-SFT-back}, $y(\gamma_i)_2 = \mu(y(\gamma_{i+1})_2, \alpha_i^{-1})$, and similarly, for $y(\gamma_i)_3$.

In particular, setting $\ell = \phi(1_G)$, we get $x(\ell) = y(1_G)$.
Consider the shortlex geodesic $\overline{\ell}$ connecting $1_G$ and $\ell$.
Let $m = |\ell|$, define $t_0 = x(\ell)_1$ and then, inductively, $t_{i+1} = x(\ell t_0 \cdots t_i)_1$ for all $i < m$.
By the definition of the automaton $\mathcal{A}$ and $x$, $\overline{\ell} = t_{m-1}^{-1}\ldots t_0^{-1}$.

We prove by induction that $\phi(\gamma_i) = \ell t_0 \cdots t_{i-1}$ and $t_i = \alpha_i$ for all $i < n$.
For $i = 0$, we have that $\phi(\gamma_0) = \phi(1_G) = \ell$ and $t_0 = x(\ell)_1 = y(1_G)_1 = \alpha_0$.
Suppose it is true for $i$ in order to prove it for $i+1$.
Then
\begin{equation*}
  \phi(\gamma_{i+1}) = \phi(\gamma_i \alpha_i) = \phi(\gamma_i)\alpha_i = \ell t_0 \cdots t_{i-1} \alpha_i = \ell t_0 \cdots t_i,
\end{equation*}
and $t_{i+1} = x(\ell t_0 \cdots t_i)_1 = x(\phi(\gamma_i)) = y(\gamma_i)$, by \autoref{eq:l:gs:1}.
In particular, $g_0 = \phi(\gamma_n)$ lies on the geodesic $\overline{\ell}$.

Let $f\in F$ be arbitrary and set $f' = \phi(f)$.
Consider the geodesic triangle with vertices $1_G$, $\ell$, and $f'$, where the geodesic between $1_G$ and $\ell$ is the shortlex word $\overline{\ell}$, the one between $1_G$ and $f'$ is the shortlex word $\overline{f'}$, and the one between $\ell$ and $f'$ is the image by $\phi$ of the shortlex geodesic between $1_G$ and $f$.
In particular, the latter one is contained in $\phi[F]$.
Note that, by the triangle inequality, $d(g_0, \phi[F]) = d(\gamma_n, F) \ge n - k \geq \delta + 1$.
It follows from the $\delta$-hyperbolicity of $G$ that $g_0$ must be $\delta$-close to some $h'$ lying on the geodesic $\overline{f'}$.

Pick such an $h'$.
By the definition of $D$ and the word metric, we have $h' \in g_0D$.
Let $h = \phi^{-1}(h')$.
The word $w \coloneq \overline{(h')^{-1}f'}$ is a shortlex geodesic connecting $h'$ to $f'$ as well as $h$ to $f$.
Denoting $w = s_1 \cdots s_m$, we prove by induction on $i$ that
\begin{equation}
  \label{eq:l:gs:2}
  y(h s_1 \cdots s_i) = x(h' s_1 \cdots s_i) \quad \text{for all } i \leq m.
\end{equation}
For $i = 0$, note that $h \in \gamma_nD$ (because $h' \in g_0D$), so, by the choice of $g_0$, $y(h) = x(h')$.
Suppose that \autoref{eq:l:gs:2} has been proved for $i$ in order to prove it for $i+1$.
We have that $\mu \bigl(y(h s_1 \cdots s_i)_2, s_{i+1} \bigr) = \mu \bigl(x(h' s_1 \cdots s_i)_2, s_{i+1} \bigr) \in P$.
So, by \autoref{cl:Gx-SFT-forw}, $y(h s_1 \cdots s_{i+1})_1 = s_{i+1}^{-1} = x(h' s_1 \cdots s_{i+1})_1$,
\begin{equation*}
  \begin{split}
    y(h s_1 \cdots s_{i+1})_2 & = \mu \bigl(y(h s_1 \cdots s_i)_2, s_{i+1} \bigr) \\
                      & = \mu \bigl(x(h' s_1 \cdots s_i)_2, s_{i+1} \bigr) = x(h' s_1 \cdots s_{i+1}),
  \end{split}
\end{equation*}
and similarly for $y(h s_1 \cdots s_{i+1})_3$.
Applying \autoref{eq:l:gs:2} for $i = m$, we conclude that $y(f)= x(g_0\gamma_n^{-1}f)$. As $f \in F$ was arbitrary, this completes the proof.
\end{proof}

\begin{proof}[Proof of \autoref{p:SSP-quasi-convex}]
  By definition of the automaton $\mathcal{A}'$, for every $g \in G$, we have $g \in H \iff x(g)_3 \in P'$. Therefore, the map $\phi \colon C \to \{0,1\}$ defined by $\phi(c) = 1 \iff c_3\in P'$, induces a continuous equivariant map $\Phi \colon C^G \to \{0,1\}^G$ such that $\Phi(x) = \chi_H$. In particular, $\Phi[\cl{G \cdot x}] = \cl{G \cdot \chi_H}$.
  By \autoref{l:Gx-SFT}, $\cl{G \cdot x}$ is an SFT and therefore $\cl{G \cdot \chi_H}$ is sofic.
\end{proof}

We recall that a subgroup of a hyperbolic group $G$ is called \df{elementary} if it is virtually cyclic.
It is \df{maximal elementary} if it is not properly contained in any other elementary subgroup.
The following is well-known and we include its proof for completeness.
\begin{lemma}
  \label{l:hyperbolic-non-conj-cyclic}
  Let $G$ be a non-elementary hyperbolic group. Then there exist two maximal elementary subgroups $H_1, H_2 \leq G$ such that $g H_1 g^{-1} \cap H_2$ is finite for every $g \in G$.
\end{lemma}
\begin{proof}
  It follows from \cite[Corollary~8.2.G]{Gromov87} that there exist maximal elementary subgroups of $G$ which are not conjugate.
  Let $H_1$ and $H_2$ be two such subgroups.
  Suppose, towards a contradiction, that $gH_1g^{-1} \cap H_2$ is infinite for some $g \in G$.
  Then the intersection contains an element $\gamma$ of infinite order and by \cite[Corollary 8.2.C]{Gromov87}, $\gamma$ is contained in a unique maximal elementary subgroup.
  This implies that $g H_1 g^{-1} = H_2$, contradicting the fact that $H_1$ and $H_2$ are not conjugate.
\end{proof}

\begin{theorem}
  \label{th:hyperbolic-groups}
  Let $G$ be an infinite hyperbolic group. Then there is a dense conjugacy class in $\Xi_\Tr(G)$ iff $G$ is virtually cyclic.
\end{theorem}
\begin{proof}
  We only have to prove that if $G$ is not virtually cyclic, then there is no dense conjugacy class in $\Xi_\Tr(G)$.
  Apply \autoref{l:hyperbolic-non-conj-cyclic} to find $H_1, H_2 \leq G$ maximal elementary such that $gH_1g^{-1} \cap H_2$ is finite for all $g \in G$.
  If $\gamma_i \in H_i$ is of infinite order, then the centralizer of $\gamma_i$ is contained in $H_i$ and has finite index therein; it follows from \cite[Proposition 3.9]{BriHaebook} and the fact that quasi-convexity is preserved under finite extensions that
  $H_1$ and $H_2$ are quasi-convex in $G$.
  By \autoref{p:SSP-quasi-convex}, $G$ has the special symbol property relative to both $H_1$ and $H_2$.
  Now we can conclude by \autoref{p:no-dense-class-2-subgroups}.
\end{proof}


\appendix

\section{Topological Fraïssé classes}
\label{sec:append-g_delta-isom}

The classical Fraïssé theorem ensures that every countable amalgamation class $\cF$ has a \df{Fraïssé limit}, i.e., a homogeneous structure with age $\cF$. It is well known that the isomorphism class of this Fraïssé limit is comeager in the space of all countable structures with age included in $\cF$ (in an appropriate Polish topology). Here, we establish a criterion for the existence of a comeager isomorphism class in the case where $\cF$ is no longer countable but is equipped with a natural Polish topology. The theorem below is inspired by classical results about model-theoretic forcing and atomic models, however, it differs in two important points. First, we do not assume compactness, and second, we work exclusively with quantifier-free types. The lack of compactness assumption is important because the spaces of minimal subshifts, to which we apply the theorem, are not compact. A version of this criterion for general Cantor actions was proved in \cite{Doucha2022p}.

We quickly recall some definitions from model theory. A \df{language} $\cL$ is a collection of function and relation symbols, together with their arities. An $\cL$-structure is a set equipped with interpretations for the $\cL$-symbols. A \df{term} is an expression using variables and function symbols. An \df{atomic formula} $\phi$ is an expression of the form $R(t_0, \ldots, t_{k-1})$, where $R$ is a relation symbol of arity $k$ and $t_0, \ldots, t_{k-1}$ are terms. A \df{quantifier-free formula} (or simply a \df{formula} in the sequel) is a Boolean combination of atomic formulas. If $\phi$ is a formula and $x$ is a set of variables, we will write $\phi(x)$ to mean that all variables that appear in $\phi$ are in $x$.
Two formulas $\phi_1(x)$ and $\phi_2(x)$ are \df{equivalent} if for all $\cL$-structures $M$ and tuples $a \in M^x$, $M \models \phi_1(a) \iff M \models \phi_2(a)$. Formulas in the variables $x$ modulo equivalence form a Boolean algebra and the compact space of ultrafilters of this algebra, denoted by $\tS_x(\cL)$, is called the space of \df{quantifier-free $\cL$-types} (which we will call simply \df{types} in what follows). A type in $\tS_x(\cL)$ can be viewed as an isomorphism type of an $\cL$-structure with generators $x$. A type $p \in \tS_x(\cL)$ \df{satisfies} a formula $\phi(x)$ (in symbols: $p \models \phi$) if $\phi \in p$. A clopen basis of the topology on $\tS_x(\cL)$ is given by sets of the form
\begin{equation*}
  \oset{\phi} = \set{p \in \tS_x(\cL) : p \models \phi}.
\end{equation*}
If $M$ is an $\cL$-structure and $a \in M^x$, the \df{type of $a$}, denoted by $\tp a$, is the element of $\tS_x(\cL)$ defined by
\begin{equation*}
  \tp a \models \phi \iff M \models \phi(a), \quad \text{ for all formulas } \phi(x).
\end{equation*}

If $\cL$ is countable, the type spaces are Polish. If $t(x)$ is a $y$-tuple of terms, then there is a natural projection $\pi_{t} \colon \tS_{x}(\cL) \to \tS_y(\cL)$ defined by
\begin{equation*}
  \pi_t(p) \models \phi(y) \iff p \models \phi(t(x)).
\end{equation*}
We will use this most often when $y \sub x$ and $t$ is the inclusion map, and then we will denote the projection by $\pi_y$.

Let $\cL$ be a countable language. A \df{hereditary $\cL$-class} $\cF$ is a collection of non-empty subsets $\tS_x(\cF) \sub \tS_x(\cL)$, for all finite $x$, such that for all $x$, $y$, all $y$-tuples of terms $t(x)$, and all $p \in \tS_x(\cF)$, we have that $\pi_{t}(p) \in \tS_y(\cF)$. If $u$ is a countable set of variables, we will denote by $\tS_u(\cF)$ the collection of types $p \in \tS_u(\cL)$ such that $\pi_x(p) \in \tS_x(\cF)$ for all finite $x \sub u$.
A \df{model of $\cF$} is an $\cL$-structure $M$ such that $\tp a \in \tS_x(\cF)$ for all finite $x$ and $a \in M^x$. Every type $p \in \tS_x(\cF)$ defines a structure generated by $x$, which is a model of $\cF$. In the case where the sets $\tS_x(\cF)$ are Borel (which is the case which will interest us), a hereditary class is given by a universal $L_{\omega_1 \omega}(\cL)$-theory. For a formula $\phi(x)$, we will denote $\oset{\phi}_\cF \coloneqq \oset{\phi} \cap \tS_x(\cF)$.
A formula $\phi(x)$ is \df{consistent with $\cF$} if $\oset{\phi}_\cF$ is non-empty.

\begin{defn}
  \label{df:top-Fraisse}
  Let $\cL$ be a countable language and let $\cF$ be a hereditary $\cL$-class. We say that $\cF$ is a \df{topological Fraïssé class} if the following conditions are satisfied:
  \begin{enumerate}
  \item \label{i:tF:amalg} $\cF$ has the \df{amalgamation property}: for all disjoint sets of variables $x, y_1, y_2$, and all $p \in \tS_x(\cF)$, $q_1 \in \tS_{xy_1}(\cF), q_2 \in \tS_{xy_2}(\cF) $ with $\pi_x(q_1) = \pi_x(q_2) = p$, there exists $s \in \tS_{xy_1y_2}(\cF)$ with $\pi_{xy_1}(s) = q_1$, $\pi_{xy_2}(s) = q_2$. (Here, as usual, $xy_1$ is a shortcut for $x \cup y_1$, etc.)

  \item \label{i:tF:Gdelta} For all finite $x$, $\tS_x(\cF)$ is a $G_\delta$ subset of $\tS_x(\cL)$ (and therefore a Polish space).

  \item \label{i:tF:cond-pi} For all finite $x \sub y$ and all open $U \sub \tS_{y}(\cF)$, the set $\pi_x[U]$ is $F_\sigma$ in $\tS_x(\cF)$.
  \end{enumerate}
\end{defn}

\begin{remark}
  \label{rem:top-Fraisse}
  In condition \autoref{i:tF:amalg} above, we allow $x = \emptyset$, so the amalgamation property includes the joint embedding property. For a classical Fraïssé class, one also requires that each $\tS_x(\cF)$ is countable. Each classical Fraïssé class can be made into a topological one by modifying the language, so that each $\tS_x(\cF)$ becomes discrete. Finally, condition \autoref{i:tF:cond-pi} is automatically satisfied if $\pi_x$ is open or closed, but also in other situations. In particular, if $\cF$ is given by a universal, first-order theory (i.e., a collection of finitary forbidden configurations) then $\tS_x(\cF)$ is compact and the only remaining condition is amalgamation.
\end{remark}

The notion of an existentially closed model is inspired from algebra and is key in Robinson's theory of model-theoretic forcing.
\begin{defn}
  \label{df:exist-closed}
  Let $\cF$ be a topological Fraïssé class. A model $M \models \cF$ is \df{existentially closed} if for all formulas
  $\phi(x, y)$ and $a \in M^x$, if there exist $N \supseteq M$, $N \models \cF$, and $b \in N^y$ such that $N \models \phi(a, b)$, then there exists $b' \in M^y$ such that $M \models \phi(a, b')$.
\end{defn}

The following proposition summarizes some of the basic properties of existentially closed models.
\begin{prop}
  \label{p:ec-models}
  Let $\cF$ be a topological Fraïssé class. Then:
  \begin{enumerate}
  \item \label{i:p:ec:model} Let $M \models \cF$, $a \in M^x$, and $q \in \tS_{xy}(\cF)$ be such that $a \models \pi_x(q)$. Then there exist $N \supseteq M$, $N \models \cF$, and $b \in N^y$ such that $(a, b) \models q$.
  \item \label{i:p:ec:char} A model $M$ is existentially closed iff for every formula $\phi(x, y)$, for every $a \in M^x$, and every $q \in \tS_{xy}(\cF)$ such that $\pi_x(q) = \tp(a)$ and $q \models \phi$, there exists $b \in M^y$ satisfying $M \models \phi(a, b)$.
  \item \label{i:p:ec:extension} Every model of $\cF$ embeds into an existentially closed one.
  \end{enumerate}
\end{prop}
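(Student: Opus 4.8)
The three statements build on one another and I would establish them in order, the crux being \autoref{i:p:ec:model}. That statement is an amalgamation claim: regarding $M$ as generated by a (possibly transfinite) tuple $u$ with $x$ an initial segment, it says that $r \coloneqq \tp^M(u) \in \tS_u(\cF)$ can be amalgamated with the finite type $q$ over their common restriction $\pi_x(q) = \tp(a)$. I would present the argument for a countable generating set $u = (u_i)_{i<\omega}$ (the general case being the same transfinite recursion, taking unions at limit stages). Writing $u_{\le n} = x \cup \{u_i : i<n\}$, I would build by recursion a \emph{coherent} sequence $s_n \in \tS_{u_{\le n}\cup y}(\cF)$ with $\pi_{u_{\le n}}(s_n) = \pi_{u_{\le n}}(r)$, $\pi_{xy}(s_n) = q$, and $\pi_{u_{\le n}\cup y}(s_{n+1}) = s_n$. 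The base case and the successor step are both instances of the finite amalgamation property \autoref{i:tF:amalg} — in the successor step one amalgamates $s_n$ with $\pi_{u_{\le n+1}}(r)$ over their common restriction $\pi_{u_{\le n}}(r)$ — and the key point is that the restriction maps between the relevant nonempty sets of types are \emph{onto}, so dependent choice supplies the coherent chain. The $s_n$ then determine a finitely satisfiable set of formulas in the variables $u\cup y$, hence extend to some $s \in \tS_{u\cup y}(\cL)$; since every finite subset of $u\cup y$ sits inside some $u_{\le n}\cup y$, heredity forces every finite projection of $s$ into $\cF$, so $s \in \tS_{u\cup y}(\cF)$ with $\pi_u(s) = r$ and $\pi_{xy}(s) = q$. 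A structure realizing $s$ contains a copy of $M$ (its sub-structure on the $u$-part has type $r = \tp^M(u)$, hence is isomorphic to $M$ fixing $u$) together with a tuple $b$ realizing $q$ over $a$; after the obvious identification this is the desired $N \supseteq M$.

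Part \autoref{i:p:ec:char} is then purely formal. For the forward direction, given $\phi(x,y)$, $a \in M^x$, and $q \in \tS_{xy}(\cF)$ with $\pi_x(q) = \tp(a)$ and $q\models\phi$, I would apply \autoref{i:p:ec:model} to obtain $N \supseteq M$ and $b \in N^y$ with $(a,b)\models q$, so $N \models \phi(a,b)$, and then existential closedness yields $b' \in M^y$ with $M \models \phi(a,b')$. For the converse, given $\phi(x,y)$, $a \in M^x$ and $N \supseteq M$ with $b \in N^y$ such that $N \models \phi(a,b)$, I would set $q \coloneqq \tp^N(a,b) \in \tS_{xy}(\cF)$; because types are quantifier-free and $M$ is a sub-structure of $N$, $\pi_x(q) = \tp^N(a) = \tp^M(a) = \tp(a)$, and $q \models \phi$, so the stated property produces $b' \in M^y$ with $M \models \phi(a,b')$, establishing that $M$ is existentially closed.

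Part \autoref{i:p:ec:extension} is the standard union-of-chain construction. Starting from $M_0 = M$, I would build an increasing chain $M_0 \subseteq M_1 \subseteq \cdots$ of models of $\cF$ such that $M_{n+1}$ witnesses every potential one-step extension over $M_n$: for each formula $\phi(x,y)$ and each $a \in M_n^x$ for which there exists $q \in \tS_{xy}(\cF)$ with $\pi_x(q) = \tp(a)$ and $q \models \phi$, fix one such $q$ and use \autoref{i:p:ec:model} to enlarge the current model so as to realize it; performing this successively over the relevant pairs (countably many when $M_n$ is countable) and taking a union yields $M_{n+1}$. Since membership in $\cF$ is checked on finitely generated sub-structures, a union of a chain of models of $\cF$ is again one, so $M_\omega = \bigcup_n M_n$ is a model; and it is existentially closed by \autoref{i:p:ec:char}, because any $a$ occurring in $M_\omega$ lies in some $M_n$, and any configuration that can be extended over $M_\omega$ can be extended over $M_n$ and was therefore already realized in $M_{n+1} \subseteq M_\omega$. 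For uncountable $M$ one replaces the length-$\omega$ chain by one whose length and the bookkeeping at each stage are controlled by $|M| + \aleph_0$.

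The only non-routine ingredient is the step in \autoref{i:p:ec:model} passing from the \emph{finite} amalgamation of \autoref{df:top-Fraisse} to amalgamation with an infinitely generated model: the surjectivity of the restriction maps in the successor step — which is precisely what finite amalgamation delivers — is what allows the dependent-choice argument to go through \emph{without} any compactness of $\tS_u(\cF)$, which can genuinely fail. I note that conditions \autoref{i:tF:Gdelta} and \autoref{i:tF:cond-pi} of \autoref{df:top-Fraisse} play no role in this proposition; only amalgamation, heredity, and the compactness of the finitary spaces $\tS_z(\cL)$ are used.
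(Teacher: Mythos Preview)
Your proposal is correct and follows essentially the same approach as the paper, which proves \autoref{i:p:ec:model} by ``repeated amalgamation and the fact that models of $\cF$ are closed under unions of chains'' and dismisses \autoref{i:p:ec:char} and \autoref{i:p:ec:extension} as formal and standard, respectively. You have simply spelled out in detail the coherent-chain-of-types argument and the union-of-chains construction that the paper's terse proof leaves implicit; your closing remark that only heredity and amalgamation (not conditions \autoref{i:tF:Gdelta} or \autoref{i:tF:cond-pi}) are used here is also accurate.
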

\begin{proof}
  \autoref{i:p:ec:model} This follows from repeated amalgamation and the fact that models of $\cF$ are closed under unions of chains.

  \autoref{i:p:ec:char} Suppose that $M$ is existentially closed and let $a, q$, and $\phi$ be given. By \autoref{i:p:ec:model}, there is a model $N \supseteq M$ and $b \in N^y$ with $(a, b) \models q$. Now by the fact that $M$ is existentially closed, we have that there is $b' \in M^y$ with $M \models \phi(a, b')$. The other direction is obvious.

  \autoref{i:p:ec:extension} This follows by a standard argument from \autoref{i:p:ec:model}, \autoref{i:p:ec:char}, and taking unions of chains.
\end{proof}

Next we turn to different ways to code models of $\cF$ as elements of a Polish space. Let $u$ be a countable set of variables.
Define
\begin{multline}
  \label{eq:defn-Xi0}
  \Xi_0(\cL) = \set{\xi \in \tS_u(\cL) : \text{ for every finite } x \sub u \text{ and function symbol } F(x), \\
      \text{there exists } v \in u \  \xi \models F(x) = v }.
\end{multline}
Every $\xi \in \Xi_0(\cL)$ defines an $\cL$-structure as follows. Let $a \in M^u$ be a realization of $\xi$ (where $M$ is arbitrary). Then $\set{a(v) : v \in u}$ is a substructure of $M$ whose isomorphism class depends only on $\xi$ and which we denote by $M_\xi$. We let
\begin{equation*}
  \Xi_0(\cF) = \Xi_0(\cL) \cap \tS_u(\cF),
\end{equation*}
so that each element of $\Xi_0(\cF)$ codes a model of $\cF$. We note that if $\cF$ is a topological Fraïssé class, then $\Xi_0(\cF)$ is a $G_\delta$ subset of $\tS_u(\cL)$ and therefore a Polish space. In an abstract setting, the elements of $\Xi_0(\cF)$ are perhaps the most natural way to code models of $\cF$. However, in concrete situations, sometimes different codings are desirable. The framework we propose below is quite flexible and includes most codings that appear in the literature.

\begin{defn}
  \label{df:admissible-coding}
  Let $\cF$ be a topological Fraïssé class.
  We say that a set $\Xi \sub \Xi_0(\cF)$ is an \df{admissible coding for models of $\cF$} if the following conditions are satisfied:
  \begin{enumerate}
  \item \label{i:adm-cod:Gdelta} $\Xi$ is a non-empty $G_\delta$ subset of $\Xi_0(\cF)$.
  \item \label{i:adm-cod:proj-open} For every finite $x \sub u$, $\pi_x[\Xi]$ is an open subset of $\tS_x(\cF)$.
  \item \label{i:adm-cod:ec} If $x \sub u$ is finite, $y$ is a finite set of variables, and $\phi(x)$ and $\psi(x, y)$ are formulas such that $\oset{\phi}_\cF \sub \pi_x[\Xi]$ and $\phi(x) \land \psi(x, y)$ is consistent, then there is $\xi \in \Xi$ and $y' \sub u$ with $|y'| = |y|$ such that $\xi \models \phi(x) \land \psi(x, y')$.
  \end{enumerate}
\end{defn}

The following is clear.
\begin{lemma}
  \label{l:full-coding-admissible}
  Let $\cF$ be a topological Fraïssé class. Then $\Xi = \Xi_0(\cF)$ is an admissible coding.
\end{lemma}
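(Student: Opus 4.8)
The plan is to verify the three clauses of \autoref{df:admissible-coding} directly for $\Xi = \Xi_0(\cF)$; no amalgamation is needed, only an unwinding of definitions (which is why the statement is flagged as clear). The one fact I would isolate and reuse is that a countably generated substructure of a model of $\cF$ is again a model of $\cF$: if $M\models\cF$ and $a$ generates a substructure $M'$, then every finite tuple of $M'$ has the form $t(a)$ for a tuple of terms $t$, and its quantifier-free type --- which is the same computed in $M'$ or in $M$, since quantifier-free types are absolute between a structure and its substructures --- equals $\pi_t(\tp a)$, hence lies in $\cF$ by heredity. Combined with the countability of $\cL$ (so a structure generated by countably many elements is itself countable), this lets me code any such model $M'$ by an element of $\Xi_0(\cF)$: enumerate its elements by a surjection $u\to M'$ and take the type of the enumeration; surjectivity guarantees the coded structure is closed under all function symbols, so the type lands in $\Xi_0(\cL)\cap\tS_u(\cF)=\Xi_0(\cF)$.

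For clause \autoref{i:adm-cod:Gdelta}: $\Xi_0(\cF)$ is $G_\delta$ in $\tS_u(\cL)$ (as noted just before \autoref{df:admissible-coding}), hence a $G_\delta$ subset of itself. For non-emptiness I would pick a one-variable type $p\in\tS_{\{v\}}(\cF)$ --- non-empty because $\cF$ is a hereditary class --- realize it, pass to the substructure it generates (a countable model of $\cF$ by the remark above), and code it.

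For clause \autoref{i:adm-cod:proj-open} I would prove the stronger equality $\pi_x[\Xi_0(\cF)]=\tS_x(\cF)$ for every finite $x\sub u$, which is in particular open. The inclusion $\subseteq$ is immediate from the definition of $\tS_u(\cF)$. For $\supseteq$, given $p\in\tS_x(\cF)$, realize $p$ by a tuple $a$ in a model, pass to the countable model $M'$ generated by $a$, and --- using that $u\sminus x$ is infinite and $M'$ countable --- extend the assignment $x\mapsto a$ to a surjection $u\to M'$; its type lies in $\Xi_0(\cF)$ and projects to $p$.

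For clause \autoref{i:adm-cod:ec}, let $x\sub u$ and $y$ be finite and $\phi(x),\psi(x,y)$ as in the statement; the hypothesis $\oset{\phi}_\cF\sub\pi_x[\Xi]$ is automatic here by the previous clause. Since $\phi\land\psi$ is consistent with $\cF$, pick $q\in\tS_{xy}(\cF)$ with $q\models\phi\land\psi$, realize it by a tuple $(a,b)$ in a model, and let $M'$ be the countable model of $\cF$ generated by $a$ and $b$; by absoluteness of quantifier-free formulas, $M'\models\phi(a)\land\psi(a,b)$. Then choose $y'\sub u\sminus x$ with $|y'|=|y|$ together with a bijection $y\to y'$, and extend the assignment $x\mapsto a$, $y'\mapsto b$ (transported along that bijection) to a surjection $u\to M'$ using the remaining infinitely many variables of $u$; the type of this enumeration lies in $\Xi_0(\cF)=\Xi$ and satisfies $\phi(x)\land\psi(x,y')$. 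The only mildly delicate point throughout --- and the thing to be careful about --- is arranging the enumerations $u\to M'$ to be simultaneously surjective (needed for membership in $\Xi_0(\cL)$) and to pin down the finitely many distinguished coordinates; this is always possible since only finitely many coordinates are constrained and $u$ is infinite while $M'$ is countable.
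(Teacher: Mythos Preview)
Your proposal is correct and essentially matches the paper's treatment: the paper gives no proof at all, simply declaring the lemma ``clear,'' and your argument is precisely the routine unwinding of definitions that justifies this. The key observations you isolate---that substructures of models of $\cF$ are models of $\cF$ by heredity, and that any countable model can be coded via a surjective enumeration $u\to M'$ respecting finitely many prescribed coordinates---are exactly what is needed, and your verification of clauses \autoref{i:adm-cod:Gdelta}--\autoref{i:adm-cod:ec} is accurate.
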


For an admissible coding $\Xi$, we denote
\begin{equation*}
  \Xi_\ec \coloneqq \set{\xi \in \Xi : M_\xi \text{ is existentially closed}}.
\end{equation*}

Note that we do not require that the models coded by $\Xi$ meet every isomorphism class. However, it follows from the following proposition that there are many existentially closed models in $\Xi$.
\begin{prop}
  \label{p:ec-dense-Gdelta}
  Let $\cF$ be a topological Fraïssé class. If $\Xi$ is an admissible coding for models of $\cF$, then $\Xi_\ec$ is a dense $G_\delta$ subset of $\Xi$.
\end{prop}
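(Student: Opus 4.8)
The plan is to realize $\Xi_\ec$ as a countable intersection of dense $G_\delta$ subsets of $\Xi$. Fix the countable set of variables $u$ with $\Xi \sub \tS_u(\cL)$, and recall that for $\xi \in \Xi$ the model $M_\xi$ has underlying set $\set{a(v) : v \in u}$ for any realization $a$ of $\xi$, so that every tuple of $M_\xi$ is of the form $a\circ\bar v$ for some $\bar v \in u^x$. For a finite set of variables $x$, a disjoint finite set of variables $y$, a quantifier-free formula $\phi(x,y)$, and a tuple $\bar v \in u^x$, write $\xi|_{\bar v} \in \tS_x(\cF)$ for the reduct of $\xi$ obtained by substituting $\bar v$ for $x$, and set
\begin{equation*}
  D_{x,y,\phi,\bar v} = \set[\big]{\xi \in \Xi : \xi|_{\bar v} \notin \pi_x[\oset{\phi}_\cF]} \cup \set[\big]{\xi \in \Xi : \exists\, \bar w \in u^y \ \ \xi \models \phi(\bar v, \bar w)}.
\end{equation*}
Since $\cL$ and $u$ are countable and, up to relabeling, $x$ and $y$ range over a countable set, there are only countably many such quadruples, and \autoref{p:ec-models}~\autoref{i:p:ec:char} translates exactly into the identity $\Xi_\ec = \bigcap_{x,y,\phi,\bar v} D_{x,y,\phi,\bar v}$.

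Next I would check that each $D_{x,y,\phi,\bar v}$ is $G_\delta$. The second set in the union equals $\bigcup_{\bar w \in u^y}\big(\oset{\phi(\bar v,\bar w)}\cap\Xi\big)$, hence is open. For the first set, $\oset{\phi}_\cF$ is (cl)open in $\tS_{xy}(\cF)$, so by \autoref{df:top-Fraisse}~\autoref{i:tF:cond-pi} the image $\pi_x[\oset{\phi}_\cF]$ is $F_\sigma$ in $\tS_x(\cF)$; pulling it back along the continuous map $\xi\mapsto\xi|_{\bar v}$ makes $\set{\xi\in\Xi : \xi|_{\bar v}\in\pi_x[\oset{\phi}_\cF]}$ an $F_\sigma$ subset of $\Xi$, so its complement is $G_\delta$. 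Writing this complement as $\bigcap_n W_n$ with $W_n$ open and the open second set as $V$, we get $D_{x,y,\phi,\bar v}=\bigcap_n(V\cup W_n)$, a countable intersection of open sets. This is where the non-compactness of the type spaces bites: without condition~\autoref{i:tF:cond-pi} there is no reason for $D_{x,y,\phi,\bar v}$ to be $G_\delta$.

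The main work is to show each $D_{x,y,\phi,\bar v}$ is dense, and this is the step I expect to be the real obstacle. Let $O=\oset{\theta}\cap\Xi$ be a non-empty basic open set, with $\theta$ a formula whose variables form a finite $v_0\sub u$; enlarging $v_0$, assume the coordinates of $\bar v$ lie in $v_0$. If some $\xi\in O$ has $\xi|_{\bar v}\notin\pi_x[\oset{\phi}_\cF]$ we are done, so suppose not. Pick $\xi_*\in O$; then $\xi_*|_{v_0}\in\oset{\theta}_\cF\cap\pi_{v_0}[\Xi]$, which is open by \autoref{df:admissible-coding}~\autoref{i:adm-cod:proj-open}, so there is a formula $\theta'=\theta\land\psi$ with $\xi_*|_{v_0}\in\oset{\theta'}_\cF\sub\pi_{v_0}[\Xi]$. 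Take any $r\in\oset{\theta'}_\cF$; since $\oset{\theta'}_\cF\sub\pi_{v_0}[\Xi]$ we have $r=\eta|_{v_0}$ for some $\eta\in\Xi$, and $\eta\models\theta$, so $\eta\in O$; by our standing assumption $\eta|_{\bar v}=r|_{\bar v}\in\pi_x[\oset{\phi}_\cF]$, i.e.\ there is $q\in\oset{\phi}_\cF$ with $\pi_x(q)=r|_{\bar v}$. Amalgamating $r$ with (the reduct of) $q$ over the common type $r|_{\bar v}=\pi_x(q)$ via \autoref{df:top-Fraisse}~\autoref{i:tF:amalg} yields a type on $v_0\cup y$ showing that $\theta'(v_0)\land\phi(\bar v,y)$ is consistent with $\cF$. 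Since also $\oset{\theta'}_\cF\sub\pi_{v_0}[\Xi]$, \autoref{df:admissible-coding}~\autoref{i:adm-cod:ec} provides $\xi\in\Xi$ and $\bar w\in u^y$ with $\xi\models\theta'(v_0)\land\phi(\bar v,\bar w)$; then $\xi\in O\cap D_{x,y,\phi,\bar v}$, as desired.

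Finally, since $\Xi$ is Polish, a countable intersection of dense $G_\delta$ sets is again dense $G_\delta$ by the Baire category theorem, so $\Xi_\ec=\bigcap_{x,y,\phi,\bar v}D_{x,y,\phi,\bar v}$ is a dense $G_\delta$ subset of $\Xi$. The subtle point throughout is the order of operations in the density argument: one must shrink $\theta$ to $\theta'$ so that $\oset{\theta'}_\cF$ lands inside $\pi_{v_0}[\Xi]$ (using~\autoref{i:adm-cod:proj-open}) before one can legitimately feed $\theta'$ and the formula $\phi(\bar v,y)$ — whose consistency with $\cF$ comes from amalgamation — into condition~\autoref{i:adm-cod:ec}.
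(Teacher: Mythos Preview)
Your proof is correct and follows essentially the same approach as the paper's: both express $\Xi_\ec$ via \autoref{p:ec-models}~\autoref{i:p:ec:char}, invoke condition~\autoref{i:tF:cond-pi} of \autoref{df:top-Fraisse} for the $G_\delta$ structure, and combine \autoref{df:admissible-coding}~\autoref{i:adm-cod:proj-open}, amalgamation, and \autoref{df:admissible-coding}~\autoref{i:adm-cod:ec} for density. The only cosmetic differences are that you parametrize by tuples $\bar v \in u^x$ where the paper uses finite $x \sub u$ directly, and your Case~2 assumption in the density step is slightly stronger than needed (it suffices that the single chosen $\xi_*$ lands in $\pi_x[\oset{\phi}_\cF]$).
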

\begin{proof}
  By \autoref{p:ec-models} \autoref{i:p:ec:char}, we have that
  \begin{multline*}
    \xi \in \Xi_\ec \iff \forall x \sub u \ \forall \phi(x, y) \big( \exists q \in \oset{\phi}_\cF  \ \pi_x(\xi) = \pi_x(q) \big) \\
    \implies \exists y' \sub u \ \xi \models \phi(x, y').
  \end{multline*}
  This condition is $G_\delta$ because the left side of the implication can be rewritten as
  \begin{equation*}
    \pi_x(\xi) \in \pi_x \big[\oset{\phi}_\cF \big],
  \end{equation*}
  which is an $F_\sigma$ set by condition \autoref{i:tF:cond-pi} in \autoref{df:top-Fraisse}. To verify density, by the Baire category theorem, it suffices to check that for fixed $\phi$ and $x$, the set of $\xi \in \Xi$ satisfying the implication is dense. Let $z \sub u$ be finite and let $\psi(x, z)$ be a formula such that $\oset{\psi} \cap \Xi$ is non-empty.
  Let $\xi_0 \in \oset{\psi} \cap \Xi$. We may assume that there is $q \in \oset{\phi}_\cF$ with $\pi_x(\xi_0) = \pi_x(q)$ (otherwise $\xi_0$ satisfies the implication). By \autoref{df:admissible-coding} \autoref{i:adm-cod:proj-open}, we may also assume that $\oset{\psi}_\cF \sub \pi_{yz}[\Xi]$. By amalgamation, the formula $\psi(x, z) \land \phi(x, y)$ is consistent, so by \autoref{df:admissible-coding} \autoref{i:adm-cod:ec}, there is $\xi \in \Xi$ and $y' \sub u$ such that $\xi \models \psi(x, z) \land \phi(x, y')$.
\end{proof}

\begin{defn}
  \label{df:proj-isolated}
  Let $\cF$ be a hereditary $\cL$-class. A type $p \in \tS_x(\cF)$ is called \df{isolated} if $p$ is an isolated point in $\tS_x(\cF)$ (in the relative topology). It is called \df{projectively isolated} if there exists a set of variables $y$ and a formula $\phi(x, y)$ such that $\pi_x\big[\oset{\phi}_\cF\big] = \set{p}$. In that case, we will say that $\phi$ \df{isolates} $p$. A model $M \models \cF$ is called \df{projectively atomic} if for all finite $x$ and all $a \in M^x$, $\tp a$ is projectively isolated in $\tS_x(\cF)$.
\end{defn}

We recall that a model $M$ is \df{homogeneous} if for all $n$ and $a, b \in M^n$ with $\tp a = \tp b$, there exists an automorphism $f$ of $M$ such that $f(a) = b$.

In classical model theory (where formulas are allowed to have quantifiers), the type projection maps $\pi_x$ are open and being projectively isolated is equivalent to being isolated. In that case, it follows from the omitting types theorem that the generic model (if it exists) is the \df{atomic} one, that is, the model that only realizes isolated types. Moreover, atomic models are always homogeneous.
The following theorem can be considered a generalization of these facts to the quantifier-free setting. Being existentially closed provides a sufficient ``quantifier elimination'' condition.
\begin{theorem}
  \label{th:proj-atomic}
  Let $\cL$ be a countable language, let $\cF$ be a topological Fraïssé $\cL$-class, and let $\Xi$ be an admissible coding for models of $\cF$. Then:
  \begin{enumerate}
  \item \label{i:th:pa:existence-proj-atomic} An existentially closed, projectively atomic model of $\cF$ exists iff the projectively isolated types are dense in $\tS_x(\cF)$ for every $x$.
  \item \label{i:th:pa:uniq-homog} If it exists, the existentially closed, projectively atomic model of $\cF$ is unique up to isomorphism and it is homogeneous.
  \item \label{i:th:pa:comeager} Let $M_0 \models \cF$. The following are equivalent:
    \begin{itemize}
    \item $M_0$ is existentially closed, projectively atomic;
    \item $\set{\xi \in \Xi : M_\xi \cong M_0}$ is dense $G_\delta$ in $\Xi$;
    \item $\set{\xi \in \Xi : M_\xi \cong M_0}$ is non-meager in $\Xi$.
    \end{itemize}
  \end{enumerate}
\end{theorem}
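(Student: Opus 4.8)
The plan is to reduce all three parts to a single extension lemma. \emph{One-step lemma:} if $M, N \models \cF$ are both existentially closed, $M$ is projectively atomic, $a \in M^x$ and $b \in N^x$ satisfy $\tp a = \tp b$, and $c \in M$, then there is $d \in N$ with $\tp(a,c) = \tp(b,d)$. To prove this I would pick a formula $\phi(x,z,y)$ isolating the (projectively isolated, since $M$ is projectively atomic) type $\tp(a,c) \in \tS_{xz}(\cF)$, so $\pi_{xz}\bigl[\oset{\phi}_\cF\bigr] = \set{\tp(a,c)}$. By existential closedness of $M$, \autoref{p:ec-models}~\autoref{i:p:ec:char} yields $e \in M^y$ with $M \models \phi(a,c,e)$; hence $\tp(a,c,e) \models \phi$ and $\pi_x\bigl(\tp(a,c,e)\bigr) = \tp b$. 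Applying the same proposition to $N$ (with the formula $\phi$ and the type $\tp(a,c,e)$) produces $(d,e') \in N^{zy}$ with $N \models \phi(b,d,e')$, and then $\tp(b,d) \in \pi_{xz}\bigl[\oset{\phi}_\cF\bigr] = \set{\tp(a,c)}$.

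From the one-step lemma, Part~\autoref{i:th:pa:uniq-homog} follows: running the usual back-and-forth between two countable existentially closed projectively atomic models and alternating the lemma in the two directions builds an isomorphism, which gives uniqueness up to isomorphism (models coded by an admissible $\Xi$ are countable); taking the two models equal and starting from a partial isomorphism $a \mapsto b$ with $\tp a = \tp b$ gives homogeneity. The easy half of~\autoref{i:th:pa:existence-proj-atomic} is also short: if $M_0$ is existentially closed and projectively atomic, then by joint embedding (the $x=\emptyset$ case of amalgamation) every formula $\phi(x)$ consistent with $\cF$ is realized in some $N \supseteq M_0$ with $N \models \cF$, hence — by existential closedness — in $M_0$ itself, and the type of the realizing tuple lies in $\oset{\phi}_\cF$ and is projectively isolated; so projectively isolated types are dense in every $\tS_x(\cF)$.

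The main work is the lemma: \emph{if projectively isolated types are dense in $\tS_x(\cF)$ for every $x$, then $\Xi_{\mathrm{pa}} \coloneqq \set{\xi \in \Xi : M_\xi \text{ is projectively atomic}}$ is comeager in $\Xi$.} Writing $I_x \sub \tS_x(\cF)$ for the (countable) set of projectively isolated types, one has $\Xi_{\mathrm{pa}} = \bigcap_x \pi_x^{-1}(I_x)$, so it suffices to make each $\pi_x^{-1}(I_x)$ comeager. This is exactly where the absence of compactness is felt: unlike in classical model theory, $I_x$ need not be open (it can be meager and nowhere dense) and $\pi_x$ need not be open, so the elementary argument fails and condition~\autoref{i:tF:cond-pi} of \autoref{df:top-Fraisse} must be used. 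The plan is to show that every nonempty basic open subset of $\Xi$ contains a nonempty open subset on which $\pi_x$ takes values in a single projectively isolated type — manufacturing an isolating formula from condition~\autoref{i:tF:cond-pi} together with \autoref{df:admissible-coding}~\autoref{i:adm-cod:proj-open} and \autoref{i:adm-cod:ec} — and then to promote this to comeagerness via a Banach--Mazur game (or by exhausting the complement with nowhere dense sets). I expect this lemma to be the principal obstacle.

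It remains to assemble Part~\autoref{i:th:pa:comeager} and the other half of~\autoref{i:th:pa:existence-proj-atomic}. I would prove the cycle ``existentially closed and projectively atomic $\Rightarrow$ isomorphism class dense $G_\delta$ $\Rightarrow$ isomorphism class non-meager $\Rightarrow$ existentially closed and projectively atomic'', the middle step being trivial since $\Xi$ is a nonempty Polish space. For the step from non-meagerness: $\Xi_\ec$ is comeager by \autoref{p:ec-dense-Gdelta}, so it meets the non-meager isomorphism class, forcing $M_0$ to be existentially closed; and if $M_0$ were not projectively atomic, some finite tuple would realize a type $p \notin I_x$, so, $M_\xi$ being countably generated, $\set{\xi : M_\xi \text{ realizes } p}$ would be a countable union of closed sets $\pi_{x_0}^{-1}\bigl(\pi_t^{-1}(p)\bigr)$, one of which would be non-meager and hence have nonempty interior containing some $\oset{\psi}\cap\Xi$; after shrinking $\psi$ so that $\oset{\psi}_\cF \sub \pi_z[\Xi]$ (using \autoref{df:admissible-coding}~\autoref{i:adm-cod:proj-open}), the formula $\psi \wedge \bigwedge_{v \in x}\bigl(v = t_v(x_0)\bigr)$ would isolate $p$, a contradiction. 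For the first step, by Part~\autoref{i:th:pa:uniq-homog} one has $\set{\xi : M_\xi \cong M_0} = \Xi_\ec \cap \Xi_{\mathrm{pa}}$, which is comeager by \autoref{p:ec-dense-Gdelta} and the crux lemma (whose hypothesis holds by the easy half of~\autoref{i:th:pa:existence-proj-atomic}); that this comeager set is in fact $G_\delta$ — which needs care, since, unlike $\Xi_\ec$, projective atomicity is not manifestly a $G_\delta$ condition — would be obtained either by a direct descriptive-set-theoretic analysis in the spirit of \autoref{p:ec-dense-Gdelta} or, as in all our applications, from the fact that the isomorphism classes are orbits of a continuous Polish group action on the coding space (concretely $\Homeo(\Omega)$ acting by conjugation in the Cantor case), non-meager orbits of such actions being $G_\delta$. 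Finally, the remaining implication in~\autoref{i:th:pa:existence-proj-atomic} drops out: density of projectively isolated types makes $\Xi_\ec \cap \Xi_{\mathrm{pa}}$ comeager, in particular nonempty, and any $\xi$ in it codes an existentially closed projectively atomic model.
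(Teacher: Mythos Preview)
Your overall architecture matches the paper's: a back-and-forth for \autoref{i:th:pa:uniq-homog}, density of realized types for the easy half of \autoref{i:th:pa:existence-proj-atomic}, a ``crux lemma'' showing projectively atomic models are comeager when projectively isolated types are dense, and a nowhere-dense argument for the last implication of \autoref{i:th:pa:comeager}. The one-step lemma and the converse direction are essentially identical to the paper's treatment.

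There is one real gap: the $G_\delta$ claim in \autoref{i:th:pa:comeager}. Your two proposed fixes do not work in the stated generality. The Polish-group-orbit argument is unavailable because an admissible coding is not assumed to carry any group action; the theorem must apply to $\Xi_0(\cF)$ itself, where none is given. And your ``direct descriptive-set-theoretic analysis'' remains a hope rather than an argument. The paper resolves this cleanly by working inside $\Xi_\ec$ rather than $\Xi$: the key observation is that the set $\set{\xi \in \Xi_\ec : \pi_x(\xi) \text{ is projectively isolated}}$ is \emph{open} in $\Xi_\ec$. Indeed, if $\phi(x,y)$ isolates $\pi_x(\xi_0)$, existential closedness of $M_{\xi_0}$ produces $y' \sub u$ with $\xi_0 \models \phi(x,y')$, and then $\oset{\phi(x,y')}$ is an open neighborhood of $\xi_0$ on which $\pi_x$ is constant. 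Intersecting over finite $x \sub u$ gives a $G_\delta$ set in $\Xi_\ec$, hence in $\Xi$ (since $\Xi_\ec$ is $G_\delta$ by \autoref{p:ec-dense-Gdelta}), and this set is exactly the isomorphism class. Your crux-lemma plan, by contrast, only shows that $\pi_x^{-1}(I_x)$ \emph{contains} a dense open set in $\Xi$, which gives comeagerness but not $G_\delta$.

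Two minor remarks. First, your crux lemma does not actually need condition~\autoref{i:tF:cond-pi} of \autoref{df:top-Fraisse}; the density argument goes through using only density of projectively isolated types in $\tS_{xy}(\cF)$ together with \autoref{df:admissible-coding}~\autoref{i:adm-cod:proj-open} and \autoref{i:adm-cod:ec}. That $F_\sigma$ condition is used elsewhere --- precisely in \autoref{p:ec-dense-Gdelta}, to make $\Xi_\ec$ a $G_\delta$ set. Second, Banach--Mazur is unnecessary: once you know every basic open contains a nonempty open subset mapping into $I_x$, the union of those subsets is already a dense open set contained in $\pi_x^{-1}(I_x)$.
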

\begin{proof}
  \autoref{i:th:pa:uniq-homog} Let $M_1$ and $M_2$ be existentially closed, projectively atomic. We will show that
  \begin{equation*}
    \set{(a, b) \in M_1^n \times M_2^n : \tp a = \tp b}, \quad n \in \N
  \end{equation*}
  is a back-and-forth system, thus proving both statements simultaneously. Suppose that $a \in M_1^x$, $b \in M_2^x$, $\tp a = \tp b$, and let $c \in M_1$. As $\tp ac$ is projectively isolated, there is a formula $\phi(x, y, z)$ such that for every $q \models \phi$, $\pi_{xy}(q) = \tp ac$. As $M_2$ is existentially closed, by \autoref{p:ec-models} \autoref{i:p:ec:char}, there are $d \in M_2$, $d' \in M_2^z$ such that $M_2 \models \phi(b, d, d')$. Then $\tp bd = \tp ac$ and we are done.

  The direction from left to right of \autoref{i:th:pa:existence-proj-atomic} follows from the fact that the types realized in any existentially closed model are dense (because of \autoref{p:ec-models} \autoref{i:p:ec:char} applied to $x = \emptyset$). For the other direction and the first implication of \autoref{i:th:pa:comeager}, we will show that if projectively isolated types are dense, then the set of $\xi$ such that $M_\xi$ is existentially closed, projectively atomic is dense $G_\delta$. For existentially closed, this is given by \autoref{p:ec-dense-Gdelta}. To complete the proof using the Baire category theorem, it suffices to check that for every finite $x \sub u$, the set
  \begin{equation}
    \label{eq:proj-isolated}
    \set{\xi \in \Xi_\ec : \pi_x(\xi) \text{ is projectively isolated}}
  \end{equation}
  is open dense in $\Xi_\ec$. To see that it is open, let $\xi_0$ belong to this set and let $\phi(x, y)$ be a formula that isolates $\pi_x(\xi_0)$. As $M_{\xi_0}$ is existentially closed, there exists $y' \sub u$ with $|y'| = |y|$ such that $\xi_0 \models \phi(x, y')$. Then every $\xi$ satisfying $\xi \models \phi(x, y')$ belongs to the set \autoref{eq:proj-isolated}. We finally check that the set \autoref{eq:proj-isolated} is dense. Let $y \sub u$ and let $\psi(x, y)$ be a formula such that $\oset{\psi} \cap \Xi$ is non-empty. By \autoref{df:admissible-coding} \autoref{i:adm-cod:proj-open}, we may assume that $\oset{\psi}_\cF \sub \pi_{xy}(\Xi)$. Let $q \in \oset{\psi}_\cF$ be projectively isolated and let $\phi(x, y, z)$ be a formula that isolates $q$. Then, in particular, $\phi(x, y, z)$ implies $\psi(x, y)$. By \autoref{df:admissible-coding} \autoref{i:adm-cod:ec}, there is $z' \sub u$ with $|z'| = |z|$ and $\xi \in \Xi$ such that $\xi \models \phi(x, y, z')$. By \autoref{p:ec-dense-Gdelta}, we may also assume that $\xi \in \Xi_\ec$. Then $\pi_x(\xi)$ is projectively isolated and $\xi \in \oset{\psi} \cap \Xi_\ec$.

  The second implication of \autoref{i:th:pa:comeager} being trivial, we prove the third. It follows from \autoref{p:ec-dense-Gdelta} that $M_0$ is existentially closed. We will show that if $x \sub u$ is finite and $p \in \tS_x(\cF)$ is not projectively isolated, then the closed set
  \begin{equation*}
    \set{\xi \in \Xi : \pi_{x}(\xi) = p}
  \end{equation*}
  is nowhere dense in $\Xi$. Suppose to the contrary that it has non-empty interior. Then there exist $y \sub u$ and a formula $\phi(x, y)$ such that $\Xi \cap \oset{\phi} \neq \emptyset$ and for all $\xi \in \Xi \cap \oset{\phi}$, $\pi_x(\xi) = p$. By \autoref{df:admissible-coding} \autoref{i:adm-cod:proj-open}, we may assume that $\oset{\phi}_\cF \sub \pi_{xy}[\Xi]$. We claim that $\phi$ isolates $p$. Indeed, let $q \in \oset{\phi}_\cF$. Then there exists $\xi \in \Xi$ with $\pi_{xy}(\xi) = q$ and it follows that $\pi_x(q) = \pi_x(\xi) = p$. This is a contradiction.

  The same argument shows that for any $z \sub u$ with $|z| = |x|$, the set $\set{\xi \in \Xi : \pi_{z, x}(\xi) = p}$ is nowhere dense. Thus the set $\set{\xi \in \Xi : M_\xi \text{ realizes } p}$ is meager and it follows that $M_0$ does not realize $p$. As this is true for all non-projectively isolated $p$, we conclude that $M_0$ is projectively atomic.
\end{proof}

\begin{cor}
  \label{c:proj-isolated-equivalence}
  Let $\cF$ be a topological Fraïssé class and let $\Xi$ be an admissible coding for models of $\cF$. Then the following are equivalent:
  \begin{enumerate}
  \item The set $\Xi$ admits a comeager isomorphism class;
  \item The set of projectively isolated types in $\tS_x(\cF)$ is dense for every $x$.
  \end{enumerate}
\end{cor}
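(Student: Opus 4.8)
The plan is to derive \autoref{c:proj-isolated-equivalence} directly from \autoref{th:proj-atomic}, specifically from parts \autoref{i:th:pa:existence-proj-atomic} and \autoref{i:th:pa:comeager}; both implications are short once that theorem is available.

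For the implication $(\mathrm{ii}) \Rightarrow (\mathrm{i})$: assume the projectively isolated types are dense in $\tS_x(\cF)$ for every finite $x$. By \autoref{th:proj-atomic}~\autoref{i:th:pa:existence-proj-atomic}, there exists an existentially closed, projectively atomic model $M_0 \models \cF$. Applying \autoref{th:proj-atomic}~\autoref{i:th:pa:comeager} to $M_0$, the set $\set{\xi \in \Xi : M_\xi \cong M_0}$ is dense $G_\delta$, hence comeager, which is the desired comeager isomorphism class. For the implication $(\mathrm{i}) \Rightarrow (\mathrm{ii})$: suppose $\Xi$ has a comeager isomorphism class $\cC$. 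Pick any $\xi_0 \in \cC$ and set $M_0 = M_{\xi_0}$; since isomorphism classes partition $\Xi$, we have $\cC = \set{\xi \in \Xi : M_\xi \cong M_0}$, so this set is comeager and in particular non-meager. By \autoref{th:proj-atomic}~\autoref{i:th:pa:comeager} (the implication starting from ``non-meager''), $M_0$ is existentially closed and projectively atomic, and then \autoref{th:proj-atomic}~\autoref{i:th:pa:existence-proj-atomic} (in the direction from the existence of such a model to density of projectively isolated types) yields that the projectively isolated types are dense in $\tS_x(\cF)$ for every $x$, completing the proof.

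The only point requiring a moment's care — and thus the closest thing to an obstacle — is the passage from a ``comeager isomorphism class'' as phrased in the statement to a concrete model $M_0$ whose coding set realizes it; this is handled simply by choosing an arbitrary element of the comeager set, using that $\xi \mapsto M_\xi$ is the invariant whose level sets are the isomorphism classes. All the genuine content (the back-and-forth argument for uniqueness and homogeneity, the Baire category computations showing the relevant sets are dense $G_\delta$, and the use of condition \autoref{i:tF:cond-pi} in \autoref{df:top-Fraisse}) is already packaged inside \autoref{th:proj-atomic}, so no further work is needed.
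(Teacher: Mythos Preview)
Your proposal is correct and matches the paper's approach: the paper states \autoref{c:proj-isolated-equivalence} as an immediate corollary of \autoref{th:proj-atomic} with no separate proof, and your argument unpacks precisely the intended deduction from parts \autoref{i:th:pa:existence-proj-atomic} and \autoref{i:th:pa:comeager}.
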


\begin{cor}
  \label{c:proj-isolated-Fraisse}
  Let $\cF$ be a topological Fraïssé class. If the projectively isolated types are dense, then they form a classical Fraïssé class and their Fraïssé limit is the projectively atomic model of $\cF$.
\end{cor}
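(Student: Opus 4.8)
The plan is to show that the collection $\mathcal{P} = (\mathcal{P}_x)_x$ of projectively isolated types --- where $\mathcal{P}_x \sub \tS_x(\cF)$ denotes the set of projectively isolated types in $\tS_x(\cF)$, with $x$ ranging over finite sets of variables --- is a countable hereditary $\cL$-class with the amalgamation and joint embedding properties, and that its Fraïssé limit is the projectively atomic model of $\cF$. Since the projectively isolated types are dense by hypothesis, \autoref{th:proj-atomic}, applied with the admissible coding $\Xi = \Xi_0(\cF)$ of \autoref{l:full-coding-admissible}, provides an existentially closed, projectively atomic model $M_0 \models \cF$; by \autoref{th:proj-atomic} \autoref{i:th:pa:uniq-homog} it is homogeneous, it is countable, and moreover it realizes every projectively isolated type: if $\phi$ isolates $p \in \mathcal{P}_x$ then $\oset{\phi}_\cF \neq \emptyset$, so by \autoref{p:ec-models} \autoref{i:p:ec:char}, applied with empty parameter tuple, the formula $\phi$ is realized in $M_0$, and the restriction of such a realization to $x$ must realize $p$ because $\phi$ isolates $p$.

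First I would verify that $\mathcal{P}$ is hereditary, that is, closed under the projections $\pi_t$ by tuples of terms. Given $\phi(x,z)$ with $\pi_x[\oset{\phi}_\cF] = \set{p}$ and a $y$-tuple of terms $t(x)$ with $y$ disjoint from $x$ and $z$, I would put $\psi(y,x,z) := \phi(x,z) \wedge \bigwedge_{v \in y} v = t_v(x)$ and check that $\oset{\psi}_\cF$ is non-empty (extend a realization of any type in $\oset{\phi}_\cF$ by the values $t(x)$) and that $\pi_y[\oset{\psi}_\cF] = \set{\pi_t(p)}$, using that the $y$-coordinates of any type in $\oset{\psi}_\cF$ are forced to be the $t$-values of its $x$-coordinates; hence $\psi$ isolates $\pi_t(p)$. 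Countability of each $\mathcal{P}_x$ is then immediate, since the countable language $\cL$ admits only countably many quantifier-free formulas and each projectively isolated type is isolated by one of them.

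The main obstacle is amalgamation, and the key idea is to amalgamate inside $M_0$: an amalgam of two projectively isolated types over a common base, formed in $\cF$ via \autoref{df:top-Fraisse} \autoref{i:tF:amalg}, need not itself be projectively isolated. Given $p \in \mathcal{P}_x$ and $q_1 \in \mathcal{P}_{xy_1}$, $q_2 \in \mathcal{P}_{xy_2}$ with $\pi_x(q_1) = \pi_x(q_2) = p$, I would realize $q_1$ by a tuple $(a,b_1)$ and $q_2$ by $(a',b_2)$ in $M_0$ (possible since $M_0$ realizes all projectively isolated types), note that $\tp(a) = p = \tp(a')$, apply homogeneity of $M_0$ to obtain an automorphism $f$ with $f(a') = a$, and set $s := \tp(a, b_1, f(b_2))$. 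Then $\pi_{xy_1}(s) = q_1$ and $\pi_{xy_2}(s) = q_2$, and $s$ is projectively isolated because $M_0$ is projectively atomic; the case $x = \emptyset$ yields the joint embedding property.

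At this point $\mathcal{P}$ is a classical Fraïssé class, so it has a Fraïssé limit $M$, characterized up to isomorphism as the unique countable, homogeneous structure whose finitely generated substructures are exactly the structures with types in $\mathcal{P}$. To conclude, I would observe that $M_0$ has these three properties: it is countable; it is homogeneous by \autoref{th:proj-atomic} \autoref{i:th:pa:uniq-homog}; the types of its finitely generated substructures are projectively isolated since $M_0$ is projectively atomic, and conversely every type in $\mathcal{P}$ is realized in $M_0$ as noted in the first paragraph. Hence $M_0 \cong M$, so the Fraïssé limit of $\mathcal{P}$ is the projectively atomic model of $\cF$, as claimed.
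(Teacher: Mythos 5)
Your proof is correct and takes essentially the same route as the paper: the paper gets countability from the countability of formulas and amalgamation from the classical Fraïssé theorem applied to the homogeneous, projectively atomic model, whereas you simply unpack that appeal by amalgamating inside $M_0$ via homogeneity and spelling out heredity and the fact that $M_0$ realizes exactly the projectively isolated types. These added details are fine and do not constitute a different argument.
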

\begin{proof}
  It is clear that the set of projectively isolated types is countable (as there are only countably many formulas) and they form an amalgamation class by the classical Fraïssé theorem (as the projectively atomic model is homogeneous).
\end{proof}

\begin{example}[Admissible codings and expansions]
  \label{ex:expansions}
  A common way to code structures is as \emph{expansions} of a fixed infinite homogeneous structure $M_0$. Let $\cL_0$ be the language of $M_0$ and let $\cF_0$ be its age. Assume that $\tS_x(\cF_0)$ is discrete for every finite $x$. Let $\cL \supseteq \cL_0$ and let $\cF$ be a topological Fraïssé class with the property that every type in $\tS_x(\cF)$ can be realized in a model whose $\cL_0$-reduct is isomorphic to $M_0$. Let $u$ be a countable set of variables and let $c \in M_0^u$ be a tuple that bijectively enumerates $M_0$. Then
  \begin{equation*}
    \Xi = \set{\xi \in \Xi_0(\cF) : \xi|_{\cL_0} = \tp c}
  \end{equation*}
  is an admissible coding for $\cF$. Via the bijection $c$, we can think of the variables $u$ as indexed by $M_0$. Then the group $\Aut(M_0)$ acts on $\Xi$ by permuting the variables and its orbits are exactly the isomorphism classes.

  The simplest special case of this is where $\cL_0$ is the language consisting only of equality and $M_0$ is an infinite set. Then $\Xi$ is the collection of all bijective enumerations of models of $\cF$ and it is equipped with an $S_\infty$-action.
\end{example}

We conclude the appendix with two concrete examples.

\subsection{Groups}
\label{sec:appendix-groups}

Let $\cL$ be the language of groups, containing function symbols for multiplication, inverse, and the identity, and let  $\cF$ be the hereditary class given by the (universal, first-order) theory of groups. Then $\tS_x(\cF)$ is the compact space of marked groups with generators $x$. The class $\cF$ has the amalgamation property by the construction of free product with amalgamation. Also, the maps $\pi_x$ are closed (because $\tS_x(\cF)$ is compact), so $\cF$ is a topological Fraïssé class.

The following characterization of projectively isolated groups is essentially due to Rips and it is based on a theorem of Boone and Higman which states that every finitely generated group with a solvable word problem embeds into a simple subgroup of a finitely presented group.
\begin{prop}[Rips]
  \label{p:solvable-word-problem}
  Let $G$ be a finitely generated marked group (that is, an element of $\tS_x(\cF)$ for some finite $x$). The following are equivalent:
  \begin{enumerate}
  \item $G$ is projectively isolated;
  \item $G$ has a solvable word problem.
  \end{enumerate}
\end{prop}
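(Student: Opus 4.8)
The plan is to prove the two implications separately; the substantive external input is the Boone--Higman theorem, which asserts that a finitely generated group has solvable word problem if and only if it embeds into a simple subgroup of a finitely presented group. Only the ``only if'' half will be needed, and I expect essentially all of the genuine difficulty to live there; the rest is bookkeeping with marked groups and a short recursion-theoretic argument.

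For (2) $\Rightarrow$ (1), I would start from embeddings $G \leq S \leq P$ provided by Boone--Higman, with $S$ simple and $P = \gen{z \mid t_1, \dots, t_k}$ finitely presented (here $z$ is a finite tuple of fresh variables). Writing each generator $x_i$ of $G$ as a word $u_i(z)$ in $P$ and fixing some $s_0 \in S \sminus \set{1}$ written as $v_0(z)$, the formula to use is
\[ \phi(x, z) \;\coloneqq\; \bigwedge_i \bigl(x_i = u_i(z)\bigr) \wedge \bigwedge_j \bigl(t_j(z) = 1\bigr) \wedge \bigl(v_0(z) \neq 1\bigr). \]
There are two things to check. First, $\phi$ is consistent, witnessed by $P$ itself with $x$ read off via the $u_i$. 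Second, if $Q$ is a marked group realizing $\phi$, then $Q$ is generated by the images of $z$, hence is a marked quotient $P/N$ of $P$; the conjunct $v_0(z) \neq 1$ forces $s_0 \notin N$, so $S \cap N$ is a proper normal subgroup of the simple group $S$ and is therefore trivial, whence $G \cap N$ is trivial as well. Consequently the subgroup of $Q$ generated by the $x$-variables is $G$ as a marked group, i.e. $\pi_x(Q) = G$. This gives $\pi_x[\oset{\phi}_\cF] = \set{G}$, so $G$ is projectively isolated.

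For (1) $\Rightarrow$ (2), the plan is a direct recursion-theoretic argument. Starting from a formula $\phi(x,y)$ that isolates $G$ (with $y$ finite, as only finitely many variables occur), I would put $\phi$ in disjunctive normal form; since the projection of each disjunct lies in $\set{G}$ and the union of the projections equals $\set{G}$, at least one disjunct already isolates $G$, so I may assume $\phi = \bigwedge_{i}\, (p_i = 1) \wedge \bigwedge_{j}\, (q_j \neq 1)$ for words $p_i, q_j$ in $x, y$. Let $P_0 = \gen{x, y \mid p_1, \dots, p_m}$, a finitely presented group. Consistency of $\phi$ forces every $q_j$ to be nontrivial in $P_0$ (a failed relation would fail in every quotient), so $P_0$ itself realizes $\phi$ and hence $\gen{x} \leq P_0$ is $G$ as a marked group. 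Now, for a word $w = w(x)$, set $P_0^w = \gen{x, y \mid p_1, \dots, p_m, w}$, a finitely presented group whose presentation is computed uniformly from $w$. Then $w =_G 1 \iff w =_{P_0} 1$, which is recursively enumerable in $w$; and I would prove that $w \neq_G 1 \iff q_j =_{P_0^w} 1$ for some $j$, which is also recursively enumerable in $w$. The key point for the nontrivial direction is that if $w \neq_G 1$, then $\gen{x}$ meets the normal closure of $w$ in $P_0$ nontrivially, so the $x$-subgroup of $P_0^w$ is a \emph{proper} marked quotient of $G$ and hence \emph{not} marked-isomorphic to $G$ (this holds even when $G$ is non-Hopfian), which forces $\phi$ to fail in $P_0^w$, i.e. some $q_j =_{P_0^w} 1$. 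Thus the word problem of $G$ is both recursively enumerable and co-recursively enumerable, hence solvable.

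The step I expect to be the real obstacle is the appeal to the ``only if'' half of Boone--Higman in (2) $\Rightarrow$ (1); beyond that, the points requiring care are the systematic handling of marked-group quotients (in particular the fact that a proper quotient of $(G,x)$ is never marked-isomorphic to $(G,x)$, which is exactly what makes the converse work) and verifying that the word problems of the uniformly presented family $(P_0^w)_w$ are recursively enumerable uniformly in $w$.
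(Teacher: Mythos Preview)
Your proof is correct and follows essentially the same route the paper attributes to Rips: for (2) $\Rightarrow$ (1) you use the Boone--Higman embedding $G \leq S \leq P$ and isolate $G$ by a quantifier-free formula encoding the presentation of $P$ together with a single nontriviality witness in $S$, and for (1) $\Rightarrow$ (2) you reduce to a conjunctive formula, build the associated finitely presented group $P_0$, and show the word problem and its complement are both r.e.\ via the uniform family $(P_0^w)_w$. The paper itself gives no details beyond citing Rips and noting that the isolating formula can be taken existential; your write-up supplies exactly those details, and the verification of each step (in particular that a proper marked quotient of $(G,x)$ is never marked-equal to $(G,x)$, and that $P_0$ itself realizes $\phi$) is sound.
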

\begin{proof}
  This follows from the proof of the main theorem of \cite{Rips1982}. One only needs to observe that the formula $P(\bar x)$ that isolates $G$ can be taken to be existential.
\end{proof}

Miller~III~\cite{Miller1981} has constructed a finitely presented group $G_0 = \gen{x}$ such that all of its non-trivial quotients have an unsolvable word problem. Thus the set of quotients of $G_0$ forms a non-empty open neighborhood in $\tS_x(\cF)$ which contains no projectively isolated group. (This example has already been used in \cite{Cornulier2007} to show that isolated groups are not dense.)  Applying \autoref{c:proj-isolated-equivalence}, we recover the result of \cite{Goldbring2023} that there is no generic isomorphism class in the space of countable groups. The proof of \cite{Goldbring2023} uses the construction of \cite{Miller1981}; this is perhaps not surprising, as both \autoref{p:solvable-word-problem} and \autoref{c:proj-isolated-equivalence} are equivalences, so computability theory seems intrinsic to the problem. Similar questions were also considered by Ivanov and Majcher in \cite{Ivanov2024p}.

\subsection{$G$-sets}
\label{sec:appendix-g-sets}

  Let $G$ be a fixed countable group and let $\cL$ be the language consisting of unary function symbols for each element of $G$. Let $\cF$ be the hereditary class given by the (universal, first-order) theory that says that the function symbols are interpreted by bijections that compose according to the law of $G$. The models of $\cF$ are the $G$-sets. The type of a singleton $a$ is determined by the stabilizer of $a$ in $G$ and one can identify the type space $\tS_1(\cF)$ with the compact space $\Sub(G)$ of subgroups of $G$. The amalgamation property is easy to check. The projection maps $\pi_x$ are open, so a type is projectively isolated iff it is isolated. Finally, it is easy to see that a type $p \in \tS_x(\cF)$ is isolated iff $\pi_y(p)$ is isolated for all $y \sub x, |y| = 1$. Combining all of this with \autoref{c:proj-isolated-equivalence}, we recover the result of \cite{Glasner2016} that the group $G$ admits a generic action iff isolated subgroups are dense in $\Sub(G)$.

\bibliography{comeager-cantor-actions}
\end{document}